\pgfplotsset{compat=1.14}
\markboth{\today}{\today}
\newcommand*{\rom}[1]{\expandafter\@slowromancap\romannumeral #1@}
\newcommand{\gettikzxy}[3]{%
  \tikz@scan@one@point\pgfutil@firstofone#1\relax
  \edef#2{\the\pgf@x}%
  \edef#3{\the\pgf@y}%
}
\newcommand{\cblue}{\color{blue}}
\newcommand{\cred}{\color{red}}
\newcommand{\tE}{\tilde{\sE}}
\newcommand{\ulz}{\underline{z}}
\newcommand{\ulu}{\underline{u}}
\newcommand{\uls}{\underline{s}}
\newcommand{\oolz}{\overline{\olz}}
\newcommand{\uulz}{\underline{\ulz}}
\newcommand{\uuls}{\underline{\uls}}
\newcommand{\uulu}{\underline{\ulu}}
\newcommand{\olz}{\overline{z}}
\newcommand{\ols}{\overline{s}}
\newcommand{\tz}{\tilde{z}}
\newcommand{\tp}{\tilde{\phi}}
\newcommand{\sA}{\mathcal A}
\newcommand{\sB}{\mathcal B}
\newcommand{\sD}{\mathcal D}
\newcommand{\sE}{\mathcal E}
\newcommand{\sF}{\mathcal F}
\newcommand{\sI}{\mathcal I}
\newcommand{\sL}{\mathcal L}
\newcommand{\sM}{\mathcal M}
\newcommand{\sN}{\mathcal N}
\newcommand{\sP}{\mathcal P}
\newcommand{\sT}{\mathcal T}
\newcommand{\R}{\mathbb R}
\newcommand{\E}{\mathbb E}
\newcommand{\N}{\mathbb N}
\newcommand{\F}{\mathbb F}
\newcommand{\bM}{\mathbb M}
\newcommand{\Prob}{\mathbb P}
\newcommand{\arginf}{\mbox{arginf}}
\newcommand{\Leb}{\mbox{Leb}}
\newtheorem{thm}{Theorem}[section]
\newtheorem{prop}[thm]{Proposition}
\newtheorem{lem}[thm]{Lemma}
\newtheorem{cor}[thm]{Corollary}
\newtheorem{rem}[thm]{Remark}
\newtheorem{defn}[thm]{Definition}
\begin{document}
\title{A construction of the left-curtain coupling}
\author{
David Hobson\thanks{University of Warwick \textit{d.hobson@warwick.ac.uk}} \hspace{10mm}
Dominykas Norgilas\thanks{University of Michigan \textit{dnorgila@umich.edu}}
}
\date{\today}
\maketitle

\begin{abstract}
In a martingale optimal transport (MOT) problem mass distributed according to the law $\mu$ is transported to the law $\nu$ in such a way that the martingale property is respected.
Beiglb\"ock and Juillet (On a problem of optimal transport under marginal martingale constraints, Annals of Probability, 44(1):42-106, 2016) introduced a solution to the MOT problem which they baptised the left-curtain coupling. The left-curtain coupling has been widely studied and shown to have many applications, including to martingale inequalities and the model-independent pricing of American options.
Beiglb\"ock and Juillet proved existence and uniqueness, proved optimality for a family of cost functions, and proved that when $\mu$ is a continuous distribution, mass at $x$ is mapped to one of at most two points, giving lower and upper functions. Henry-Labord\`{e}re and Touzi (An explicit martingale version of Brenier's theorem, Finance and Stochastics, 20:635-668, 2016) showed that the left-curtain coupling is optimal for an extended family of cost functions and gave a construction of the upper and lower functions under an assumption that $\mu$ and $\nu$ are continuous, together with further simplifying assumptions of a technical nature.


In this article we construct these upper and lower functions in the general case of arbitrary centred measures in convex order, and thereby give a complete construction of the left-curtain coupling. In the case where $\mu$ has atoms these upper and lower functions are to be interpreted in the sense of a lifted martingale.  \\

\indent Keywords: optimal transport, Brenier's theorem, martingales, convex order.\\
\indent Mathematics Subject Classification: 60G42.
\end{abstract}

\section{Introduction}
In the classical formulation of the optimal transport (OT) problem due to Kantorovich \cite{kantorovich:42} one seeks a joint law $\pi$ for random variables $X\sim\mu$ and $Y\sim\nu$ which, for a given cost function $c:\R^d\times\R^d\mapsto\R$, minimises $\E^\pi[c(X,Y)]$. The cornerstone result in $\R^d$ with an Euclidean cost $c(x,y)=\lvert x-y\lvert^2$ is Brenier's Theorem, see Brenier \cite{Brenier:87} and R\"{u}schendorf and Rachev \cite{RachRu}. Brenier's Theorem states that, under some regularity assumptions, the optimal coupling is deterministic and realised by a map which corresponds to a gradient of a convex function. In the one-dimensional setting this coupling is identified with the Fr\'{e}chet-Hoeffding (or quantile) coupling $\pi_{HF}$. An important feature of this coupling is that it is optimal for the large class of cost functions satisfying the Spence-Mirrlees condition $c_{xy}>0$.

In recent years, there has been significant interest in optimal transport problems where the transport plan is constrained to be a martingale. The basic problem of martingale optimal transport (MOT) is, given probability measures $\mu$ and $\nu$ on $\R$ which are in convex order and a cost function $c$, to construct a joint law $\pi$ for $X\sim\mu$ and $Y\sim\nu$ satisfying the martingale constraint $\E^\pi[Y\lvert X]=X$ and such that $\E^\pi[c(X,Y)]$ is minimised. Such problems arise naturally in the context of robust (or model-independent) mathematical finance and were first considered by Hobson and Neuberger~\cite{HobsonNeuberger:12} and Hobson and Klimmek~\cite{HobsonKlimmek:15} for the specific (financially relevant) cost functions $c(x,y)=-|y-x|$ and $c(x,y)=|y-x|$. Subsequently,
Beiglb\"{o}ck et al. \cite{BeiglbockHenryLaborderePenkner:13} (in a discrete time setting) and Galichon et al. \cite{GalichonHenryLabordereTouzi:14} (in continuous time) extended the problem to a more general setting, made the connection to optimal transport problems, and proved duality theorems. The class of MOT problems is of wide mathematical interest and, as well as the application to finance, is closely related to, and has important consequences for, the study of martingale inequalities (see Beiglb\"{o}ck and Nutz \cite{BeiglbockNutz:14}, Henry-Labord{\`e}re et al. \cite{HLabordereOblojSpoida:16}, Ob{\l}{\'o}j et al. \cite{OblojSpoidaTouzi:15}) and the Skorokhod embedding problem (see Beiglb\"{o}ck et al. \cite{BeiglbockCoxHuesmann:17}, K\"{a}llblad et al. \cite{KallbladTanTouzi:17}).

Using an extension of the notion of cyclical monotonicity from the classical OT setting, Beiglb\"{o}ck and Juillet \cite{BeiglbockJuillet:16} introduced the left-monotone martingale coupling, which can be viewed as a martingale analogue of the monotone Fr\'{e}chet-Hoeffding coupling. The authors then prove the existence and uniqueness of the left-monotone coupling, which they baptise the \textit{left-curtain} martingale coupling, together with the optimality of this joint probability measure for some specific cost functions. Henry-Labord\`{e}re and Touzi~\cite{HenryLabordereTouzi:16} extended their results to show that the left-curtain coupling is optimal for a wide class of payoffs (essentially\footnote{Some authors seek to minimise $\E^{\pi}[c(X,Y)]$, whereas others seek to maximise, and therefore some care is needed when moving between articles. Henry-Labord\`{e}re and Touzi~\cite{HenryLabordereTouzi:16} maximise, and therefore the condition they state is $c_{xyy}>0$, but this becomes the condition in the main text if we move to $-c$ and a minimisation problem.} those satisfying $c_{xyy}<0$). Several other authors further investigate the properties and extensions of the left-curtain coupling, see Beiglb\"{o}ck et al. \cite{BeiglbockHenryLabordereTouzi:17,BeiglbockCox:17}, Juillet \cite{Juillet:16}, Nutz et al. \cite{NutzStebegg:18,NutzStebeggTan:17}, Campi et al. \cite{Campi:17}. In the case of a continuum of marginals which are increasing in convex order, Henry-Labord\`{e}re et al.~\cite{HenryLabordereTanTouzi:16}, Juillet~\cite{Juillet:18} and Br\"{u}ckerhoff at al.~\cite{BHJ:20} recently showed, amongst other things, how the left-curtain coupling can be used to construct a martingale that fits given marginals at any given time and how it solves a continuous-time version of the martingale optimal transport problem.

In the light of the Brenier's Theorem in classical OT theory, a natural question arises as to whether the aforementioned optimal martingale transport plans possess similar nice structural properties and whether they can be explicitly constructed. Due to the martingale constraint, if the marginals are such that $\mu\neq\nu$, then no martingale coupling can be realized by a single map. Instead, the best one can hope for is a binomial map. Hobson and Neuberger \cite{HobsonNeuberger:12} showed that if $\mu$ is continuous then there exists a pair of increasing functions on which $\pi_{HN}$, their optimal coupling, is concentrated. In particular, if we write $\pi_{HN}(dx,dy) = \mu(dx) \pi_x^{HN}(dy)$ we find $\pi_x^{HN}$ has support in a two-point set $\{ g(x), f(x) \}$ and the functions $x  \mapsto g(x)$ and $x \mapsto f(x)$ are both increasing. Hobson and Klimmek \cite{HobsonKlimmek:15}, on the other hand, worked under the dispersion assumption and showed that their coupling $\pi_{HK}$ is such that each portion of mass from initial law $\mu$ is mapped to $\nu$ by splitting it into three points at most. In particular, $\pi_{HK}$ is supported on the diagonal and the graphs of two explicitly constructed decreasing functions. When $\mu$ is atom free, Beiglb\"{o}ck and Juillet \cite{BeiglbockJuillet:16} showed that for the left-curtain coupling $\pi_{lc}$ there exist lower and upper functions $T_d,T_u$ with $T_d(x)\leq x\leq T_u(x)$ such that if we write $\pi_{lc}(dx,dy) = \mu(dx) \pi^{lc}_x(dy)$ then $\pi^{lc}_x$ has support concentrated on the set $\{ T_d(x), T_u(x) \}$. Here $T_u$ is non-decreasing while $T_d$ satisfies a particular left-monotonicity property.

There are two methods in the literature used to obtain the upper and lower functions that characterise the left-curtain coupling $\pi_{lc}$. The first method is non-constructive and is to approximate $\mu$ by a family of discrete measures and to consider the limits of the resulting upper and lower functions. The second method requires additional regularity assumptions on $\mu$ and $\nu$ and is to characterise $T_d$ and $T_u$ via differential equations (or integral equations). In the case where both marginals $\mu$ and $\nu$ are continuous (and satisfy a further particular structural property), Henry-Labord\`{e}re and Touzi \cite{HenryLabordereTouzi:16} (among other things) construct the upper and lower functions $T_d,T_u$ as the solutions of a pair of coupled ordinary differential equations. Indeed, after some further ingenious manipulations, they show that $T_d$ is the root of an integral equation. However, there are several barriers which make it difficult to extend the construction via differential equations to the general case. First, it cannot cope with atoms in the initial or terminal laws, and requires both measures to have positive densities. Second, it requires a starting condition to initialise the differential equations. Third, the method works by solving for $T_d$ and $T_u$ on a family of intervals, but there may be countably many such intervals, and the set of right-endpoints of these intervals may have (countably many) accumulation points beyond each of which it is difficult to extend the solutions to the differential equations. The same sorts of issues apply to the construction via the integral equation of Henry-Labord\`{e}re and Touzi \cite{HenryLabordereTouzi:16}

The first point is absolutely fundamental.
When $\mu$ has an atom at $x$ the probability kernel $\pi_{lc}^x(\cdot)$ in the disintegration $\pi_{lc}(dx,dy)=\mu(dx)\pi^x_{lc}(dy)$ becomes a measure with support on non-trivial subsets of $\R$ and not just on a two-point set. In this case $T_d,T_u$ cannot be constructed, unless we allow them to be multi-valued. By changing the viewpoint, Hobson and Norgilas \cite{HobsonNorgilas:17} showed how to recover the property that $Y$ takes values in a two-point set. The idea is to write $X=G_\mu(U)$, where $G_\mu$ is a quantile function of $\mu$ and $U\sim U[0,1]$, and then to seek functions $R,S$ satisfying certain monotonicity properties such that $Y\in\{R(U),S(U)\}$. While $T_d$ and $T_u$ are multivalued on the atoms of $\mu$, $R$ and $S$ remain well defined. Hobson and Norgilas explicitly constructed such $R$ and $S$ in the case the initial law $\mu$ is finitely supported (while $\nu$ is arbitrary), and then by approximating general $\mu$ with atomic probability measures showed that the limiting functions give rise to a generalised, or \textit{lifted}, left-curtain martingale coupling.

The goal of this paper is {\em to give a direct, geometric construction of the lifted left-curtain martingale coupling for general measures $\mu$ and $\nu$}. In particular we construct $R$ and $S$ (and $(T_d,T_u)$ in the case where $\mu$ is continuous). Our methods rely neither on differential equations nor on the delicate approximation of measures, but rather on a representation of the initial and target laws via potentials. The potential of a measure involves integrating the measure against a test function and this has a smoothing effect. It is this extra smoothness which allows us to give a global construction of the key quantities. Nonetheless, some delicate arguments are needed to prove that the quantities we construct have the appropriate monotonicity properties and do indeed yield the left-curtain martingale coupling, especially since we place no assumptions on the initial or terminal laws.

The power of the potential representation is well-recognised in related settings.
There is a close connection between martingale optimal transport and solutions of the Skorokhod embedding problem (SEP) for Brownian motion (especially for non-trivial initial laws), based on the idea of viewing a martingale as a time-change of Brownian motion. One productive source of elegant solutions to the SEP is the potential-theoretic representation of measures and a geometric description due to Chacon and Walsh~\cite{ChaconWalsh:76}. Many of the classical solutions of the Skorokhod embedding problem (and therefore many of the martingale optimal transports which have been proposed in the literature) can be described by drawing tangents (or supporting hyperplanes in the atomic case) on a suitable picture, see Chacon and Walsh~\cite{ChaconWalsh:76} and Hobson~\cite{Hobson:98a}.
The constructions of the SEP due to Dubins~\cite{Dubins:68}, Az{\'e}ma and Yor~\cite{AzemaYor:79}, Jacka~\cite{Jacka:89}, Vallois~\cite{Vallois:83}, 
Hobson~\cite{Hobson:98b}, Hobson and Pedersen~\cite{HobsonPedersen:01} (at least) have a representation in this form.
The geometric approach has a clear advantage in bypassing many of the technical issues which arise in approximation arguments.

The classical result by Strassen \cite{Strassen:65} states that it is possible to transport $\mu$ to $\nu$ using a martingale if and only if the marginal laws respect the convex order condition $\mu\leq_{cx}\nu$ (i.e., $\mu$ is less than $\nu$ in convex order). In order to study transport plans in the martingale setting, Beiglb\"{o}ck and Juillet \cite{BeiglbockJuillet:16} introduced the notion of \textit{extended} convex order of two measures, denoted by $\leq_{E}$, which compares measures of possibly different total mass. If a pair of measures $\mu,\nu$ is such that $\mu\leq_E\nu$, than there exists a martingale that transports $\mu$ into $\nu$ (without necessarily covering all of $\nu$). In particular, the set of measures $\eta$ with  $\mu\leq_{cx}\eta \leq \nu$, is non-empty, and each such $\eta$ corresponds to a terminal law of a martingale that embeds $\mu$ into $\nu$. Beiglb\"{o}ck and Juillet \cite{BeiglbockJuillet:16} proved that there exists a canonical choice of such $\eta$ with respect to $\leq_{cx}$. In particular there exists the unique measure $S^\nu(\mu)$, the \textit{shadow} of $\mu$ in $\nu$, that is the smallest measure with respect to convex order among measures $\eta$ satisfying $\mu\leq_{cx}\eta \leq \nu$. Our interest in the shadow measure lies in the fact that the left-curtain martingale coupling can be defined as the unique measure $\pi_{lc}$ on $\R^2$ such that, for each $x\in\R$, $\pi_{lc}\lvert_{(-\infty,x])\times\R}$ has the first marginal $\mu\lvert_{(-\infty,x]}$ and the second marginal $S^\nu(\mu\lvert_{(-\infty,x]})$. (For other martingale transports arising using the shadow measure, but different parametrisations of $\mu$, see Beiglb\"{o}ck and Juillet \cite{BeiglbockJuillet:16s}.) Recently Beiglb\"{o}ck et al. \cite{BeiglbockHobsonNorgilas:20} showed how to explicitly construct, via potential-geometric arguments, the shadow measure $S^\nu(\mu)$ for arbitrary $\mu$ and $\nu$ with $\mu\leq_E\nu$. This turns out to be the main ingredient of our construction of the upper and lower functions characterising the left-curtain coupling.

One feature of our construction is that in the case where $\mu$ is continuous (respectively the general case) for each $x \in \R$ (respectively $u \in (0,1)$) we find $\{T_d(x),T_u(x)\}$ (respectively $\{ R(u),S(u) \}$) by considering the convex hull of a certain function. Since we can do this for each $x$ individually, the construction for a single $x$ immediately extends to a construction for all $x$ and to pairs of functions $\{T_d(\cdot),T_u(\cdot)\}$ (with a similar conclusion for the lifted martingale). In some applications, for example the model-independent pricing of American options (see Hobson and Norgilas~\cite{HobsonNorgilas:17}), it is sufficient to find a triple $(x,T_d(x),T_u(x))$ associated with the left-curtain coupling and with one further property, and the full structure of the construction of the left-curtain coupling is not required. Then the direct nature of our construction is particularly useful.

In recent work Bayraktar et al. \cite{BayraktarSprmg:21} provide a geometric construction of the so-called \textit{increasing} supermartingale coupling $\pi_I$ (introduced by Nutz and Stebegg \cite{NutzStebegg:18}), which builds upon and extends the construction presented in this article. ($\pi_{I}$ can be viewed as supermartingale counterpart of the left-curtain coupling $\pi_{lc}$). The increasing coupling $\pi_I$ has a feature that it behaves as $\pi_{lc}$ on a certain part of the space, and thus the construction of Bayraktar et al. \cite{BayraktarSprmg:21} heavily relies on the ability to construct $\pi_{lc}$ on these `martingale' intervals.  This is achieved in the present paper and in full generality.

The paper is structured as follows. In \cref{sec:prelims} we discuss the relevant notions of probability measures and (extended) convex order, and discuss some important (for our main theorems) results regarding the convex hull of a function. In \cref{sec:shadow} we introduce the shadow measure and the left-curtain martingale coupling. \cref{sec:geometric,sec:LeftCont,sec:embed} are dedicated to our main results. (\cref{sec:phiprop} contains preliminary results for \cref{sec:LeftCont,sec:embed}.) In \cref{sec:geometric} we construct a candidate pair of functions that characterise the left-curtain coupling (\cref{thm:monotone}), while in \cref{sec:LeftCont} we study their regularity properties. Finally in \cref{sec:embed}, we show (\cref{thm:construction2}) that our construction yields the (lifted) left-curtain martingale coupling. Some proofs are deferred until the appendix.

\section{Preliminaries}\label{sec:prelims}
\subsection{Measures and Convex order}
\label{prelims:convex}
Let $\sM$ (respectively $\sP$) be the set of measures (respectively probability measures) on $\R$ with finite total mass and finite first moment, i.e. if $\eta\in\sM$, then $\eta(\R)<\infty$ and $\int_\R\lvert x\lvert\eta(dx)<\infty$. Given a measure $\eta\in\sM$ (not necessarily a probability measure), define $\bar{\eta} = \int_\R x \eta(dx)$ to be the first moment of $\eta$ (and then $\bar\eta/\eta(\R)$ is the barycentre of $\eta$). The support of $\eta\in\sM$ is denoted by supp$(\eta)$; it is the smallest closed set $E\subseteq\R$ with $\eta(E)=\eta(\R)$. Let $\ell_\eta:=\inf\{k\in\textrm{supp}(\eta)\}$ and $r_\eta:=\sup\{k\in\textrm{supp}(\eta)\}$. Let $\sI_\eta$ be the smallest interval containing the support of $\eta$, so that $\{ \ell_\eta, r_\eta \}$ are the endpoints of $\sI_\eta$. If $\eta$ has an atom at $\ell_\eta$ then $\ell_\eta$ is included in $\sI_\eta$, and otherwise it is excluded, and similarly for $r_\eta$. 

For $\eta \in \sM$, let $F_\eta:\R\mapsto[0,\eta(\R)]$ and $G_\eta:(0,\eta(\R)) \mapsto \R$ be the distribution and quantile functions of $\eta$, respectively. As usual $F_\eta$ is right-continuous, however we take an arbitrary version of $G_\eta$ until further notice. (In \cref{sec:LeftCont,{sec:embed}} we will work with a left-continuous version of $G_\eta$.)

For $\alpha\geq0$ and $\beta \in \R$, let $\sD(\alpha, \beta)$ denote the set of non-negative, non-decreasing convex functions $f:\R \mapsto \R_+$ such that
\[ \lim_{ z \downarrow -\infty} f(z)=0, \hspace{10mm} \lim_{z \uparrow \infty} \{ f(z) - (\alpha z- \beta) \} =0. \]
When $\alpha=0$, $\sD(0,\beta)$ is empty unless $\beta=0$ and then $\sD(0,0)$ contains one element.

For $\eta\in\sM$, define the function $P_\eta : \R \mapsto \R_+$ by
\begin{equation*}
P_\eta(k) := \int_{\R} (k-x)^+ \eta(dx),\quad k\in\R.
\end{equation*}
(The notation $P_{\cdot}$ arises from the connection with the expected payoff of a \emph{put option}.)
The following properties of $P_\eta$ can be found in Chacon~\cite{Chacon:77}, and Chacon and Walsh~\cite{ChaconWalsh:76}:  $P_\eta \in \sD(\eta(\R), \bar{\eta})$ and $\{k : P_{\eta}(k) > (\eta(\R)k - \bar{\eta})^+ \} = (\ell_\eta,r_\eta)$. Conversely (see, for example, Hirsch et al. \cite[Proposition 2.1]{peacock}),  if $h \in \sD(k_m,k_f)$ for some numbers $k_m\geq0$ and $k_f\in\R$ (with $k_f=0$ if $k_m=0$), then there exists a unique measure $\eta\in\sM$, with total mass $\eta(\R)=k_m$ and first moment $\bar{\eta}=k_f$, such that $h=P_{\eta}$. In particular, $\eta$ is uniquely identified by the second derivative of $h$ in the sense of distributions. Note that $P_{\eta}$ is related to the potential $U_\eta$, defined by \begin{equation*}
 U_\eta(k) : =  - \int_{\R} |k-x| \eta(dx),\quad k\in\R,
 \end{equation*}
via $P_\eta(k) = \frac{1}{2}(-U_\eta(k) + ( \eta(\R)k- \bar{\eta}))$. We will call $P_\eta$ a modified potential. Finally note that both second derivatives $P''_\eta$ and $-U''_\eta/2$ identify the same underlying measure $\eta$.

For $\eta,\chi\in\sM$, we write $\eta\leq\chi$ if $\eta(A) \leq \chi(A)$ for all Borel measurable subsets $A$ of $\R$, or equivalently if
\begin{equation*}
\int fd\eta\leq\int fd\chi,\quad \textrm{for all non-negative }f:\R\mapsto\R_+.
\end{equation*}
Since $\eta$ and $\chi$ can be identified as second derivatives of $P_\chi$ and $P_\eta$ respectively, we have $\eta\leq\chi$ if and only if $P_\chi-P_\eta$ is convex, i.e. $P_\eta$ has a smaller curvature than $P_\chi$.

Two measures $\eta,\chi\in\sM$ are in convex order, and we write $\eta \leq_{cx} \chi$, if
\begin{equation}\label{eq:cx}
\int fd\eta\leq\int fd\chi,\quad\textrm{for all convex }f:\R\mapsto\R.
\end{equation}
Since we can apply \eqref{eq:cx} to all affine functions, including $f(x)=\pm 1$ and $f(x)=\pm x$, we obtain that
if $\eta \leq_{cx} \chi$ then $\eta$ and $\chi$ have the same total mass ($\eta(\R)= \chi(\R)$) and the same first moment ($\bar{\eta}= \bar{\chi}$).
Moreover, necessarily we must have $\ell_\chi \leq \ell_\eta \leq r_\eta \leq r_\chi$. From simple approximation arguments (see Hirsch et al. \cite{peacock}) we also have that, if $\eta$ and $\chi$ have the same total mass and the same first moment, then $\eta \leq_{cx} \chi$ if and only if $P_{\eta}(k) \leq P_{\chi}(k)$, $k\in\R$.

For our purposes in the sequel we need a generalisation of the convex order of two measures. We follow Beiglb\"{o}ck and Juillet \cite{BeiglbockJuillet:16} and say $\eta,\chi\in\sM$ are in an \textit{extended} convex order, and write $\eta\leq_{E}\chi$, if
 \begin{equation*}
 \int fd\eta\leq\int fd\chi,\quad\textrm{for all \textit{non-negative}, convex }f:\R\mapsto\R_+.
 \end{equation*}
The partial order $\leq_E$ generalises both $\leq$ and $\leq_{cx}$ in the sense that it preserves existing orderings and gives rise to new ones. If $\eta\leq_{cx}\chi$ then also $\eta\leq_E\chi$ (since non-negative convex functions are convex), while if $\eta\leq\chi$, we also have that $\eta\leq_E\chi$ (since non-negative convex functions are non-negative). Note that, if $\eta\leq_E\chi$, then $\eta(\R)\leq\chi(\R)$ (apply the non-negative convex function $\phi(x)=1$ in the definition of $\leq_E$). It is also easy to prove that, if $\eta(\R)=\chi(\R)$, then $\eta\leq_{E}\chi$ is equivalent to $\eta\leq_{cx}\chi$.

For $\eta,\chi\in\sP$, let ${\Pi}(\eta,\chi)$ be the set of probability measures on $\R^2$ with the first marginal $\eta$ and second marginal $\chi$. Let ${\Pi}_M(\eta,\chi)$ be the set of martingale couplings of $\eta$ and $\chi$.
Then
\begin{equation*}
  {\Pi}_M(\eta,\chi) = \big\{ \pi \in {\Pi}(\eta,\chi) : \mbox{\eqref{eq:martingalepi} holds} \big\},
\end{equation*}
where \eqref{eq:martingalepi} is the martingale condition
\begin{equation}
\int_{x \in B} \int_{y \in \R} y \pi(dx,dy) = \int_{x \in B} \int_{y \in \R} x \pi(dx,dy) = \int_B x \eta(dx), \quad
\mbox{$\forall$ Borel $B \subseteq \R$}.
\label{eq:martingalepi}
\end{equation}
Equivalently, $\Pi_M(\eta,\chi)$ consists of all transport plans $\pi$ (i.e. elements of ${\Pi}(\eta,\chi)$) such that the disintegration in probability measures $(\pi_x)_{x\in\R}$ with respect to $\eta$ satisfies $\int_\R y\pi_x(dy)=x$ for $\eta$-almost every $x$. 

If we ignore the martingale requirement \eqref{eq:martingalepi}, it is easy to see that the set of probability measures with given marginals is non-empty, i.e. ${\Pi}(\eta,\chi)\neq\emptyset$ (consider the product measure $\eta\otimes\chi$). However, the fundamental question whether, for given $\eta$ and $\chi$, the set of martingale couplings $\Pi_M(\eta,\chi)$ is non-empty, is more delicate. For any $\pi\in\Pi_M(\eta,\chi)$ and convex $f:\R\mapsto\R$, by (conditional) Jensen's inequality we have that
\begin{equation*}
\int_\R f(x)\eta(dx)\leq \int_\R\int_\R f(y)\pi_x(dy)\eta(dx)=\int_\R f(y)\pi(\R,dy)=\int_\R f(y)\chi(dy),
\end{equation*}
so that $\eta\leq_{cx}\chi$. On the other hand, Strassen \cite{Strassen:65} showed that a converse is also true (i.e. $\eta\leq_{cx}\chi$ implies that ${\Pi}_M(\eta,\chi)\neq\emptyset$), so that ${\Pi}_M(\eta,\chi)$ is non-empty if and only if $\eta\leq_{cx}\chi$.

For a pair of measures $\eta,\chi\in\sM$, let the function $D = D_{\eta,\chi}: \R \mapsto \R$ be defined by $D_{\eta,\chi}(k) = P_\chi(k) - P_\eta(k)$. Note that if $\eta,\chi$ have equal mass and equal first moment then $\eta \leq_{cx} \chi$ is equivalent to $D \geq 0$ on $\R$.
Let $(\ell_D,r_D)$ be the smallest interval containing $\{ k : D_{\eta,\chi}(k)>0 \}$; let $\sI_D$ be the open interval $(\ell_D,r_D)$ together with $\{\ell_D\}$ if $\ell_D>-\infty$ and $D'(\ell_D+):=\lim_{k\downarrow\ell_D}(D(k)-D(\ell_D))/(k-\ell_D)>0$ and $\{ r_D \}$ if $r_D<\infty$ and $D'(r_D-):=\lim_{k\uparrow\ell_D}(D(k)-D(\ell_D))/(k-\ell_D) < 0$. Note that, if $\eta\leq_c\chi$, then $\ell_\nu\leq\ell_\mu\leq r_\mu\leq r_\nu$ and $\sI_D\subseteq[\ell_\nu,r_\nu]$.

The following result (see Hobson~\cite[page 254]{Hobson:98b} or Beiglb\"{o}ck and Juillet ~\cite[Section A.1]{BeiglbockJuillet:16}) tells us that, if $D_{\eta,\chi}(x)=0$ for some $x$, then in any martingale coupling of $\eta$ and $\chi$ no mass can cross $x$.

\begin{lem} Suppose $\eta$ and $\chi$ are probability measures with $\eta \leq_{cx} \chi$. Suppose that $D(x)=0$. If $\pi \in {\Pi_M}(\eta,\chi)$, then we have $\pi((-\infty,x),(x,\infty)) + \pi((x,\infty),(-\infty,x))=0$.
\label{lem:chacon}
\end{lem}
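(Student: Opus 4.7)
The plan is to use Jensen's inequality in its conditional form to represent $D(x)$ as an integral of non-negative quantities, and then to read off from $D(x)=0$ that these non-negative quantities vanish $\eta$-a.s., yielding the no-crossing property.

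More concretely, let $\pi\in\Pi_M(\eta,\chi)$ and disintegrate $\pi(dx',dy)=\eta(dx')\pi_{x'}(dy)$, so that $\int y\,\pi_{x'}(dy)=x'$ for $\eta$-a.e.\ $x'$. Applying the conditional Jensen inequality to the convex function $\phi(y)=(x-y)^+$ gives
\[
\int (x-y)^+\,\pi_{x'}(dy)\;\geq\;\Bigl(x-\int y\,\pi_{x'}(dy)\Bigr)^{\!+}\;=\;(x-x')^+,
\]
and integrating against $\eta$ yields $P_\chi(x)\geq P_\eta(x)$, i.e.\ $D(x)\geq 0$. The hypothesis $D(x)=0$ therefore forces equality in Jensen's inequality for $\eta$-a.e.\ $x'$:
\[
\int (x-y)^+\,\pi_{x'}(dy)\;=\;(x-x')^+ \qquad \text{for $\eta$-a.e.\ $x'$.}
\]

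The next step is to exploit that $\phi(y)=(x-y)^+$ is piecewise affine with a kink at $x$: it coincides with the tangent line $y\mapsto x-y$ on $(-\infty,x]$ and with the tangent line $y\mapsto 0$ on $[x,\infty)$. Equality in Jensen's inequality means that $\pi_{x'}$ is concentrated on a set where $\phi$ agrees with one of its supporting affine minorants through $(x',\phi(x'))$. I would analyse the three cases: for $x'<x$ the only such affine minorant is $y\mapsto x-y$, so $\pi_{x'}$ must be supported in $(-\infty,x]$; for $x'>x$ the only affine minorant is $y\mapsto 0$, so $\pi_{x'}$ must be supported in $[x,\infty)$; for $x'=x$ the equation forces $(x-y)^+=0$ $\pi_x$-a.s., hence $\pi_x$ is supported in $[x,\infty)$ (and, combined with the barycentre condition, is in fact $\delta_x$).

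Finally I assemble the conclusion. For $\eta$-a.e.\ $x'<x$ we have $\pi_{x'}((x,\infty))=0$, while for $\eta$-a.e.\ $x'>x$ we have $\pi_{x'}((-\infty,x))=0$, so
\[
\pi((-\infty,x)\times(x,\infty))+\pi((x,\infty)\times(-\infty,x))\;=\;\int_{(-\infty,x)}\!\!\pi_{x'}((x,\infty))\,\eta(dx')+\int_{(x,\infty)}\!\!\pi_{x'}((-\infty,x))\,\eta(dx')\;=\;0.
\]
The only delicate point is verifying strictness of Jensen's inequality when $\pi_{x'}$ has mass on both sides of $x$; this reduces to a one-line computation using that $(x-y)^+$ is strictly convex \emph{across} the kink $y=x$ in the averaged sense (any two-point measure straddling $x$ gives $\int \phi\,d\pi_{x'}>\phi(x')$), which is what justifies the case analysis above.
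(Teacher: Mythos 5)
The paper does not prove \cref{lem:chacon}; it cites Hobson~\cite[page 254]{Hobson:98b} and Beiglb\"ock and Juillet~\cite[Section A.1]{BeiglbockJuillet:16} for this standard ``no mass crosses a zero of $D$'' fact, so there is nothing in the source to compare your argument against line by line. That said, your proof is correct and is essentially the classical argument those references use. One small remark: your final paragraph flags the ``strictness of Jensen across the kink'' as the delicate step, but in fact the case analysis reduces to an identity that needs no such care. For $x'<x$ the martingale property gives $x-x'=\int(x-y)\,\pi_{x'}(dy)$, so equality in Jensen at $x'$ reads
\[
0 \;=\; \int (x-y)^+\,\pi_{x'}(dy) - (x-x')
  \;=\; \int\bigl[(x-y)^+ - (x-y)\bigr]\,\pi_{x'}(dy)
  \;=\; \int (y-x)^+\,\pi_{x'}(dy),
\]
which forces $\pi_{x'}((x,\infty))=0$ directly; for $x'>x$ the equality $\int(x-y)^+\,\pi_{x'}(dy)=0$ forces $\pi_{x'}((-\infty,x))=0$ immediately. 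This bypasses any discussion of which affine minorants are in play or whether the kink is ``strictly convex in the averaged sense,'' and tightens the last step of your write-up.
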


It follows from Lemma \ref{lem:chacon} that, if there is a point $x$ in the interior of the interval $\sI_\eta$ such that $D_{\eta,\chi}(x)=0$, then we can separate the problem of constructing martingale couplings of $\eta$ to $\chi$ into a pair of subproblems involving mass to the left and right of $x$, respectively, always taking care to allocate mass of $\chi$ at $x$ appropriately. Indeed, if there are multiple $\{ x_j \}_{j\geq1}$ with $D_{\eta,\chi}(x_j)=0$, then we can divide the problem into a sequence of `irreducible' problems, each taking place on an interval $\sI_i$ such that $D>0$ on the interior of $\sI_i$ and $D=0$ at the endpoints. All mass starting in a given interval is transported to a point in the same interval. Moreover, by the martingale property, any mass starting at a finite endpoint of $\sI_i$ must stay there. Putting this together we may restrict attention to intervals $I$ on which $D>0$ (with $\lim_{x \rightarrow e_I} D(x)=0$ at endpoints $e_I$ of $I$), and we may assume that the starting law has support within the interior of $I$ and the target law has support within the closure of $I$ (and $I$ is the smallest set with this last property).

{\em Notation:} For $x \in \R$ let $\delta_x$ denote the unit point mass at $x$. For real numbers $c,x,d$ with $c \leq x \leq d$ define the probability measure $\chi_{c,x,d}$ by $\chi_{c,x,d} = \frac{d-x}{d-c}\delta_c + \frac{x-c}{d-c} \delta_d$ with $\chi_{c,x,d} = \delta_x$ if $(d-x)(x-c)=0$. Note that $\chi_{c,x,d}$ has mean $x$ and is the law of a Brownian motion started at $x$ evaluated on the first exit from $(c,d)$.

\subsection{Convex hull}

Our key results will make extensive use of the convex hull. For $f : \R \mapsto \R$ let $f^c$ be the largest convex function which lies below $f$. In our typical application $f$ will be non-negative and this property will be inherited by $f^c$. However, in general we
may have $f^c$ equal to $-\infty$ on $\R$, and the results of this section are stated in a way which
includes this case. Note that if a function $g$ is equal to $-\infty$ (or $\infty$) everywhere, then we deem it to be both linear and convex, and set $g^c$ equal to $g$.

Fix $x,z\in\R$ with $x \leq z$, and define $L^f_{x,z}:\R\mapsto\R$ by
\begin{equation}\label{eq:L1}
L^f_{x,z}(y)=\begin{cases}
f(x)+\frac{f(z)-f(x)}{z-x}(y-x),&\textrm{ if }x<z,\\
f(x),&\textrm{ if }x=z.
\end{cases}
\end{equation}
Then (see Rockafellar \cite[Corollary 17.1.5]{Rockafellar:72}),
\begin{equation}\label{eq:chull}
f^c(y)=\inf_{x\leq y\leq z}L^f_{x,z}(y),\quad y\in\R.
\end{equation}
(Note that for \eqref{eq:chull}, the definition of $L^f_{x,z}$ outside $[x,z]$ is irrelevant and we could restrict the domain of $L^f_{x,z}$ to $[x,z]$. However, in \cref{sec:geometric,sec:LeftCont,sec:embed} we will need $L^f_{x,z}$ to be defined on $\R$.)

Moreover, it is not hard to see (at least geometrically, by drawing the graphs of $f$ and $f^c$) that $f^c$ replaces the non-convex segments of $f$ by straight lines. (Proofs of lemmas in this section are given in \cref{ssec:proofsCH}.)
\begin{lem}
	\label{lem:linear}
Let $f:\R\mapsto\R$ be a lower semi-continuous function. Suppose $f>f^c$ on $(a,b)\subseteq\R$. Then $f^c$ is linear on $(a,b)$.
\end{lem}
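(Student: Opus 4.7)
The plan is to use the infimum representation \eqref{eq:chull} and to identify, at every $y_0\in(a,b)$, a minimising pair $(x^*,z^*)$ whose endpoints must lie outside $(a,b)$. The assumption $f>f^c$ on $(a,b)$ then pins $x^*\leq a$ and $z^*\geq b$, while the equality case of the chord inequality for the convex function $f^c$ forces $f^c$ to be affine on the whole of $[x^*,z^*]\supseteq[a,b]$. I may assume throughout that $f^c$ is a proper convex function, since otherwise $f^c\equiv-\infty$ is linear by the paper's convention.

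Fix $y_0\in(a,b)$. Since $L^f_{y_0,y_0}(y_0)=f(y_0)>f^c(y_0)$, the infimum in \eqref{eq:chull} is approached by pairs $(x_n,z_n)$ with $x_n<y_0<z_n$ and $L^f_{x_n,z_n}(y_0)\downarrow f^c(y_0)$. I would then argue that, after passing to a subsequence, $(x_n,z_n)\to(x^*,z^*)$ in $\R^2$ with $x^*\leq y_0\leq z^*$, and that $L^f_{x^*,z^*}(y_0)=f^c(y_0)$. For the compactness part, fix any supporting line $\ell$ of $f^c$ at $y_0$; by maximality of $f^c$ one has $\ell\leq f^c\leq f$ on $\R$ and $\ell(y_0)=f^c(y_0)$. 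Writing $\lambda_n=(y_0-x_n)/(z_n-x_n)\in[0,1]$,
\[
L^f_{x_n,z_n}(y_0)-f^c(y_0)=(1-\lambda_n)\bigl(f(x_n)-\ell(x_n)\bigr)+\lambda_n\bigl(f(z_n)-\ell(z_n)\bigr),
\]
a sum of two non-negative terms, both of which must vanish. This estimate, combined with the lower semi-continuity of $f$, is enough to rule out escape to infinity: if, say, $x_n\to-\infty$ while $z_n$ is bounded, then $\lambda_n\to1$ forces $f(z_n)-\ell(z_n)\to 0$, so $z_n$ converges to a contact point of $f$ with $\ell$, and the minimising sequence can then be replaced by one with a finite left endpoint. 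Once $(x_n,z_n)\to(x^*,z^*)$ in $\R^2$, lower semi-continuity of $f$ gives $L^f_{x^*,z^*}(y_0)\leq\liminf_n L^f_{x_n,z_n}(y_0)=f^c(y_0)$, while \eqref{eq:chull} gives the reverse inequality, yielding equality.

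To conclude, combine the convexity of $f^c$ with $f^c\leq f$ at $x^*$ and $z^*$:
\[
f^c(y_0)\leq\frac{z^*-y_0}{z^*-x^*}f^c(x^*)+\frac{y_0-x^*}{z^*-x^*}f^c(z^*)\leq L^f_{x^*,z^*}(y_0)=f^c(y_0).
\]
Both inequalities are therefore equalities. Equality on the left is the chord identity for the convex function $f^c$ at an interior point of $[x^*,z^*]$; a standard argument shows this forces $f^c$ to coincide with the chord on the whole of $[x^*,z^*]$, i.e.\ $f^c$ is affine there. Equality on the right gives $f^c(x^*)=f(x^*)$ and $f^c(z^*)=f(z^*)$, and the hypothesis $f>f^c$ on $(a,b)$ therefore forces $x^*,z^*\notin(a,b)$. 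Since $x^*\leq y_0\leq z^*$ with $y_0\in(a,b)$, this means $x^*\leq a$ and $z^*\geq b$, so $f^c$ is affine on $[x^*,z^*]\supseteq(a,b)$, as required. The main obstacle is the compactness step: one needs the lower semi-continuity of $f$ together with the finiteness of $f^c(y_0)$ to prevent the minimising chords from drifting to $\pm\infty$; everything else is bookkeeping with the equality cases of Jensen's inequality for $f^c$.
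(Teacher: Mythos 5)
Your strategy diverges from the paper's: you try to realise the infimum in \eqref{eq:chull} at a finite pair $(x^*,z^*)$ and then read off linearity and the location of the contact points from the two equality cases, whereas the paper argues by contradiction, constructing a strictly larger convex minorant of $f$ when $f^c$ is assumed non-linear on $(a,b)$. Your final bookkeeping with the equality cases of Jensen's inequality is fine once a finite minimiser exists, but the compactness step is a genuine gap: the infimum in \eqref{eq:chull} need not be attained, and your proposed fix does not close it. Concretely, take $f(x)=1$ for $x<0$ and $f(x)=|x-1|$ for $x\ge 0$. Then $f$ is continuous, $f^c(x)=(x-1)^+$, $f>f^c$ on $(-\infty,1)$, and $f^c$ is linear there, so the lemma applies; but for $y_0=0$ one has $f^c(0)=0$ while $L^f_{x,z}(0)\ge \tfrac{1}{1-x}>0$ for every finite $x<0\le z$, with the infimum approached only as $x\to-\infty$, $z\to 1$. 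Here $z_n\to 1$ is indeed a contact point, exactly as your argument produces, yet there is \emph{no} finite minimising pair, and the assertion that ``the minimising sequence can then be replaced by one with a finite left endpoint'' is simply false. You also leave the case $x_n\to-\infty$, $z_n\to+\infty$ unaddressed. Finally, note the logical order of the results: the paper's Lemma~\ref{lem:convh1}, which does identify $f^c(y)=L^f_{X(y),Z(y)}(y)$ and carefully handles the infinite-endpoint cases via the extended definitions of $L^f_{-\infty,z}$, $L^f_{x,\infty}$ and $L^f_{-\infty,\infty}$, is itself proved \emph{using} Lemma~\ref{lem:linear}, so nothing of that sort can be invoked here without circularity.

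The paper's proof bypasses the attainment question entirely. Assuming for contradiction that $f^c$ is not linear on $(a,b)$, convexity gives $f^c<L^{f^c}_{a,b}$ on $(a,b)$, and one sets $\eta=\inf_{(a,b)}\bigl(f-L^{f^c}_{a,b}\bigr)$. If $\eta\ge 0$ then $f^c\vee L^{f^c}_{a,b}$ is a convex minorant of $f$ strictly exceeding $f^c$ somewhere in $(a,b)$. If $\eta<0$, lower semi-continuity ensures $f-L^{f^c}_{a,b}$ attains its minimum on $[a,b]$ at some $z\in(a,b)$ (the endpoints are excluded because there $f-L^{f^c}_{a,b}=f-f^c\ge 0>\eta$), and the hypothesis $f(z)>f^c(z)$ then makes $f^c\vee\bigl(L^{f^c}_{a,b}+\eta\bigr)$ a convex minorant of $f$ exceeding $f^c$ at $z$. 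Either way maximality of $f^c$ is contradicted, and no chord-endpoint limits are ever needed; lower semi-continuity is used only to attain a minimum on the compact set $[a,b]$, not to control escape to infinity.
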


In the sequel, for a given function $f$ and $y \in \R$, we will want to identify the values $x,z\in\R$ with $x< y < z$ which attain the infimum in \eqref{eq:chull}. For this, however, we need to allow $x$ and $z$ to take values in the extended real line. Therefore we extend the definition of $L^f_{x,z}$ to $L^f_{-\infty,z}$ and $L^f_{x,\infty}$ by taking appropriate limits in \eqref{eq:L1}. In particular, we define, for each $y\in\R$,
\begin{align*}
L^{f}_{-\infty,z}(y) &= (z-y) \liminf_{x \downarrow - \infty} \frac{f(x)}{|x|} +  f(z),\\
L^{f}_{x,\infty}(y) &=  f(x) + (y-x) \liminf_{z \uparrow \infty} \frac{ f(z)}{z}.
\end{align*}

Let $\phi = \liminf_{z \uparrow \infty} \frac{f(z)}{z} \in [-\infty,\infty]$, $\psi = \liminf_{x \downarrow - \infty} \frac{f(x)}{|x|} \in [-\infty,\infty]$, and if $\phi \in (-\infty,\infty)$, $\gamma=\inf_{w \in \R} (f(w) - \phi w)$. Then we also define $L^f_{-\infty,\infty}$ by
\[ L^{f}_{-\infty,\infty}(y) =  \left\{ \begin{array}{lcl}
\infty, & \; & \psi + \phi > 0,   \\
\gamma + \phi y && \psi+ \phi=0,   \\
-\infty, &&  \psi + \phi  < 0,
\end{array}\quad y\in\R, \right. \]
with the convention that $-\infty+\infty = \infty+ (-\infty) = - \infty$.

Let $\sB(y) = \{ (x,z) : -\infty \leq x < y < z \leq \infty \}$ be the set of open intervals containing $y$.
\begin{defn}\label{defn:convh}
Let $f:\R\mapsto \R$ be a measurable function and $f^c$ denote its convex hull. For $y\in\R$, define
\begin{align*}
X^f(y)=X(y)&=\sup\{x:x\leq y, f^c(x)=f(x)\},\\
Z^f(y)=Z(y)&=\inf\{z:z\geq y, f^c(z)=f(z)\},
\end{align*}
with the convention that $\sup\emptyset=-\infty$ and $\inf\emptyset = \infty$.
\end{defn}

The following result is a slight extension of \eqref{eq:chull}.
\begin{lem} \label{lem:convh1}
Suppose $f:\R\mapsto \R$ is continuous. Then for $y \in \R$ we have $f^c(y)=L^f_{X(y),Z(y)}(y)$.
\end{lem}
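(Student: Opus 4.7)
The plan is to verify the claimed identity by splitting into cases based on whether $f^c$ touches $f$ at $y$. First, if $f^c(y)=f(y)$, then \cref{defn:convh} gives $X(y)=y=Z(y)$, and the $x=z$ clause of \eqref{eq:L1} yields $L^f_{y,y}(y)=f(y)=f^c(y)$, so the identity holds.

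Assume henceforth $f^c(y)<f(y)$, and (ignoring the degenerate sub-case $f^c\equiv-\infty$, which is handled separately at the end) $f^c$ is real-valued and hence continuous. Then $\{t\in\R:f^c(t)<f(t)\}$ is open; let $(a,b)$ be its connected component containing $y$, with $-\infty\leq a<y<b\leq\infty$. Any finite endpoint must satisfy $f^c=f$ there (by maximality and continuity), so $X(y)=a$ and $Z(y)=b$ by \cref{defn:convh}, and \cref{lem:linear} shows $f^c$ is affine on $(a,b)$, say $f^c(t)=\alpha t+\beta$. It remains to match this affine piece with $L^f_{a,b}$ in the possibly extended sense.

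When both $a,b$ are finite, the affine $\alpha t+\beta$ passes through $(a,f(a))$ and $(b,f(b))$, so it coincides with the chord $L^f_{a,b}$ on $[a,b]$, and $f^c(y)=L^f_{a,b}(y)$ follows immediately. The main obstacle is an infinite endpoint; take $a=-\infty$ and $b$ finite (the case $b=\infty$ is symmetric, and the two-sided case combines both arguments). Then $f^c(x)=f(b)+\alpha(x-b)$ on $(-\infty,b]$, and I must show $\alpha=-\psi$ with $\psi=\liminf_{x\downarrow-\infty}f(x)/|x|$. The inequality $\psi\geq-\alpha$ follows from $f\geq f^c$ and $f^c(x)/|x|\to-\alpha$. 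For the reverse, argue by contradiction: if $\psi>-\alpha$, choose $\epsilon\in(0,(\psi+\alpha)/2)$ so that $\alpha':=-\psi+2\epsilon<\alpha$, and consider the affine function $A_D(x):=\alpha'x-D$. The $\liminf$ bound $f(x)\geq(\psi-\epsilon)|x|$ for $x$ sufficiently negative (say $x<-M$) forces $f-A_D\geq\epsilon|x|+D$ on that left tail; for $D$ large enough, $A_D\leq f$ also holds on the compact interval $[-M,b]$ (by continuity and compactness) and on $[b,\infty)$ (using $A_D(b)\leq f(b)$ together with $f(x)\geq f^c(x)\geq f(b)+\alpha(x-b)$ and $\alpha'<\alpha$). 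Thus $A_D$ is an affine minorant of $f$ on $\R$, so $A_D\leq f^c$; but on $(-\infty,b)$ this inequality becomes $(\alpha'-\alpha)x\leq D+\beta$, whose left side tends to $+\infty$ as $x\to-\infty$ (since $\alpha'-\alpha<0$), a contradiction. Hence $\alpha=-\psi$, and $L^f_{-\infty,b}(y)=(b-y)\psi+f(b)=f(b)+\alpha(y-b)=f^c(y)$.

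Finally, for the case $a=-\infty$ and $b=\infty$, the same contradiction scheme applied at both ends gives $f^c$ globally affine with slope $\alpha=\phi=-\psi$, whence $\phi+\psi=0$ and $L^f_{-\infty,\infty}(y)=\gamma+\phi y=f^c(y)$. The degenerate situation where $f$ admits no affine minorant forces $f^c\equiv-\infty$ and, by the same kind of endpoint analysis, $\phi+\psi<0$; then the convention $-\infty+\infty=-\infty$ in the definition of $L^f_{-\infty,\infty}$ produces $L^f_{-\infty,\infty}(y)=-\infty=f^c(y)$, closing the argument.
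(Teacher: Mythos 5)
Your argument follows essentially the same route as the paper's: the core cases ($f^c(y)=f(y)$; both endpoints finite; one endpoint infinite) are handled by the same combination of \cref{lem:linear} plus a maximality-of-convex-minorant contradiction, and your framing via the connected component $(a,b)$ of $\{f^c<f\}$ is just a mild repackaging of the definition of $X(y),Z(y)$.

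Two small gaps, both in the extreme cases. First, in the two-sided-infinite case with $\phi+\psi=0$ and $f^c$ finite, you assert $L^f_{-\infty,\infty}(y)=\gamma+\phi y=f^c(y)$ but never verify $\gamma=\beta$ (the intercept of $f^c$). It is easy --- $f\ge\alpha y+\beta$ gives $\gamma\ge\beta$, while $\gamma>\beta$ would make $\gamma+\phi y$ a strictly larger convex minorant --- but it needs saying. Second, and more substantively, your claim that ``no affine minorant forces $\phi+\psi<0$'' is false: one can have $\phi+\psi=0$ with $\gamma=-\infty$ (e.g.\ $f(w)=w-\sqrt{|w|}$, where $\phi=1$, $\psi=-1$, $\gamma=-\infty$, $f^c\equiv-\infty$). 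Your argument as written does not cover this sub-case. The conclusion still holds there, since $L^f_{-\infty,\infty}(y)=\gamma+\phi y=-\infty$ when $\gamma=-\infty$, but the reasoning offered does not establish it. The paper's proof treats the $\psi+\phi=0$ case with both $\gamma\in\R$ and $\gamma=-\infty$ explicitly; you should do the same rather than funnelling everything into $\phi+\psi<0$.
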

We will need one more result regarding the locations of $X$ and $Z$.
\begin{lem} \label{lem:convh2}
Suppose $f:\R\mapsto \R$ is continuous. If $(x,z) \in \sB(y)$ and $f(k)>L^f_{x,z}(k)$ for all  $k\in(x,z)$, then
$$
X(y)\leq x\quad\textrm{and}\quad Z(y)\geq z.
$$
\end{lem}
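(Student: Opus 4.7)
The plan is to argue by contradiction, focusing on the inequality $X(y)\leq x$; the companion bound $Z(y)\geq z$ will then follow from an entirely symmetric argument (with the roles of $X$ and $Z$, and of $x$ and $z$, exchanged). So suppose $X(y)>x$. By \cref{defn:convh}, the supremum defining $X(y)$ strictly exceeds $x$, so there is some point $x'$ with $x<x'\leq y$ and $f^c(x')=f(x')$.

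The goal is now to squeeze $x'$ between two incompatible inequalities. On one hand, since $x<x'\leq y<z$ we have $x'\in(x,z)$, so the chord hypothesis gives $f(x')>L^f_{x,z}(x')$. On the other hand, the infimum representation \eqref{eq:chull} applied at $x'$ yields $f^c(x')\leq L^f_{x,z}(x')$, because $(x,z)$ is an admissible pair with $x\leq x'\leq z$ in the infimum defining $f^c(x')$. Combining these,
\[
f(x')=f^c(x')\leq L^f_{x,z}(x')<f(x'),
\]
which is the required contradiction, and so $X(y)\leq x$.

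The one subtlety is that \eqref{eq:chull} as originally stated requires finite endpoints, so a little extra work is needed when $x=-\infty$ or $z=\infty$. I would handle these extended cases by approximation: for every finite $a\leq x'$ the finite version of \eqref{eq:chull} gives $f^c(x')\leq L^f_{a,z}(x')$, and selecting a sequence $a_n\downarrow-\infty$ along which $f(a_n)/|a_n|$ achieves the $\liminf$ in the definition of $L^f_{-\infty,z}$ transports this bound to the limit $f^c(x')\leq L^f_{-\infty,z}(x')$; the symmetric limit handles $z=\infty$, and a joint limit handles $x=-\infty,z=\infty$ consistently with the definition of $L^f_{-\infty,\infty}$. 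In the degenerate case where $L^f_{x,z}\equiv-\infty$ on $(x,z)$, the hypothesis is vacuous but the same limiting bound forces $f^c\equiv-\infty$ on $(x,z)$, hence $f^c\equiv-\infty$ globally and $X(y)=-\infty\leq x$ trivially. I expect this extended-endpoint bookkeeping to be the main technical---if routine---point; the underlying geometric content is simply that a chord lying strictly below $f$ on the interval $(x,z)$ cannot be exceeded by the convex hull $f^c$ on that interval, and so $f^c$ has no room anywhere in $(x,z)$ to touch $f$.
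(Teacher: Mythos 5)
Your proof is correct and rests on the same geometric observation as the paper's: the chord $L^f_{x,z}$ separates $f$ from $f^c$ on $(x,z)$, so $f^c$ cannot touch $f$ there, forcing $X(y)\leq x$ and $Z(y)\geq z$. The only real difference is how you obtain the bound $f^c\leq L^f_{x,z}$ on $(x,z)$: you invoke the Rockafellar representation \eqref{eq:chull} directly, which, as you note, is stated for finite endpoints and therefore requires the approximation argument you sketch when $x=-\infty$ or $z=\infty$. The paper instead interposes the chord of $f^c$ and uses the chain $f(k)>L^f_{x,z}(k)\geq L^{f^c}_{x,z}(k)\geq f^c(k)$ for $k\in(x,z)$: the middle inequality follows from $f\geq f^c$ (applied at the endpoints, or via $\liminf f(w)/|w|\geq \liminf f^c(w)/|w|$ for an infinite endpoint), and the last from convexity of $f^c$ (the chord of a convex function lies above it, including in the limiting infinite-endpoint form). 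This alternative route never appeals to \eqref{eq:chull} and so the extended-endpoint bookkeeping you flag is avoided; the separate degenerate cases (e.g.\ $X(y)=-\infty$) are dispatched trivially as you also observe. Both arguments are sound; the paper's phrasing is marginally more economical because the convex-hull property of $f^c$ already carries the needed inequality without reaching for the infimum formula.
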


\section{The shadow measure and left-curtain martingale coupling}\label{sec:shadow}
\subsection{The shadow measure}
Given two measures $\mu,\nu\in\sM$ with $\mu\leq_{E}\nu$ let $\sT(\mu,\nu)=\{\theta\in\sM:\mu\leq_{cx}\theta\leq \nu\}$. Then $\sT(\mu,\nu)$ represents the set of all possible target measures in $\nu$ to which we can transport $\mu$ using a martingale. We are interested in the smallest element of $\sT(\mu,\nu)$ with respect to convex order.
\begin{defn}[Shadow measure]\label{defn:shadow}
Let $\mu,\nu\in\sM$ and assume $\mu\leq_E\nu$. The shadow of $\mu$ in $\nu$, denoted by $S^\nu(\mu)$, has the following properties
\begin{enumerate}
\item\label{s1} $S^\nu(\mu)\leq\nu$.
\item\label{s2} $\mu\leq_{cx}S^\nu(\mu)$.
\item \label{s3}If $\eta$ is another measure satisfying $\mu \leq_{cx}\eta \leq \nu$, then $S^\nu(\mu)\leq_{cx}\eta$.
\end{enumerate}
\label{lem:shadowDef}
\end{defn}
Beiglb\"{o}ck and Juillet \cite[Proposition 4.4 and Lemma 4.6] {BeiglbockJuillet:16} proved the existence and uniqueness of the shadow measure $S^\nu(\mu)$. Furthermore, a result of Beiglb\"{o}ck et al.~\cite{BeiglbockHobsonNorgilas:20} says that the modified potential of the shadow measure, $P_{S^\nu(\mu)}$, can be explicitly constructed, and then $S^\nu(\mu)$ is identified as the second derivative of $P_{S^\nu(\mu)}$ in the sense of distributions.

\begin{thm}[Beiglb\"{o}ck et al. {\cite[Theorem 1]{BeiglbockHobsonNorgilas:20}}]\label{thm:shadow_potential}
Let $\mu,\nu\in\sM$ with $\mu\leq_{E}\nu$. Then
\begin{equation}\label{eq:shadow_potential}
P_{S^\nu(\mu)}=P_\nu-(P_\nu-P_{\mu})^c. 
\end{equation}
\end{thm}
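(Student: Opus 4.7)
The plan is to define $Q := P_\nu - (P_\nu - P_\mu)^c$ and show it is a valid modified potential of a measure $\eta$, then verify the three defining properties of the shadow, so that $\eta = S^\nu(\mu)$ by uniqueness (\cref{lem:shadowDef}).

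First, set $D := P_\nu - P_\mu$. Testing $\mu \leq_E \nu$ against the non-negative convex function $x \mapsto (k-x)^+$ gives $D \geq 0$, and the constant $0$ is a convex minorant of $D$, so $D^c \geq 0$ too. I would then pin down the asymptotics of $D^c$. At $-\infty$, both $P_\mu,P_\nu \to 0$, so $D\to 0$, which forces $D^c \to 0$. At $+\infty$, by put-call parity,
\[
D(k) = \int_\R (x-k)^+\nu(dx) - \int_\R (x-k)^+\mu(dx) + (\nu(\R)-\mu(\R))k - (\bar\nu-\bar\mu),
\]
and the first two integrals obey $\int(x-k)^+\mu(dx)\leq\int(x-k)^+\nu(dx)$ by $\mu\leq_E\nu$ applied to the non-negative convex function $x\mapsto(x-k)^+$. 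Hence the line $L(k):=(\nu(\R)-\mu(\R))k-(\bar\nu-\bar\mu)$ satisfies $L\leq D$ on $\R$, so $L^+:=\max(L,0)$ is a non-negative convex minorant of $D$ and therefore $L^+\leq D^c\leq D$. Combined with $D-L\to 0$ at $+\infty$, this gives $D^c - L \to 0$, establishing $D^c\in \sD(\nu(\R)-\mu(\R),\bar\nu-\bar\mu)$.

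Second, I would show $Q = P_\nu - D^c \in \sD(\mu(\R),\bar\mu)$ so that the converse of the potential characterisation (cited from Hirsch et al.) produces a unique $\eta \in \sM$ with $P_\eta = Q$, $\eta(\R)=\mu(\R)$ and $\bar\eta = \bar\mu$. The bound $Q\geq P_\nu - D = P_\mu\geq 0$ is immediate, and the correct asymptotics at $\pm\infty$ follow by subtracting the asymptotics from the previous step from those of $P_\nu$. The substantive point is convexity of $Q$, equivalently that the distributional second derivative $\nu - \sigma$ with $\sigma := (D^c)''$ is a non-negative measure. By \cref{lem:linear}, $D^c$ is affine on each connected component of $\{D>D^c\}$, so $\sigma$ is concentrated on the contact set $A:=\{D=D^c\}$. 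On the interior of $A$ the distributions $(D^c)''$ and $D'' = \nu - \mu$ agree, and $\sigma\geq 0$ forces $\nu\geq\mu$ there, giving $\sigma \leq \nu$ locally. At an atom $k_0$ of $\sigma$, the function $D-D^c \geq 0$ attains its minimum value $0$ at $k_0$, so
\[
D_-'(k_0)\leq (D^c)_-'(k_0),\qquad D_+'(k_0)\geq (D^c)_+'(k_0),
\]
hence $\sigma(\{k_0\}) = (D^c)_+'(k_0) - (D^c)_-'(k_0) \leq D_+'(k_0) - D_-'(k_0) = \nu(\{k_0\}) - \mu(\{k_0\}) \leq \nu(\{k_0\})$. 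Assembling these observations on intervals $(a,b]$ by decomposing according to the open pieces of $A^c$ and the closed residue yields $\sigma \leq \nu$ globally.

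Third, I verify the shadow axioms. Property \ref{s1}: $P_\nu - P_\eta = D^c$ is convex, so $\eta\leq\nu$. Property \ref{s2}: $P_\eta - P_\mu = D - D^c \geq 0$, and $\eta$ has the same total mass and first moment as $\mu$ by the first step, so $\mu\leq_{cx}\eta$. Property \ref{s3}: if $\theta\in\sM$ satisfies $\mu\leq_{cx}\theta\leq\nu$, then $P_\nu - P_\theta$ is convex (from $\theta\leq\nu$) and dominated by $D = P_\nu - P_\mu$ (from $P_\mu\leq P_\theta$), so by maximality of $D^c$, $P_\nu-P_\theta\leq D^c$, that is $P_\theta\geq Q = P_\eta$; since $\theta$ and $\eta$ share mass and mean, this is exactly $\eta\leq_{cx}\theta$. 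By the uniqueness statement of \cref{lem:shadowDef}, $\eta = S^\nu(\mu)$, proving \eqref{eq:shadow_potential}.

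The main obstacle is the convexity step: while the atomic and open-interior pieces of $\sigma\leq\nu$ are handled by the derivative comparison above, the contact set $A$ may have a complicated closed structure (e.g., a Cantor-like residue after removing the countably many bump intervals). Controlling the singular/continuous part of $\sigma$ on this residue is the delicate point and probably requires either a direct interval-wise comparison of $(D^c)_+'$ with $F_\nu$ via the monotonicity of derivatives of $D^c$ on contact sub-intervals, or an approximation of $D$ by smooth functions together with a weak limit argument.
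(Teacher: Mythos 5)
The paper does not prove \cref{thm:shadow_potential}; it is quoted from Beiglb\"ock, Hobson and Norgilas \cite{BeiglbockHobsonNorgilas:20}, so there is no in-paper proof to compare against. Judged on its own terms, your plan --- show $Q:=P_\nu-D^c$ is a genuine modified potential, let $\eta$ be the measure with $P_\eta=Q$, verify the three shadow axioms of \cref{defn:shadow}, and invoke uniqueness --- is exactly the natural route, and the asymptotics, the identification of $\eta$, and the verification of properties \ref{s1}--\ref{s3} are all correct.

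The one place you hedge is the convexity of $Q$, i.e.\ $\sigma:=(D^c)''\leq\nu$, where you worry about a Cantor-like residue of the contact set $A:=\{D=D^c\}$. That worry is unnecessary: the interval-wise comparison you already gesture at closes the gap without ever decomposing $\sigma$ into atomic / interior / singular-continuous parts. Since $D^c$ is affine on every component of the open set $A^c$ (your use of \cref{lem:linear}), $\sigma$ is concentrated on the closed set $A$. Fix $a\leq b$. If $[a,b]\cap A=\emptyset$ then $\sigma([a,b])=0$. Otherwise put $k_1=\inf(A\cap[a,b])$, $k_2=\sup(A\cap[a,b])$; closedness of $A$ gives $k_1,k_2\in A$, and since $\sigma$ charges neither $[a,k_1)$ nor $(k_2,b]$ we have $\sigma([a,b])=\sigma([k_1,k_2])$. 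At $k_1,k_2\in A$ the non-negative function $D-D^c$ attains its minimum value $0$, so the same one-sided derivative comparison you used for atoms gives $(D^c)'_-(k_1)\geq D'_-(k_1)$ and $(D^c)'_+(k_2)\leq D'_+(k_2)$, whence
\[
\sigma([k_1,k_2])=(D^c)'_+(k_2)-(D^c)'_-(k_1)\leq D'_+(k_2)-D'_-(k_1)=\nu([k_1,k_2])-\mu([k_1,k_2])\leq\nu([a,b]).
\]
Thus $\sigma([a,b])\leq\nu([a,b])$ for every closed interval; taking $a_n\downarrow a$ with $a_n>a$ and using $\sigma((a_n,b])\leq\sigma([a_n,b])\leq\nu([a_n,b])\leq\nu((a,b])$ upgrades this to $\sigma\leq\nu$ as measures. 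So $P_\nu-D^c$ is convex and your proof goes through; the ``closed residue'' is a red herring because one only ever needs the contact inequality at the two extreme points of $A\cap[a,b]$.
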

\begin{cor}\label{cor:shadow_potential}
If $(P_\nu - P_\mu)^c$ is linear on $[a,b]$ then $\nu - S^\nu(\mu)$ does not charge $(a,b)$.
\end{cor}

\subsection{The left-curtain coupling $\pi_{lc}$ for continuous $\mu$}\label{sec:LC}

The left-curtain martingale coupling (introduced by Beiglb\"{o}ck and Juillet~\cite{BeiglbockJuillet:16}), and denoted by $\pi_{lc}$, is a martingale coupling that arises via the shadow measure, created working from left to right. More specifically (see Beiglb\"{o}ck and Juillet~\cite[Theorem 4.18]{BeiglbockJuillet:16}), $\pi_{lc}$ is the unique measure in $\Pi_M(\mu,\nu)$ which for each $x \in \R$ transports $\mu\lvert_{(-\infty,x]}$ to the shadow $S^\nu(\mu\lvert_{(-\infty,x]})$. In other words, the first and second marginals of $\pi_{lc}\lvert_{(-\infty,x]\times\R}$ are $\mu\lvert_{(-\infty,x]}$ and $S^\nu(\mu\lvert_{(-\infty,x]})$, respectively, for each $x$. Furthermore, as a consequence of the minimality with respect to convex order, $\pi_{lc}$ is also the unique martingale coupling  which is left-monotone in the sense of \cref{def:lmon} (see Beiglb\"{o}ck and Juillet ~\cite[Theorem 5.3]{BeiglbockJuillet:16}):

\begin{defn}\label{def:lmon}
A transport plan $\pi\in\Pi(\mu,\nu)$ is said to be \textit{left-monotone} if there exists $\Gamma\in\sB(\R^2)$ with $\pi(\Gamma)=1$ and such that, if $(x,y^-),(x,y^+),(x^\prime,y^\prime)\in\Gamma$ we cannot have $x<x^\prime$ and $y^-<y^\prime<y^+$.
\end{defn}

When the initial law $\mu$ is continuous, the left-curtain coupling has a rather simple representation. In particular, for $x\in\mathbb{R}$, the element $\pi^x_{lc}(\cdot)$ in the disintegration $\pi_{lc}(dx,dy) = \mu(dx) \pi_{lc}^x(dy)$ is a measure supported on a set of at most two points.

\begin{lem}[{Beiglb\"ock and Juillet \cite[Corollary 1.6]{BeiglbockJuillet:16}}]\label{lem:LC}
Let $\mu,\nu$ be probability measures in convex order and assume that $\mu$ is continuous. Then there exists a pair of measurable functions $T_d : \R \mapsto \R$ and $T_u : \R \mapsto \R$ such that $T_d(x) \leq x \leq T_u(x)$, such that for all $x<x'$ we have $T_u(x) \leq T_u(x')$ and $T_d(x') \notin (T_d(x),T_u(x))$, and such that, if we define $\bar{\pi}(dx,dy) = \mu(dx) \chi_{T_d(x),x,T_u(x)}(dy)$, then $\bar{\pi} \in {\Pi_M}(\mu,\nu)$ and $\bar{\pi}=\pi_{lc}$.
\end{lem}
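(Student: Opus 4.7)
The plan is to extract $T_d$ and $T_u$ from a left-monotone support of $\pi_{lc}$ and then exploit the atomlessness of $\mu$ to force two-point support of the disintegration $\pi^x_{lc}$. The excerpt has already recorded that $\pi_{lc}$ is the unique left-monotone element of $\Pi_M(\mu,\nu)$, so fix a Borel left-monotone set $\Gamma \subseteq \R^2$ with $\pi_{lc}(\Gamma)=1$; replacing $\Gamma$ by its closure preserves left-monotonicity, since the forbidden configuration in \cref{def:lmon} involves strict inequalities. For each $x$, write $\Gamma^x = \{y : (x,y) \in \Gamma\}$ and set $T_d(x) = \inf \Gamma^x$, $T_u(x) = \sup \Gamma^x$; these are attained because $\Gamma^x$ is closed and are measurable by standard results on Borel-measurable multifunctions. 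Since $\pi^x_{lc}$ has barycentre $x$ for $\mu$-a.e.\ $x$, its support must straddle $x$, giving $T_d(x) \leq x \leq T_u(x)$.

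The ordering claims follow almost immediately from \cref{def:lmon}. Given $x<x'$ with $\Gamma^x,\Gamma^{x'}$ non-empty, apply the left-monotone property to $(x, T_d(x)), (x, T_u(x)) \in \Gamma$ together with any $(x', y') \in \Gamma$: this forbids $y' \in (T_d(x), T_u(x))$. Choosing $y' = T_d(x')$ yields $T_d(x') \notin (T_d(x), T_u(x))$. Choosing $y' = T_u(x')$ and noting that $T_u(x') \geq x' > x \geq T_d(x)$ rules out $T_u(x') \leq T_d(x)$, so $T_u(x') \geq T_u(x)$, i.e.\ $T_u$ is non-decreasing.

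The crux, and the place where continuity of $\mu$ is indispensable, is showing that $\Gamma^x \subseteq \{T_d(x), T_u(x)\}$ for $\mu$-a.e.\ $x$. I would argue by contradiction. Suppose there is a Borel set $A$ with $\mu(A)>0$ such that for every $x \in A$ one can choose $y^*(x) \in \Gamma^x$ with $T_d(x) < y^*(x) < T_u(x)$; a measurable selection theorem produces such a Borel $y^*$. By Lusin's theorem together with inner regularity, pass to a compact $K \subseteq A$ with $\mu(K)>0$ on which $T_d,\, y^*,\, T_u$ are all continuous. Since $\mu$ is atomless and $\mu(K)>0$, the set $K$ has no isolated points, so for every $x\in K$ one finds $x'\in K$ with $x<x'$ and $x'$ arbitrarily close to $x$. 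By continuity of the three selections at $x$ and the strictness of $T_d(x) < y^*(x) < T_u(x)$, for $x'$ sufficiently close to $x$ we have $y^*(x') \in (T_d(x), T_u(x))$, which contradicts the left-monotonicity of $\Gamma$. Hence $\Gamma^x = \{T_d(x), T_u(x)\}$ for $\mu$-a.e.\ $x$, and combined with the martingale identity $\int y\,\pi^x_{lc}(dy)=x$ this pins down $\pi^x_{lc} = \chi_{T_d(x),\,x,\,T_u(x)}$. Integrating against $\mu$ then gives $\bar\pi = \pi_{lc} \in \Pi_M(\mu,\nu)$. I expect the main obstacle to be the reduction from the merely measurable selections $T_d,T_u,y^*$ to a compact set on which all three are continuous; the atomlessness of $\mu$ is precisely what enables the Lusin/regularity step to produce such a $K$ of positive $\mu$-measure and hence deliver the sandwich collision that closes the contradiction.
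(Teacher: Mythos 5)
The paper does not prove this statement: it is imported verbatim from Beiglb\"ock and Juillet (their Corollary~1.6) and used as a standing input, so there is no in-text proof against which to compare your argument. Your strategy --- read $T_d$ and $T_u$ off the extremes of the sections of a left-monotone carrier $\Gamma$ of $\pi_{lc}$, then use atomlessness of $\mu$ together with Lusin to show that a strictly interior point in some section would force a left-monotonicity violation --- is the natural soft route and is essentially the standard argument.

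Two of your steps, as written, are false, though both admit routine repairs. First, the closure of a left-monotone set need not be left-monotone, contrary to your opening claim. Take $\Gamma = \{(0,-1)\} \cup \{(1/n,1): n\geq 1\} \cup \{(1,0)\}$: every $x$-section is a singleton, so $\Gamma$ is trivially left-monotone, yet $\overline{\Gamma}$ contains the new point $(0,1)$, after which $(0,-1),(0,1),(1,0)$ is a forbidden triple in the sense of \cref{def:lmon}. The openness of the forbidden configuration works against closure, not for it: limits can create forbidden triples that were absent. The repair is to drop closure altogether and, in your final contradiction, bracket $y^*(x_0)$ by two points $y^-,y^+\in\Gamma^{x_0}$ with $y^-<y^*(x_0)<y^+$; such points exist precisely because $y^*(x_0)$ lies strictly between $\inf\Gamma^{x_0}$ and $\sup\Gamma^{x_0}$, and continuity of $y^*$ on the Lusin set $K$ still forces $y^*(x')\in(y^-,y^+)$ for $x'\in K$ sufficiently close to $x_0$ on the right, giving the collision. (The monotonicity properties of $T_d,T_u$ then hold $\mu$-a.e.\ and are fixed on a null set, which is standard.) Second, ``$\mu$ atomless and $\mu(K)>0$ implies $K$ has no isolated points'' is not correct, and even a perfect compact set has points (its supremum, say) with no approach from the right within $K$. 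What is true, and all you actually need, is that $\{x\in K:\ (x,x+\varepsilon)\cap K=\emptyset \text{ for some } \varepsilon>0\}$ is countable (inject each such $x$ into a rational separating it from $K$ on the right), hence $\mu$-null; so $\mu$-a.e.\ $x_0\in K$ admits $x_n\downarrow x_0$ with $x_n\in K$, and any such $x_0$ closes the contradiction. With these two corrections your proof goes through.
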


Since $T_d(x) \leq x \leq T_u(x)$ we call $T_d$ a lower function and $T_u$ an upper function.

Lemma~\ref{lem:LC} is expressed in terms of elements of $\Pi_M$. We can give an equivalent expression in terms of a martingale. First we give an analogue of Definition~\ref{def:lmon} for functions.

\begin{defn}\label{def:lmonfns}
Given an interval $I$ and an increasing function $g:I \mapsto \R$, a pair of functions $f,h : I \mapsto \R$ is said to be left-monotone with respect to $g$ on $I$ if $f \leq g \leq h$ and if for $x < x'$ we have $h(x) \leq h(x')$ and $f(x') \notin (f(x), h(x))$.
\end{defn}

\begin{cor}\label{cor:BJ}
Let $(\Omega,\sF,\Prob) = (I \times (0,1), \sB(\Omega), \mu \times Leb((0,1)))$. 
Let $\omega = (x,v)$ and let the canonical random variable $(X,V)$ on $(\Omega,\sF,\Prob)$ be given by $(X(\omega),V(\omega))=(x,v)$.
Then $X$ has law $\mu$, $V$ is a $U(0,1)$ random variable and $X$ and $V$ are independent. Let $\F = (\sF_0 = \{ \emptyset, \Omega \}, \sF_1 = \sigma(X), \sF_2 = \sigma(X,V) \})$ and set $\mathbf{M} = (\Omega, \sF, \F, \Prob)$.

Suppose $\mu$ is continuous.
Then there exists $T_d,T_u:I\mapsto\R$ such that $(T_d,T_u)$ is left-monotone with respect to the identity function on $I$ and such that
if we define $Y(x,v) \in \{T_d(x),T_u(x) \}$ by $Y(x,v) = x$ on $T_d(x)=x=T_u(x)$ and
\begin{equation} Y(x,v) = T_d(x) I_{\{ v \leq \frac{T_u(x) - x}{T_u(x)-T_d((x)} \}} +  T_u(x) I_{ \{ v > \frac{T_u(x) - x}{T_u(x)-T_d(x)} \} }
\label{eq:YXVdef}
\end{equation}
otherwise, then
$M = (\bar{\mu},X,Y(X,V))$ is a $\mathbf{M}$-martingale for which $\sL(X) = \mu$ and $\sL(Y) = \nu$.
\end{cor}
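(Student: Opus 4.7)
The plan is to deduce Corollary \ref{cor:BJ} directly from Lemma \ref{lem:LC} by translating between the transport-plan formalism and the filtered-martingale formalism on the concrete product space $(\Omega,\sF,\Prob)$. The corollary is essentially a notational reformulation and no genuinely new content is needed beyond Lemma \ref{lem:LC} itself.

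First I would invoke Lemma \ref{lem:LC} to obtain measurable functions $T_d, T_u : I \to \R$ with $T_d(x)\leq x\leq T_u(x)$, with $T_u$ non-decreasing and $T_d(x')\notin(T_d(x),T_u(x))$ whenever $x<x'$, and with $\bar\pi(dx,dy)=\mu(dx)\chi_{T_d(x),x,T_u(x)}(dy)$ equal to $\pi_{lc}$. These monotonicity and ordering conditions are precisely what it means for the pair $(T_d,T_u)$ to be left-monotone with respect to the identity function on $I$ in the sense of Definition \ref{def:lmonfns}.

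Next I would verify that the joint law of $(X,Y(X,V))$ on $(\Omega,\sF,\Prob)$ coincides with $\bar\pi$. By the independence of $X$ and $V$ and the fact that $V\sim U(0,1)$, for each fixed $x\in I$ with $T_d(x)<T_u(x)$ the event $\{v\leq (T_u(x)-x)/(T_u(x)-T_d(x))\}$ has Lebesgue measure $(T_u(x)-x)/(T_u(x)-T_d(x))$, so the conditional law of $Y$ given $X=x$ assigns this mass to $T_d(x)$ and the complementary mass $(x-T_d(x))/(T_u(x)-T_d(x))$ to $T_u(x)$. This is exactly $\chi_{T_d(x),x,T_u(x)}$, and on the exceptional set $\{T_d(x)=x=T_u(x)\}$ the explicit clause in the definition of $Y$ yields $\delta_x$, also matching $\chi_{T_d(x),x,T_u(x)}$. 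Consequently $\sL(X,Y)=\bar\pi=\pi_{lc}$, and reading off marginals gives $\sL(X)=\mu$ and $\sL(Y)=\nu$.

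Finally I would check the $\mathbf{M}$-martingale property. Measurability is immediate from the definitions: $M_0=\bar\mu$ is a constant (hence $\sF_0$-measurable), $M_1=X$ is $\sF_1$-measurable, and $M_2=Y(X,V)$ is $\sF_2$-measurable. The identity $\E[M_1\mid \sF_0]=\E[X]=\bar\mu=M_0$ uses that $\mu$ and $\nu$ have common mean $\bar\mu$ (a consequence of $\mu\leq_{cx}\nu$). For the second step, since $V$ is independent of $X$,
\[
\E[M_2\mid \sF_1]=\E[Y(X,V)\mid X]=\int_\R y\,\chi_{T_d(X),X,T_u(X)}(dy)=X=M_1,
\]
because $\chi_{c,x,d}$ has mean $x$ by construction. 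There is no real obstacle; the only point deserving attention is the bookkeeping on the degenerate set where $T_d(x)=x=T_u(x)$, which is handled by the explicit convention built into \eqref{eq:YXVdef} and the definition of $\chi_{c,x,d}$.
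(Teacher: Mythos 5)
Your proof is correct and is essentially the argument the paper has in mind; the paper states Corollary~\ref{cor:BJ} without proof, treating it as a routine translation of Lemma~\ref{lem:LC} from the transport-plan formalism into the filtered-martingale formalism, and your write-up correctly supplies the bookkeeping. In particular you rightly observe that the conditions of Definition~\ref{def:lmonfns} with $g=\mathrm{id}$ are exactly those delivered by Lemma~\ref{lem:LC}, that independence of $X$ and $V$ together with $V\sim U(0,1)$ makes the conditional law of $Y$ given $X=x$ equal to $\chi_{T_d(x),x,T_u(x)}$ (including the degenerate case), and that the martingale property follows because each $\chi_{c,x,d}$ has mean $x$.
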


\begin{figure}[H]
	\centering
	\begin{tikzpicture}[dot/.style={circle,inner sep=1pt, fill, label={#1},name#1},
	extended line/.style={shorten >=-#1, shorten <=-#1},
	extended line/.default=3cm,
	declare function={	
		diag(\x)=\x;
		f=1;
		e1=1.5;
		g=3;
		e2=3.5;
		f2=4;
		f3=5;
		fx=5.5;
		e1x=5.5;
		gx=6.3;
		gx1=6.6;
		gx2=7.1;
		e2x=8;
		f2x=8;
		f3x=8.5;
		s=8.5;
		sx=8.5;
		k1=6.37;
		k2=4.5;
		a(\x)=(k1-\x)*(\x<k1)-8;
		b(\x)=(k2-\x)*(\x<k2)-8;
	}]

	\draw[name path=diag,black] (0,0) -- (10,10);
	
	
	\coordinate (g) at (g, {diag(g)});
	\coordinate (e1) at (e1, {diag(e1)});
	\coordinate (e2) at (e2, {diag(e2)});
	\coordinate (s) at (s, {diag(s)});
	
	\coordinate (a) at (g, {diag(f)});
	\coordinate (b) at (f2, {diag(g)});
	\coordinate (c) at (f2, {diag(f)});
	
	\draw[thick, blue, name path=tu1] (e1) to[out=75,in=180] (g-0.5, {diag(g)})--(g);
	\draw[thick, blue, name path=td1] (e1) to[out=315,in=180] (g-0.5, {diag(f)})--(a);
	
	\draw[blue, dashed] (g-0.5, {diag(g)}) -- (g-0.5, {diag(f)}) -- (e2+0.3,{diag(f)}) -- (e2+0.3,{diag(g)})--(g-0.5, {diag(g)});
	
	\draw[thick, blue, name path=tu2] (e2) to[out=75,in=205] (e2+0.3,{diag(f3)-1}) -- (e2+0.3,{diag(f3)});
	\draw[thick, blue, name path=td2] (e2) to[out=300,in=140] (e2+0.3,{diag(g)+0.3}) -- (e2+0.3,{diag(g)});
	\draw[thick, blue, name path=td3,] (e2+0.3,{diag(f)}) -- (e2+0.3,{diag(f)-0.2}) to[out=320,in=170] (f3,0.5);

	\coordinate (gx) at (gx, {diag(gx)});
		\coordinate (gx1) at (gx1, {diag(gx1)});
		\coordinate (gx2) at (gx2, {diag(gx2)});
	\coordinate (e1x) at (e1x, {diag(e1x)});
	\coordinate (e2x) at (e2x, {diag(e2x)});
	\coordinate (sx) at (sx, {diag(sx)});
	
	\coordinate (ax) at (gx, {diag(fx)});
	\coordinate (bx) at (f2x, {diag(gx)});
	\coordinate (cx) at (f2x, {diag(fx)});
	
	\draw[thick, blue, name path=tu1x] (e1x) to[out=75,in=180] (gx);
	\draw[thick, blue, name path=tu1x] (gx1) to[out=75,in=180] (gx2);
		\draw[thick, blue, name path=tu1x] (7.2,{diag(7.2)}) to[out=75,in=180] (7.4,{diag(7.4)});
		\draw[thick, blue, name path=tu1x] (7.5,{diag(7.5)}) to[out=75,in=180] (7.6,{diag(7.6)});
		\draw[thick, blue, name path=tu1x] (7.65,{diag(7.65)}) to[out=75,in=180] (7.7,{diag(7.7)});
	
	\draw[thick, blue, name path=td1x] (e1x) to[out=315,in=180] (gx, {diag(fx)-0.2});
	\draw[thick, blue, name path=td1x] (gx1) to[out=315,in=180] (gx2, {diag(gx1)-0.1});
	\draw[thick, blue, name path=td1x] (7.2,{diag(7.2)}) to[out=315,in=180] (7.4, {diag(gx2)});
	\draw[thick, blue, name path=td1x] (7.5,{diag(7.5)}) to[out=315,in=180] (7.6, {diag(7.43)});
	\draw[thick, blue, name path=td1x] (7.65,{diag(7.65)}) to[out=315,in=180] (7.7, {diag(7.65)});

	\draw[thick, blue, name path=tu2x] (e2x) to[out=75,in=205] (10,10);
	\draw[thick, blue, name path=td2x] (e2x) to[out=300,in=140] (8.5,{diag(7.7)});
	\draw[thick, blue, name path=td3x] (8.5, {diag(gx1)-0.1}) to[out=300,in=180] (9, {diag(gx)});
	\draw[thick, blue, name path=td3x] (9, {diag(fx)-0.2}) to[out=300,in=180] (9.5, {diag(f3)});
	\draw[thick, blue, name path=td3x] (9.5,0.5) to[out=300,in=180] (10,0);

	\draw [blue, dashed] (gx) -- (9,{diag(gx)}) -- (9,{diag(fx)-0.2}) -- (gx, {diag(fx)-0.2}) ;
	\draw[thick, blue] (e2+0.3,{diag(f3)})--(f3,{diag(f3)});
	\draw [blue, dashed] (f3,{diag(f3)}) -- (9.5, {diag(f3)})--(9.5,0.5) -- (f3,0.5);
	\draw[blue, dashed] (gx2, {diag(gx1)-0.1}) -- (8.5, {diag(gx1)-0.1})--(8.5,{diag(7.7)})-- (7.7,{diag(7.7)});
	
	\path[name path=base] (0,0) to (10,0);
	
	\node (Tu)[scale=1] at (8.5,9.5) {$T_u$};
	\node (Tu)[scale=1] at (9.3,0.2) {$T_d$};

	\end{tikzpicture}
	\caption{Stylized plot of the functions $T_d$ and $T_u$ in the general case. $T_d$ and $T_u$ are given by the solid lines in the figure. Note that on the set $\{x:T_u(x)=x\}$ we have $T_d(x)=x$. In the figure the set $\{x: T_u(x)>T_d(x) \}$ is a finite union of intervals whereas in general it may be a countable union of intervals (and the set of endpoints of these intervals may have accumulation points). Similarly, in the figure $T_d$ has finitely many downward jumps, whereas in general it may have countably many jumps. 
Atoms of $\nu$ lead to horizontal sections of $T_d$ and $T_u$. Atoms of $\mu$, which are excluded in \cref{lem:LC} and \cref{cor:BJ}, but included in \cref{thm:construction} below, lead to vertical (multi-valued) sections of $T_d$ and $T_u$.}
	\label{fig:generalTdTu}
\end{figure}


Suppose $\nu$ is also continuous and fix $x\in\mathbb{R}$. Under the left-curtain martingale coupling, $\mu\lvert_{(T_d(x),x)}$ is mapped to $\nu\lvert_{(T_d(x), T_u(x))}$. Thus $\{ T_d(x), T_u(x) \}$ with $T_d(x) \leq x \leq T_u(x)$ are solutions to
\begin{align}
\int_{T_d(x)}^x \mu(dz) & =  \int_{T_d(x)}^{T_u(x)} \nu(dz),  \label{eq:mass} \\
\int_{T_d(x)}^x z \mu(dz) & =  \int_{T_d(x)}^{T_u(x)} z \nu(dz). \label{eq:mean}
\end{align}
Essentially, \eqref{eq:mass} is preservation of mass condition and \eqref{eq:mean} is preservation of mean and the martingale property.

In general, there can be multiple solutions to \eqref{eq:mass} and \eqref{eq:mean} although under the additional left-monotonicity properties of Definition~\ref{def:lmonfns}, for almost all $x\in\mathbb{R}$ there is a unique solution. (However, even then there may be exceptional $x$ at which $T_d$ jumps and at which there are multiple solutions.)

As observed by Henry-Labord\`{e}re and Touzi~\cite{HenryLabordereTouzi:16}, when $\mu$ and $\nu$ admit continuous densities $\rho_\mu$ and $\rho_\nu$ respectively, and $T_u$ and $T_d$ are smooth, we find that they satisfy the pair of coupled differential equations
\begin{eqnarray*}  \rho_\mu(x) - T_d'(x) \rho_\mu(T_d(x)) & = & T'_u(x) \rho_\nu(T_u(x)) - T_d'(x) \rho_\nu(T_d(x)) \\
x \rho_\mu(x) - T_d'(x) T_d(x) \rho_\mu(T_d(x)) & = & T_u(x) T'_u(x) \rho_\nu(T_u(x)) - T_d'(x) T_d(x) \rho_\nu(T_d(x)) .
\end{eqnarray*}
However, it remains to specify the initial conditions of the differential equations and even in the case of smooth densities, $T_d$ may have downward jumps at locations which depend on the global properties of $\mu$ and $\nu$. In the special case where $\mu$ and $\nu$ satisfy the dispersion assumption, see Hobson and Klimmek~\cite{HobsonKlimmek:15} or Hobson and Norgilas~\cite{HobsonNorgilas:18} (see also \cite[Section 3.4]{HenryLabordereTouzi:16} for a closely related condition), in the sense that there exists $I_> = (e_-,e_+)$ such that $\rho_\mu > \rho_\nu>0$ on $I_>$ and $0 \leq \rho_\mu \leq \rho_\nu$ otherwise, then the situation simplifies. In particular, $T_d(x) = x = T_u(x)$ for $x \leq e_-$ and on $[e_-,\infty)$, $T_d$ is strictly decreasing, $T_u$ is strictly increasing and together they solve
\[ T_d'(x) =  \frac{T_u(x) - T_d(x)}{T_u(x)-x} \frac{(\rho_\nu-\rho_\mu)(T_d(x))}{\rho_\mu(x)}\quad\textrm{and}\quad
T_u'(x) = \frac{T_u(x) - T_d(x)}{x - T_d(x)} \frac{\rho_\nu(T_u(x))}{\rho_\mu(x)} \]
subject to $T_d(e_-)= e_- = T_u(e_-)$ (see also \cite[Equations (3.9) and (3.10)]{HenryLabordereTouzi:16}).

Henry-Labord\`{e}re and Touzi~\cite[Equations (3.15) and (3.16)]{HenryLabordereTouzi:16}) are able to go further and write $T_d(x)$ as the root of an integral equation, (and then $T_u(x)$ can be deduced from $T_d(x)$). But the integral equation depends on the curve $\{ T_d(z) ; e_- \leq z \leq x \}$.
They are also able to extend beyond the dispersion assumption case by defining $T_d$ on intervals $[m_i,n_i]$
where $m_i$ is an element of a certain set $\bM_0=\bM_0(\mu,\nu)$ which, in the case where the densities are well defined, continuously differentiable and not identically equal on an interval, is the set of points where $\rho_\mu=\rho_\nu$ and $\rho_\mu'>\rho_\nu'$. But, the assumption that $\mu$ and $\nu$ are atom free is essential, and Henry-Labord\`{e}re and Touzi also assume that $\bM_0$ is finite, and to move beyond the case where $\bM_0$ can be written as $\bM_0 = \{ m_j, j \in \N: i<j \Leftrightarrow m_i<m_j \}$ (for example, to allow for accumulation points) would require further arguments. The fundamental question of how to determine $T_d$ and $T_u$ remains, especially since Lemma~\ref{lem:LC} and Corollary~\ref{cor:BJ} are purely existence statements.

\subsection{The left-curtain coupling in the presence of atoms}

In the case with atoms previous work of the authors gives an existence result similar in form to Corollary~\ref{cor:BJ}.

\begin{thm}[Hobson and Norgilas~{\cite[Theorem 1]{HobsonNorgilas:18}}]
\label{thm:construction}
Let $(\Omega,\sF,\Prob) = ((0,1) \times (0,1), \sB(\Omega), \Leb(\Omega))$. Let $\omega = (u,v)$ and let $(U,V)$ be the canonical random variables on $(\Omega,\sF,\Prob)$ given by $(U(\omega),V(\omega))=(u,v)$ so that $U$ and $V$ are independent $U(0,1)$ random variables. Let $\F = (\sF_0 = \{ \emptyset, \Omega \}, \sF_1 = \sigma(U), \sF_2 = \sigma(U,V) \})$ and set $\mathbf{M} = (\Omega, \sF, \F, \Prob)$.

Fix $\mu\leq_{cx}\nu$ and let $G=G_\mu$ be a quantile function of $\mu$.

Then there exists $R,S:(0,1)\mapsto\R$ such that the pair $(R,S)$ is left monotone with respect to $G$ on $\sI=(0,1)$ and such that if we define $X(u,v)=X(u)=G(u)$ and $Y(u,v) \in \{R(u),S(u) \}$ by $Y(u,v) = G(u)$ on $R(u)=S(u)$ and
\begin{equation}
Y(u,v) = R(u) I_{\{ v \leq \frac{S(u) - G(u)}{S(u)-R(u)} \}} +  S(u) I_{ \{ v > \frac{S(u) - G(u)}{S(u)-R(u)} \} }
\label{eq:YUVdef}
\end{equation}
otherwise, then
$M = (\bar{\mu},X(U),Y(U,V))$ is a $\mathbf{M}$-martingale for which $\sL(X) = \mu$ and $\sL(Y) = \nu$.
\end{thm}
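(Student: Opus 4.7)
The plan is to build $R$ and $S$ directly from the potential-geometric representation of the shadow measure developed in \cref{sec:shadow}, and then verify left-monotonicity, the marginal conditions, and the martingale property. This is the roadmap followed by \cref{sec:geometric,sec:LeftCont,sec:embed}. For each $u \in (0,1)$ let
$$\mu_u(A) := \Leb\bigl(\{v \in (0,u) : G(v) \in A\}\bigr), \qquad A \in \sB(\R),$$
so that $\mu_u$ is the image of $\Leb\lvert_{(0,u)}$ under $G$. Then $\mu_u$ has total mass $u$, is non-decreasing in $u$ as a measure, and $\mu_1 = \mu$. Since $\mu_u \leq \mu \leq_{cx} \nu$ we have $\mu_u \leq_E \nu$, so $S^\nu(\mu_u)$ is well-defined and, by \cref{thm:shadow_potential},
$$P_{S^\nu(\mu_u)} = P_\nu - D_u^c, \qquad D_u := P_\nu - P_{\mu_u}.$$
Define $R(u) := X^{D_u}(G(u))$ and $S(u) := Z^{D_u}(G(u))$ in the notation of \cref{defn:convh}. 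The heuristic is that incrementing $u$ by $du$ adds $\delta_{G(u)}\,du$ to $\mu_u$, which subtracts $(k-G(u))^+ du$ from $D_u$; a direct computation shows that on the interval $[R(u), S(u)]$ (where $D_u^c$ is a chord by \cref{lem:linear}) the corresponding change in $(D_u^c)''$ is a point measure with mass $(S(u)-G(u))/(S(u)-R(u))$ at $R(u)$ and $(G(u)-R(u))/(S(u)-R(u))$ at $S(u)$, having mean $G(u)$. These are exactly the martingale weights appearing in \eqref{eq:YUVdef}.

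Next, verify the three required properties. The inequalities $R(u) \leq G(u) \leq S(u)$ follow immediately from \cref{defn:convh} since $D_u \geq D_u^c$. Monotonicity of $S$ and the non-intrusion property $R(u') \notin (R(u), S(u))$ for $u < u'$ must be extracted from how the convex hulls $D_u^c$ evolve with $u$: because $P_{\mu_u}$ is non-decreasing in $u$, $D_u$ is non-increasing in $u$, and one shows that the contact set $\{k : D_u(k) = D_u^c(k)\}$ reorganises consistently with the left-monotone property as $u$ grows. The workhorses are \cref{lem:linear,lem:convh1,lem:convh2}. This is the content of \cref{thm:monotone}. Working with a left-continuous $G$ then ensures enough regularity of $R$ and $S$ for \eqref{eq:YUVdef} to be well-posed and for the push-forward computations to be measurable (\cref{sec:LeftCont}). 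The conditional law of $Y$ given $U = u$ has mean $G(u) = X(u)$ by construction, giving the martingale property; to obtain $\sL(Y) = \nu$ one integrates the infinitesimal identity above to show that the pushforward of $\Leb\lvert_{(0,w)} \otimes \Leb\lvert_{(0,1)}$ under $(U,V) \mapsto Y(U,V)$ equals $S^\nu(\mu_w)$ for each $w \in (0,1]$, and $S^\nu(\mu_1) = \nu$ because $\mu \leq_{cx} \nu$ are probability measures of equal total mass. This is \cref{thm:construction2}.

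The main obstacle is proving left-monotonicity in full generality. Without structural assumptions on $\mu$ or $\nu$ the set $\{k : D_u(k) > D_u^c(k)\}$ may be a countable union of intervals whose endpoints accumulate, $R$ may jump countably many times, and atoms of $\mu$ or $\nu$ produce vertical and horizontal sections of $(R, S)$ respectively. These are precisely the features that make the differential-equation approach of \cite{HenryLabordereTouzi:16} break down. The convex-hull / potential approach sidesteps these pathologies because the convex hull is a globally defined object with built-in regularity, but the combinatorial geometry of how contact intervals evolve with $u$ must still be pinned down carefully, and this is the crux of the argument in \cref{sec:geometric}.
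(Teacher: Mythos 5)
Your roadmap is the paper's (potentials, shadow measure via Theorem~\ref{thm:shadow_potential}, convex-hull contact points, monotonicity then embedding), and your $D_u$ is precisely the paper's $\sE_u$; your candidate $R(u)=X^{\sE_u}(G(u))$ is what the paper calls $Q(u)$, which it notes differs from its own $R$ only on a null set. So the approach is right. But there is a genuine gap in the step that is actually the heart of the matter, namely deducing $\sL(Y)=\nu$.

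You propose to ``integrate the infinitesimal identity'' that adding $\delta_{G(u)}\,du$ to $\mu_u$ contributes masses $\frac{S(u)-G(u)}{S(u)-R(u)}\,du$ at $R(u)$ and $\frac{G(u)-R(u)}{S(u)-R(u)}\,du$ at $S(u)$ to the shadow. This is the correct heuristic and it is exactly what the differential equations of Henry-Labord\`ere and Touzi encode, but it is only valid where $R$, $S$, and the contact structure of $\sE^c_u$ vary smoothly. In full generality $R$ has countably many downward jumps (possibly with accumulation points), $\phi'$ need not exist pointwise, and across a jump of $R$ a macroscopic block of mass is reassigned, so the infinitesimal identity simply does not hold there. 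One cannot ``integrate'' it without first controlling those jumps. The paper's resolution occupies Sections~\ref{sec:phiprop}--\ref{sec:embed} and Appendix~\ref{ssec:setequality}: one shows $\phi$ is decreasing and Lipschitz on each component $A^n_<$ of $\{G(u+)<S(u)\}$ (Lemma~\ref{lem:phi}, Corollary~\ref{cor:phiabs}), identifies $\phi'=-\frac{S-G}{S-R}$ a.e.\ there, and then introduces the globally Lipschitz monotone envelope $\psi_{v,x}$ (Definition~\ref{def:psi}, Lemma~\ref{lem:psi}) so that the jump intervals of $R$ become intervals where $\psi$ is flat. The crucial and technically demanding inputs are the set identity of Proposition~\ref{prop:setequality}, Lemma~\ref{lem:claims12}, and the resulting evaluation $\Prob(Y\le y)=S^{-1}(y)+\phi(S^{-1}(y))=P_\nu'(y)$ in Proposition~\ref{prop:intrewrite}. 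None of this is visible in your sketch, and without it the integration step is not justified.

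A smaller point: you single out left-monotonicity as ``the main obstacle.'' That proof (Theorem~\ref{thm:monotone}) is indeed delicate --- it hinges on Lemma~\ref{lem:tangentsmeet} and the quantity $\xi_{u,w}$, and the comparison in Figure~\ref{fig:whatgoeswrong} shows why the convexity of $\sE_u-\sE_v$ is indispensable --- but the larger share of the technical work is actually the embedding verification just described. Finally, note that Theorem~\ref{thm:construction} itself is cited from Hobson and Norgilas~\cite{HobsonNorgilas:18}; what this paper adds (Theorem~\ref{thm:construction2}) is a direct, constructive proof that a particular pair $(R,S)$ satisfies it, so your plan is really a re-derivation along those lines, not an independent route.
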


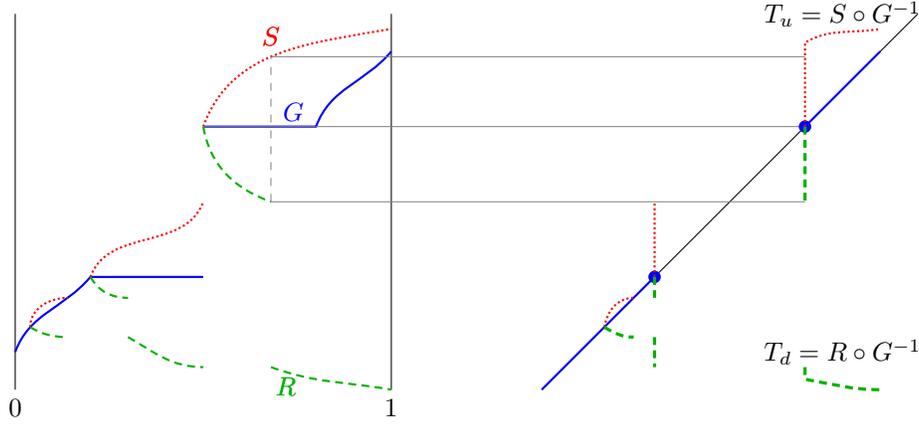
\begin{figure}[H]
\centering
\begin{tikzpicture}[scale=1,
declare function={	
	k1=2.1;
	k2=1;
	a(\x)=((k1-\x)*(\x<k1))-k1-1;
	b(\x)=((k2-\x)*(\x<k2))-k1-1;
	x1=-6.8;
	x2=-6.3;
	z1=-5.2;
	z2=-4.7;
}]

\draw[ black] (-7,0)--(-7,5);
\draw[black] (-2,0)--(-2,5);

\draw[name path=diag, black] (0,0) -- (5,5);

\draw[blue,thick, name path=g1] (-7,0.5) to[out=70, in=230] (-6,1.5) -- (-4.5,1.5) ;
\draw[blue,thick, name path=g2] (-4.5,3.5) -- (-3,3.5) to[out=70, in=230] (-2,4.5)  ;

\path [name path=lineA](x1,0) -- (x1,5);
\path [gray, very thin, name intersections={of=lineA and g1}] (x1,0) -- (intersection-1);
\coordinate (temp1) at (intersection-1);

\path [name path=lineA](x2,0) -- (x2,5);
\path [gray, very thin, name intersections={of=lineA and g1}] (x2,0) -- (intersection-1);
\coordinate (temp2) at (intersection-1);

\draw[red,thick, densely dotted,name path=s1] (temp1) to[out=90, in=180] (temp2) ;
\draw[red,thick, densely dotted,name path=s2] (-6,1.5) to[out=70, in=250] (-4.5,2.5) ;
\draw[red,thick, densely dotted,name path=s3] (-4.5,3.5) to[out=70, in=190] (-2,4.8) ;

\gettikzxy{(temp1)}{\l}{\k};
\gettikzxy{(temp2)}{\n}{\m};
\draw[black!30!green,thick, densely dashed, name path=r1] (temp1) to[out=330, in=180] (\n,0.7) ;
\draw[black!30!green,thick, densely dashed, name path=r2] (-6,1.5) to[out=300, in=180] (-5.5,\m) ;
\draw[black!30!green,thick, densely dashed, name path=r3] (-5.5,0.7) to[out=330, in=180] (-4.5,0.3) ;
\draw[black!30!green,thick, densely dashed, name path=r4] (-4.5,3.5) to[out=280, in=160] (-3.6,2.5) ;
\draw[black!30!green,thick, densely dashed, name path=r5] (-3.6,0.3) to[out=340, in=170] (-2,0) ;


\path[gray, very thin] (\n,0.7) -- (temp2);
\path[gray, very thin] (\n,0.7) -- (0.7,0.7);
\path[gray, very thin]  (temp2)--(\m,\m);
\path[gray, very thin]  (-2,4.5)--(4.5,4.5);

\path [name path=lineA](-5.5,0.7) -- (-5.5,5);
\path [gray, very thin, name intersections={of=lineA and s2}] (-5.5,0.7) -- (intersection-1);
\coordinate (vs2) at (intersection-1);
\gettikzxy{(vs2)}{\aa}{\bb}
\path[gray, very thin]  (vs2)--(\bb,\bb);

\path[gray, very thin] (-4.5,0.3) -- (-4.5,3.5);
\draw[gray, very thin] (-4.5,3.5) -- (3.5,3.5);
\path[gray, very thin] (-4.5,0.3) -- (0.3,0.3);

\path[gray, very thin] (-3.6,0.3) -- (-3.6,2.5) ;
\draw[gray, very thin] (-3.6,2.5) -- (3.5,2.5) ;

\path [name path=lineA](-3.6,2.5) -- (-3.6,5);
\draw [gray, very thin, dashed, name intersections={of=lineA and s3}] (-3.6,2.5) -- (intersection-1);
\coordinate (test1) at (intersection-1);
\gettikzxy{(test1)}{\testx}{\testy}
\draw[gray, very thin]  (test1)--(3.5,\testy);

\path [name path=lineA](-3,3.5) -- (-3,5);
\path [gray, very thin, name intersections={of=lineA and s3}] (-3,3.5) -- (intersection-1);
\coordinate (vs3) at (intersection-1);
\gettikzxy{(vs3)}{\aaa}{\bbb}
\path[gray, very thin]  (vs3)--(\bbb,\bbb);

\path [name path=lineA](-3,3.5) -- (-3,0);
\path [gray, very thin, name intersections={of=lineA and r5}] (-3,3.5) -- (intersection-1);
\coordinate (vr5) at (intersection-1);
\gettikzxy{(vr5)}{\aaaa}{\bbbb}
\path[gray, very thin]  (vr5)--(\bbbb,\bbbb);

\path[gray, very thin]  (vs2)--(\bb,\bb);
\path[gray, very thin]  (vs2)--(\bb,\bb);

\node [scale=0.5, shape=circle, fill, blue] at (3.5,3.5) {} ;
\node [scale=0.5, shape=circle, fill, blue] at (1.5,1.5) {} ;

\draw[blue,thick] (0,0) -- (1.5,1.5);
\draw[blue,thick] (3.5,3.5) -- (4.5,4.5);
\draw[red,densely dotted, thick] (\k,\k) to[out=80, in=180] (\m,\m);
\draw[red,densely dotted, thick] (1.5,1.5) -- (1.5,2.5);
\draw[red,densely dotted, thick] (3.5,3.5) -- (3.5,\bbb) to[out=30, in=190] (4.5,4.8);

\draw[black!30!green, densely dashed,very thick] (\k,\k) to[out=330, in=180] (\m,0.7);
\draw[black!30!green,densely dashed,very thick] (1.5,1.5) -- (1.5,\m);
\draw[black!30!green,densely dashed,very thick] (1.5,0.7) -- (1.5,0.3);
\draw[black!30!green,densely dashed,very thick] (3.5,3.5) -- (3.5,2.5);
\draw[black!30!green,densely dashed,very thick] (3.5,0.3) -- (3.5,\bbbb) to[out=350, in=180] (4.5,0);

\node[red] at (-3.6,4.7) {$S$};
\node[blue] at (-3.3,3.7) {$G$};
\node[black!30!green] at (-3.4,0.04) {$R$};

\node[red] at (-3.6,4.7) {$S$};
\node[black!30!green] at (-3.4,0.04) {$R$};

\node[below] at (-7,0) {$0$};
\node[below] at (-2,0) {$1$};
\node[black] at (4,0.5) {$T_d=R \circ G^{-1}$};
\node[black] at (4,5) {$T_u=S \circ G^{-1}$};
\end{tikzpicture}
\caption{Sketch of $R,G,S$ and the corresponding $T_u$ and $T_d$. On the atoms of $\mu$, $G$ is flat, and $T_d$ and $T_u$ are multi-valued, but $R$ and $S$ remain well-defined.}
\label{fig:RGSfg}
\end{figure}

Our goals in later sections are first to construct suitable candidate functions $R$ and $S$ satisfying left-monotonicity with respect to $G$, and second to show that they do indeed lead to a pair $(X,Y)$ with $X \sim \mu$ and $Y \sim \nu$.

\subsection{Lifted martingale transport plans}
Just as Corollary~\ref{cor:BJ} has an equivalent expression via Lemma~\ref{lem:LC}, Theorem~\ref{thm:construction} has an equivalent expression in terms of transport plans, provided we generalise the notion of a martingale transport plan.
Let $(\mu_u)_{0 \leq u \leq 1}$ be a family of measures with $\mu_u(\R)=u$, $\mu_1=\mu$ and $\mu_u \leq \mu_v$ for $0 \leq u \leq v \leq 1$, and let $\lambda$ denote Lebesgue measure on the unit interval. Then a \textit{lift} (Beiglb\"{o}ck and Juillet \cite{BeiglbockJuillet:16,BeiglbockJuillet:16s}) of $\mu$ with respect to $(\mu_u)_{0 \leq u \leq 1}$ is a probability measure $\hat\mu\in\Pi(\lambda,\mu)$ such that, for all $u\in[0,1]$ and Borel $A\subseteq\R$, $\hat\mu([0,u]\times A)=\mu_u(A)$. A {\it lifted martingale transport plan} is a probability measure $\hat{\pi} \in \Pi(\hat{\mu},\nu)$ such that $\int_{\R} y \hat{\pi}_{u,x}(dy)=x$, $\hat\mu\textrm{-a.e. }(u,x)$, where $\hat{\pi}_{u,x}$ denotes the disintegration of $\hat\pi\in\Pi(\hat\mu,\nu)$ with respect to $\hat\mu$: $\hat{\pi}(du,dx,dy) = \hat{\mu}(du,dx) \hat{\pi}_{u,x}(dy)$.

One of the insights of Beiglb\"{o}ck and Juillet \cite{BeiglbockJuillet:16,BeiglbockJuillet:16s} is that, for $(\mu_u)_{0 \leq u \leq 1}$ as above, the shadow measure induces a family of martingale couplings. In particular the idea is that for all $u\in[0,1]$, $\mu_u$ is mapped to $S^\nu(\mu_u)$. A crucial result making this possible is the fact that if $0<u<v<1$ and $\mu_u \leq \mu_v$ then $S^\nu(\mu_u) \leq S^\nu(\mu_v)$.

A natural choice for the lift $\hat{\mu}$ of $\mu$ is the {\it quantile lift} $\hat{\mu}^Q$ whose support is of the form $\{(v,G(v)): 0<v<1 \}$ where $G$ is a quantile function of $\mu$. Then $\hat{\mu}^Q(du,dx) = du \delta_{G(u)}(dx)$ and for a Borel set $A$, $\hat{\mu}^Q([0,w] \times A) = \int_{0}^w du I_{ \{G(u) \in A \}}$. Then, by Beiglb\"{o}ck and Juillet \cite[Theorem 1.1]{BeiglbockJuillet:16s}, there exists a unique lifted martingale transport plan $\hat{\pi}^Q$
such that for all $u\in[0,1]$ and Borel $A,B\subseteq\R$,  $\hat\pi^Q([0,u]\times A\times\R)=\mu_u(A)$ and $\hat\pi^Q([0,u]\times \R\times B)=S^\nu(\mu_u)(B)$. This is the left-curtain martingale coupling.

By analogy with the correspondence between Lemma~\ref{lem:LC} and Corollary~\ref{cor:BJ} we have the following equivalent restatement of Theorem~\ref{thm:construction}:
\begin{cor}\label{cor:construction}
Let $\mu,\nu$ be probability measures in convex order and let $\hat{\mu}^Q$ be the quantile lift of $\mu$. Then there exists a pair of measurable functions $R : \R \mapsto \R$ and $S : \R \mapsto \R$ such that $(R,S)$ is left-monotone with respect to $G=G_\mu$  and such that if
$\hat{\pi}^Q(du,dx,dy) = du \delta_{G(u)}(dx) \hat{\pi}^Q_{u,x}(dy)$ (recall $\hat{\mu}$ has support on $\{(u,G(u)) : 0 < u < 1 \}$) then
$\hat{\pi}^Q_{u,x}(dy) = \hat{\pi}^Q_{u,G(u)} (dy) = \chi_{R(u),G(u),S(u)}(dy)$ and $\hat{\pi}^Q$ is the lifted-left-curtain martingale transport plan which transports a second marginal $\mu$ to third marginal $\nu$.
\end{cor}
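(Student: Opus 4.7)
The plan is to translate Theorem~\ref{thm:construction} directly into the transport-plan language, using the canonical construction of the martingale on the product space together with the disintegration of the joint law with respect to $U$.

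First I would apply Theorem~\ref{thm:construction} to obtain the pair $(R,S)$ that is left-monotone with respect to $G=G_\mu$, along with the martingale $M=(\bar\mu, X(U), Y(U,V))$ on $(\Omega,\sF,\Prob) = ((0,1)^2, \sB, \Leb)$ satisfying $\sL(X)=\mu$ and $\sL(Y)=\nu$, where $X(u,v)=G(u)$ and $Y(u,v)$ is defined via \eqref{eq:YUVdef}. Then I would \emph{define} $\hat\pi^Q$ to be the joint law on $(0,1)\times\R\times\R$ of the triple $(U, X(U), Y(U,V))$ under $\Prob$. The first two marginals coincide with $\hat\mu^Q$: indeed $(U,X(U))=(U,G(U))$ has law $\int_0^1 du\, \delta_{(u,G(u))}$, which is exactly $\hat\mu^Q$.

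Next I would verify the structural representation. By the independence of $U$ and $V$, the conditional law of $Y$ given $U=u$ is governed entirely by the second coordinate $v$, and \eqref{eq:YUVdef} shows that this conditional law is exactly the two-point measure $\chi_{R(u),G(u),S(u)}$ (reducing to $\delta_{G(u)}$ when $R(u)=S(u)$). Hence $\hat\pi^Q$ disintegrates as
\[
\hat\pi^Q(du,dx,dy) = du\,\delta_{G(u)}(dx)\,\chi_{R(u),G(u),S(u)}(dy),
\]
so $\hat\pi^Q_{u,G(u)}(dy)=\chi_{R(u),G(u),S(u)}(dy)$, as required. The martingale property $\int y\,\hat\pi^Q_{u,x}(dy)=x$ for $\hat\mu^Q$-a.e.\ $(u,x)$ is immediate, since the measure $\chi_{R(u),G(u),S(u)}$ has mean $G(u)=x$ by construction. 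Moreover, the third marginal of $\hat\pi^Q$ equals $\sL(Y(U,V))=\nu$, and therefore $\hat\pi^Q \in \Pi(\hat\mu^Q, \nu)$ is a lifted martingale transport plan in the sense of Section~4.4.

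It remains to identify $\hat\pi^Q$ with the lifted-left-curtain martingale transport plan. Here I would invoke uniqueness: by Beiglb\"ock and Juillet~\cite[Theorem 1.1]{BeiglbockJuillet:16s}, a lifted martingale transport plan with first marginal $\hat\mu^Q$ and third marginal $\nu$ is the left-curtain lift if and only if, for every $u\in[0,1]$, the third marginal of $\hat\pi^Q\lvert_{[0,u]\times\R\times\R}$ coincides with $S^\nu(\mu_u)$, where $\mu_u = G_*(\lambda\lvert_{[0,u]})$. The main obstacle is to verify this shadow property for our candidate $\hat\pi^Q$. A clean route is to observe that projecting onto the $(x,y)$-coordinates produces a martingale coupling $\pi \in \Pi_M(\mu,\nu)$ that is supported, up to the modifications needed on atoms of $\mu$ where $G$ is flat, on the graphs of $T_d=R\circ G^{-1}$ and $T_u=S\circ G^{-1}$ (see Figure~\ref{fig:RGSfg}); left-monotonicity of $(R,S)$ with respect to $G$ in the sense of Definition~\ref{def:lmonfns} then translates into left-monotonicity of $\pi$ in the sense of Definition~\ref{def:lmon}, and uniqueness of left-monotone couplings (Beiglb\"ock and Juillet~\cite[Theorem 5.3]{BeiglbockJuillet:16}) pins $\pi$ down as $\pi_{lc}$. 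The shadow condition at each level $u$ then follows from the well-known fact that the quantile-lifted left-curtain coupling realises the shadow $S^\nu(\mu_u)$ for every initial sub-mass $\mu_u$.
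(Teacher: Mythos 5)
Your translation of Theorem~\ref{thm:construction} into the transport-plan language is correct and is precisely the content of the paper's ``equivalent restatement'': defining $\hat\pi^Q$ as the law of $(U,X(U),Y(U,V))$, checking that its projection onto the first two coordinates is $\hat\mu^Q$, its third marginal is $\nu$, and its disintegration kernel is $\hat\pi^Q_{u,G(u)}=\chi_{R(u),G(u),S(u)}$ (which automatically has mean $G(u)$). The paper goes no further precisely because Theorem~\ref{thm:construction} is quoted from Hobson--Norgilas, whose result already carries the identification with the lifted left-curtain plan; the corollary is then a genuine rephrasing, with nothing new to prove.

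The problem lies in your closing identification argument, which is circular. You correctly state that the defining property of the lifted left-curtain plan (Beiglb\"ock--Juillet~\cite[Theorem~1.1]{BeiglbockJuillet:16s}) is the shadow condition $\hat\pi^Q([0,u]\times\R\times B)=S^\nu(\mu_u)(B)$ for every $u$. You then project to the $(x,y)$-plane, argue $\pi=\pi_{lc}$, and say ``the shadow condition at each level $u$ then follows from the well-known fact that the quantile-lifted left-curtain coupling realises the shadow.'' But that well-known fact is a statement \emph{about} the lifted left-curtain plan, and whether $\hat\pi^Q$ \emph{is} that plan is exactly what you must establish --- so you are assuming your conclusion. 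Nor does $\pi=\pi_{lc}$ close the gap: when $\mu$ has atoms, many distinct lifts project to the same two-dimensional coupling, and $\pi=\pi_{lc}$ constrains the third marginal of $\hat\pi^Q\lvert_{[0,u]\times\R\times\R}$ only at levels $u$ that are right endpoints of the flat pieces of $G$, whereas the shadow condition must hold for all $u$ --- and it is precisely at $u$ interior to a flat piece that the lift carries information beyond $\pi$. (Your intermediate claim that Definition~\ref{def:lmonfns} ``translates into'' Definition~\ref{def:lmon} for the projection is likewise not a formal consequence of Definition~\ref{def:lmonfns} alone; one needs the structural facts $R\le Q\le G$ together with Lemma~\ref{lem:R=G=S} to rule out configurations such as $R(u_1)<G(u_1)=S(u_1)$ and $R(u_2)=G(u_2)<S(u_2)$ with $G(u_1)=G(u_2)$.) A self-contained proof would instead verify the shadow condition directly, i.e.\ show $\int_0^u\chi_{R(w),G(w),S(w)}\,dw=S^\nu(\mu_u)$ for each $u$, using Theorem~\ref{thm:shadow_potential} and the construction of $R,S$ in Section~\ref{sec:geometric}.
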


\section{The Geometric construction}
\label{sec:geometric}

Fix $\mu\leq_{cx}\nu$. The goal of this section is to construct candidates for the functions $R$ and $S$ of Theorem~\ref{thm:construction}. Then, in Section~\ref{sec:embed} we will prove that they can indeed be used to define a (left-monotone) martingale coupling of $\mu$ and $\nu$.

Recall the definition of $D(k)=P_\nu(k)-P_\mu(k)$, $k\in\R$. In what follows (and in the light of \cref{lem:chacon} and the subsequent discussion) we assume that $\{k\in\R:D(k)>0\}$ is an (open) interval, and
$\mu$ is supported on this set. Moreover, $\{k:D(k)>0\}=(\ell_\nu,r_\nu)$.

Recall also the definition of the sub-differential $\partial h(x)$ of a convex function $h:\R \mapsto \R$ at $x$:
\[ \partial h (x) = \{ \phi \in\R: h(y) \geq h(x) + \phi(y-x) \mbox{ for all } y \in \R \}. \]
We extend this definition to non-convex functions $f$ so that the subdifferential of $f$ at $x$ is given by
\[ \partial f (x) = \{ \phi\in\R : f(y) \geq f(x) + \phi(y-x) \mbox{ for all } y \in \R \}. \]
If $h$ is convex then $\partial h$ is non-empty everywhere, but this is not the case for non-convex functions. Instead we have that $\partial f(x)$ is non-empty if and only if $f(x)=f^c(x)$ and then $\partial f^c(x) = \partial f(x)$.

Let $G=G_\mu$ be a quantile function of $\mu$. (In Sections~\ref{sec:LeftCont} and \ref{sec:embed} we will take $G$ to be the left-continuous quantile function, but for now we let $G$ be {\em any} quantile function.)
For each $u\in(0,1)$, define $\mu_u\in\sM$ by
$$\mu_{u}(A)= \mu\Big(A \cap \big(-\infty,G(u)\big)\Big)+\Bigg(u-\mu\Big(\big(-\infty,G(u)\big)\Big)\Bigg)\delta_{G(u)}(A),\quad \textrm{for all Borel }A\subseteq\R.$$
Then for $u\in(0,1)$, $\mu_u\leq\mu$ and $\mu_u(\R)=u$. Note that $\mu_u$ does not depend on the choice of quantile function $G$.

 We have $P_{\mu_u}(k)=P_\mu(k)$ for $k\leq G(u)$, while $P_{\mu_u}(k)\leq P_\mu(k)$ for $k> G(u)$. In particular,
 \begin{equation*}
 P_{\mu_u}(k)=P_\mu(k\wedge G(u))+u(k-G(u))^+,\quad k\in\R,
 \end{equation*}
and thus, $P_{\mu_u}(\cdot)$ is linear on $[G(u),\infty)$ and $u\in\partial P_\mu(G(u))$, so that $P'_\mu(G(u)-)\leq u\leq P'_\mu(G(u)+)$.

For each $u \in (0,1)$ define $\sE_u:\R\mapsto\R_+$ by $\sE_u=P_\nu-P_{\mu_u}$, so that for $k \in \R$,
\[
 \sE_u(k)=P_\nu(k)-P_{\mu_u}(k)=D(k)+P_\mu(k)-P_{\mu_u}(k).
\]
Then, by \cref{thm:shadow_potential}, we have that
 \begin{equation*}
 P_{S^\nu(\mu_u)}(k)=P_\nu(k)-\sE^c_u(k),\quad k\in\R.
 \end{equation*}
The idea underlying this section is that we can hope to determine the functions characterising the left-curtain coupling by considering the properties of $\sE_u$ and $\sE_u^c$.

Note that $\sE_u(k)=D(k)$ for $k\leq G(u)$. Since $P_\mu-P_{\mu_u}$ is non-negative on $\R$, we have that $\sE_u(k)\geq D(k)$ for $k> G(u)$. Moreover, since $P_{\mu_u}$ is linear on $[G(u),+\infty)$, $\sE_u$ is convex on $(G(u),+\infty)$. It is also easy to see that $k\mapsto\sE_u(k)-D(k)$ is non-decreasing.

We now define candidate lower and upper functions. In fact we define two lower functions, which differ on a set of measure zero, either of which could be used in Theorem~\ref{thm:construction}. One of the lower functions is in keeping with the definition of $X$ in the study of convex hulls, but it turns out that the other is more convenient in the proof of Theorem~\ref{thm:construction}.

The idea is that typically $\sE_u$ is not convex, but we can define its convex hull. Moreover, commonly $\sE_u(G(u)) > \sE^c_u(G(u))$ in which case we can define $Q(u)$ to be the largest point to the left of $G(u)$ at which $\sE_u$ and its convex hull agree, and $S(u)$ to be the smallest point to the right of $G(u)$ where $\sE_u$ and $\sE^c_u$ agree. Then, when $Q(u)<S(u)$ we can define $\phi(u)$ to be the slope of $\sE^c_u$ over this interval. Typically, $Q(u)$ will be the only point $k$ below $G(u)$ such that $\sE_u(k) = \sE_u(S(u)) + (k-S(u))\phi(u)$, but in exceptional cases there may be other points with this property. In that case we let $R(u)$ be the smallest such point. See Figure~\ref{fig:RS}. Our first goal is to formulate the descriptions in this paragraph precisely, and in such a way that they apply to all situations, including pathological ones. The second goal is prove that the various quantities have certain properties, especially as $u$ varies.

Our motivation to study $R(u)$ and $S(u)$ stems from the fact that, at least in regular cases, by replacing $(T_d(x),x,T_u(x))$ with $(R(u),G(u),S(u))$, the mass and mean preservation conditions \eqref{eq:mass} and \eqref{eq:mean} hold. Indeed, suppose $\mu$ and $\nu$ are atomless with positive densities everywhere, so that, for each $u\in(0,1)$, $\sE_u$ is differentiable. Then if $R(u)<G(u)\leq S(u)$, by construction (see Figure \ref{fig:RS}) we have that
$$
\sE_u'(R(u))=\sE_u'(S(u))\quad\textrm{and}\quad \sE_u(R(u))+\sE_u'(R(u))(S(u)-R(u))=\sE_u(S(u)),
$$
which can be easily shown to be equivalent to \eqref{eq:mass} and \eqref{eq:mean}. Indeed, we have both \begin{align*}\sE_u'(R(u))&=P'_\nu(R(u))-P'_{\mu_u}(R(u))=P'_\nu(R(u))-P'_{\mu}(R(u))=\int^{R(u)}_{-\infty}\nu(dy)-\int^{R(u)}_{-\infty}\mu(dy),\\
	\sE_u'(S(u))&=P'_\nu(S(u))-P'_{\mu_u}(S(u))=P'_\nu(S(u))-P'_{\mu}(G(u))=\int^{S(u)}_{-\infty}\nu(dy)-\int^{G(u)}_{-\infty}\mu(dy),
	\end{align*}
	and therefore $
	\sE_u'(R(u))=\sE_u'(S(u))$ is equivalent to the mass preservation condition \eqref{eq:mass}. Similarly, by writing
	\begin{align*}\sE_u(R(u))=P_\nu(R(u))-P_{\mu_u}(R(u))&=P_\nu(R(u))-P_{\mu}(R(u))\\&=\int^{R(u)}_{-\infty}(R(u)-y)\nu(dy)-\int^{R(u)}_{-\infty}(R(u)-y)\mu(dy)
	\end{align*}
	and
	\begin{align*}
	\sE_u(S(u))&=P_\nu(S(u))-P_{\mu_u}(S(u))\\&=P_\nu(S(u))-\{P_{\mu}(G(u))+P'_{\mu_u}(G(u))(S(u)-G(u))\}\\
	&=\int^{S(u)}_{-\infty}(S(u)-y)\nu(dy)-\left\{\int^{G(u)}_{-\infty}(G(u)-y)\mu(dy)+(S(u)-G(u))\int^{G(u)}_{-\infty}\mu(dy)\right\}\\ &=\int^{S(u)}_{-\infty}(S(u)-y)\nu(dy)-\int^{G(u)}_{-\infty}(S(u)-y)\mu(dy),
	\end{align*}
	and using the mean preservation condition \eqref{eq:mass}, we have that that $\sE_u(R(u))+\sE_u'(R(u))(S(u)-R(u))=\sE_u(S(u))
	$ is equivalent to the mean preservation condition \eqref{eq:mean}.

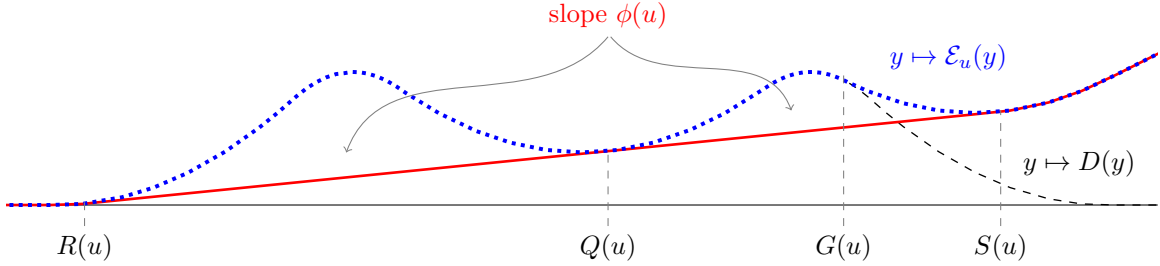
\begin{figure}[H]
	\centering
	\begin{tikzpicture}[scale=1]

\begin{axis}[%
width=6.028in,
height=2.754in,
at={(1.011in,0.642in)},
scale only axis,
xmin=-11,
xmax=11,
ymin=-0.5,
ymax=2,
axis line style={draw=none},
ticks=none
]
\draw[black, thin] (-11,0) -- (11,0);
\addplot [color=black, dashed, line width=0.5pt, forget plot]
table[row sep=crcr]{%
	5.10778631336004
	0.579120487313966\\
	5.28116588395275	0.550692356321193\\
	5.45454545454545	0.516281441424168\\
	5.79769279116164	0.441487336695587\\
	6.14084012777783	0.37232841678\\
	6.81818181818182	0.25309834481619\\
	7.50052689249177	0.156184065001039\\
	8.18181818181818	0.0826473956587641\\
	8.913369432737	0.0295195446611451\\
	9.54545454545455	0.00516182995117376\\
	10.1951138678739	-4.02019102274664e-06\\
	10.9090909090909	5.23608497005057e-06\\
	11.6245592768795	4.99949232768415e-06\\
	12.2727272727273	5.69424161334098e-06\\
	12.7873709632194	-0.000900998195628944\\
	13.6363636363636	2.55228360046544e-06\\
	14.3055157129462	-7.12227511812102e-06\\
	15	5.82376882718449e-06\\
};
\addplot [color=red, line width=1.0pt, forget plot]
table[row sep=crcr]{%
	-15	0\\
	-14.3743426864006	0\\
	-13.6363636363636	0\\
	-12.922520765984	0\\
	-12.2727272727273	0\\
	-11.6037511150776	0\\
	-10.9090909090909	0\\
	-10.1567767603786	0\\
	-9.54545454545455	0.00516529194875333\\
	-8.78555408111793	0.0241111129606366\\
	-8.18181818181818	0.0392044808885227\\
	-7.52404914247162	0.0556486746725015\\
	-6.81818181818182	0.0732953232255121\\
	-6.16274148089556	0.0896812995719799\\
	-5.45454545454546	0.107386165562501\\
	-4.69396922041941	0.12640053418326\\
	-4.09090909090909	0.141477007899491\\
	-3.35965597297158	0.159758300050986\\
	-2.72727272727273	0.17556785023648\\
	-2.00961780284731	0.193509188215844\\
	-1.36363636363636	0.20965869257347\\
	-0.688142581770708	0.226546004052749\\
	0	0.243749534910459\\
	0.681996956288809	0.260799348155818\\
	1.36363636363636	0.277840010546298\\
	2.07255602717534	0.295562666422883\\
	2.72727272727273	0.31193027388147\\
	3.46373839324672	0.330341566774406\\
	4.09090909090909	0.346020537216643\\
	4.76102717217463	0.362773171911045\\
	5.45454545454545	0.380110800551816\\
	6.14084012777783	0.397267842384896\\
	6.81818181818182	0.414201063886988\\
	7.50052689249177	0.431259367617357\\
	8.18181818181818	0.449124682527086\\
	8.913369432737	0.487440956412348\\
	9.54545454545455	0.542093630196611\\
	10.1951138678739	0.618135397881325\\
	10.9090909090909	0.707391398215165\\
	11.6245592768795	0.796824991527279\\
	12.2727272727273	0.877846509150189\\
	12.7873709632194	0.942167254247437\\
	13.6363636363636	1.04829872537446\\
	14.3055157129462	1.13389926036962\\
	15	1.21874929168467\\
};
\addplot [color=blue, dotted, line width=1.5pt, forget plot]
table[row sep=crcr]{%
	-15	0\\
	-14.3743426864006	0\\
	-13.6363636363636	0\\
	-12.922520765984	0\\
	-12.2727272727273	0\\
	-11.6037511150776	0\\
	-10.9090909090909	0\\
	-10.1567767603786	0\\
	-9.54545454545455	0.00516529194875333\\
	-8.78555408111793	0.0368719582992022\\
	-8.18181818181818	0.0826446406924073\\
	-7.52404914247162	0.153258338675332\\
	-6.81818181818182	0.253099065950084\\
	-6.16274148089556	0.368114076983243\\
	-5.80864346772051	0.43918676050732\\
	-5.45454545454546	0.516270463100197\\
	-5.26440139601394	0.553708788226936\\
	-5.07425733748243	0.58391669559351\\
	-4.88411327895092	0.606892956316372\\
	-4.69396922041941	0.622638584241668\\
	-4.54320418804183	0.626617195164236\\
	-4.39243915566425	0.632781558477449\\
	-4.24167412328667	0.631034673976273\\
	-4.09090909090909	0.624741555295822\\
	-3.90809581142471	0.611011938733503\\
	-3.72528253194034	0.590598341633919\\
	-3.54246925245596	0.563502600307279\\
	-3.35965597297158	0.532183251804731\\
	-2.72727272727273	0.435950274547483\\
	-2.00961780284731	0.350962188489822\\
	-1.36363636363636	0.296342723908122\\
	-0.688142581770708	0.261838896997191\\
	0	0.249996711366964\\
	0.681996956288809	0.261628059973472\\
	1.36363636363636	0.296487735376558\\
	2.07255602717534	0.357385147678734\\
	2.72727272727273	0.435946494150791\\
	3.46373839324672	0.549937949132153\\
	3.62053106766232	0.575890831516237\\
	3.77732374207791	0.597092931388165\\
	3.9341164164935	0.61337407422424\\
	4.09090909090909	0.624743965908923\\
	4.25843861122547	0.63190866150059\\
	4.42596813154186	0.632554779333081\\
	4.59349765185824	0.628031280250534\\
	4.76102717217463	0.617918105339684\\
	4.93440674276733	0.601515203917605\\
	5.10778631336004	0.580572723208292\\
	5.45454545454545	0.542107891512015\\
	6.14084012777783	0.483683429598855\\
	6.81818181818182	0.449121067486052\\
	7.50052689249177	0.437499922994881\\
	8.18181818181818	0.449124682527086\\
	8.913369432737	0.487440956412348\\
	9.54545454545455	0.542093630196611\\
	10.1951138678739	0.618135397881325\\
	10.9090909090909	0.707391398215165\\
	11.6245592768795	0.796824991527279\\
	12.2727272727273	0.877846509150189\\
	12.7873709632194	0.942167254247437\\
	13.6363636363636	1.04829872537446\\
	14.3055157129462	1.13389926036962\\
	15	1.21874929168467\\
};

\node (R)[scale=1] at (-9.5,-0.2) {$R(u)$};
\draw[gray,dashed,thin] (-9.5,-0.1) -- (-9.5,0.05);

\node (Q)[scale=1] at (0.5,-0.2) {$Q(u)$};
\draw[gray,dashed,thin] (0.5,-0.1) -- (0.5,0.27);

\node (G)[scale=1] at (5,-0.2) {$G(u)$};
\draw[gray,dashed,thin] (5,-0.1) -- (5,0.64);

\node (S)[scale=1] at (8,-0.2) {$S(u)$};
\draw[gray,dashed,thin] (8,-0.1) -- (8,0.45);

\node (E)[scale=1,blue] at (7,0.7) {$y\mapsto\sE_{u}(y)$};
\node (D)[scale=1,black] at (9.5,0.2) {$y\mapsto D(y)$};
\node (P)[scale=1,red] at (0.5,0.9) {slope $\phi(u)$};
\draw [gray,->] (0.5,0.8) to[out=230, in=70] (-4.5,0.25);
\draw [gray,->] (0.55,0.8) to[out=310, in=110] (4,0.45);
\end{axis}
	
	\end{tikzpicture}
	\caption{Plot of locations of $R(u)$, $Q(u)$, $G(u)$ and $S(u)$ in the case where $R(u)<Q(u)<G(u)<S(u)$. The dashed curve represents $D$. The dotted curve corresponds to the graph of $\sE_{u}$. Note that $D=\sE_u$ on $(-\infty, G(u)]$, while $\sE_u$ is convex and $D\leq\sE_u$ on $(G(u),\infty)$. The solid curve below $\sE_{u}$ represents $\sE^c_{u}$. The convex hull $\sE^c_{u}$ is linear on $[R(u),S(u)]$, and its slope is given by $\phi(u)$.}
	\label{fig:RS}
\end{figure}

Define $Q,S:(0,1)\mapsto\R$ by
\begin{align}
\label{eq:Q} Q(u) &:= X^{\sE_u}(G(u)) \\
\label{eq:S} S(u) &:= Z^{\sE_u}(G(u))
\end{align}

\begin{lem}
\label{lem:R=G=S}
$Q(u)=G(u)$ if and only if $S(u)=G(u)$.
\end{lem}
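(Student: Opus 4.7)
The plan is to show that each of the conditions $Q(u)=G(u)$ and $S(u)=G(u)$ is equivalent to the single statement
\[
\sE_u^c(G(u))=\sE_u(G(u));
\]
geometrically, both say that the graph of $\sE_u$ touches its convex hull at the point $G(u)$. The argument boils down to continuity plus the definitions of $X^{\sE_u}$ and $Z^{\sE_u}$.

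First I would record that $\sE_u=P_\nu-P_{\mu_u}$ is a difference of two modified potentials, both of which are finite and Lipschitz on $\R$; hence $\sE_u$ is continuous on $\R$. Moreover $0\le \sE_u^c\le \sE_u$, so $\sE_u^c$ is a finite-valued convex function on $\R$ and therefore also continuous.

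Next, the implication $\sE_u^c(G(u))=\sE_u(G(u))\Rightarrow Q(u)=G(u)$ and $S(u)=G(u)$ is immediate: the point $G(u)$ itself then lies in each of the sets appearing in \cref{defn:convh} applied to $\sE_u$ at $G(u)$, and it is already the right endpoint of the range for $Q(u)$ and the left endpoint of the range for $S(u)$; so both the sup and the inf are attained at $G(u)$. For the converse, suppose $Q(u)=G(u)$. Since $G(u)$ is finite and $\sup\emptyset=-\infty$, the set $\{x\le G(u):\sE_u^c(x)=\sE_u(x)\}$ is non-empty, and either it contains $G(u)$ (and we are done) or there exists a sequence $x_n\uparrow G(u)$ along which $\sE_u^c(x_n)=\sE_u(x_n)$; continuity of $\sE_u$ and $\sE_u^c$ then yields $\sE_u^c(G(u))=\sE_u(G(u))$. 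The implication $S(u)=G(u)\Rightarrow \sE_u^c(G(u))=\sE_u(G(u))$ is symmetric, using a sequence $z_n\downarrow G(u)$.

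Chaining these four implications gives the stated equivalence. I do not foresee a substantive obstacle: the whole argument is a routine use of continuity, and the two-sided symmetry between the definitions of $Q$ and $S$ relative to $G(u)$ is what forces them to agree on whether or not they equal $G(u)$. The only thing worth being a bit careful about is covering both cases (the extremum in \cref{defn:convh} is attained, or only approached in the limit), which is why the continuity of $\sE_u^c$ is needed in addition to that of $\sE_u$.
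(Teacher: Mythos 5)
Your proof is correct and takes essentially the same approach as the paper: both arguments reduce the equivalence to the single condition $\sE_u^c(G(u))=\sE_u(G(u))$, using continuity of $\sE_u$ (hence closedness of the contact set $\{\sE_u^c=\sE_u\}$) to guarantee that the supremum defining $Q(u)$ and the infimum defining $S(u)$ are attained. The paper states this contact fact directly ("By continuity of $\sE_u(\cdot)$ we have $\sE_u(Q(u)) = \sE^c_u(Q(u))$") and then pivots immediately, whereas you spell out the limiting-sequence justification; the substance is the same.
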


\begin{proof}
Fix $u\in(0,1)$.
By continuity of $\sE_u(\cdot)$ we have $\sE_u(Q(u)) = \sE^c_u(Q(u))$, and $\sE_u(S(u)) = \sE^c_u(S(u))$.

Suppose $Q(u)=G(u)$. Then $G(u) \in \{z: z \geq G(u), \sE^c_u(z) = \sE_u(z) \}$ and hence
$S(u) = \inf \{z: z \geq G(u), \sE^c_u(z) = \sE_u(z) \} = G(u)$.

The reverse implication follows by symmetry.
\end{proof}

We want to introduce a function $\phi:(0,1)\mapsto\R$ which represents the slope of $\sE^c_u(\cdot)$ at $G(u)$. If $Q(u)< G(u) < S(u)$, then this slope is well defined.
If $Q(u)=G(u)$ or $G(u)=S(u)$ then the slope of $\sE^c_u$ may not be well defined at $G(u)$. To cover all cases we define:
\begin{defn}
\label{def:slope}
$\phi:(0,1) \mapsto \R$ is given by $\phi(u) = \inf \{ \psi: \psi \in \partial \sE_u^c (G(u)) \}$.
\end{defn}

\begin{lem}
	\label{lem:phialt}
	$\phi(u) = (\sE^c_u)'(G(u)-) = (\sE^c_u)'(S(u)-)$.
\end{lem}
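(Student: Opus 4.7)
The plan is to prove the two equalities in turn. For $\phi(u) = (\sE^c_u)'(G(u)-)$, I would first note that $\sE_u^c$ is a proper convex function finite on all of $\R$: indeed $\sE_u = D + (P_\mu - P_{\mu_u}) \geq 0$ since both summands are non-negative (the first by $\mu \leq_{cx}\nu$, the second by $\mu_u \leq \mu$), so $\sE_u^c \geq 0$, while $\sE_u(k)$ has asymptote $0$ at $-\infty$ and asymptotic slope $1-u > 0$ at $+\infty$, so admits an affine minorant on $\R$. For any such proper convex function $h$ and any $x \in \R$, standard convex analysis gives $\partial h(x) = [h'(x-), h'(x+)]$, whose infimum is $h'(x-)$. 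Applying this at $x = G(u)$ with $h = \sE_u^c$ yields the first equality directly from \cref{def:slope}.

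For the second equality, I would split into cases according to whether $S(u) = G(u)$. If $S(u) = G(u)$, the equality is trivial. Otherwise $G(u) < S(u)$, and \cref{lem:R=G=S} forces $Q(u) < G(u)$ as well, so $Q(u) < G(u) < S(u)$. By the very definitions of $Q(u)$ and $S(u)$ as the nearest contact points of $\sE^c_u$ with $\sE_u$ on either side of $G(u)$, we have $\sE^c_u < \sE_u$ strictly throughout the open interval $(Q(u), S(u))$. Since $\sE_u$ is continuous (being the difference of two continuous modified potentials), \cref{lem:linear} applies and yields that $\sE^c_u$ is affine on $(Q(u), S(u))$ with some constant slope $m$. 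Hence $(\sE^c_u)'(x-) = m$ for every $x \in (Q(u), S(u)]$, and specializing to $x = G(u)$ and $x = S(u)$ gives $(\sE^c_u)'(G(u)-) = m = (\sE^c_u)'(S(u)-)$.

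The main thing to be careful about is not any single computation but the verification that $\sE_u^c$ is a proper convex function on all of $\R$, so that the standard characterisation of the subdifferential as the closed interval of one-sided derivatives is applicable without any pathology at infinity. Once that is settled, both equalities follow from the geometric observation that $G(u)$ and $S(u)$ lie in the closure of a single linear segment of $\sE_u^c$.
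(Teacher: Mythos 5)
Your proof is correct and follows essentially the same approach as the paper's: the first equality comes from the standard identification of the infimum of the subdifferential of a convex function with its left derivative, and the second from the observation that when $G(u)<S(u)$ the convex hull $\sE^c_u$ is affine on a segment containing $G(u)$ with right endpoint $S(u)$. Your added verification that $\sE^c_u$ is proper, and the explicit appeal to \cref{lem:R=G=S} and \cref{lem:linear}, merely spell out steps the paper leaves implicit (the paper instead cites the identity $\sE^c_u=L^{\sE_u}_{Q(u),S(u)}$ on $[Q(u),S(u)]$, which is the same fact).
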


\begin{proof}
	The first equality is immediate from the definition of $\phi$, as is the second one provided $G(u)=S(u)$. On the other hand, if $G(u)<S(u)$, then $\sE^c_u=L^{\sE_u}_{Q(u),S(u)}$ on $[Q(u),S(u)]\supset\{G(u)\}$ and the second equality follows.
\end{proof}

Now we can introduce our second candidate lower function.

Recall the definition of $ L^f_{a,b}$ for any $f:\R\mapsto\R$ (see \eqref{eq:L1}), so that (in the case $a<b$) $L^f_{a,b}$ is the line passing through $(a,f(a))$ and $(b, f(b))$. Define also $L^{f,\psi}_a$ by $L^{f,\psi}_a(y) = f(a) + \psi(y-a)$ so that $L^{f,\psi}_a$ is the line passing through $(a,f(a))$ with slope $\psi$. (Note that, in the case $a=b$, $L^f_{a,a}=L^{f,0}_a$.)
Define $R:(0,1)\mapsto\R$ by
\begin{equation}    \label{eq:R}
R(u):=  \inf\{k:k \leq G(u), D(k) = {L}^{\sE^c_u ,\phi(u)}_{G(u)}(k) \}.
\end{equation}
If $Q(u)<G(u)$ then the definition of $R$ can be rewritten as $R(u)=   \inf\{k:k \leq G(u), D(k) = L^{\sE_u}_{Q(u),S(u)}(k) \}$.
Note that 
$Q(u) \in \{k:k \leq G(u), D(k) = {L}^{\sE^c_u,\phi(u)}_{G(u)}(k) \}$ so that $R(u)$ exists in all cases and satisfies $R(u) \leq Q(u)$. See Figure~\ref{fig:RS}.

Recall that $\ell_\nu$ and $r_\nu$ are the left- and right-hand endpoints of the interval $\{ k: D(k)>0 \}$.  The next lemma, the proof of which is postponed until Appendix \ref{ssec:proofsGC}, shows that $R$ and $S$ are finite on $(0,1)$.
\begin{lem}\label{lem:RSfinite}
	Fix $u\in(0,1)$. Either $-\infty<\ell_\nu\leq R(u)$  or $-\infty=\ell_\nu<R(u)$. Similarly, either $S(u) \leq r_\nu<-\infty$ or $S(u) < r_\nu=\infty$.
\end{lem}

If $Q(u)<S(u)$, then by construction, $\sE^c_u<\sE_u$ on $(Q(u),S(u))$ and $\sE^c_u \leq \sE_u$ on $[R(u),S(u)]$. In particular, $\sE^c_u$ is linear on $(R(u),S(u))$, whilst $\sE^c_u(S(u))=\sE_u(S(u))$, $\sE^c_u(Q(u))=\sE_u(Q(u))=D(Q(u))$ and $\sE^c_u(R(u))=\sE_u(R(u))=D(R(u))$. It follows that if $Q(u)<S(u)$ then
\begin{equation*}
\phi(u) = \frac{\sE_u(S(u))-D(Q(u))}{S(u)-Q(u)} = \frac{\sE_u(S(u))-D(R(u))}{S(u)-R(u)}.
\end{equation*}
Further, $\phi(u)$ is an element of each of $\partial \sE_u(R(u))$, $\partial \sE_u(Q(u))$ and $\partial\sE_u(S(u))$ together with $\partial \sE^c_u(R(u))$, $\partial \sE^c_u(Q(u))$ and $\partial\sE^c_u(S(u))$.

Our goal is to prove first that $(R,S)$ is left-monotone with respect to $G$ on $(0,1)$ in the sense of Definition~\ref{def:lmonfns} (Theorem~\ref{thm:monotone} below) and second that they define a martingale coupling of $\mu$ and $\nu$ (Theorem~\ref{thm:construction2} below). Together, these results give an explicit construction of a pair $(R,S)$ which solve the problem in Theorem~\ref{thm:construction} above.

We begin with some preliminary lemmas and other results. We are interested in properties of $u\mapsto\sE_u(k)$, for fixed $k\in\R$. Let $0<u<v<1$. Then, since $\mu_u \leq \mu_v$, $P_{\mu_u} \leq P_{\mu_v}$ and
\begin{equation}\label{eq:Eu}
\sE_u(k)\geq\sE_{v}(k),\quad k\in\R.
\end{equation}
Indeed, for $k\in\R$,
\begin{align}
&\sE_u(k)-\sE_{v}(k)=P_{\mu_{v}}(k)- P_{\mu_u}(k)\nonumber\\
&=\begin{cases}\label{eq:Ediff}
0,&\textrm{if }k\leq G(u);\\
P_\mu(k)-P_\mu(G(u))-u(k-G(u)),&\textrm{if }G(u)<k\leq G(v);\\
P_\mu(G(v)) - P_\mu(G(u))-v(G(v)-G(u))+(v-u)(k-G(u)),&\textrm{if }k > G(v).
\end{cases}
\end{align}
Note that, for $0<u<v<1$ and $k\geq G(v)$, \eqref{eq:Ediff} can be written as
\begin{equation}\label{eq:diffE}
\sE_u(k)-\sE_{v}(k)=(v-u)\left(k-R(u)\right)+\Gamma_{u,v},
\end{equation}
where
\[ \Gamma_{u,v} = P_\mu(G(v)) - \{ P_\mu(G(u)) + v ( G(v) - G(u) ) \} - (v-u)(G(u) - R(u)) \]
does not depend on $k$.
Convexity of $P_\mu$ ensures that $\Gamma_{u,v}\leq0$, and  if $R(u)<G(u)$ then $\Gamma_{u,v}<0$.
Further, from \eqref{eq:Eu} we also have that $\sE^c_{v} \leq \sE_{v} \leq \sE_u$, so that $\sE^c_{v}$ is a convex minorant of $\sE_u$ and
\begin{equation}\label{eq:hullEu}
\sE^c_u(k)\geq\sE^c_{v}(k),\quad k\in\R.
\end{equation}
Finally, for any $u\in(0,1)$, $\sE_u$ is defined as a difference of two convex functions, and thus its left and right derivatives exist. It follows that
for $v>u$ and $k \geq G(v)$,
\begin{equation}\label{eq:EslopeDiff+}
\sE_u^\prime(k+)-\sE_{v}^\prime(k+) = v-u.
\end{equation}
Again for $v>u$,
\begin{equation}\label{eq:EslopeDiff-}
 \sE_u^\prime(k-)-\sE_{v}^\prime(k-) = \left\{ \begin{array}{ll} v-u,  &\textrm{if } k > G(v); \\
                                                            P'_{\mu}(k-)-u,   &\textrm{if } k \in (G(u),G(v)]; \\
                                                            0, &\textrm{if } k\leq G(u).
                                                             \end{array} \right.
\end{equation}

\begin{lem}\label{lem:easyderivcomparison}
	Suppose $k \in [G(v-),G(v+)]$, $u<v$ and $G(u+)<k$. Suppose $\sE_v'(k-) \leq \sE'_v(k+)$ and $\psi \in [\sE_v'(k-) ,\sE'_v(k+)]$. Then $\sE_u'(k-) \leq \psi + (v-u) \leq \sE_u'(k+)$.
\end{lem}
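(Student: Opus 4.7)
The plan is to reduce the desired bracket inequality $\sE_u'(k-) \leq \psi + (v-u) \leq \sE_u'(k+)$, given the hypothesis $\psi \in [\sE_v'(k-), \sE_v'(k+)]$, to the two one-sided slope comparisons
\[
\sE_u'(k-) - \sE_v'(k-) \leq v - u \leq \sE_u'(k+) - \sE_v'(k+).
\]
The assumption $G(u+) < k$ forces $k > G(u)$, so $\mu_u$ sits entirely in $(-\infty, G(u)]$ and $P_{\mu_u}$ is affine with slope $u$ in a neighbourhood of $k$. This reduces the $\sE_u$ side to $\sE_u'(k\pm) = P_\nu'(k\pm) - u$.

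I would then split on the position of $k$ relative to $G(v)$, appealing to \eqref{eq:EslopeDiff+}--\eqref{eq:EslopeDiff-} where possible. If $k > G(v)$, both one-sided slope differences equal $v-u$ exactly. If $k = G(v)$, the right derivative is covered by \eqref{eq:EslopeDiff+}, while for the left derivative the identity $\sE_v = D$ on $(-\infty, G(v)]$ yields $\sE_u'(k-) - \sE_v'(k-) = F_\mu(G(v)-) - u$. If $k \in [G(v-), G(v))$, then $\sE_v = D$ throughout a neighbourhood of $k$ and the two one-sided differences become $F_\mu(k-) - u$ and $F_\mu(k) - u$ respectively.

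Combining these computations, the lemma reduces to the quantile inequality $F_\mu(k-) \leq v \leq F_\mu(k)$ for every $k \in [G(v-), G(v+)]$. The one-sided limits of any version of the quantile coincide with its left- and right-continuous versions, $G(v-) = \inf\{x : F_\mu(x) \geq v\}$ and $G(v+) = \inf\{x : F_\mu(x) > v\}$. Right-continuity of $F_\mu$ then forces $F_\mu(k) \geq F_\mu(G(v-)) \geq v$ for $k \geq G(v-)$, while $k \leq G(v+)$ implies $F_\mu(x) \leq v$ for all $x < k$, hence $F_\mu(k-) \leq v$. Plugging this into the case analysis produces both slope inequalities, and then adding $(v-u)$ to $\sE_v'(k-) \leq \psi \leq \sE_v'(k+)$ and sandwiching against $\sE_u'(k\pm)$ finishes the proof.

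The main technical obstacle is the boundary case $k = G(v)$, where $\sE_v$ may have a genuine asymmetric kink: its left derivative reflects only $D$ (via $P_\mu$), while its right derivative feels the affine extension of $P_{\mu_v}$ with slope $v$. This asymmetry is precisely why the hypothesis $\sE_v'(k-) \leq \sE_v'(k+)$ is needed to make the choice of $\psi$ meaningful; once the one-sided derivative formulas are in place, the elementary quantile inequality closes the argument cleanly and uniformly across all subcases.
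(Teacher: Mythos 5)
Your proof is correct and rests on the same two facts the paper uses: $\sE_u'(k\pm)=P_\nu'(k\pm)-u$ (because $k>G(u+)\geq G(u)$ places $k$ in the range where $P_{\mu_u}$ is affine with slope $u$) and $P_{\mu_v}'(k-)\leq v = P_{\mu_v}'(k+)$ for $k\in[G(v-),G(v+)]$, which is exactly the quantile inequality $F_\mu(k-)\leq v\leq F_\mu(k)$ you isolate. The paper compresses this into a single sentence by bounding $\sE_v'(k\pm)$ against $P_\nu'(k\pm)-v$ directly, whereas you arrive at the same bounds via a case split on $k$ versus $G(v)$ (appealing to \eqref{eq:EslopeDiff+} and \eqref{eq:EslopeDiff-}); the case distinction is sound but unnecessary, since the quantile inequality already covers all $k$ in the interval uniformly.
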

\begin{proof}
	The result follows easily from the fact that $\sE_v'(k-) = P'_\nu(k-) - P'_{{ \mu_v}}(k-) \geq P'_\nu(k-) - v$ together with $\sE_v'(k+) = P'_\nu(k+) - v$ and
	$\sE_u'(k \pm) = P'_\nu(k\pm) - u$.
\end{proof}

The proof of the following lemma and its corollary are deferred until \cref{ssec:proofsGC}.
\begin{lem}Fix $v>u$. Suppose $\sE^c_u(z) = \sE^c_v(z) = \sE_u(z)$ for some $z$. Then $\sE^c_u = \sE^c_v$ on $(-\infty,z]$.
	\label{lem:chagree}
\end{lem}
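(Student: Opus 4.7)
The plan is to (i) extract from the hypothesis that $\sE_u(z) = \sE_v(z)$ and that $z$ is a contact point of $\sE^c_v$; (ii) propagate this pointwise equality to the whole of $(-\infty, z]$; and (iii) transfer the resulting equality of $\sE_u$ and $\sE_v$ on $(-\infty, z]$ to their convex hulls via the chord representation \eqref{eq:chull}.

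For (i), I combine \eqref{eq:Eu} with the trivial bound $\sE^c_v \leq \sE_v$ to obtain the sandwich $\sE_u \geq \sE_v \geq \sE^c_v$. The hypothesis $\sE_u(z) = \sE^c_v(z)$ then collapses everything at $z$, yielding $\sE_u(z) = \sE_v(z) = \sE^c_u(z) = \sE^c_v(z)$; in particular $z$ is a contact point of $\sE^c_v$ as well.

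For (ii), I set $\Delta := \sE_u - \sE_v = P_{\mu_v} - P_{\mu_u}$. Since $\mu_u \leq \mu_v$, $\Delta$ is the modified potential of the non-negative measure $\mu_v - \mu_u$, so $\Delta$ is convex and non-negative. From \eqref{eq:Ediff}, $\Delta \equiv 0$ on $(-\infty, G(u)]$. If $z > G(u)$, then $\Delta(G(u)) = \Delta(z) = 0$ together with convexity and non-negativity of $\Delta$ forces $\Delta \equiv 0$ on $[G(u), z]$ (a convex non-negative function vanishing at two points vanishes on the segment between them). Either way, $\sE_u = \sE_v$ on $(-\infty, z]$.

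For (iii), I first claim that for any $k \leq z$, the infimum in \eqref{eq:chull} for either $\sE^c_u(k)$ or $\sE^c_v(k)$ can be restricted to pairs $(x, z')$ with $x \leq k \leq z' \leq z$. Fix $f \in \{\sE_u, \sE_v\}$ and any $x \leq k \leq b$ with $b > z$. Since $L^f_{x, b}$ is a chord between two points on the graph of $f$, it lies above $f^c$ on $[x,b]$, and in particular $L^f_{x, b}(z) \geq f^c(z) = f(z) = L^f_{x,z}(z)$. As $L^f_{x, b}$ and $L^f_{x, z}$ both pass through $(x, f(x))$, a slope comparison then gives $L^f_{x, z}(k) \leq L^f_{x, b}(k)$ throughout $[x, z]$, establishing the claim. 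With the infima thus restricted and using $\sE_u = \sE_v$ on $(-\infty, z]$ from (ii), the chords $L^{\sE_u}_{x, z'}$ and $L^{\sE_v}_{x, z'}$ coincide for all admissible $(x, z')$, so the two infima are identical and $\sE^c_u = \sE^c_v$ on $(-\infty, z]$.

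The main obstacle is the chord-restriction argument in (iii); it is precisely here that the contact property $f^c(z) = f(z)$ is essential, since without it one could not rule out optimal chords whose right endpoint lies strictly beyond $z$, where the equality $\sE_u = \sE_v$ fails.
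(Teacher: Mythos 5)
Your proof is correct, and it takes a genuinely different route from the paper's. The paper proves the result by constructing the spliced function $\tE$ equal to $\sE^c_u$ on $(-\infty,z]$ and to $\sE^c_v$ on $(z,\infty)$, and then verifying by a somewhat lengthy case analysis that $\tE$ is convex and dominated by $\sE_v$; maximality of the convex hull then forces $\sE^c_u \leq \sE^c_v$ on $(-\infty,z]$, and equality follows since $\sE^c_v \leq \sE^c_u$ holds globally. You instead isolate two cleaner facts: first, that $\Delta = \sE_u - \sE_v = P_{\mu_v - \mu_u}$ is a convex non-negative function vanishing at $G(u)$ and at $z$, hence vanishing on $[G(u),z]$, so $\sE_u = \sE_v$ on all of $(-\infty,z]$ (the paper reaches the same equality by noting $z \leq G(u+)$ and arguing on $(G(u+),\infty)$, but your convexity-of-$\Delta$ argument is arguably cleaner); second, that because $z$ is a contact point of both hulls, the infimum in \eqref{eq:chull} for $f^c(k)$ with $k \leq z$ can be restricted to chords whose right endpoint lies in $[k,z]$, via the slope comparison you give. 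This chord-restriction lemma is the key new ingredient and replaces the paper's convexity verification of $\tE$; it sidesteps the casework entirely and makes it transparent exactly where the hypothesis $\sE^c_u(z)=\sE_u(z)$ (and the derived $\sE^c_v(z)=\sE_v(z)$) is used. Both proofs have comparable length, but yours localizes the argument to the representation \eqref{eq:chull} and avoids introducing an auxiliary function whose convexity must be checked by hand.
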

\begin{cor}\label{cor:R=R}
	Suppose $v>u$ and $S(u) = S(v) \leq G(u+)$. Then $\sE^c_u = \sE^c_v$ on $(-\infty, S(v)]$. Further $\phi(u)=\phi(v)$ and $R(u)=R(v)$.
\end{cor}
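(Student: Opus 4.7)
The plan is to exploit \cref{lem:chagree} by locating a common touching point of the two convex hulls. First I would observe that the hypothesis forces a chain of equalities: since $G$ is non-decreasing we have $G(u+)\leq G(v)$, and since $S(w)\geq G(w)$ we have $G(v)\leq S(v)$; the assumption $S(v)\leq G(u+)$ collapses the chain, so that $S(u)=S(v)=G(v)=G(u+)$. Denote this common value by $z_0$.

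To invoke \cref{lem:chagree} at $z=z_0$ I need $\sE^c_u(z_0)=\sE^c_v(z_0)=\sE_u(z_0)$. The equalities $\sE^c_u(z_0)=\sE_u(z_0)$ and $\sE^c_v(z_0)=\sE_v(z_0)$ are immediate from $z_0=S(u)=S(v)$, so the substantive point is $\sE_u(z_0)=\sE_v(z_0)$. By the second case of \eqref{eq:Ediff} evaluated at $k=z_0=G(u+)$ this reduces to the identity $P_\mu(G(u+))-P_\mu(G(u))=u\bigl(G(u+)-G(u)\bigr)$, a standard quantile-function fact: $F_\mu\equiv u$ on $[G(u),G(u+))$, so $P_\mu$ is linear with slope $u$ on $[G(u),G(u+)]$. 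Then \cref{lem:chagree} gives $\sE^c_u=\sE^c_v$ on $(-\infty,S(v)]$. Comparing left derivatives at $z_0$ and invoking \cref{lem:phialt} yields $\phi(u)=(\sE^c_u)'(S(u)-)=(\sE^c_v)'(S(v)-)=\phi(v)$.

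The most delicate step is the final equality $R(u)=R(v)$; my strategy is to show that the two affine functions $\ell_u:=L^{\sE^c_u,\phi(u)}_{G(u)}$ and $\ell_v:=L^{\sE^c_v,\phi(v)}_{G(v)}$ coincide. They have the same slope $\phi(u)=\phi(v)$, so it suffices to match them at one point. By \cref{lem:phialt}, $\phi(u)=(\sE^c_u)'(G(u)-)=(\sE^c_u)'(S(u)-)=(\sE^c_u)'(G(v)-)$; monotonicity of the derivative of the convex function $\sE^c_u$ then pins this derivative to $\phi(u)$ throughout $[G(u),G(v)]$, so $\sE^c_u$ is linear with slope $\phi(u)$ on that interval. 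Combined with $\sE^c_u(G(v))=\sE^c_v(G(v))$ from the previous paragraph this gives $\ell_u(G(v))=\ell_v(G(v))$, hence $\ell_u\equiv\ell_v$; call the common line $\ell$. It remains to compare the sets $\{k\leq G(u):D(k)=\ell(k)\}$ and $\{k\leq G(v):D(k)=\ell(k)\}$. They differ at most by $\{G(v)\}$: on the open interval $(G(u),G(v))$ one has $D=\sE_u$ (since $P_\mu=P_{\mu_u}$ there, by the same linearity) while $\sE^c_u=\ell$, and $(G(u),G(v))\subset(Q(u),S(u))$ is precisely where $\sE_u>\sE^c_u$. Since $Q(u)\leq G(u)$ lies in the first set, appending the single point $G(v)\geq G(u)$ cannot change the infimum, so $R(u)=R(v)$.
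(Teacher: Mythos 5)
Your proof is correct and rests on the same key step as the paper's: applying \cref{lem:chagree} at the common point $z_0=S(u)=S(v)=G(v)=G(u+)$ after checking $\sE_u(z_0)=\sE_v(z_0)$ via the linearity of $P_\mu$ on $[G(u),G(u+)]$. For the remaining two assertions you take slightly cleaner routes---deducing $\phi(u)=\phi(v)$ from \cref{lem:phialt} and the just-established hull equality by matching left derivatives at $z_0$, rather than re-inspecting the proof of \cref{lem:phiRepresentation} to extend the supremum range as the paper does, and explicitly verifying that the two defining lines $L^{\sE^c_u,\phi(u)}_{G(u)}$ and $L^{\sE^c_v,\phi(v)}_{G(v)}$ coincide and that enlarging the constraint from $k\leq G(u)$ to $k\leq G(v)$ appends only the root $G(v)\geq Q(u)$, leaving the infimum unchanged---an amplification worth having, since the paper dispatches the $R$-equality with a terse remark that $R(\cdot)$ depends on $\sE_{\cdot}$ only through $\sE^c_{\cdot}$ on $(-\infty,S(\cdot)]$.
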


For $0 < u < v <1$ let $\xi_{u,v}$ be given by
\[ (v - u) \xi_{u,v} = \int_{(G(u),G(v))} x \mu(dx) + G(u) \{ \mu(-\infty, G(u)] - u \} + G(v) \{ v - \mu(-\infty, G(v)) \}. \]
Then $\xi_{u,v}$ is the conditional mean of $\mu$ between the quantiles at $u$ and $v$. It is easily checked that $\xi_{u,v}$ does not depend on the choice of quantile function $G$.

\begin{lem}
\label{lem:tangentsmeet}
We have
\begin{enumerate}
\item[(i)] Suppose $k \in [G(v-), G(v+)]$ and $u<v$. Then the line passing through $(k,\sE_v(k))$ with slope $P_\nu'(k-)-v$ meets the line passing through $(k,\sE_u(k))$
with slope $P_\nu'(k-)-u$ at a point with $x$-coordinate $\xi_{u,v}$, which does not depend on $k$.

\item[(ii)] Take $k > G(v+)$ and suppose $u<v$. Then the line passing through $(k,\sE_v(k))$ with slope $\sE_v'(k-)$ meets the line passing through $(k,\sE_u(k))$
with slope $\sE_u'(k-)$ at a point with $x$-coordinate $\xi_{u,v}$, which does not depend on $k$.

\item[(iii)] Suppose $k \in [G(w-),G(w+)]$, $u<w$, $G(u+)<k$, $\sE_w'(k-) \leq \sE'_w(k+)$ and $\psi \in [\sE_w'(k-) ,\sE'_w(k+)]$. Then the line passing through $(k,\sE_w(k))$ with slope $\psi$ meets the line passing through $(k,\sE_u(k))$
with slope $\psi + (w-u)$ at a point with $x$-coordinate $\xi_{u,v}$, which does not depend on $k$.
\end{enumerate}
\end{lem}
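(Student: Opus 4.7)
My proof plan rests on a single observation: for two lines through $(k,A)$ and $(k,B)$ with slopes $s_A$ and $s_B\neq s_A$, the $x$-coordinate of their intersection is $k+(A-B)/(s_B-s_A)$, which is independent of the common abscissa $k$ exactly when $(A-B)/(s_B-s_A)$ is itself independent of $k$. In each of the three cases the two slopes differ by exactly $v-u$ (respectively $w-u$ in case (iii)), so the intersection $x$-coordinate satisfies
\[ (v-u)(x-k) \;=\; \sE_v(k)-\sE_u(k) \;=\; P_{\mu_u}(k)-P_{\mu_v}(k), \]
with $v$ replaced by $w$ in case (iii). Hence the entire lemma reduces to showing that the right-hand side is an affine function of $k$ with slope $-(v-u)$, and that its constant term equals $(v-u)\xi_{u,v}$ (respectively $(w-u)\xi_{u,w}$).

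The identity $P_{\mu_u}(k)=P_\mu(k\wedge G(u))+u(k-G(u))^+$ shows that $P_{\mu_u}$ is affine in $k$ with slope $u$ on $[G(u),\infty)$, and the analogous statement holds for $P_{\mu_v}$. In case (ii), where $k>G(v+)\geq G(v)\geq G(u)$, both functions are in their linear regime, so
\[ P_{\mu_u}(k)-P_{\mu_v}(k) \;=\; -(v-u)k+\bigl[P_\mu(G(u))-uG(u)\bigr]-\bigl[P_\mu(G(v))-vG(v)\bigr]. \]
Computing $P_\mu(G(v))-P_\mu(G(u))$ by splitting the integral defining $P_\mu$ over the regions $(-\infty,G(u)]$ and $(G(u),G(v))$ and regrouping recovers precisely the three-term expression $M+G(u)(\mu((-\infty,G(u)])-u)+G(v)(v-\mu((-\infty,G(v))))$ appearing in the definition of $(v-u)\xi_{u,v}$; this settles case (ii).

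Case (i) reduces to the same computation once one observes that, for $k\in[G(v-),G(v+)]$, the quantile plateau carries no $\mu$-mass, so $P_\mu$, and hence $P_{\mu_v}$, is affine with slope $v$ throughout $[G(v-),G(v+)]$; the algebra of case (ii) then applies with $G(v)$ replaced by any chosen representative in $[G(v-),G(v+)]$, and the resulting $x$ is unchanged because $\xi_{u,v}$ is invariant under the choice of representative. Case (iii) is a direct generalisation of case (i): the two slopes again differ by $w-u$ by hypothesis, and the condition $G(u+)<k\leq G(w+)$ places us once more in the regime where $P_{\mu_u}-P_{\mu_w}$ is affine in $k$ with slope $-(w-u)$, so the same algebra gives $x=\xi_{u,w}$, independent of both the chosen $\psi$ and of $k$.

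The computations are largely routine; the main obstacle is careful bookkeeping around atoms of $\mu$ at the points $G(u),G(v),G(w)$ and around the non-uniqueness of the quantile function on plateaus of $F_\mu$. In particular, verifying that the formula defining $\xi_{u,v}$ gives the same value regardless of the representative chosen for $G(u),G(v)$, and that the affine representations of $P_{\mu_u},P_{\mu_v}$ glue correctly at the endpoints of these plateaus (which is where cases (i) and (ii) must be patched together), requires a short but careful case analysis.
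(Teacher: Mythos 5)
Your proof is correct and follows essentially the same route as the paper: both reduce the claim to the observation that the $P_\nu$-contributions cancel (you do this via the intersection formula $x = k + (\sE_v(k)-\sE_u(k))/(v-u)$, the paper by subtracting the common piece $P_\nu(k)+P_\nu'(k-)(z-k)$ from both lines), leaving only $P_{\mu_u}(k)-P_{\mu_v}(k)$, which is affine in $k$ with slope $-(v-u)$ on the relevant range, and whose constant term is then matched to the definition of $\xi_{u,v}$. The only cosmetic difference is that you compute the intersection abscissa explicitly, while the paper frames it as finding where the two offset lines $P_{\mu_u}(k)+u(z-k)$ and $P_{\mu_v}(k)+v(z-k)$ meet.
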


\begin{proof}
(i) The line $L^{\sE_v,P_\nu'(k-) - v}_k$ is given by
\[ L^{\sE_v,P_\nu'(k-) - v}_k(z) = \sE_v(k) + (P_\nu'(k-) - v)(z-k) = P_\nu(k) + P_\nu'(k-)(z-k) - ( P_{\mu_v}(k) + v(z-k) ). \]
Similarly the line $L^{\sE_u,P_\nu'(k-) - u}_k$ is given by
\[ L^{\sE_u,P_\nu'(k-) - u}_k(z) = \sE_u(k) + (P_\nu'(k-) - u)(z-k) = P_\nu(k) + P_\nu'(k-)(z-k) - ( P_{\mu_u}(k) + u(z-k) ). \]
These lines intersect at the point where the lines $P_{\mu_v}(k) + v(z-k)$ and $P_{\mu_u}(k) + u(z-k)$ meet.
But for $k \geq G(v-)$, $P_{\mu_v}(k) + v(z-k) = P_{\mu_v}(G(v)) + v(z-G(v))$ and $P_{\mu_u}(k) + u(z-k) = P_{\mu_u}(G(u)) + u(z-G(u))$.

It is easily checked that the lines $P_{\mu_v}(G(v)) + v(z-G(v))$ and $P_{\mu_u}(G(v)) + u(z-G(u))$ meet at
$z =\xi_{u,v}$.

(ii) If $k>G(v+)$ then from the definition of $P_{\mu_v}$ it follows that $P'_{\mu_v}(k-)=v$ and therefore $\sE'_v(k-) = P_\nu'(k-)-v$. Since $k> G(v+)\geq G(u)$, the same argument shows that $\sE'_u(k-) = P_\nu'(k-)-u$. Then (ii) follows exactly as in the proof of (i).

(iii) This follows similarly to (i).

\end{proof}

 \begin{thm}\label{thm:monotone}
The pair $R,S:(0,1)\mapsto\R$, defined by \eqref{eq:R} and \eqref{eq:S} 
is left-monotone with respect to $G$ on $(0,1)$ in the sense of Definition~\ref{def:lmonfns}.
\end{thm}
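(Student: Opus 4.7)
The plan is to verify the two nontrivial components of left-monotonicity, since $R\leq G\leq S$ is built into the definitions: (I) $S$ is non-decreasing, and (II) $R(v)\notin(R(u),S(u))$ whenever $u<v$.

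For (I), suppose for contradiction $u<v$ and $S(v)<S(u)$. Then $S(u)>G(u)$, so \cref{lem:R=G=S} forces $Q(u)<G(u)$ and hence $\sE_u^c<\sE_u$ strictly on $(Q(u),S(u))$; the chain $G(u)\leq G(v)\leq S(v)<S(u)$ places $S(v)$ inside this interval. Let $\ell_v:=L^{\sE_v,\phi(v)}_{S(v)}$ be the line through $(S(v),\sE_v(S(v)))$ with slope $\phi(v)$; since $\phi(v)\in\partial\sE_v^c(S(v))\subseteq\partial\sE_v(S(v))$, $\ell_v$ is a global affine minorant of $\sE_v$ and thus of $\sE_u$. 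Consider the shifted line $\tilde\ell(k):=\ell_v(k)+(v-u)(k-\xi_{u,v})$; by \cref{lem:tangentsmeet}(iii) it passes through $(S(v),\sE_u(S(v)))$ with slope $\phi(v)+(v-u)$. I will verify $\tilde\ell\leq\sE_u$ globally on three ranges: on $[G(v),\infty)$, \eqref{eq:diffE} yields $\sE_u-\tilde\ell=\sE_v-\ell_v\geq 0$; on $(-\infty,G(u)]$, $\sE_u=\sE_v\geq\ell_v$ and $(v-u)(k-\xi_{u,v})\leq 0$ (since $k\leq G(u)\leq\xi_{u,v}$); on $[G(u),G(v)]$, \cref{lem:easyderivcomparison} places $\phi(v)+(v-u)\in\partial\sE_u(S(v))$, so $\tilde\ell$ is a supporting line at $S(v)$ of the convex restriction $\sE_u\lvert_{[G(u),\infty)}$. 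Since $\tilde\ell(S(v))=\sE_u(S(v))$, $\tilde\ell$ is a global linear minorant of $\sE_u$ achieving equality at $S(v)$, forcing $\sE_u^c(S(v))=\sE_u(S(v))$ and contradicting $S(v)\in(Q(u),S(u))$.

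For (II), suppose $u<v$ and $R(v)\in(R(u),S(u))$, and split by the position of $R(v)$. If $R(v)=G(v)$, \cref{lem:R=G=S} gives $R(v)=S(v)$, and (I) yields $R(v)=S(v)\geq S(u)$, contradicting $R(v)<S(u)$. If $R(v)\leq G(u)$, then $\sE_u=\sE_v=D$ at $R(v)$, and the tangent line $T_v:=L^{\sE_v^c,\phi(v)}_{G(v)}$ satisfies $T_v(R(v))=D(R(v))$ and $T_v\leq\sE_u^c\leq\sE_u$, forcing $\sE_u^c(R(v))=\sE_u(R(v))=\sE_v^c(R(v))$. Applying \cref{lem:chagree} yields $\sE_u^c=\sE_v^c$ on $(-\infty,R(v)]$. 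Comparing one-sided derivatives at $R(v)$---noting $R(v)$ lies strictly inside the linear segment $[R(u),S(u)]$ of $\sE_u^c$ but is the left endpoint of the linear segment of $\sE_v^c$, and combining the convexity of $\sE_v^c$ with $\sE_v^c\leq\sE_u^c$ (equality at $R(v)$)---forces $\phi(u)=\phi(v)$. Hence the linear segment of $\sE_v^c$ extends to at least $R(u)$, contradicting $R(v)>R(u)$.

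The delicate remaining subcase is $R(v)\in(G(u),G(v))$. Set $u^*:=\mu((-\infty,R(v)])\in(u,v)$; since $\mu_{u^*}$ is independent of the chosen quantile function, $\mu_{u^*}=\mu\lvert_{(-\infty,R(v)]}$ and so $\sE_{u^*}(R(v))=D(R(v))$, with $G(u^*)=R(v)$ for the left-continuous quantile (with analogous care otherwise). Since $T_v$ is a global affine minorant of $\sE_v\leq\sE_{u^*}$ achieving equality at $R(v)$, we conclude $\sE_{u^*}^c(R(v))=\sE_{u^*}(R(v))$, whence $Q(u^*)=G(u^*)$ and \cref{lem:R=G=S} gives $S(u^*)=R(v)$. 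Applying (I) to $u<u^*$ then yields $S(u)\leq S(u^*)=R(v)<S(u)$, a contradiction. The principal hurdle is (I)---specifically, bounding $\tilde\ell$ from above by $\sE_u$ on the transition interval $[G(u),G(v)]$, where $\sE_u-\sE_v$ ceases to be linear---which rests squarely on \cref{lem:easyderivcomparison}.
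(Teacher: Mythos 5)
Your proof is correct in substance but organised differently from the paper's, so let me compare, and also flag a few imprecisions that you should patch before this would stand as a rigorous argument.

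\emph{What you do differently.} The paper proves both parts of left-monotonicity simultaneously: in its Case~1 it shows $\sE_v > L^{\sE_v}_{R(u),S(u)}$ on $(R(u),S(u))$ and invokes Lemma~\ref{lem:convh2} to conclude $R(v)\le R(u)<S(u)\le S(v)$ in one stroke, and in its Case~2 it handles $S(u)\le G(v)$ separately. You instead split the claim into (I) $S$ non-decreasing, proved by a shifted-supporting-line construction, and (II) $R(v)\notin(R(u),S(u))$, proved by case analysis on the position of $R(v)$ and bootstrapping from (I). Your construction of $\tilde\ell=\ell_v+(v-u)(\cdot-\xi_{u,v})$ is a clean way to transfer the supporting line of $\sE_v$ at $S(v)$ to a supporting line of $\sE_u$ at $S(v)$, and in (II) you cleverly introduce the intermediate level $u^*=F_\mu(R(v))$ to reduce the middle case to (I). Both proofs rest on the same core algebraic fact that $\sE_u(k)-\sE_v(k)=(v-u)(k-\xi_{u,v})$ for $k\ge G(v)$, but your decomposition is conceptually simpler at the cost of a separate verification of (I).

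\emph{Imprecisions to repair.} First, you cite Lemma~\ref{lem:tangentsmeet}(iii) and Lemma~\ref{lem:easyderivcomparison} with $k=S(v)$, but both lemmas require $k\in[G(v-),G(v+)]$, which can fail when $S(v)>G(v+)$. The conclusions you want are nonetheless true: the meeting-point claim at $\xi_{u,v}$ is an algebraic consequence of the identity $\sE_u(k)-\sE_v(k)=(v-u)(k-\xi_{u,v})$ valid for \emph{any} $k\ge G(v)$ and any pair of lines with slopes differing by $v-u$, and the one-sided-derivative inclusion $\phi(v)+(v-u)\in[\sE_u'(S(v)-),\sE_u'(S(v)+)]$ follows for $S(v)>G(v+)$ directly from \eqref{eq:EslopeDiff+}--\eqref{eq:EslopeDiff-}, and for $S(v)\le G(v+)$ from Lemma~\ref{lem:easyderivcomparison} (or a short direct argument when $G(u+)=S(v)$). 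You should invoke these ingredients directly rather than the lemmas whose hypotheses do not quite fit. Second, in (I) you write ``$\phi(v)+(v-u)\in\partial\sE_u(S(v))$''; as defined in the paper $\partial\sE_u(S(v))$ is the \emph{global} subdifferential, which is \emph{empty} under your standing assumption $S(v)\in(Q(u),S(u))$ (since there $\sE_u>\sE_u^c$); what you mean is the local subgradient interval $[\sE_u'(S(v)-),\sE_u'(S(v)+)]$ of the convex restriction $\sE_u\lvert_{[G(u),\infty)}$. Third, in (II) the case boundary should be $R(v)\le G(u+)$ versus $R(v)>G(u+)$ rather than $R(v)\le G(u)$ versus $R(v)\in(G(u),G(v))$: your Case~2 argument goes through verbatim as long as $\sE_u(R(v))=D(R(v))$, which requires only $R(v)\le G(u+)$, and the intermediate-$u^*$ argument needs $u^*>u$, which is guaranteed precisely when $R(v)>G(u+)$ (whereas $R(v)\in(G(u),G(u+)]$ can give $u^*=u$). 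With this re-drawn boundary you also sidestep the worry you flagged about $G(u^*)=R(v)$; all you need is $S(u^*)\le R(v)$, which follows from $\sE_{u^*}^c(R(v))=\sE_{u^*}(R(v))$ and $R(v)\ge G(u^*)$ without pinning $G(u^*)$ exactly.

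None of these is a conceptual gap — the ideas are right and the argument can be completed along the lines you set out — but as written the lemma citations are not hypotheses-correct, one subdifferential is the wrong one, and one case split is in the wrong place.
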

\begin{proof}
That $R(u)\leq Q(u) \leq G(u)\leq S(u)$, $u\in(0,1)$, follows by definition.

Fix $0<u<v<1$.

If $R(u)=G(u)$ then necessarily $S(u)=G(u)$ (see Lemma~\ref{lem:R=G=S}). Then $S(v) \geq G(v) \geq G(u) = S(u)$. Further, $(R(u),S(u)) = \emptyset$ so that $R(v) \notin (R(u),S(u))$ by default.

There are two remaining cases, when $R(u) < G(u) \leq G(v) < S(u)$ and $R(u) < G(u) \leq S(u) \leq G(v)$.

\textit{Case 1:}  Suppose $R(u) < G(u) \leq G(v) < S(u)$.  We show that
\[ \sE_{v}(k)> L^{\sE_{v}}_{R(u),S(u)}(k) \]
for $k\in(R(u),S(u))$. Then it follows from Lemma~\ref{lem:convh2} that $R(v) \leq Q(v) =X^{\sE_{v}}(G(v)) \leq R(u) < S(u) \leq Z^{\sE_{v}}(G(v)) = S(v)$ as required.

First, for $k\in(R(u),G(u)]$ and since $\sE_u \geq \sE_{v}$ everywhere (with equality to the left of $G(u)$),
\begin{equation}
\label{eq:RneqQ1}  \sE_{v}(k)= \sE_u(k) \geq L^{\sE_u}_{R(u),S(u)}(k) > L^{\sE_{v}}_{R(u),S(u)}(k).
\end{equation}
with the strict inequality in \eqref{eq:RneqQ1} following from the fact that $S(u)>G(v)$ and $\sE_{v} < \sE_u$ on $(G(v),\infty)$.

Second, from \eqref{eq:EslopeDiff-} we have that
\begin{eqnarray}
\sE_{v}^\prime(S(u)-)&=& \sE_{ u}^\prime(S(u)-)-(v-u) \nonumber \\
&\leq & \frac{\sE_u(S(u))-D(R(u))}{S(u)-R(u)}-(v-u) \nonumber \\
&= & \frac{\Gamma_{u,v}+(v-u)(S(u)-R(u))+\sE_{v}(S(u))-D(R(u))}{S(u)-R(u)}-(v-u) \nonumber \\
& < & \frac{\sE_{v}(S(u))-D(R(u))}{S(u)-R(u)} 
\label{eq:EuSlope}
\end{eqnarray}
where we use the fact that $\Gamma_{u,v}<0$ for $R(u)<G(u)$.
We conclude that $\sE'_{v}(S(u-)) < \frac{\sE_{v}(S(u))-D(R(u))}{S(u)-R(u)}$ which is the slope of $L^{\sE_{v}}_{R(u),S(u)}$.
Then from the convexity of $\sE_{v}$ on $[G(v),S(u)]$ we have that $\sE_{v}(k) > L^{\sE_{v}}_{R(u),S(u)}(k)$ for $k \in [G(v),S(u))$.

It remains to show that $\sE_{v}(k) > L^{\sE_{v}}_{R(u),S(u)}(k)$ for $k \in (G(u),G(v))$. This will follow from the following pair of inequalities which are valid on $(G(u),G(v))$:
\begin{equation}
\label{eq:pair<}
D = \sE_{v} \geq L^{\sE_{v}}_{R(u),G(v)} > L^{\sE_{v}}_{R(u),S(u)}.
\end{equation}
The second of these inequalities is valid on $(R(u),S(u))$ and follows from the fact that $\sE_{v}$ is convex on $(G(v),S(u))$ and \eqref{eq:EuSlope}.

Consider, therefore, the first inequality in \eqref{eq:pair<}. 
We consider two subcases, namely when $D(G(v)) \geq L^{\sE_u}_{R(u),S(u)}(G(v))$ and when $D(G(v)) < L^{\sE_u}_{R(u),S(u)}(G(v))$.

\textit{Case 1a:} $D(G(v)) \geq L^{\sE_u}_{R(u),S(u)}(G(v))$. Either $D \geq L^{\sE_v}_{R(u),G(v)}$ on $(G(u),G(v))$ in which case we are done, or there exists $y$ with $G(u)<y \leq G(v)$ with $D(y) = L^{\sE_v}_{R(u),G(v)}(y)$ and $D'(y-) \geq (L^{\sE_v}_{R(u),G(v)})'$ whence
\[ D'(y-) \geq (L^{\sE_v}_{R(u),G(v)})' \geq (L^{\sE_{u}}_{R(u),S(u)})' \geq \sE_u'(S(u)-) \geq \sE_u'(y+) \geq \sE'_u(y-). \]
Indeed by minimality of $S(u)$, $(L^{\sE_{u}}_{R(u),S(u)})' >  \sE_u'(y+)$.
Then, $D'(y-)>\sE'_{u}(y-)$. But, this would mean that $P'_{\nu}(y-) - P'_{\mu}(y-) > P'_{\nu}(y-) - P'_{\mu_u}(y-)$ or equivalently $u > P'_\mu(y-)$. Since $y > G(u)$ we have $P'_\mu(y-) \geq P'_\mu(G(u)+) \geq u$, yielding a contradiction. Hence in \textit{Case 1a:} we have $D\geq L^{\sE_v}_{R(u),G(v)}$ on $(G(u),G(v))$ as required.

\textit{Case 1b:} $D(G(v)) < L^{\sE_u}_{R(u),S(u)}(G(v))$.
Again, either $D \geq L^{\sE_v}_{R(u),G(v)}$ on $(G(u),G(v))$ in which case we are done, or there exists $y$ with $G(u)<y \leq G(v)$ with $D(y) = L^{\sE_v}_{R(u),G(v)}(y)$ and $D'(y-) \geq \frac{\sE_v(G(v)) - D(R(u))}{G(v)-R(u)}$ which is the slope of $L^{\sE_v}_{R(u),G(v)}$. Let $w \in (u,v]$ be such that $G(w-) \leq y \leq G(w+)$ and $P_\mu'(y-) = w$. Then $\sE_w'(y-) = P'_{\nu}(y-) - w$. Further, since $G(u)<y$, we have $P'_{\mu_u}(y-)=u$ and therefore $\sE'_u(y-)=P_\nu'(y-)-u$.

Now consider the lines $L^{D,P'_{\nu}(y-) - w}_y \equiv L^{\sE_w,P'_{\nu}(y-) - w}_y$ and $L^{\sE_u,P'_{\nu}(y-) - u}_y$, and note that $D=\sE_w$ on $(-\infty,y]$. Since $D'(y-) \geq (L^{\sE_v}_{R(u),G(v)})'$, $L^{D,P'_{\nu}(y-) - w}_y$ lies on or below $L^{\sE_v}_{R(u),G(v)}$ to the left of $y$ and $L^{\sE_v}_{R(u),G(v)}$ lies on or below $L^{\sE_{u}}_{R(u),S(u)}$ on $(R(u),G(v))$. In contrast, $L^{\sE_u,P'_{\nu}(y-) - u}_y$ lies above $L^{\sE_{u}}_{R(u),S(u)}$ to the left of $y$ by convexity of $\sE_u$ on $[G(u),\infty)$. Hence, if $L^{\sE_w,P'_{\nu}(y-) - w}_y$ and $L^{\sE_u,P'_{\nu}(y-) - u}_y$ meet then they must do so outside $(R(u),y]$. But, by Lemma~\ref{lem:tangentsmeet}(i) they meet at $\xi_{u,w} \in [G(u),G({w})]$, and since $\xi_{u,w}$ does not depend on the choice of quantile function $G$ we must have that $\xi_{u,w} \in [G(u),G({w-})]$; a contradiction. Hence
$D \geq L^{\sE_v}_{R(u),G(v)}$ on $(G(u),G(v))$ as required.

\textit{Case 2:} Now suppose $R(u) < G(u) \leq S(u)\leq G(v)$.
Then $S(v) \geq G(v) \geq S(u)$ so that all that remains to be shown is that $R(v) \notin (R(u),S(u))$.

Suppose to the contrary that $R(u) < R(v) < S(u)$. Consider the lines $L^{\sE_w}_{R(w),S(w)}$ for $w \in \{u,v\}$. Recall that $\phi(w)$ is the slope of $L^{\sE_w}_{R(w),S(w)}$. Note that, since $\sE^c_w =L^{\sE_w}_{R(w),S(w)}$ on $[R(w),S(w)]$, for $w\in\{u,v\}$, and $\sE^c_u\geq\sE^c_v$ everywhere, we have that $L^{\sE_u}_{R(u),S(u)}\geq L^{\sE_v}_{R(v),S(v)}$ on $[R(v),S(u)]$.

Suppose first that $D(R(v)) \geq L^{\sE_u}_{R(u),S(u)}(R(v))$. (This will follow if, for instance, $R(v) \leq G(u+)$ for then since $R(v) > R(u)$ by hypothesis,
\( D(R(v)) = \sE_u(R(v)) \geq L^{\sE_u}_{R(u),S(u)}(R(v)). \))
Then $D(R(v)) = \sE_v(R(v)) = L^{\sE_v}_{R(v),S(v)}(R(v))\leq L^{\sE_u}_{R(u),S(u)}(R(v))\leq D(R(v))$.
Therefore $L^{\sE_u}_{R(u),S(u)}(R(v)) = L^{\sE_v}_{R(v),S(v)}(R(v))$, and since $L^{\sE_u}_{R(u),S(u)}\geq L^{\sE_v}_{R(v),S(v)}$ to the right of $R(v)$, we must have that $\phi(v)\leq \phi(u)$. If $\phi(v)<\phi(u)$ then $D(R(u))=L^{\sE_u}_{R(u),S(u)}(R(u))< L^{\sE_v}_{R(v),S(v)}(R(u))\leq\sE^c_v(R(u))\leq \sE_v(R(u))=D(R(u))$, a contradiction. On the other hand, if $\phi(v)=\phi(v)$ then the lines $L^{\sE_u}_{R(u),S(u)}$ and $L^{\sE_v}_{R(v),S(v)}$ are identical. But then $R(u)=R(v)$, a contradiction.

Now suppose $D(R(v)) < L^{\sE_u}_{R(u),S(u)}(R(v))$. Since $L^{\sE_u}_{R(u),S(u)}(R(v))\leq\sE_u(R(v))$ we must have that $G(u+)<R(v)$, and then by the minimality of $S(u)$ it follows that $L^{\sE_u}_{R(u),S(u)}(R(v))<\sE_u(R(v))$. Since $v>u$ and $R(v)<S(u)$ there exists $w \in (u,v]$ with $G(w-)\leq R(v) \leq G(w+)$ and then we have from the twin facts that $\sE_w(R(v))=D(R(v))$ and $\sE_{w}^c \geq \sE^c_v \geq L^{D,\phi(v)}_{R(v)}$ that $\sE'_w(R(v)-) \leq \phi(v) \leq \sE'_w(R(v)+)$. It follows from Lemma~\ref{lem:tangentsmeet}(iii) with $k=R(v)$ and $\psi = \phi(v)$ that the line passing through $(R(v),\sE_w(R(v)))$ with slope $\phi(v)$ meets the line passing through $(R(v),\sE_u(R(v)))$
with slope $\phi(v) + (w-u)$ at a point with $x$-coordinate $\xi_{u,w}\in[G(u),G(w)]$. But since $\xi_{u,w}$ does not depend on the choice of quantile function $G$ we have that $\xi_{u,w}\in[G(u),G(w-)]$, and then by noting that $\xi_{u,w}\neq R(v)$ it follows that $\xi_{u,w}\in[G(u),R(v))$.
But, from the convexity of $\sE_u$ on $[G(u),\infty)$ and the fact that by Lemma~\ref{lem:easyderivcomparison} $\sE_u'(R(v)-) \leq \phi(v) + w-u \leq \sE'_u(R(v)+)$, we have that $L^{\sE_u, \phi(v)+w-u}_{R(v)} > L^{\sE_u}_{R(u),S(u)}$ on $(-\infty,R(v)]$ and hence $L^{D,\phi(v)}_{R(v)} \equiv L^{\sE_v}_{R(v),S(v)} $ crosses $L^{\sE_u}_{R(u),S(u)}$ in $[G(u), R(v))$. Then
\[ \sE_u(R(u)) = \sE_v(R(u)) \geq L^{\sE_v}_{R(v),S(v)}(R(u)) >  L^{\sE_u}_{R(u),S(u)}(R(u)) =  \sE_u(R(u)), \]
a contradiction.

We conclude that $R(v) \notin (R(u),S(u))$.
\end{proof}

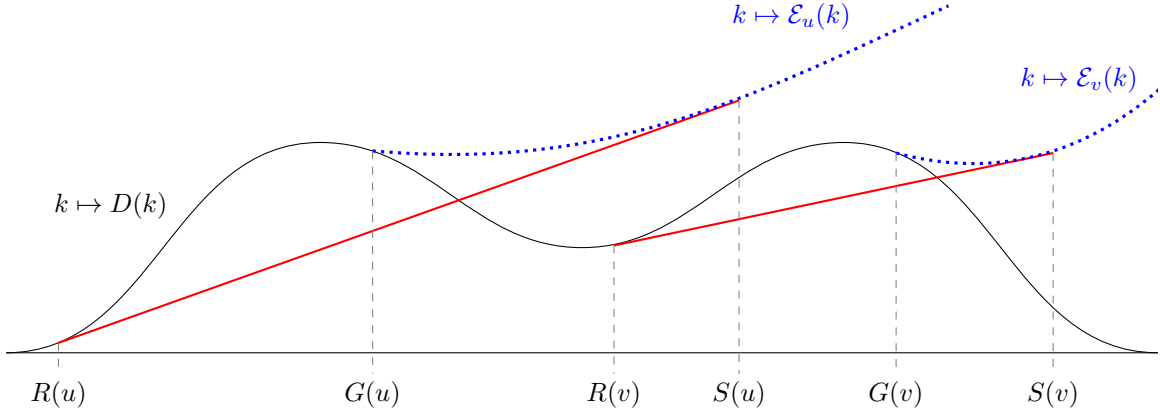
\begin{figure}[H]
		\centering
	\begin{tikzpicture}
	
	\begin{axis}[%
	width=6.028in,
	height=2.754in,
	at={(1.011in,0.642in)},
	scale only axis,
	xmin=-11,
	xmax=11,
	ymin=-0.5,
	ymax=2,
	axis line style={draw=none},
	ticks=none
	]
	\draw[black, thin] (-11,0) -- (11,0);
	\draw[black] (-11,0) to[out=0, in=180] (-5,1) to[out=0, in=180] (0,0.5) to[out=0, in=180] (5,1) to[out=0, in=180] (11,0) ;
	\draw[red,thick] (0.61,0.51)--(9,0.95);
	\draw[gray,thin, dashed] (9,0.95)--(9,-0.1);
	
	\node[black] at (9,-0.2) {$S(v)$};
		\draw[gray,thin, dashed] (0.61,0.51)--(0.61,-0.1);
		\node[black] at (0.61,-0.2) {$R(v)$};
	\draw[blue,dotted, very thick] (6,0.95) to[out=345, in=230] (12,1.5);
	\draw[gray,thin, dashed] (6,0.95)--(6,-0.1);
\node[black] at (6,-0.2) {$G(v)$};
	\node[blue] at (9.5,1.3) {$k\mapsto \sE_v(k)$};
	\node[black] at (-9,0.7) {$k\mapsto D(k)$};
	\draw[red,thick] (-10,0.047)--(3,1.2);
	\draw[gray,thin, dashed] (-10,0.047)--(-10,-0.1);
	\draw[gray,thin, dashed] (3,1.2)--(3,-0.1);
	\node[black] at (3,-0.2) {$S(u)$};
	\node[black] at (-10,-0.2) {$R(u)$};
		\draw[blue,dotted, very thick] (-4,0.96) to[out=355, in=205] (7,1.65);
		\node[blue] at (4,1.6) {$k\mapsto \sE_u(k)$};
		\draw[gray,thin,dashed] (-4,0.96) -- (-4,-0.1);
		\node[black] at (-4,-0.2) {$G(u)$};
	\end{axis}
	\end{tikzpicture}
\caption{A potential counterexample to the left-monotonicity of $R$ and $S$. In the figure, for a pair of arbitrary curves $(\sE_u,\sE_v)$, which are convex beyond $G(u)$ and $G(v)$ respectively, we have $R(v)\in (R(u),S(u))$. But this is not a feasible pair since $\sE_u$ and $\sE_v$ are not generated from a pair of distributions $\mu\leq_{cx}\nu$, and, in particular, $\sE_u-\sE_v$ is not convex.}
\label{fig:whatgoeswrong}
\end{figure}

It is possible to draw pairs of curves $\sE_u$ and $\sE_v$ which have most of the correct properties (for example, $\sE_v \leq \sE_u$ with equality to the left of $G(u+)$) and for which $R(v) \in (R(u),S(u))$. See Figure~\ref{fig:whatgoeswrong}. But crucially $\sE_v$ and $\sE_u$ in Figure~\ref{fig:whatgoeswrong} do not satisfy $\sE_u - \sE_v$ is convex. The extra structure described in \eqref{eq:EslopeDiff-} and Lemma~\ref{lem:tangentsmeet} makes counterexamples such as the one in the figure infeasible.


By definition $G$ is increasing and by Theorem~\ref{thm:monotone} the same property holds for $S$. Further, for $v>u$, $R(v) \notin (R(u),S(u))$, and so, except at places where $R=G=S$, and except at points where $R$ jumps upwards, we expect $R$ to be decreasing. We now argue that the set where $G<S$ can be divided into a union of disjoint intervals on which $R$ is decreasing.

The functions $G$ and $S$ are monotonic, so we can define left and right limits. If $G(u+) < S(u-)$ then there exists $v$ with $v>u$ such that $G(v+) < S(u-) \leq S(v-)$. Conversely, there exists $w$ with $w<u$ such that
$S(w-) > G(u+) \geq G(w+)$. Then $\{ u : G(u+) < S(u-) \}$ is open, and since each such interval contains a rational we can write $A_< := \{u:G(u+)<S(u-)\}$ as a countable union of disjoint open sets:
\begin{equation}\label{eq:S<U}
A_< := \{u\in(0,1):G(u+)<S(u-)\}=\bigcup_{n\geq1}A^n_<.
\end{equation}
As the next lemma shows, $R$ is decreasing on each of these sets.  Moreover, for $u,v \in A_<^n$ with $u<v$, $R(v)<G(u)$. The proof of Lemma \ref{lem:RdecreasingA<} is given in Appendix \ref{ssec:proofsGC}.
\begin{lem}\label{lem:RdecreasingA<}
	For each $n\geq 1$, $R(\cdot)$ is decreasing on $A^n_<$.
	\end{lem}

The results of this section (especially Theorem~\ref{thm:monotone}) give functions $R,S$ which are left-monotone with respect to $G$. The remaining task is to show that they define a martingale transport for $\mu$ to $\nu$, see Theorem~\ref{thm:construction}. The next two sections give further characterisations and regularity results on the functions $\phi$, $R$ and $S$.

\begin{rem}\label{rem:measurability}
The monotonicity of $S$ (see Theorem \ref{thm:monotone}) implies that $S$ is (Borel) measurable. On other hand, by Lemma \ref{lem:RdecreasingA<}, the restriction of $R$ to $A_<$ is also measurable. It turns out that, in order to prove Theorem \ref{thm:construction}, global measurability of $R$ is not necessary (see Section \ref{sec:embed}).
\end{rem}

\section{Properties of $\phi$}
\label{sec:phiprop}
The goal in this section is to give some further representations and properties of $\phi(\cdot)$. In particular, although $\phi$ can jump upwards on $(0,1)$ it is decreasing and Lipschitz continuous on $A_<$ and has a derivative almost everywhere on $A_<$ which we can identify in terms of a rational function of $R$, $G$ and $S$.

\begin{lem}\label{lem:phiRepresentation}
We have
\begin{equation}
\label{eq:phiR}
\phi(u)=   \sup_{k < G(u)} 
\frac{\sE_u(S(u))-D(k)}{S(u)-k},\quad u\in(0,1),
\end{equation}
and
\begin{equation}
\label{eq:phiS}
\phi(v) \leq \inf_{k > G(v)} \frac{\sE_v(k)-D(R(v))}{k-R(v)},\quad v\in(0,1),
\end{equation}
with equality in \eqref{eq:phiS} if $R(v)<G(v)$. Moreover, if $u,v \in A_<^n$ for some $n$, with $u<v$, then in \eqref{eq:phiS} the infimum over $k>G(v)$ can be extended to an infimum over $k>G(u)$ and we have
\begin{equation}
\label{eq:phiS2}
\phi(v) = \inf_{k > G(v)} \frac{\sE_v(k)-D(R(v))}{k-R(v)} = \inf_{k > G(u)} \frac{\sE_v(k)-D(R(v))}{k-R(v)}.
\end{equation}

\end{lem}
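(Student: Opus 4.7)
The plan is to prove all three identities through supporting-line arguments rooted in \cref{lem:phialt}, combined with the contact-point fact $\sE^c_u(R(u))=D(R(u))$ (which follows from the definition of $R(u)$ together with $\phi(u)\in\partial\sE^c_u(G(u))$, or trivially from \cref{lem:R=G=S} when $R(u)=G(u)=S(u)$) and the analogous $\sE^c_u(S(u))=\sE_u(S(u))$.

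First I would handle \eqref{eq:phiR}. For the $\geq$ direction, the supporting line $L(k)=\sE_u(S(u))+\phi(u)(k-S(u))$ of $\sE^c_u$ at $S(u)$ lies below $\sE^c_u\leq\sE_u$; since $\sE_u(k)=D(k)$ for $k<G(u)\leq S(u)$, rearranging $L(k)\leq D(k)$ gives the bound. For the $\leq$ direction, if $R(u)<G(u)$ the point $k=R(u)$ is a witness attaining equality, using linearity of $\sE^c_u$ on $[R(u),S(u)]$ with slope $\phi(u)$. The delicate case is $R(u)=G(u)$, which by \cref{lem:R=G=S} forces $S(u)=G(u)$ and leaves no discrete witness. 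I would argue by contradiction: for any $M<\phi(u)$, if $(D(G(u))-D(k))/(G(u)-k)\leq M$ for all $k<G(u)$, then the linear function $T_M(k):=D(G(u))+M(k-G(u))$ satisfies $T_M\leq D$ on $(-\infty,G(u)]$. Using that $\phi(u)\leq\sE'_u(G(u)+)$ (which follows from $\sE^c_u\leq\sE_u$ with equality at $G(u)$ combined with convexity of $\sE^c_u$), we get $M<\sE'_u(G(u)+)$; convexity of $\sE_u$ on $[G(u),\infty)$ then extends $T_M\leq\sE_u$ globally. Hence $T_M\leq\sE^c_u$ and, since $T_M(G(u))=\sE^c_u(G(u))$, $M\in\partial\sE^c_u(G(u))$, contradicting $\phi(u)=\min\partial\sE^c_u(G(u))>M$.

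For \eqref{eq:phiS} the strategy is parallel. The supporting line $L(k)=D(R(v))+\phi(v)(k-R(v))$ of $\sE^c_v$ at $R(v)$ lies below $\sE^c_v\leq\sE_v$, yielding the stated bound at $k>G(v)$. For equality when $R(v)<G(v)$: if $S(v)>G(v)$, take $k=S(v)$; if $S(v)=G(v)$, use continuity of $\sE_v$ together with the identity $D(R(v))=D(G(v))+\phi(v)(R(v)-G(v))$ (which comes from the definition of $R(v)$ and $\sE^c_v(G(v))=D(G(v))$) to see that the ratio tends to $\phi(v)$ as $k\downarrow G(v)$.

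Finally, for \eqref{eq:phiS2}, the analysis at the end of \cref{sec:geometric} establishes $R(v)<G(u)\leq G(v)<S(v)$ whenever $u,v\in A^n_<$ and $u<v$; in particular $R(v)<G(v)$, so \eqref{eq:phiS} holds with equality, which is the first identity. Since $G(u)\leq G(v)$ the inf over $k>G(u)$ is trivially dominated by the inf over $k>G(v)$; for the reverse direction I would show the ratio is at least $\phi(v)$ on $(G(u),G(v)]$. On this range $\sE_v(k)=D(k)$ (as $k\leq G(v)$) and $k\in(R(v),S(v))$, where by \cref{lem:linear} $\sE^c_v$ is linear with slope $\phi(v)$. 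Hence $D(k)\geq\sE^c_v(k)=D(R(v))+\phi(v)(k-R(v))$, giving the desired bound. The main obstacle is the subgradient-minimality argument in the degenerate case $R(u)=G(u)=S(u)$ of \eqref{eq:phiR}, where the fact that $\phi(u)$ is the \emph{minimum} of the subdifferential (rather than an arbitrary element) is essential.
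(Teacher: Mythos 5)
Your proof is correct, and it follows the same broad architecture as the paper's: establish the two one-sided bounds via supporting-line arguments using the contact facts $\sE^c_u(R(u))=D(R(u))$ and $\sE^c_u(S(u))=\sE_u(S(u))$, exhibit equality via the witness $k=R(u)$ when $R(u)<G(u)$, treat the degenerate case $R(u)=G(u)=S(u)$ separately, and reuse linearity of $\sE^c_v$ on $[R(v),S(v)]$ to extend the infimum in \eqref{eq:phiS2}. The one place where you genuinely diverge is the degenerate case of \eqref{eq:phiR}. The paper sets $\phi^*=\sE'_u(G(u)-)$, shows $\phi^*\in\partial\sE^c_u(G(u))$ (the hard direction being handled through a contradiction involving \cref{lem:convh2} and the forced conclusion $Q(u)<G(u)$), and then identifies $\phi(u)=\phi^*$ before relating this to the supremum. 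Your argument skips the identification entirely: assuming the supremum is bounded by some $M<\phi(u)$, you build the line $T_M$ below $D$ on $(-\infty,G(u)]$, observe $M<\phi(u)\le(\sE^c_u)'(G(u)+)\le\sE'_u(G(u)+)$ so convexity of $\sE_u$ on $[G(u),\infty)$ lets $T_M$ be a global minorant, whence $T_M\le\sE^c_u$ and $M\in\partial\sE^c_u(G(u))$, contradicting minimality of $\phi(u)$. This is a clean and valid route; the paper's version buys the extra fact $\phi(u)=\sE'_u(G(u)-)$, which isn't strictly needed for the lemma. For \eqref{eq:phiS}, your explicit limit $k\downarrow G(v)$ in the subcase $R(v)<G(v)=S(v)$ makes precise what the paper compresses into the terse phrase ``by considering $k=S(v)$''; the rest of the \eqref{eq:phiS} and \eqref{eq:phiS2} arguments track the paper's proof closely.
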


\begin{proof}

First consider \eqref{eq:phiR}.

If $R(u)<G(u)$ then $\sE_u \geq \sE^c_u \geq L^{\sE_u}_{R(u),S(u)}$ everywhere and $\sE^c_u = L^{\sE_u}_{R(u),S(u)}$ on $[R(u),S(u)]$.
Then, for $k<G(u)$,
\begin{equation}
 \frac{\sE_u(S(u))-\sE_u(k)}{S(u)-k} 
 \leq \frac{L^{\sE_u}_{R(u),S(u)}(S(u))-L^{\sE_u}_{R(u),S(u)}(k)}{S(u)-k} = \frac{\sE_u(S(u))-\sE_u(R(u))}{S(u)-R(u)} = \phi(u)
 \label{eq:phiineq}
 \end{equation}
so that \eqref{eq:phiR} holds.

Now suppose that $R(u)=G(u)$, and hence that $\sE^c_u(G(u))=\sE_u(G(u))$ and $G(u)=S(u)$. Let $\phi^* = \sE'_u(G(u)-)$. We show $\phi(u) = \phi^* =\sE'_u(G(u)-)$ and that \eqref{eq:phiR} holds. Clearly if $\psi \in \partial \sE^c_u(G(u))$ then $\psi \geq \frac{\sE^c_u(S(u)) - \sE_u(k)}{S(u)-k}$ for $k<G(u)$ and then
\[ \psi \geq  \sup_{k < G(u)} 
\frac{\sE_u(G(u))-D(k)}{G(u)-k}  \geq   \lim_{k \uparrow G(u)} \frac{\sE_u(G(u))-\sE_u(k)}{G(u)-k} = \sE'_u(G(u)-) = \phi^*. \]
Hence, since $\phi(u)\in\partial\sE^c_u(G(u))$ by definition, to show that $\phi(u)=\phi^*$ it  is sufficient to show that $\phi^* =\sE'_u(G(u)-) \in \partial \sE^c_u(G(u))$. We show
\begin{equation}
\label{eq:candslope}
 \sE_u(y) \geq \sE_u(G(u))+ \phi^*(y-G(u)),
\end{equation}
for $y<G(u)$ and $y>G(u)$ separately.

Suppose \eqref{eq:candslope} fails for some $r_0<G(u)$. Then there exists $\epsilon>0$ such that
\[ \sE_u(r_0) < \sE_u(G(u))+ (\phi^*+\epsilon)(r_0-G(u)). \]
Let $r_1$ be the largest solution (with $r<G(u)$) of $\sE_u(r) = \sE_u(G(u))+ (\phi^*+\epsilon)(r-G(u))$. Then from consideration of the slope of $\sE_u$ near $G(u)$, $r_1<G(u)$; further, from the property of $r_1$ as the largest solution we have $\sE_u > L^{\sE_u}_{r_1,G(u)}$ on $(r_1,G(u))$. Then
$Q(u) = Y^{\sE_u}(G(u)) \leq r_1$ by Lemma~\ref{lem:convh2} and {\em a fortiori} $R(u)<G(u)$. But, this contradicts our assumption that $R(u)=G(u)$.

Now we show \eqref{eq:candslope} for $y>G(u)$ in the case $R(u)=G(u)=S(u)$. For $y>G(u)$, since $\sE_u = \sE_u^c$ on $[G(u),\infty)$,
\begin{eqnarray*}
\sE_u(y)= \sE_u^c(y) & \geq & \sE_u(G(u)) + (\sE^c_u)'(G(u)+)(y-G(u)) \\
 & \geq & \sE_u(G(u)) + (\sE^c_u)'(G(u)-)(y-G(u)) \\
& \geq & \sE_u(G(u)) + \phi^*(y-G(u))
\end{eqnarray*}
and we are done.

Hence $\phi(u)=\phi^*$. It remains to show \eqref{eq:phiR}. But
\[ \sE'_u(G(u)-) = \lim_{k \uparrow G(u)} \frac{\sE_u(G(u))-\sE_u(k)}{G(u)-k} \leq \sup_{k<G(u)} \frac{\sE_u(S(u))-\sE_u(k)}{S(u)-k} \leq \phi(u) = \phi^* \] and \eqref{eq:phiR} holds.

Now consider \eqref{eq:phiS}. If $R(v)<G(v)$ then
interchanging the roles of $R$ and $S$, by analogy with \eqref{eq:phiineq} we obtain for $k \geq G(v)$
\[ \frac{\sE_v(k) - \sE_v(R(v))}{k-R(v)} \geq \phi(v) \]
and \eqref{eq:phiS} follows. By considering $k=S(v)$ we see there is equality. If $R(v) = G(v)=S(v)$, then $\sE_v(G(v)) = \sE^c(G(v))$ and then if $\psi \in \partial \sE_v(G(v))$ we have that, for all $k\in\R$, 
\[ \sE_v(k) \geq \sE_v(G(v)) + \psi(k-G(v)) = \sE_{ v}(R(v)) + \psi(k-R(v)).\] Hence, for $k > G(v)=R(v)$, we have that $\phi(v) \leq \psi \leq \frac{\sE_v(k) - \sE_v(R(v))}{(k-R(v))}$ and \eqref{eq:phiS} follows.

For the final part, fix $u<v$ with $u,v \in A_<^n$. Then $R(v)\leq Q(v)<G(v)<S(v)$ and we have equality in \eqref{eq:phiS} so that $\phi(v) = \inf_{k>G(v)}  \frac{\sE_v(k) - \sE_v(R(v))}{k-R(v)}$. Moreover, $\sE_v \geq \sE^c_v$ on $[R(v),S(v)]$ and $R(v) \leq R(u) < G(u)$ so that $\sE_v \geq \sE^c_v$ on $[G(u),G(v)]$ and
$\inf_{k \in [G(u),G(v)]} \frac{\sE_v(k) - \sE_v(R(v))}{k-R(v)} \geq \frac{\sE_v(S(v)) - \sE_v(R(v))}{S(v)-R(v)} = \phi(v)$. Then the second equality in \eqref{eq:phiS2} follows.
\end{proof}

\begin{lem}\label{lem:phi}
\begin{enumerate}
\item[(i)] $\lim_{u \uparrow 1} \phi(u)=0$.
\item[(ii)] For $u,v \in (0,1)$ with $u<v$ we have $\phi(v) \geq  \phi(u) - (v - u)$.
\item[(iii)]
On each $A_<^n$, $n\geq1$, for $u,v\in A_<^n$ with $u<v$, we have $\phi(v)\leq\phi(u)$ so that $\phi$ is non-increasing.
\end{enumerate}
\end{lem}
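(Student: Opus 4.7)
The plan treats the three parts in sequence.

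For (i), I will establish the stronger bound $\phi(u)\in[0,1-u]$, which immediately yields $\phi(u)\to 0$ as $u\uparrow 1$. Since $\mu_u\leq \mu\leq_{cx}\nu$ implies $\mu_u\leq_E\nu$, we have $\sE_u=P_\nu-P_{\mu_u}\geq 0$ and hence $\sE^c_u\geq 0$ as the largest convex minorant of a non-negative function. Both $P_\nu$ and $P_{\mu_u}$ vanish at $-\infty$, so $\sE_u$ (and therefore $\sE^c_u$) tends to $0$ as $k\to -\infty$; combining convexity, non-negativity, and this zero limit forces $\sE^c_u$ to be non-decreasing (otherwise a negative slope would force blow-up at $-\infty$), so $\phi(u)=(\sE^c_u)'(G(u)-)\geq 0$. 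For the upper bound, from $P_\nu(k)\sim k-\bar\nu$ and $P_{\mu_u}(k)\sim uk-\overline{\mu_u}$ as $k\to+\infty$, $\sE_u(k)/k\to 1-u$; since $\sE^c_u\leq \sE_u$ has the same asymptotic growth rate, convexity forces the left derivative of $\sE^c_u$ to be non-decreasing with limit at $+\infty$ at most $1-u$, so $\phi(u)\leq 1-u$.

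For (ii), I will use the potential representation $P_{S^\nu(\mu_u)}=P_\nu-\sE^c_u$ from \cref{thm:shadow_potential}. Differentiating on the left at $G(u)$ gives
\[ \phi(u)=F_\nu(G(u)-)-F_{S^\nu(\mu_u)}(G(u)-)=\nu((-\infty,G(u)))-S^\nu(\mu_u)((-\infty,G(u))), \]
and analogously for $\phi(v)$. I will then invoke the monotonicity of the shadow ($\mu_u\leq\mu_v\Rightarrow S^\nu(\mu_u)\leq S^\nu(\mu_v)$, a key fact from Beiglb\"ock--Juillet that the paper explicitly recalls in its discussion of lifted couplings), so that $S^\nu(\mu_v)-S^\nu(\mu_u)$ is a non-negative measure of total mass $v-u$. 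Direct computation gives
\[ \phi(u)-\phi(v)=-\nu([G(u),G(v)))+\bigl[S^\nu(\mu_v)((-\infty,G(v)))-S^\nu(\mu_u)((-\infty,G(u)))\bigr], \]
and splitting the bracketed term as $[S^\nu(\mu_v)-S^\nu(\mu_u)]((-\infty,G(u)))+S^\nu(\mu_v)([G(u),G(v)))$, bounded respectively by $(v-u)$ and by $\nu([G(u),G(v)))$ (using $S^\nu(\mu_v)\leq\nu$), yields $\phi(u)-\phi(v)\leq v-u$.

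For (iii), I will give a direct geometric argument. Fix $u<v$ in $A_<^n$. By the discussion preceding this section, $R(v)\leq R(u)<G(u)\leq G(v)<S(u)\leq S(v)$, and $\phi(u),\phi(v)$ are the slopes of non-trivial linear sections of $\sE^c_u,\sE^c_v$. Set $s:=[\sE_u(S(u))-D(R(v))]/[S(u)-R(v)]$; the plan is to sandwich $\phi(v)\leq s\leq\phi(u)$. The line $\ell_v$ through $(R(v),D(R(v)))$ and $(S(v),\sE_v(S(v)))$ coincides with $\sE^c_v$ on its linear section, so $\ell_v\leq\sE^c_v\leq\sE_v\leq\sE_u$ everywhere by \eqref{eq:Eu}; evaluation at $S(u)>R(v)$ gives $\phi(v)\leq s$. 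Conversely, the line $\ell_u$ through $(R(u),D(R(u)))$ and $(S(u),\sE_u(S(u)))$ has slope $\phi(u)$ and lies below $\sE^c_u\leq\sE_u$; since $R(v)\leq R(u)\leq G(u)$ where $\sE_u=D$, we obtain $\ell_u(R(v))\leq D(R(v))$, equivalent to $D(R(u))-D(R(v))\leq \phi(u)(R(u)-R(v))$. Adding this to the defining identity $\sE_u(S(u))-D(R(u))=\phi(u)(S(u)-R(u))$ gives $\sE_u(S(u))-D(R(v))\leq\phi(u)(S(u)-R(v))$, i.e.\ $s\leq\phi(u)$.

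The main obstacle is part (ii): the argument above leverages shadow monotonicity as an external input, which is clean but not geometric. A self-contained derivation by manipulating convex hulls directly seems delicate, because the natural candidate $L_v+g$ (where $L_v$ is the tangent to $\sE^c_v$ at $G(v)$ and $g:=P_{\mu_v}-P_{\mu_u}$) is indeed a convex minorant of $\sE_u$ with the desired slope $\phi(v)$ on $(-\infty,G(u))$, but it typically fails to touch $\sE^c_u$ at $G(u)$, so the relationship between its slope and $\phi(u)$ cannot be read off without extra work. Parts (i) and (iii) are comparatively routine.
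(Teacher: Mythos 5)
Your proposal is correct in all three parts; parts (i) and (iii) track the paper's approach closely, while part (ii) takes a genuinely different route.

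For (i) both you and the paper establish the stronger bound $0 \leq \phi(u) \leq 1-u$ and let $u \uparrow 1$. The paper reads the upper bound off at $S(u)$, via $\phi(u) \leq (\sE^c_u)'(S(u)+) = \sE'_u(S(u)+) = P'_\nu(S(u)+)-u \leq 1-u$; you extract it at $+\infty$ from the asymptotic slope $\sE_u(k)/k \to 1-u$. Same content, different point of evaluation.

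For (iii) your sandwich $\phi(v) \leq s \leq \phi(u)$ with $s = [\sE_u(S(u)) - D(R(v))]/[S(u)-R(v)]$ is precisely the quantity the paper sandwiches: it obtains $\phi(u) \geq s$ by taking $k=R(v)$ in \eqref{eq:phiR} and $s \geq \phi(v)$ via \eqref{eq:phiS2}, both instances of \cref{lem:phiRepresentation}. Your version just reproves the two one-sided estimates geometrically via the lines $\ell_u,\ell_v$ rather than citing the representation lemma. One harmless overstatement: you write $G(v) < S(u)$ in your chain of inequalities, but this can fail for $u<v$ in the same $A_<^n$ (both $G$ and $S$ may grow substantially across the interval while maintaining $G(w+)<S(w-)$). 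Your argument never uses it — only $R(v) \leq R(u) < G(u)$ and $R(v) < S(u)$, both of which do hold — so the proof is unaffected.

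For (ii) the divergence is real. The paper stays inside the geometry of $\sE_u$, working from \cref{lem:phiRepresentation} together with \eqref{eq:diffE} and handling three cases according to whether $G(u)=S(v)$, whether $R(u)<G(v)$, etc. You instead use \cref{thm:shadow_potential} to write $\phi(w) = (\sE^c_w)'(G(w)-) = \nu((-\infty,G(w))) - S^\nu(\mu_w)((-\infty,G(w)))$, then decompose $\phi(u)-\phi(v)$ and bound the two pieces by $v-u$ and by $\nu([G(u),G(v)))$ using, respectively, the shadow monotonicity $\mu_u \leq \mu_v \Rightarrow S^\nu(\mu_u) \leq S^\nu(\mu_v)$ (a Beiglb\"ock--Juillet result the paper itself recalls) and $S^\nu(\mu_v) \leq \nu$. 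This is uniform — no cases — and gives a pleasant probabilistic reading of $\phi(u)$ as the mass that $\nu$ has but $S^\nu(\mu_u)$ lacks below $G(u)$; the price is that it imports shadow monotonicity as an external ingredient, whereas the paper's longer argument is self-contained in the sense that it only manipulates $\sE_u$ and $\sE^c_u$, the objects it develops for everything downstream. Your own remark about wanting a self-contained convex-hull argument is well taken: a direct proof that a tangent to $\sE^c_v$, suitably tilted, minorizes $\sE^c_u$ does run into exactly the difficulty you describe, which is presumably why the paper resorts to the case split instead.
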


In light of (i) above we may extend the domain of $\phi$ to $(0,1]$ by setting $\phi(1)=0$, and (ii) still holds, even with $v=1$.

\begin{cor}\label{cor:phiabs}
On each $A_<^n$ we have that $\phi$ is Lipschitz continuous. Moreover, $\phi$ is absolutely continuous on $A_<^n$ and there exists a function $\phi': A^n_< \rightarrow \R$ such that for $u,v \in A_<^n$,
\[
\phi(v) - \phi(u) = \int_u^{v} \phi'(w) dw.
\]
\end{cor}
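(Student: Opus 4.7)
The plan is to combine parts (ii) and (iii) of \cref{lem:phi} to obtain a one-sided Lipschitz bound with constant $1$, and then invoke the standard equivalence between Lipschitz continuity and absolute continuity with an integrable derivative.

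Fix $n \geq 1$ and pick $u, v \in A_<^n$ with $u < v$. By \cref{lem:phi}(iii), $\phi(v) \leq \phi(u)$, which rewrites as $\phi(u) - \phi(v) \geq 0$. By \cref{lem:phi}(ii), $\phi(v) \geq \phi(u) - (v-u)$, which rewrites as $\phi(u) - \phi(v) \leq v - u$. Combining these,
\[ 0 \;\leq\; \phi(u) - \phi(v) \;\leq\; v - u, \]
so $|\phi(u) - \phi(v)| \leq |v - u|$ for all $u,v \in A_<^n$. Hence $\phi$ is Lipschitz continuous (with constant $1$) on $A_<^n$.

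Recall that $A_<^n$ is an open interval (as a connected component of the open set $A_<$). Any Lipschitz function on an interval is absolutely continuous on each compact subinterval, hence absolutely continuous on $A_<^n$ in the local sense. By the Lebesgue differentiation theorem together with the fundamental theorem of calculus for absolutely continuous functions, $\phi$ is differentiable almost everywhere on $A_<^n$; setting $\phi'(w)$ equal to this derivative wherever it exists (and, say, zero elsewhere on $A_<^n$), we obtain a measurable function with $|\phi'| \leq 1$ a.e. such that
\[ \phi(v) - \phi(u) \;=\; \int_u^v \phi'(w)\, dw, \qquad u,v \in A_<^n, \]
as claimed. There is no real obstacle here: the content is entirely in \cref{lem:phi}, and the corollary is a direct packaging of parts (ii) and (iii) combined with standard real analysis. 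The more substantive work — identifying $\phi'$ explicitly as a rational function of $R$, $G$ and $S$, as advertised in the introduction to \cref{sec:phiprop} — is presumably addressed separately.
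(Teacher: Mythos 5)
Your proof is correct and is essentially the argument the paper has in mind: the paper states the corollary immediately after \cref{lem:phi} with no separate proof, treating it as an immediate consequence of parts (ii) and (iii), and you have simply spelled out the standard real-analysis packaging (two-sided Lipschitz bound with constant $1$, hence absolute continuity and the fundamental theorem of calculus on the open interval $A_<^n$).
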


\begin{proof}[Proof of Lemma~\ref{lem:phi}]
(i) First consider the limit on $\phi$. We have $0 \leq \phi(u) \leq \sE'_u(S(u)+) = P'_\nu(S(u)+)-u \leq 1-u$, and the result follows.

(ii)
Suppose $G(u)=S(v)$ and then $G(u)=G(v)=S(u)=S(v)$. Then ${L}^{\sE_v,\phi(v)}_{G(v)}$ lies below $\sE_v$, which in turn lies below $\sE_u$ (with equality at $G(u)=G(v)$) and so $\phi(v) \in \partial \sE_u(G(u))$. Then $\phi(v) \geq \phi(u) \geq \phi(u) - (v-u)$.

Suppose $G(u)<S(v)$ and $R(u) < G(v)$. Then from Lemma~\ref{lem:phiRepresentation},
\begin{align*}
\phi(v)&=\sup_{k < G(v)}\frac{\sE_{v}(S(v))-D(k)}{S(v)-k}\\
&\geq\frac{\sE_{v}(S(v))-D(R( u))}{S(v)-R( u)}\\
&=\frac{\sE_{u}(S(v))-D(R( u))}{S(v)-R( u)} - \frac{\sE_{u}(S(v))-\sE_{v}(S(v))}{S(v)-R( u)}\\
&\geq\inf_{k > G(u)}\frac{\sE_u(k)-D(R(u))}{k-R(u)}-(v- u)-\frac{\Gamma_{u,v}}{S(v )-R(u)}\\
&\geq \phi(u)-(v-u),
\end{align*}
where we use
$R(u)< G(v)$ for the first inequality, \eqref{eq:diffE} and $G(u)< S(v)$ for the second and \eqref{eq:phiS} and $\Gamma_{u,v} \leq 0$ for the third.

Finally, suppose $G(u)<S(v)$ and $R(u)=G(v)$. Then, $R(u)=Q(u)=G(u)=G(v)=S(u)<S(v)$. Since ${L}^{\sE_u,\phi(u)}_{G(u)}$ lies below $\sE_u$ everywhere, while ${L}^{\sE_v,\phi(v)}_{S(v)}$ lies below $\sE_v$ everywhere, using \eqref{eq:EslopeDiff-} together with the fact that $S(v)>G(v)=G(u)$ we have that $\phi(v)\geq\sE'_v(S(v)-)=\sE'_u(S(v)-)-(v-u)\geq \phi(u)-(v-u)$.

(iii) Fix $n\geq1$ and let $u,v\in A_<^n$ with $u<v$. Then $R(v)<G(u)<S(u)$ and we have
\begin{align*}
\phi(u)&=\sup_{k <  G(u)}\frac{\sE_u(S(u))-D(k)}{S(u)-k}\\
&\geq\frac{\sE_u(S(u))-D(R(v))}{S(u)-R(v)}\\
&\geq\inf_{k > G(u)}\frac{\sE_v(k)-D(R(v))}{k-R(v)} = \phi(v),
\end{align*}
where we used that $\sE_u\geq\sE_v$ everywhere for the second inequality and \eqref{eq:phiS2} for the last equality.
\end{proof}

Fix $n\geq1$ and let $u\in A_<^n$. Then
\begin{align}\label{eq:tangentCond}
D(R(u))-R(u)\phi(u)&=\sE_u(S(u))-\phi(u)S(u)\nonumber\\
&=\{P_\nu(S(u))-S(u)(\phi(u)+u)\}-\{P_\mu(G(u))-G(u)u\}.
\end{align}
Recall that $\phi(u)$ is an element of both $\partial \sE_u(R(u))$ and $\partial\sE_u(S(u))$. Moreover, $u\in\partial P_{\mu_u}(G(u))$, but since $P_{\mu_u}(\cdot)$ is linear on $[G(u),+\infty)$, we also have that $u\in\partial P_{\mu_u}(S(u))$. Since the subdifferential of the sum of two functions is equal to the sum of individual subdifferentials (at least provided both subdifferentials are non-empty, as in our case) and since $P_\nu(S(u))=\sE_u(S(u))+P_{\mu_u}(S(u))$, we have that $(\phi(u)+u)\in\partial P_\nu(S(u))$.
\begin{lem}\label{lem:conjugate}
	For each $n\geq1$ and $u,v\in A_<^n$ with $u<v$, we have that
	\begin{enumerate}
		\item $D(R(u))-R(u)\phi(u)-\{D(R(v))-R(v)\phi(v)\}=\int^{v}_u\phi^\prime(w)R(w)dw$,
		\item $P_\mu(G(u))-G(u)u-\{P_\mu(G(v))-G(v)v\}=\int^{v}_uG(w)dw$,
		\item $P_\nu(S(u))-(\phi(u)+u)S(u)-\{P_\nu(S(v))-(\phi(v)+v)S(v)\}=\int^{v}_u(\phi^\prime(w)+1)S(w)dw$.
	\end{enumerate}
\end{lem}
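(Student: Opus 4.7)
Each of (1), (2), (3) has the same shape: the left-hand side is the change across $[u,v]$ of a geometric quantity (a first moment or the $y$-intercept of a tangent line), and the right-hand side is the integral of its almost-everywhere derivative. The proofs will rely on the absolute continuity of $\phi$ on $A_<^n$ (\cref{cor:phiabs}) together with the monotonicity of $R$, $G$ and $S$ there.

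For (2), let $\hat\mu_{u,v}:=\mu_v-\mu_u$; this measure has total mass $v-u$, and from the asymptote of $P_{\mu_v}-P_{\mu_u}$ as $k\to\infty$ read off \eqref{eq:Ediff} its first moment is $\bar{\hat\mu}_{u,v}=vG(v)-uG(u)-P_\mu(G(v))+P_\mu(G(u))$. On the other hand, the push-forward of Lebesgue measure on $(u,v)$ by the quantile $G$ agrees with $\hat\mu_{u,v}$ (with the correct atomic weights at $G(u),G(v)$ when these are atoms of $\mu$), so $\int_u^v G(w)\,dw=\bar{\hat\mu}_{u,v}$, which rearranges to (2).

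For (1) and (3), set $F_R(w):=D(R(w))-R(w)\phi(w)$ and $F_S(w):=P_\nu(S(w))-(\phi(w)+w)S(w)$; geometrically these are the $y$-intercepts of the tangent line to $D$ at $R(w)$ of slope $\phi(w)$ and of the tangent line to $P_\nu$ at $S(w)$ of slope $\phi(w)+w$, both being vertical translates of the common linear piece $\sE_w^c\big|_{[R(w),S(w)]}$. By \eqref{eq:tangentCond} the three intercepts $F_R(w)$, $F_S(w)$ and $P_\mu(G(w))-wG(w)$ are algebraically related, so (1), (2), (3) are mutually consistent. I will (a) establish continuity of $F_R$ and $F_S$ on $A_<^n$ across the countably many jumps of $R$ and $S$, and (b) compute their derivatives almost everywhere and integrate.

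For (a), at a jump of $R$ at $w\in A_<^n$, letting $w'\to w\pm$ in $D(R(w'))=\sE_{w'}(R(w'))=\sE_{w'}^c(R(w'))$ and using Lipschitz continuity of $\phi$ together with the monotone pointwise convergences $\sE_{w'}\to\sE_w$ and $\sE_{w'}^c\to\sE_w^c$ forces both $R(w-)$ and $R(w+)$ to be tangency points of $\sE_w^c$ with $\sE_w$; both therefore lie on the common linear piece of slope $\phi(w)$ and share the same $y$-intercept, so $F_R(w-)=F_R(w+)$. The argument at jumps of $S$ is analogous, using $P_\nu=\sE_w+P_{\mu_w}$ and the fact that $P_{\mu_w}$ is linear of slope $w$ on $[G(w),\infty)\ni S(w)$. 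For (b), the minimality in the definition of $R(w)$ combined with $R(w)\le G(w)$ (so $D=\sE_w$ in a left-neighbourhood of $R(w)$) gives $D'(R(w)-)\le\phi(w)\le D'(R(w)+)$; hence at a.e.\ $w$ (where $R$ and $\phi$ are differentiable and $D$ is differentiable at $R(w)$) one has $D'(R(w))=\phi(w)$ and
\[F_R'(w)=(D'(R(w))-\phi(w))R'(w)-R(w)\phi'(w)=-R(w)\phi'(w),\]
with the symmetric computation $F_S'(w)=-(\phi'(w)+1)S(w)$. Combining (a), which rules out any singular part of the Stieltjes differentials, with (b) yields $F_R$ and $F_S$ absolutely continuous on $A_<^n$; integrating from $u$ to $v$ gives (1) and (3). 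The main obstacle is the rigorous Lebesgue–Stieltjes calculus when $R,S$ can jump while $\phi$ is only absolutely continuous; the Lipschitz continuity of $\phi$ (\cref{cor:phiabs}) together with the left-monotonicity of $(R,S)$ with respect to $G$ (\cref{thm:monotone}) is what makes the intercepts continuous across these jumps.
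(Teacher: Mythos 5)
Your reading of the geometry is sound: the left-hand sides are $y$-intercepts of tangent lines, and your claim $F_R(w)=D(R(w))-R(w)\phi(w)$, $F_S(w)=P_\nu(S(w))-(\phi(w)+w)S(w)$ with a.e.\ derivatives $-R\phi'$ and $-(\phi'+1)S$ is exactly what the paper establishes. The proof of (2) via the push-forward of Lebesgue measure by $G$ is a genuine (and clean) alternative to the paper's treatment. However, the plan as written has a real gap in (1) and (3): establishing continuity of $F_R,F_S$ across the jumps of $R$ and $S$ (your step (a)), combined with the a.e.\ derivative formula (step (b)), does \emph{not} give absolute continuity. The Cantor function is continuous, has a.e.\ derivative zero, yet is not constant; a jump analysis rules out only the atomic part of the Stieltjes differential, not a singular continuous part, and $R$ is merely a decreasing function of bounded variation. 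So "Combining (a) \ldots\ with (b) yields $F_R$ and $F_S$ absolutely continuous" does not follow.

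The fix, which is precisely what the paper's proof supplies, is to recognise $F_R(w)=C^n_D(\phi(w))$ where $C^n_D(\theta)=\inf_{x\le G(u^n_-+)}\{D(x)-\theta x\}$ is the (restricted) Legendre conjugate of $D$, and similarly for the other intercepts. The conjugate is \emph{concave}, hence locally Lipschitz, and $\phi$ is Lipschitz on $A_<^n$ (\cref{cor:phiabs}); so the composition $C^n_D\circ\phi$ is Lipschitz and therefore absolutely continuous by a purely structural argument, with no need to chase jumps of $R$. The a.e.\ derivative is then $(C^n_D)'(\phi(w))\phi'(w)=-R(w)\phi'(w)$ by the usual subdifferential formula for conjugates (your envelope-theorem computation is the pointwise version of this). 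Your $y$-intercept picture \emph{is} the conjugate function, but without naming it as such you lose access to the concavity that carries the argument; add that one observation and your route and the paper's coincide for (1) and (3), while (2) remains a nice independent measure-theoretic shortcut.
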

\begin{proof}
	Let $A_<$ be as in \eqref{eq:S<U} and for each $n\geq1$ write $A^n_<:=(u^n_-,u^n_+)$. Recall that $R$ is decreasing on each $A^n_<$. Furthermore, since $R\leq G$ everywhere, $R(u)\leq G(u^n_-+)$ for all $u\in A^n_<$.
	
	Define, for each $n\geq1$, conjugate functions $C^n_D,C^n_{P_\nu},C^n_{P_\mu}:(-1,1)\mapsto\mathbb{R}$ by
	\begin{align*}
	C^n_D(\theta)&=\inf_{-\infty< x\leq G(u^n_-+)}\{D(x)-\theta x\},\\
	C^n_{P_\nu}(\theta)&=\inf_{x\in\R}\{P_\nu(x)-\theta x\},\\
	C^n_{P_\mu}(\theta)&=\inf_{x\in\R}\{P_\mu(x)-\theta x\}.
	\end{align*}
	Note that all three conjugates are concave and thus differentiable almost everywhere.
	
	We prove the first statement, the other two being similar. Fix $u,v\in A_<^n$ with $u<v$. Since $\phi(u)\in\partial \sE_u(R(u))$ and $\phi(v)\in\partial \sE_v(R(v))$, and $\sE_u(R(u))=D(R(u))$ and $\sE_v(R(v))=D(R(v))$,
	\begin{align*}
	D(R(u))-R(u)\phi(u)- \left\{ D(R(v)) - R(v)\phi(v) \right\} &=C^n_D(\phi (u))-C^n_D(\phi (v))\\
	&=-\int^{\phi(v)}_{\phi( u)}(C^n_D)^\prime(\chi) d\chi\\
	&=-\int^{v}_u\phi^\prime(w)(C^n_D)^\prime(\phi(w))dw,
	\end{align*}
	where we used that, by \cref{cor:phiabs}, $\phi(\cdot)$ is differentiable almost everywhere on $A_<^n$.
	
	Moreover, $(C^n_D)^\prime(\phi(w))=-(D^\prime)^{-1}(\phi(w))=-R(w)$ Lebesgue almost everywhere, and hence
	\begin{equation*}
	D(R(u))-R(u)\phi(u)-\{D(R(v))-R(v)\phi(v)\}=\int^{v}_u\phi^\prime(w)R(w)dw,
	\end{equation*}
	as required.
\end{proof}

Using \eqref{eq:tangentCond} and \cref{lem:conjugate} we have that for $u<v$ with $u,v \in A_<^n$,
$$
\int^{v}_u\phi^\prime(w)R(w)dw=\int^{v}_u(\phi^\prime(w)+1)S(w)dw-\int^{v}_uG(w)dw.
$$
Since $n\geq 1$ and $u,v\in A_<^n$ were arbitrary, $\phi^\prime(w)R(w)=(\phi^\prime(w)+1)S(w)-G(w)$ Lebesgue almost everywhere on each $A_<^n$. Hence,
$$
\phi^\prime(w)=-\frac{S(w)-G(w)}{S(w)-R(w)}\quad\textrm{almost everywhere on } A_<.
$$

\section{Left Continuity}\label{sec:LeftCont}
In Sections~\ref{sec:geometric} and \ref{sec:phiprop} we allowed $G$ to be any quantile function. To help simplify the analysis going forward, from now on we assume that $G$ is the left-continuous quantile function associated with $\mu$.
\begin{prop}\label{prop:Rlowersemi}
	Suppose $G$ is left-continuous.
	\begin{enumerate}
		\item[(i)] $S$ is left-continuous.
        \item[(ii)] $\phi$ is 
		left-continuous: $\phi(v-) := \lim_{u \uparrow v} \phi(u) = \phi(v)$. Moreover, for each $v \in (0,1)$ the right limit $\phi(v+)$ exists and
		$\phi(v+) := \lim_{w \downarrow v} \phi(w) = \lim_{w \downarrow v} (\sE^c_v)'(S(w)-)$.
		\item[(iii)] $R$ satisfies $R(v) \leq \liminf_{u \uparrow v} R(u)$. 
	\end{enumerate}
\end{prop}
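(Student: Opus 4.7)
The plan is to first record two preliminary convergence facts and then use them to attack (i), (ii), (iii) in turn. As $u \uparrow v$, left-continuity of $G$ at $v$ combined with \eqref{eq:Ediff} and the continuity of $P_\mu$ yields $\sE_u(k) \to \sE_v(k)$ pointwise; by \eqref{eq:Eu} this convergence is monotone from above, hence uniform on compacts. Applying \eqref{eq:hullEu}, the decreasing pointwise limit of $\sE_u^c$ is itself convex, lies between $\sE_v^c$ and $\sE_v$, and must therefore equal $\sE_v^c$ by maximality of the convex minorant; uniform convergence on compact subsets of the interior of the domain is then a standard consequence for convex functions.

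For (i), monotonicity of $S$ (Theorem~\ref{thm:monotone}) ensures the existence of $S(v-) \leq S(v)$, and since $S(u) \geq G(u) \to G(v)$ we also have $S(v-) \geq G(v)$. I would pass to the limit in the identity $\sE_u^c(S(u)) = \sE_u(S(u))$ using the uniform convergences above and the continuity of $\sE_v$ and $\sE_v^c$, obtaining $\sE_v^c(S(v-)) = \sE_v(S(v-))$; the infimum definition of $S(v)$ then forces $S(v) \leq S(v-)$, and equality follows.

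For the left-continuity of $\phi$ in (ii), Lemma~\ref{lem:phi}(ii) immediately gives $\limsup_{u \uparrow v} \phi(u) \leq \phi(v)$, and the representation \eqref{eq:phiR} supplies the reverse bound: for any fixed $k < G(v)$, eventually $k < G(u)$ by left-continuity of $G$, and one passes to the limit in $\phi(u) \geq (\sE_u(S(u)) - D(k))/(S(u)-k)$ using (i), continuity of $D$, and uniform convergence of $\sE_u$ to arrive at $\liminf \phi(u) \geq \sup_{k < G(v)}(\sE_v(S(v))-D(k))/(S(v)-k) = \phi(v)$. For the right-limit statement, Lemma~\ref{lem:phi}(ii) yields $\liminf_{w \downarrow v} \phi(w) \geq \phi(v)$, and the existence of $\lim_{w \downarrow v}(\sE_v^c)'(S(w)-)$ follows from monotonicity of $S$ and convexity of $\sE_v^c$. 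The identification of $\phi(v+)$ with this limit will be the hardest step: the plan is to compare the supporting line $L^{\sE_w^c,\phi(w)}_{S(w)}$ of $\sE_w^c$ with $\sE_v^c$ near $S(w)$, exploiting that this line lies below $\sE_w \leq \sE_v$ (hence below $\sE_v^c$) and that the explicit form of $\sE_v - \sE_w$ from \eqref{eq:Ediff} pins down how the slopes of $\sE_w^c$ and $\sE_v^c$ at $S(w)$ must coincide in the limit.

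For (iii), I would extract from any sequence $u_n \uparrow v$ a subsequence along which $R(u_n) \to r^* \in [-\infty,G(v)]$, ruling out $r^* = -\infty$ via the lower barrier on $R$ coming from $D \geq 0$ together with the supporting-line structure \eqref{eq:R}. Passing to the limit in the defining equation $D(R(u_n)) = \sE_{u_n}(S(u_n)) + \phi(u_n)(R(u_n) - S(u_n))$, using (i), (ii), continuity of $D$, and uniform convergence of $\sE_{u_n}$, one obtains $D(r^*) = L^{\sE_v^c,\phi(v)}_{G(v)}(r^*)$; here the equality of the line parametrised from $S(v)$ with the line parametrised from $G(v)$ holds because $\sE_v^c$ is linear with slope $\phi(v)$ on the segment joining $G(v)$ and $S(v)$. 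Since $r^* \leq G(v)$, the infimum definition \eqref{eq:R} then forces $R(v) \leq r^*$, completing the argument. As noted, the main obstacle will be the identification of the right limit of $\phi$ at $v$.
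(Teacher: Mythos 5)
Your treatment of (i), the left-continuity half of (ii), and (iii) is sound. Part (i) is essentially the paper's route (\cref{lem:Elim} together with \cref{lem:hullConv}). Your argument for the left-continuity of $\phi$ is a genuine alternative: passing to the limit in \eqref{eq:phiR} for fixed $k<G(v)$ and combining with \cref{lem:phi}(ii) works, whereas the paper deduces it from the derivative-comparison \cref{lem:sEc'}. Your subsequence argument for (iii) also departs usefully from the paper's two-case analysis ($R(v)<G(v)$ handled directly via left-monotonicity, and the delicate case $R(v)=G(v)=S(v)$ via a dedicated geometric argument near $G(v)$); once (i) and left-continuity of $\phi$ are available it works cleanly and unifies the cases, the only further point being a lower bound on $R$ that keeps $r^*$ finite.

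The genuine gap is the right-limit identity in (ii). Your plan at best places $\phi(v+)$ in the subgradient interval $[(\sE^c_v)'(S(v+)-),(\sE^c_v)'(S(v+)+)]$: for $w>v$ the supporting line $L^{\sE^c_w,\phi(w)}_{S(w)}\leq\sE^c_w\leq\sE_w\leq\sE_v$, so any limit of such lines lies below $\sE^c_v$ and passes through $(S(v+),\sE^c_v(S(v+)))$, hence has slope a subgradient of $\sE^c_v$ there. But the proposition asserts the exact value $\phi(v+)=\lim_{w\downarrow v}(\sE^c_v)'(S(w)-)$, and this equals $(\sE^c_v)'(S(v+)+)$ when $S(w)>S(v+)$ for all $w>v$ and $(\sE^c_v)'(S(v+)-)$ otherwise (see \cref{cor:phisemi}); the interval containment does not resolve that dichotomy. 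What is missing is the quantitative estimate $(\sE^c_u)'(k-)-(v-u)\leq(\sE^c_v)'(k-)\leq(\sE^c_u)'(k-)$ for $u<v$, uniformly in $k$ — this is \cref{lem:sEc'} — which yields $|(\sE^c_w)'(S(w)-)-(\sE^c_v)'(S(w)-)|\leq w-v\to 0$ and hence the claimed equality. Your appeal to \eqref{eq:Ediff} points in the right direction, but \cref{lem:sEc'} requires a nontrivial subgradient-matching case analysis (on the location of $k$ relative to $R(u)$, $R(v)$, $S(u)$, $G(u+)$), and without it or an equivalent uniform derivative estimate the right-limit part of (ii) remains unproved.
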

Before proving \cref{prop:Rlowersemi} we record a couple of lemmas whose proofs are given in \cref{ssec:proofsLC}.

\begin{lem} \label{lem:Elim} For each $v\in(0,1)$,
\begin{equation}\label{E}
\lim_{u\uparrow v}\sE_u(k)=\lim_{u\downarrow v}\sE_u(k)=\sE_v(k)\quad\textrm{and}\quad \lim_{u\uparrow v}\sE^c_u(k) =\lim_{u\downarrow v}\sE^c_u(k)=\sE^c_v(k)\quad\textrm{for all } k\in\R.
\end{equation}
\end{lem}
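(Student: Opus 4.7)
My plan is to separate the lemma into two parts: convergence of $\sE_u(k)$ follows directly from the explicit formula for $P_{\mu_u}$ and left-continuity of $G$, whereas convergence of $\sE^c_u(k)$ is obtained by combining the monotonicity $u\mapsto \sE^c_u$ from \eqref{eq:hullEu} with a supporting-line duality on the side where the monotonicity does not immediately close the estimate.

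For the first claim I would use $P_{\mu_u}(k) = P_\mu(k\wedge G(u)) + u(k-G(u))^+$ and reduce everything to $P_{\mu_u}(k)\to P_{\mu_v}(k)$. As $u\uparrow v$, left-continuity of $G$ gives $G(u)\uparrow G(v)$, and convergence follows immediately from continuity of $P_\mu$. As $u\downarrow v$, one has $G(u)\downarrow G(v+)$, which may strictly exceed $G(v)$; but then left-continuity of $G$ forces $F_\mu(G(v))=v$, so $\mu$ charges no mass in $(G(v),G(v+))$ and $P_\mu$ is affine on $[G(v),G(v+)]$ with slope $v$. A short case analysis on the position of $k$ relative to $G(v)$ and $G(v+)$ delivers $P_{\mu_u}(k)\to P_{\mu_v}(k)$ in each case.

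For the convex-hull part, the left-limit is the easier side. For $u<v$, \eqref{eq:hullEu} gives $\sE^c_u\ge \sE^c_v$, so $\sE^c_u$ decreases pointwise as $u\uparrow v$ to some limit $h_v\ge \sE^c_v$. As a decreasing pointwise limit of convex functions, $h_v$ is convex; and since $\sE^c_u\le \sE_u$ while $\sE_u\downarrow \sE_v$, one has $h_v\le \sE_v$. Thus $h_v$ is a convex minorant of $\sE_v$, forcing $h_v\le \sE^c_v$ and hence $h_v=\sE^c_v$.

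The hard part will be the right-limit, since for $u>v$ one only has $\sE^c_u\le \sE^c_v$, giving the upper bound $\limsup \sE^c_u\le \sE^c_v$ but no lower bound (convex hulls are not continuous under increasing pointwise limits in general). To supply the lower bound I would approximate $\sE^c_v$ by its supporting lines. Fix $k_0$ and $\phi\in\partial \sE^c_v(k_0)$; set $c:=\inf_k(\sE_v(k)-\phi k)$, so that $\phi k + c \le \sE_v$ and $\phi k_0 + c = \sE^c_v(k_0)$. For $u>v$ close to $v$ let $c_u:=\inf_k(\sE_u(k)-\phi k)$; then $\phi k + c_u \le \sE_u$, and this line being a convex minorant of $\sE_u$ yields $\sE^c_u(k_0)\ge \phi k_0 + c_u$. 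The task therefore reduces to showing $c_u\uparrow c$ as $u\downarrow v$. Both $c_u\le c$ and monotonicity of $c_u$ in $u\downarrow v$ are immediate from $\sE_u\le \sE_v$ and $\sE_u\uparrow \sE_v$. If $c^*:=\lim c_u<c$, there exist $k_u$ with $\sE_u(k_u)-\phi k_u\le (c+c^*)/2$; the growth of $\sE_u$ (linear with slope $\nu(\R)-u>\phi$ at $+\infty$, vanishing at $-\infty$, provided $\phi\in[0,\nu(\R)-v)$, the boundary slope $\phi=\nu(\R)-v$ being handled by approximating $\phi$ from below and invoking upper semi-continuity of the concave map $\phi\mapsto \phi k_0 + c_v(\phi)$) confines $\{k_u\}$ to a bounded set. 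Extracting a subsequence $k_u\to k^*$ and combining the uniform Lipschitz bound on $\{\sE_u\}$ with the pointwise convergence already established in the first part yields $\sE_v(k^*)-\phi k^*\le(c+c^*)/2<c$, contradicting the definition of $c$. Hence $c_u\uparrow c$, $\liminf \sE^c_u(k_0)\ge \sE^c_v(k_0)$, and combined with the upper bound this gives $\sE^c_u(k_0)\to \sE^c_v(k_0)$.
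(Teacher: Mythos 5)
Your decomposition and the first three of the four limits match the paper's argument closely: the explicit formula for $P_{\mu_u}$ plus left-continuity of $G$ handles $\sE_u(k)$ from the left, the affineness of $P_\mu$ on $[G(v),G(v+)]$ handles the right limit, and the "decreasing limit of convex minorants of $\sE_v$ is a convex minorant of $\sE_v$" argument handles $\sE^c_u(k)$ from the left, exactly as in the paper. Where you genuinely diverge is the hard direction, the lower bound on $\liminf_{u\downarrow v}\sE^c_u(k_0)$. The paper argues directly in the primal: for each $v_n\downarrow v$ either $\sE^c_{v_n}(k_0)=\sE_{v_n}(k_0)$, or there are chord endpoints $r_n<k_0<s_n$ with $\sE^c_{v_n}(k_0)=L^{\sE_{v_n}}_{r_n,s_n}(k_0)$, and then passes to a subsequential limit of $(r_n,s_n)$, using the uniform Lipschitz bound $|\sE_u'|\le 1$ to control $\sE_{v_n}(s_n)-\sE_v(s)$. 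You instead dualize: fix a supporting slope $\phi\in\partial\sE^c_v(k_0)$, observe $c_u=\inf_k(\sE_u(k)-\phi k)\uparrow c_v(\phi)$ via a compactness-and-Lipschitz argument, and then pass through the Fenchel identity $\sE^c_v(k_0)=\sup_\phi(\phi k_0+c_v(\phi))$. The dual approach has the virtue of isolating the single obstruction explicitly — the boundary slope $\phi=\nu(\R)-v$, at which the infimum defining $c_u$ may escape to $-\infty$ for $u>v$ — whereas the paper absorbs the same phenomenon into the question of whether the subsequence $(s_n)$ stays bounded. Your remedy (take $\phi\uparrow\nu(\R)-v$ and use closedness/continuity of the one-dimensional concave conjugate on its effective domain; note it is really \emph{continuity along the segment up to the boundary point}, not upper semi-continuity, that closes the gap) is sound, though it does introduce a small amount of convex-duality machinery the paper avoids. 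Both routes work; the paper's is shorter once set up, yours makes the boundary issue more transparent.
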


\begin{lem}\label{lem:hullConv}
	Suppose that $f:(0,1)\mapsto\R$ is non-decreasing.
	Then
	$$
	\lim_{u\uparrow v}\sE^c_u(f(u))=\sE^c_v(f(v-)).
	$$
	If in addition, for each $u\in(0,1)$, $\sE_u$ is non-decreasing on $(f(u),\infty)$, then
	$$\lim_{u\uparrow v}\sE_u(f(u))=\sE_v(f(v-)).
	$$\end{lem}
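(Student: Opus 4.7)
My plan is to reduce both claims to the pointwise convergence of \cref{lem:Elim} plus continuity of the limiting functions. Fix $v\in(0,1)$ and an arbitrary sequence $u_n\uparrow v$; since $f$ is non-decreasing, $f(u_n)$ is non-decreasing with $\lim_n f(u_n)=f(v-)$. The preparatory observation is that both $\sE_v=P_\nu-P_{\mu_v}$ (a difference of finite convex functions) and $\sE^c_v$ (a finite convex function, bounded below by $0$ because $\sE_v\geq 0$ admits $0$ as a convex minorant) are continuous on $\R$. Consequently $\sE_v(f(u_n))\to \sE_v(f(v-))$ and $\sE^c_v(f(u_n))\to\sE^c_v(f(v-))$.

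For the first claim, the lower bound $\liminf_n \sE^c_{u_n}(f(u_n))\geq \sE^c_v(f(v-))$ falls out of \eqref{eq:hullEu}, since $\sE^c_{u_n}(f(u_n))\geq \sE^c_v(f(u_n))$. For the upper bound I would pick $k_0<f(v-)<k'$, note that $k_0\leq f(u_n)\leq k'$ eventually, and use convexity of $\sE^c_{u_n}$ to write
\[ \sE^c_{u_n}(f(u_n)) \leq \tfrac{k'-f(u_n)}{k'-k_0}\sE^c_{u_n}(k_0) + \tfrac{f(u_n)-k_0}{k'-k_0}\sE^c_{u_n}(k'). \]
Applying \cref{lem:Elim} at the fixed points $k_0$ and $k'$ and passing to the limit in $n$ gives
\[ \limsup_{n\to\infty}\sE^c_{u_n}(f(u_n)) \leq \tfrac{k'-f(v-)}{k'-k_0}\sE^c_v(k_0) + \tfrac{f(v-)-k_0}{k'-k_0}\sE^c_v(k'). \]
Letting $k_0\uparrow f(v-)$ with $k'$ held fixed, the second coefficient tends to $0$ and the first to $1$, so continuity of $\sE^c_v$ yields $\limsup_n \sE^c_{u_n}(f(u_n))\leq \sE^c_v(f(v-))$, which combines with the lower bound to give equality.

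For the second claim the lower bound is identical (using \eqref{eq:Eu} in place of \eqref{eq:hullEu}). For the upper bound I would exploit the extra hypothesis directly: for any $k'>f(v-)$ we have $k'>f(u_n)$ for every $n$, and continuity of $\sE_{u_n}$ extends the assumed monotonicity on $(f(u_n),\infty)$ to $[f(u_n),\infty)$, so $\sE_{u_n}(f(u_n))\leq \sE_{u_n}(k')$. Then \cref{lem:Elim} gives $\limsup_n \sE_{u_n}(f(u_n))\leq \sE_v(k')$, and letting $k'\downarrow f(v-)$ finishes the argument by continuity of $\sE_v$.

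The hard part is the upper bound in the first assertion: unlike in the second part, I have no monotonicity of $\sE^c_u$ available, so I cannot dominate $\sE^c_{u_n}(f(u_n))$ by a value of $\sE^c_{u_n}$ at a single fixed point above $f(v-)$. The convex-combination sandwich between $k_0$ and $k'$ is precisely what compensates for the absence of such a hypothesis, converting the pointwise convergence supplied by \cref{lem:Elim} into a bound at the moving point $f(u_n)$.
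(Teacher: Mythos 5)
Your proof is correct, but it takes a genuinely different route from the paper's for the $\sE^c$ assertion. The paper proves the second claim first via a one-line squeeze,
\[
\sE_v(f(v-))=\lim_{u\uparrow v}\sE_v(f(u))\leq \lim_{u\uparrow v}\sE_u(f(u))\leq \lim_{u\uparrow v}\sE_u(f(v-))=\sE_v(f(v-)),
\]
where the middle step uses $\sE_u\geq\sE_v$ and the third uses the monotonicity hypothesis together with $f(u)\leq f(v-)$. It then observes that $\sE^c_u$ is \emph{automatically} non-decreasing on all of $\R$ (it is a non-negative convex function squeezed to $0$ at $-\infty$), so the identical squeeze applies verbatim to $\sE^c$ with no extra hypothesis. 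You did not notice this monotonicity of $\sE^c_u$ and instead compensated with the two-sided convex-combination bound between fixed points $k_0<f(v-)<k'$, then shrank $k_0\uparrow f(v-)$. That argument is sound, and it has the mild virtue of using only convexity rather than the specific monotonicity of the hulls, so it would carry over to settings where $\sE^c_u$ need not be monotone. The cost is length: had you noticed $\sE^c_u$ is monotone, you could have dominated $\sE^c_{u_n}(f(u_n))$ directly by $\sE^c_{u_n}(f(v-))$, exactly as you did for $\sE_{u_n}$, and the two claims would have collapsed into one argument, as in the paper.
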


We are now ready to prove \cref{prop:Rlowersemi}.
\begin{proof}[Proof of \cref{prop:Rlowersemi}(i)]
By definition, $G\leq S$, while by \cref{thm:monotone}, $S$ is non-decreasing. Furthermore, $\sE_u$ is non-decreasing on $(S(u),\infty)$, and therefore \cref{lem:hullConv} applies with $f=S$.
	
	Fix $u,v\in(0,1)$ with $u<v$. Then $G(u)\leq S(u)\leq S(v)$ and therefore
	$$
	G(v)\leq S(v-)\leq S(v).
	$$
	
	We will now show that $S(v)\leq S(v-)$. Since $\sE^c_u(S(u))=\sE_u(S(u))$, letting $u\uparrow v$ on both sides and using \cref{lem:hullConv}, we have that
	$$
	\sE^c_v(S(v-))=\lim_{u\uparrow v}\sE^c_u(S(u))=\lim_{u\uparrow v}\sE_u(S(u))=\sE_v(S(v-)),
	$$
	which implies that
	$$
	S(v-)\in\{k:k\geq G(v), \sE^c_v(k)=\sE_v(k)\}.
	$$
	It follows that
	$$
	S(v-)\geq \inf\{k:k\geq G(v), \sE^c_v(k)=\sE_v(k)\}=S(v).
	$$
\end{proof}

We now turn to the continuity properties of $\phi$.
The proof of \cref{prop:Rlowersemi}(ii) is based on the following lemma.

\begin{lem}
\label{lem:sEc'}
For $u,v\in(0,1)$ with $u<v$, and all $k\in\R$,\\
\noindent{(i)} $(\sE^c_u)'(k -) - (v-u) \leq (\sE^c_v)'(k-) \leq (\sE^c_u)'(k-)$, \\
(ii)
$(\sE^c_u)'(k +) - (v-u) \leq (\sE^c_v)'(k+) \leq (\sE^c_u)'(k+)$.
\end{lem}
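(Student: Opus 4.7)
Set $T:=P_{\mu_v}-P_{\mu_u}$, which is the modified potential of the non-negative measure $\mu_v-\mu_u$ of mass $v-u$; in particular $T$ is convex, non-negative, non-decreasing, satisfies $T(-\infty)=0$, has one-sided derivatives in $[0,v-u]$, and $\sE_u=\sE_v+T$. The function $\sE^c_v+T$ is convex and dominated by $\sE_v+T=\sE_u$, so $\sE^c_v+T\le\sE^c_u$; writing $H:=\sE^c_u-\sE^c_v$, one has $H\ge T\ge 0$ on $\R$. A short rearrangement gives $H-T=(\sE_v-\sE^c_v)-(\sE_u-\sE^c_u)$, and since both bracketed differences are non-negative this forces $H(k_0)=T(k_0)$ and $\sE^c_u(k_0)=\sE_u(k_0)$ at every contact point $k_0$ of $\sE^c_v$ with $\sE_v$.

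The strategy is to prove (ii) directly and then deduce (i) from it by a limit argument. For the lower bound of (ii), I use $(\sE^c_u)'(k+)=\inf_{y>k}[\sE_u(y)-\sE^c_u(k)]/(y-k)$ (the subgradient characterisation of the right derivative) together with the identity
\[\frac{\sE_v(y)-\sE^c_v(k)}{y-k}=\frac{\sE_u(y)-\sE^c_u(k)}{y-k}+\frac{H(k)-T(y)}{y-k}.\]
Because $H(k)\ge T(k)$ and $T(y)-T(k)\le (v-u)(y-k)$ for $y>k$, the last fraction is bounded below pointwise by $-(v-u)$, and taking the infimum over $y>k$ gives $(\sE^c_v)'(k+)\ge(\sE^c_u)'(k+)-(v-u)$.

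The upper bound of (ii) is the crux. A direct pointwise argument would need $H(k)\le T(y)$ for every $y>k$, which can fail once $H(k)>T(k)$, so instead I first establish the estimate at contact points of $\sE^c_v$. At such a $k_0$ the inequality $\sE^c_u\ge\sE^c_v+T$ holds with equality at $k_0$, giving
\[\sE^c_u(y)-\sE^c_u(k_0)\ge[\sE^c_v(y)-\sE^c_v(k_0)]+[T(y)-T(k_0)]\quad\text{for all }y,\]
and dividing by $y-k_0>0$ and letting $y\downarrow k_0$ yields $(\sE^c_u)'(k_0+)\ge(\sE^c_v)'(k_0+)+T'(k_0+)\ge(\sE^c_v)'(k_0+)$. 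For a general $k$ with $\sE^c_v(k)<\sE_v(k)$, \cref{lem:linear} makes $\sE^c_v$ linear on the maximal interval $(a_v,b_v)\ni k$ where strict inequality holds, so $(\sE^c_v)'(k+)=(\sE^c_v)'(a_v+)$; since $a_v$ is a contact point of $\sE^c_v$ and $(\sE^c_u)'(\cdot+)$ is non-decreasing, the contact-point estimate at $a_v$ together with monotonicity delivers $(\sE^c_u)'(k+)\ge(\sE^c_u)'(a_v+)\ge(\sE^c_v)'(a_v+)=(\sE^c_v)'(k+)$.

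Finally, (i) follows by applying (ii) at $k-\epsilon$ for $\epsilon>0$ and sending $\epsilon\downarrow 0$: the standard fact $\lim_{x\uparrow k}f'(x+)=f'(k-)$ for convex $f$ lets both inequalities pass to the limit at $k$. The main obstacle is the upper bound in (ii); it is not accessible by a direct pointwise estimate and genuinely requires the contact-point coincidence $H=T$, together with the monotonicity of $(\sE^c_u)'$, to propagate the bound from contact points across the linear segments of $\sE^c_v$.
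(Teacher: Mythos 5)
Your argument is correct in substance but follows a genuinely different route from the paper's. The paper first establishes the left-derivative inequalities (i) via a case analysis tracking the locations of $R(u)$, $R(v)$, $S(u)$, $G(u+)$ and the points $j$, $m$ at which a fixed slope $\theta\in\partial\sE^c_u(k)$ supports $\sE_u$ and $\sE_v$ respectively, using the monotonicity of $\sE_v-\sE_u$ (and of $\sE_u-\sE_v-k(v-u)$ for the lower bound, which is only sketched); (ii) is then obtained by taking limits from above. You reverse this: you prove (ii) directly and obtain (i) by taking limits from the left. Your proof of (ii) rests on the clean structural decomposition $\sE_u=\sE_v+T$ where $T=P_{\mu_v}-P_{\mu_u}$ is the modified potential of the sub-probability $\mu_v-\mu_u$, hence convex, non-negative, with $0\le T'\le v-u$. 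From $\sE^c_v+T\le\sE^c_u$ you extract the key fact that $H:=\sE^c_u-\sE^c_v$ equals $T$ at every contact point of $\sE^c_v$ with $\sE_v$ (and that such points are also contact points of $\sE^c_u$), and propagate the derivative bound from contact points to arbitrary $k$ via \cref{lem:linear} and monotonicity of $(\sE^c_u)'$. This is more symmetric (it covers both directions of (ii) on an equal footing, whereas the paper only gestures at one), more self-contained (no reference to $R$, $Q$, $S$, and no sub-cases), and arguably more transparent. What the paper's version buys is tighter integration with the geometric quantities used elsewhere in that section; what yours buys is robustness and brevity.

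One small point to fill in: in the reduction step for the upper bound of (ii), you take $k$ with $\sE^c_v(k)<\sE_v(k)$ and reduce to the left endpoint $a_v$ of the maximal linearity interval, asserting $a_v$ is a contact point. If $\ell_D=-\infty$ it can happen that $a_v=-\infty$, in which case $a_v$ is not a contact point. However, since $0\le\sE^c_v\le\sE_v$ and $\sE_v(k)\to0$ as $k\downarrow-\infty$, a linear piece of $\sE^c_v$ extending to $-\infty$ must have slope $0$; and $\sE^c_u\ge0$ with $\sE^c_u(k)\to0$ forces $\sE^c_u$ non-decreasing, hence $(\sE^c_u)'(k+)\ge0=(\sE^c_v)'(k+)$. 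So the inequality holds trivially there, but it should be stated.
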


\begin{proof}
	
(i) First we show that for $u,v\in(0,1)$ with $u<v$, $(\sE^c_v)'(k-) \leq (\sE^c_u)'(k-)$, for all $k\in\R$.

Fix $k\in\R$ and suppose $\theta \in \partial \sE^c_u(k)$. We show that $(\sE_v^c)'(k-) \leq \theta$ and then since $\theta \in \partial \sE^c_u(k)$ is arbitrary,
\[ (\sE^c_v)'(k-) \leq \inf \{ \theta : \theta \in \partial \sE^c_u(k) \} =  (\sE^c_u)'(k-) \]
as required.

Note that $\theta\in(0,1-u)$. If $\theta \in [1-v, 1-u)$ then we have $(\sE^c_v)'(k-) \leq 1-v \leq \theta$. Hence in what follows we may assume $k$ and $\theta$ are such that $\theta \in \partial \sE^c_u(k)$ and $\theta < 1-v$.

Consider the case $k>R(u)$. Given $\theta \in \partial \sE^c_u(k)$ there exists $j \geq k$ such that $\sE_u(j) = \sE^c_u(j)$ and $\theta \in \partial \sE^c_u(j)$. (If $R(u)<k \leq S(u)$ then $j=S(u)$ is one choice and since $j>R(u)$ we may assume $j \geq S(u)$). Thus
\begin{equation}
\label{eq:ellj} \sE_u(\ell) \geq \sE_u(j) + \theta(\ell - j),  \hspace{10mm}  \mbox{ for all $\ell  \in \R$.} \end{equation}
By the remarks in the previous paragraph we may assume $\theta\in(0,1-v)$. Then we can find $m$ such that both $\sE^c_v(m) = \sE_v(m)$ and $\theta \in \partial \sE^c_v(m)$, and then
\begin{equation}
\label{eq:ellm}
 \sE_v(\ell) \geq \sE_v(m) + \theta(\ell - m),  \hspace{10mm}  \mbox{ for all $\ell \in \R$. }
\end{equation}
Then $\sE_u(m) \geq \sE_u(j) + \theta(m - j)$ and $\sE_v(j) \geq \sE_v(m) + \theta(j - m)$ and adding
\[ \sE_v(j) - \sE_u(j) \geq \sE_v(m) - \sE_u(m). \]
But $\sE_v(\cdot) - \sE_u(\cdot)$ is decreasing, and strictly decreasing on $(G(u+),\infty)$.
Therefore, provided $j>G(u+)$, $m \geq j$.
Then $(\sE^c_v)'(k-)\leq (\sE^c_v)'(j-) \leq (\sE^c_v)'(m-) \leq \theta$ 
as required.

If $S(u) \leq j \leq G(u+)$ we can find $m$ as above with $\sE^c_v(m)= \sE_v(m)$ and $\theta \in \partial \sE^c_v(m)$. If $m\geq j$ then the above argument still works and $(\sE^c_v)'(k-) \leq (\sE^c_u)'(k-)$. So, suppose $m<j$. We begin by arguing that in this case we must have $\sE_v(j) = \sE_v(m) + \theta(j-m)$. If not then by \eqref{eq:ellm} $\sE_v(j) > \sE_v(m) + \theta(j-m)$. But $m<j \leq G(u+)$ and on $(-\infty, G(u+)]$ we have $\sE_u=\sE_v$. Hence $\sE_u(j) > \sE_u(m) + \theta(j-m)$, or equivalently $\sE_u(m) <\sE_u(j) + \theta(m-j)$ contradicting \eqref{eq:ellj}. It follows that we must have $\sE_v(j) = \sE_v(m) + \theta(j-m)$ and hence
\[ \sE_v(\ell) \geq \sE_v(m) + \theta(\ell - m) = \sE_v(j) + \theta(\ell - j), \hspace{10mm}  \mbox{ for all $\ell\in\R$ } \]
and $\sE^c_v(j) = \sE_v(j)$ and $\theta \in \partial \sE^c_v (j)$. In particular we could take $m=j$, and for this choice of $m$ we have
$(\sE^c_v)'(k-) \leq (\sE^c_v)'(j-) \leq \theta$ and since $\theta \in \partial \sE^c_u(k)$ is arbitrary, $(\sE^c_v)'(j-) \leq (\sE^c_u)'(k-)$.

Now suppose $k \leq R(u) \wedge R(v)$. Since $\sE^c_u = \sE^c_v$ on $(-\infty, R(u) \wedge R(v)]$ we have $(\sE^c_v)'(k-) = (\sE^c_u)'(k-)$.

Finally, suppose $R(v)<k \leq R(u) \leq S(v)$. If $\psi = (\sE^c_v)'(k-)$ then $\psi \in \partial \sE^c_v(R(v))$ and since $\sE^c_v(R(v))= \sE^c_u(R(v))$ it follows that $\psi \in \partial \sE^c_u(R(v))$. Then, since $k> R(v)$, if $\theta \in \partial \sE^c_u(k)$ we must have $\theta \geq \psi$. Then $(\sE^c_u)'(k-) \geq (\sE^c_v)'(k-)$.

The proof that $(\sE^c_u)'(k-) - (v-u) \leq (\sE^c_v)'(k-)$ is similar, but based on the fact that $\sE_u(k) - \sE_v(k) - k(v-u)$ is decreasing in $k$.

(ii) Given that $(\sE^c_w)'(k+) = \lim_{j \downarrow k} (\sE^c_w)'(j-)$ the result follows from (i) by taking limits from above.
\end{proof}

\begin{proof}[Proof of Proposition~\ref{prop:Rlowersemi}(ii)]
Fix $v\in(0,1)$.
Consider the left limit $\phi(v-) = \lim_{u \uparrow v} \phi(u)$.
 We have, using Lemma~\ref{lem:sEc'} and the last representation of Lemma~\ref{lem:phialt},
\begin{eqnarray*}
\phi(v) = (\sE^c_v)'(S(v)-) & = & \lim_{u \uparrow v} (\sE^c_v)'(S(u)-) \\
& \leq &  \lim_{u \uparrow v} (\sE^c_u)'(S(u)-) =   \lim_{u \uparrow v} \phi(u) = \phi(v-) \\
& \leq & \lim_{u \uparrow v} (\sE^c_u)'(S(v)-) \\
& \leq & \lim_{u \uparrow v} \left[ (\sE^c_v)'(S(v)-) + (v-u) \right] \\
&=& (\sE^c_v)'(S(v)-) = \phi(v).
\end{eqnarray*}

The proof of the result for the right limit is similar. 
We have
\begin{eqnarray*}
	\lim_{v \downarrow u} (\sE^c_u)'(S(v)-)
	& \leq &  [\lim_{v \downarrow u} (\sE^c_v)'(S(v)-) + (v-u)] \\
	& = &  \lim_{v \downarrow u} (\sE^c_v)'(S(v)-) = \lim_{v \downarrow u} \phi(v) = \phi(u+) \\
	& \leq & \lim_{v \downarrow u} (\sE^c_u)'(S(v)-).
\end{eqnarray*}
\end{proof}

\begin{proof}[Proof of \cref{prop:Rlowersemi}(iii)]
We are left to prove the lower semi-continuity (from the left) of $R$. Fix $v\in(0,1)$. Since $R\leq G$ everywhere, $\liminf_{u\uparrow v}R(u)\leq G(v)$. Moreover, for $u\in(0,1)$ with $u<v$, by the left-monotonicity we have that $R(v)\notin(R(u),S(u))$, and therefore
$$
R(v)\notin(\inf_{u<v}R(u),\sup_{u<v}S(u)=S(v)).
$$
Then, if $R(v)<G(v)\leq S(v)$, $R(v)\leq\inf_{u<v}R(u)\leq\liminf_{u\uparrow v}R(u)$, and we are done. So, suppose that $R(v)=G(v)=S(v)$. Let $L={L}^{\sE_v,\phi(v)}_{G(v)}$. Then $D>L$ to the left of $G(v)$ and $\{k:k < G(v),\textrm{ } D(k) = L(k)\}=\emptyset$. Furthermore, if $G(w)=G(v)$ for some $w\in(0,v)$, then, for all $u\in(w,v)$, $G(w)=G(u)=S(u)=G(v)=S(v)$, and $\sE^c_u=\sE_u=\sE_v=\sE^c_v$ at $G(v)$. Then, by \cref{lem:chagree}, $\sE^c_u=\sE^c_v$ on $(-\infty, G(v)]$. Then $R(u)=G(u)=S(u)=G(v)=R(v)$ and it follows that $\liminf_{u\uparrow v}R(u)=R(v)$.

Hence suppose that $R(v)=G(v)=S(v)$ and $G(u)<G(v)$ for all $u\in(0,v)$. There are two cases to consider.

First, suppose that $D=\sE_v>\sE^c_v$ on an interval to left of $G(v)$, and such that this interval cannot be made larger without violating $\sE_v>\sE^c_v$. Then $\sE^c_v$ is linear on this interval, and this interval must be finite since $\sE_v\geq\sE^c_v\geq0$ and $\sE^c_v$ can only change slope at points where $\sE_v = \sE_v^c$. But, if the interval is finite, $\{k:k \leq G(u), D(k) = {L}^{\sE_u,\phi(u)}_{G(u)}(k) \}\neq\emptyset$, a contradiction.

Second, suppose that, for each $u\in(0,v)$ there exists $k_u\in(G(u),G(v))$ such that $D(k_u)=\sE^c_v(k_u)$. We first argue that, for all $u\in(0,v)$, $\sE^c_v$ is not linear on $(G(u),G(v))$. Suppose there exists $u_0\in(0,v)$ such that $\sE^c_v$ is linear on $(G(u_0),G(v))$. Let $(k_0,G(v)]\supseteq (G(u_0),G(v)]$ be the largest interval of the form $(k, G(v)]$ on which $\sE^c_v$ is linear. As in the first case we must have that $k_0$ is finite, but then $R(v)=k_0<G(v)$, a contradiction.
Hence $\sE^c_v$ is not linear to the left of $G(v)$. Now, let $\bar{k}_u:=\sup\{k\geq k_u: \sE^c_v(k)=L_{k_u}^{\sE^c_v,(\sE^c_v)^\prime(k_u-)}(k)\}$. Then $D\geq\sE^c_v>L_{k_u}^{\sE^c_v,(\sE^c_v)^\prime(k_u-)}$ on $(\bar{k}_u,G(v))$ and $\bar{k}_u\in(G(u),\bar{y}_u)$, where $\bar{y}_u\in(G(u),G(v))$ is the point where $L_{k_u}^{\sE^c_v,(\sE^c_v)^\prime(k_u-)}$ crosses $L$. (Note that neither $\bar{k}_u=\bar{y}_u<G(v)$ nor $\bar{k}_u<\bar{y}_u=G(v)$ can happen, since then either $\sE^c_v=L$ on $(\bar{y}_u,G(v))$ or $\sE^c_v=L_{k_u}^{\sE^c_v,(\sE^c_v)^\prime(k_u-)}$ on $(k_u,G(v))$, respectively. But this contradicts the fact that $\sE^c_v$ is not linear to the left of $G(v)$.) Then, $G(u)<\bar{k}_u\leq R(w)\leq R(v)=G(v)$ for all $w\in(F_\mu(\bar{k}_u),v)$, and therefore $G(u)\leq\liminf_{z\uparrow v}R(z)\leq R(v)=G(v)$. Using the left-continuity of $G$ we conclude that $\liminf_{u\uparrow v}R(u)=R(v)$.
\end{proof}

In the next section we will need further two results.
\begin{cor}\label{cor:phisemi}
If $S(w)>S(v+)$ for all $w>v$ then $\phi(v+) = (\sE^c_v)'(S(v+)+)$. Otherwise, if $S(w)=S(v+)$ for some $w>v$ then $\phi(v+) = (\sE^c_v)'(S(v+)-)$. In either case $\phi(v+) \geq (\sE^c_v)'(S(v+)-)$.
\end{cor}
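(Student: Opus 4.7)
The plan is to start from the identity
\[ \phi(v+) = \lim_{w\downarrow v} \phi(w) = \lim_{w\downarrow v} (\sE^c_v)'(S(w)-), \]
established in \cref{prop:Rlowersemi}(ii), and analyse the limit by using the monotonicity of $S$ (\cref{thm:monotone}) together with the convexity of $\sE^c_v$. Because $S$ is non-decreasing, the right-limit $S(v+) = \lim_{w\downarrow v} S(w)$ exists and $S(w) \geq S(v+)$ for every $w > v$. Two scenarios are therefore exhaustive: either $S(w) > S(v+)$ for all $w > v$, or $S(w_0) = S(v+)$ for some $w_0 > v$.

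In the first case, $S(w) \downarrow S(v+)$ strictly from above as $w \downarrow v$. Since $\sE^c_v$ is convex, its left-derivative $k \mapsto (\sE^c_v)'(k-)$ is non-decreasing and satisfies $\lim_{k\downarrow k_0} (\sE^c_v)'(k-) = (\sE^c_v)'(k_0+)$ for every $k_0\in\R$. Applying this with $k_0 = S(v+)$ and $k = S(w)$ yields $\phi(v+) = (\sE^c_v)'(S(v+)+)$.

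In the second case, monotonicity of $S$ combined with $S(w) \leq S(w_0) = S(v+) \leq S(w)$ for $w\in(v,w_0]$ forces $S(w) = S(v+)$ for every $w\in(v,w_0]$. Thus the expression $(\sE^c_v)'(S(w)-)$ is eventually constant equal to $(\sE^c_v)'(S(v+)-)$ as $w \downarrow v$, and so $\phi(v+) = (\sE^c_v)'(S(v+)-)$.

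Finally, the convexity of $\sE^c_v$ gives $(\sE^c_v)'(S(v+)+) \geq (\sE^c_v)'(S(v+)-)$, so in either scenario we obtain $\phi(v+) \geq (\sE^c_v)'(S(v+)-)$. The main (though modest) obstacle is simply making sure the second case indeed forces $S$ to be locally constant to the right of $v$; this is immediate from the monotonicity of $S$, so no analytical difficulty arises and the proof is essentially a careful unpacking of limits.
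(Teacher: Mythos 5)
Your proof is correct and follows essentially the same route as the paper: both start from the representation $\phi(v+) = \lim_{w\downarrow v} (\sE^c_v)'(S(w)-)$ from \cref{prop:Rlowersemi}(ii), split into the two cases depending on whether $S$ is locally constant just to the right of $v$, and in the first case invoke the standard fact that for a convex $f$, $\lim_{y\downarrow x} f'(y-) = f'(x+)$. You merely spell out a couple of steps (that $S$ must equal $S(v+)$ on $(v,w_0]$ in the second case, and the final inequality from convexity) that the paper leaves implicit.
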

\begin{proof}
	If $S(w)>S(v+)$ for all $w>v$ then $\lim_{w \downarrow v}(\sE^c_v)'(S(w)-) = (\sE^c_v)'(S(v+)+)$ where we use the fact that, for a convex function $f$, $\lim_{y\downarrow x}f'(y-)=f'(x+)$. Otherwise, if $S(w)=S(v+)$ for some $w>v$ then $\lim_{w \downarrow v}(\sE^c_v)'(S(w)-) = (\sE^c_v)'(S(v+)-)$.
\end{proof}

Let $S^{-1}$ be the right-continuous inverse to the increasing function $S$.
By our conventions, for $y\in\R$, $S(S^{-1}(y)) \leq y$ with equality whenever $S$ is continuous at $S^{-1}(y)$.
Moreover, if $w>S^{-1}(y)$ then $S(w)>y$.

\begin{cor}
	\label{cor:phi*} For $y\in\R$,
	$\phi(S^{-1}(y)) \leq (\sE^c_{S^{-1}(y)})'(y-) \leq (\sE^c_{S^{-1}(y)})'(y+) \leq \phi(S^{-1}(y)+)$.
\end{cor}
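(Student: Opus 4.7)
Write $u=S^{-1}(y)$. The plan is to sandwich $(\sE^c_u)'(y\pm)$ between $\phi(u)$ and $\phi(u+)$ using only two facts: (a) convexity of $\sE^c_u$, and (b) the already-established characterisations $\phi(u)=(\sE^c_u)'(S(u)-)$ from \cref{lem:phialt} and the formula for $\phi(u+)$ in \cref{cor:phisemi}. The key preliminary observation is that, since $S$ is non-decreasing and left-continuous (\cref{prop:Rlowersemi}(i)) and $S^{-1}$ is the right-continuous inverse, we have
\[
S(u)\leq y\leq S(u+).
\]
Indeed, picking $u_n\uparrow u$ with $S(u_n)\leq y$ and using left-continuity of $S$ gives $S(u)\leq y$; while for every $w>u$ we have $S(w)>y$ by definition of $u=S^{-1}(y)$, so $S(u+)\geq y$.

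First, I would derive the leftmost inequality. By \cref{lem:phialt}, $\phi(u)=(\sE^c_u)'(S(u)-)$. Since $S(u)\leq y$ and $\sE^c_u$ is convex, its left derivative is non-decreasing, so
\[
\phi(u)=(\sE^c_u)'(S(u)-)\leq (\sE^c_u)'(y-).
\]
The middle inequality $(\sE^c_u)'(y-)\leq (\sE^c_u)'(y+)$ is immediate from convexity of $\sE^c_u$.

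The last inequality is the only one that requires a case split based on whether $S$ jumps across $y$ at $u$. If $S(u+)>y$, then by convexity of $\sE^c_u$ the right derivative at $y$ satisfies $(\sE^c_u)'(y+)\leq (\sE^c_u)'(S(u+)-)$, and by the last assertion of \cref{cor:phisemi} this is bounded above by $\phi(u+)$. If on the other hand $S(u+)=y$, then for every $w>u$ we have $S(w)>y=S(u+)$, so the first alternative of \cref{cor:phisemi} applies and gives $\phi(u+)=(\sE^c_u)'(S(u+)+)=(\sE^c_u)'(y+)$. In either case $(\sE^c_u)'(y+)\leq \phi(u+)$, completing the proof.

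I do not anticipate any substantial obstacle: the whole argument is a bookkeeping exercise combining the convexity monotonicity of $(\sE^c_u)'$ with the two characterisations of $\phi(u)$ and $\phi(u+)$ already in hand. The only point where care is needed is the case analysis at $S(u+)$ versus $y$, which determines whether \cref{cor:phisemi} is applied in its equality form or only as the stated inequality $\phi(v+)\geq (\sE^c_v)'(S(v+)-)$.
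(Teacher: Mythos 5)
Your proof is correct and follows essentially the same route as the paper's own: establish $S(u)\leq y\leq S(u+)$ for $u=S^{-1}(y)$, use Lemma 5.5 (lem:phialt) and monotonicity of the subdifferential of the convex function $\sE^c_u$ for the first two inequalities, then split on whether $S(u+)=y$ or $S(u+)>y$ and invoke Corollary 5.6 (cor:phisemi) in each case exactly as the paper does. The only cosmetic difference is that you re-derive the bound $S(u)\leq y\leq S(u+)$ from left-continuity, whereas the paper records it just before the corollary as a consequence of its stated conventions on $S^{-1}$.
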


\begin{proof}
	Since $S(S^{-1}(y)) \leq y  \leq S(S^{-1}(y)+)$,
	\[
	\phi(S^{-1}(y))=(\sE^c_{S^{-1}(y)})'(S(S^{-1}(y))-)
	\leq  (\sE^c_{S^{-1}(y)})'(y-)
	\leq  (\sE^c_{S^{-1}(y)})'(y+),
	\]
	where we use Lemma~\ref{lem:phialt} for the equality.
	
	Moreover, if $y = S(S^{-1}(y+))$ then $(\sE^c_{S^{-1}(y)})'(y+) = (\sE^c_{S^{-1}(y)})'(S(S^{-1}(y)+)+)$ and $S(w) > y = S(S^{-1}(y)+)$ for all $w > S^{-1}(y)$, so that by Corollary~\ref{cor:phisemi}, $\phi(S^{-1}(y)+) = (\sE^c_{S^{-1}(y)})'(S(S^{-1}(y)+)+)$ and $(\sE^c_{S^{-1}(y)})'(y+) = \phi(S^{-1}(y)+)$. Otherwise, if $y < S(S^{-1}(y)+)$ then, by Corollary~\ref{cor:phisemi}, $(\sE^c_{S^{-1}(y)})'(y+) \leq (\sE^c_{S^{-1}(y)})'(S(S^{-1}(y)+)-) \leq \phi(S^{-1}(y)+)$.
\end{proof}

\section{The candidate coupling is an embedding}
\label{sec:embed}

We are now almost ready to prove \cref{thm:construction}.
Let $U,V\sim U(0,1)$ be two independent uniform random variables. Then $X=G(U)\sim\mu$. On the other hand, $Y(U,V)$, if defined as in \eqref{eq:YUVdef}, may not be a random variable since $R$ may not be measurable (see Remark \ref{rem:measurability}). In order to deal with this, we introduce $T:(0,1)\to\R$ given by
\begin{equation}\label{eq:T}
T(u)=\begin{cases}
G(u),\quad \textrm{if }S(u)=G(u)\\
R(u),\quad \textrm{if }S(u)>G(u)
\end{cases}\quad u\in(0,1).
\end{equation}
The proof of the following lemma is postponed until Appendix \ref{ssec:proofsE}.
 \begin{lem}\label{lem:monotone} Let $T,S:(0,1)\mapsto\R$ be defined by \eqref{eq:T} and \eqref{eq:S}. Then
	$T$ is (Borel) measurable and the pair $(T,S)$ is left-monotone with respect to $G$ on $(0,1)$ in the sense of Definition~\ref{def:lmonfns}.
\end{lem}
Define $Y:(0,1)^2\to\R$ by $Y(u,v) = G(u)$ on $\{(u,v)\in(0,1)^2:T(u)=S(u)\}$ and
\begin{equation}
Y(u,v) = T(u) I_{\left\{ v \leq \frac{S(u) - G(u)}{S(u)-T(u)} \right\}} +  S(u) I_{ \left\{ v > \frac{S(u) - G(u)}{S(u)-T(u)} \right\} }
\label{eq:YUVdef1}
\end{equation}
otherwise. Then $Y(U,V)$ is a random variable and the martingale property (see the statement of Theorem \ref{thm:construction}) is a direct consequence of the definition of $Y$, see \eqref{eq:YUVdef1}. We are left to show that $Y(U,V)$ has law $\nu$.

Note that with our choice of left-continuous quantile function the definition of $A_<$ becomes $A_< = \{ u \in (0,1): G(u+) < S(u) \} = \cup_{n \geq 1} A^n_<$. Also recall that $S^{-1}$ denotes the right-continuous inverse to the increasing function $S$.

Let $\chi = \sL(Y(U,V))$. We want to show that $\chi = \nu$. We begin by describing the strategy of our proof. Fix $y\in \R$.
Then, since $\Prob(Y(u,V) \leq y) = \frac{S(u)-G(u)}{S(u)-{T}(u)}$ whenever ${T}(u)\leq y < S(u)$,
\begin{eqnarray}
\chi((-\infty,y]) = \Prob[Y(U,V)\leq y]&=&\Prob[U\leq S^{-1}(y)]+\Prob[Y(U,V)\leq y, U>S^{-1}(y)] \nonumber \\
&=&S^{-1}(y)+\int^1_{S^{-1}(y)}\frac{S(u)-G(u)}{S(u)-T(u)}I_{\{{T}(u)\leq y < S(u)\}}du\nonumber\\
 &=&S^{-1}(y)+\int^1_{S^{-1}(y)}\frac{S(u)-G(u)}{S(u)-{T}(u)}I_{\{{T}(u)\leq y \}}du,\label{eq:intrepresentation}
\end{eqnarray}
where for the last equality we used that, since $S^{-1}$ is right-continuous, $S(u)>y$ for all $u>S^{-1}(y)$.

We are free to include a multiplicative term $I_{ \{ S(u) > G(u) \} }$ in the integrand since off this set $\frac{S(u)-G(u)}{S(u)-{T}(u)}=0$. (Recall that $T=R$ on $\{ S(u) > G(u) \}$.) But $\{ S(u) > G(u) \}$ and $A_{<}$ differ only by a countable (and thus null) set and on $A_<$ we have $\frac{S(u)-G(u)}{S(u)-R(u)} = -\phi'(u)$. Hence, we are interested in integrals of the form $-\int^1_a \phi'(u) I_{\{R(u)\leq y \}}du$. If $I=(u_-,u_+)$ is an interval over which $R(\cdot) > y$ (and $I$ cannot be made any larger without violating $R(\cdot)>y$) then we expect that $\phi(u_-)=\phi(u_+)$. Adding in such intervals would allow us to replace $-\int^1_a \phi'(u) I_{\{R(u)\leq y\}}du$ with $-\int^1_a \phi'(u) du = \phi(a) - \phi(1)=\phi(a)$ since $\phi(1)=0$. Then
\begin{equation}
\label{eq:Proby}
\Prob[Y(U,V)\leq y] = S^{-1}(y) + \phi(S^{-1}(y)).
\end{equation}
Further, for each $v\in(0,1)$, since $\phi(v) \in \partial \sE_v(S(v))$ and $v \in \partial P_{\mu_v} (S(v))$ we have $(v+\phi(v)) \in \partial P_\nu (S(v))$, and then, if $S(v)$ is a continuity point of $\nu$, $v+\phi(v)=P^\prime_\nu(S(v))=\nu((-\infty,S(v)])$. Hence, provided $y$ is a continuity point of $\nu$ and $S(S^{-1}(y))=y$, we have
\[ \chi((-\infty,y]) = \Prob[Y(U,V)\leq y] =S^{-1}(y)+\phi(S^{-1}(y)) = P_\nu^\prime(S(S^{-1}(y)))=P_\nu^\prime(y)= \nu((-\infty,y]) \]
as desired.

There are at least three issues we must overcome to complete this analysis. First, the derivative $\phi'$ need not exist everywhere. Second, there may be a countably infinite number of intervals $A_<^m=(u^m_-,u^m_+)$ which we must add, on each of which $R(\cdot) > y$.
Third, we need a refined argument to cover the case where $S(S^{-1}(y))<y$.

To deal with the issues about $\phi$ we introduce a family of modified functions $\psi_{v,x}$, each member of which is monotonically decreasing and Lipschitz, and therefore has a derivative almost everywhere. The introduction of the monotonic function $\psi$ also allows us to easily add the missing intervals since $\psi$ is constant on those intervals by construction; moreover the intervals where $\psi$ is decreasing are precisely the intervals where $R(\cdot)\leq y$. The case where $S(S^{-1}(y))<y$ requires a careful definition of the initial value of $\psi$ and an additional argument.

Note that it is sufficient to prove $\chi((-\infty,y])=\nu((-\infty,y])$ at continuity points of $\nu$ only. Indeed it is sufficient to prove $\chi((-\infty,y])=\nu((-\infty,y])$ on a dense set of values of $y$, so we may also restrict attention to $y$ which are continuity points of $\mu$ also.



\begin{lem}
\label{lem:claims3phi}Suppose $y$ is a continuity point of $\nu$.
Then $\phi(S^{-1}(y)) \leq P_{\nu}'(y)- S^{-1}(y) \leq \phi(S^{-1}(y)+)$. 
\end{lem}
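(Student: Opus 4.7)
The plan is to reduce everything to Corollary~\ref{cor:phi*}, which already gives the sandwich
\[
\phi(S^{-1}(y))\leq(\sE^c_{S^{-1}(y)})'(y-)\leq(\sE^c_{S^{-1}(y)})'(y+)\leq\phi(S^{-1}(y)+).
\]
Writing $v=S^{-1}(y)$, by right-continuity of $S^{-1}$ we have $S(v)\leq y$, so $G(v)\leq S(v)\leq y$. The claim will follow at once once I establish the identification $(\sE^c_v)'(y+)=P'_\nu(y)-v$: the lower end of the sandwich then gives $\phi(v)\leq(\sE^c_v)'(y-)\leq(\sE^c_v)'(y+)=P'_\nu(y)-v$, and the upper end gives $P'_\nu(y)-v=(\sE^c_v)'(y+)\leq\phi(v+)$.

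To obtain this identification the key intermediate step is that $\sE^c_v=\sE_v$ on $[S(v),\infty)$. The argument I have in mind is a gluing/maximality argument: $\sE_v$ is convex on $[G(v),\infty)\supseteq[S(v),\infty)$; moreover $\phi(v)\in\partial\sE^c_v(S(v))$ combined with $\sE^c_v(S(v))=\sE_v(S(v))$ forces $\phi(v)\in\partial\sE_v(S(v))$, and in particular $\phi(v)\leq\sE'_v(S(v)+)$. Hence the function that equals $\sE^c_v$ on $(-\infty,S(v)]$ and $\sE_v$ on $[S(v),\infty)$ is a convex minorant of $\sE_v$ on all of $\R$, and so by maximality of $\sE^c_v$ must coincide with it. Given this, $(\sE^c_v)'(y+)=\sE'_v(y+)=P'_\nu(y+)-P'_{\mu_v}(y+)$. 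Since $y\geq G(v)$ and $P_{\mu_v}$ is linear with slope $v$ on $[G(v),\infty)$, the subtracted term is $v$; the hypothesis that $y$ is a continuity point of $\nu$ collapses $P'_\nu(y+)$ to $P'_\nu(y)$, completing the identification.

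The only obstacle I foresee is the gluing step; everything else is bookkeeping. If that step turned out to be inconvenient at the endpoint $S(v)$ itself, I would split into two cases: for $y>S(v)$ the convexity of $\sE_v$ on an open neighbourhood of $y$ (together with the minimality in the definition of $S(v)$ which forbids a linear piece of $\sE^c_v$ extending strictly past $S(v)$) already forces $\sE^c_v=\sE_v$ near $y$, while for $y=S(v)$ the one-sided equality $(\sE^c_v)'(S(v)-)=\phi(v)$ supplied by Lemma~\ref{lem:phialt} and $(\sE^c_v)'(S(v)+)=\sE'_v(S(v)+)=P'_\nu(y)-v$ is exactly what the sandwich from Corollary~\ref{cor:phi*} needs to absorb the possible kink at $S(v)$.
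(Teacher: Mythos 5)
Your proposal follows essentially the same route as the paper: reduce to the sandwich of Corollary~\ref{cor:phi*} and identify $(\sE^c_{S^{-1}(y)})'(y+)$ with $P'_\nu(y)-S^{-1}(y)$ via the fact that $\sE^c_{S^{-1}(y)}=\sE_{S^{-1}(y)}$ on $[S(S^{-1}(y)),\infty)$ and linearity of $P_{\mu_{S^{-1}(y)}}$ there. The only difference is cosmetic: the paper asserts the equality $\sE^c_{S^{-1}(y)}=\sE_{S^{-1}(y)}$ on $[S(S^{-1}(y)),\infty)$ without comment, whereas you supply the short gluing-by-maximality justification (which is correct, since $\phi(S^{-1}(y))\in\partial\sE^c_{S^{-1}(y)}(S(S^{-1}(y)))$ forces $\phi(S^{-1}(y))\leq\sE'_{S^{-1}(y)}(S(S^{-1}(y))+)$, making the glued function convex).
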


\begin{proof}
We have $\sE^c_{S^{-1}(y)} = \sE_{S^{-1}(y)}$ on $[S(S^{-1}(y)),\infty)$ and $y \geq S(S^{-1}(y))$ so that $\sE^c_{S^{-1}(y)}(y) = \sE_{S^{-1}(y)}(y)$ and
$(\sE^c_{S^{-1}(y)})'(y+) = \sE_{S^{-1}(y)}'(y+)$. Also, since $y$ is a continuity point of $\nu$, $\sE'_{S^{-1}(y)}(y+) = P_\nu'(y+) - S^{-1}(y) = P_\nu'(y) - S^{-1}(y)$. Then the result follows by Corollary~\ref{cor:phi*}.
\end{proof}
\begin{defn}\label{def:psi}
	For $v\in(0,1)$ and $x\in[0,1)$, define $\psi_{v,x} : [v,1] \mapsto [0,1]$ by
	$\psi_{v,x}(w) = x \wedge \inf_{v < u \leq w} \{ \phi(u) \}$.
\end{defn}

Here we use the fact that we have extended the domain of $\phi$ to $(0,1]$ whence also $\psi_{v,x}(1)=0$. Note that the only relevant cases are when $x \in [\phi(v), \phi(v+)]$. See Figure~\ref{fig:PhiPsi}.

The proof of the following lemma is quite straightforward but is deferred to \cref{ssec:proofsE}.

\begin{lem}\label{lem:psi}
	For $v\in(0,1)$ and $x \in [\phi(v), \phi(v+)]$, $\psi_{v,x}$ is decreasing and absolutely continuous.
\end{lem}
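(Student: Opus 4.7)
My plan is to reduce both claims to a single Lipschitz estimate. Since $\psi_{v,x}(w) = x \wedge \inf_{v<u\leq w}\phi(u)$ is a minimum of the constant $x$ and the running infimum $w \mapsto \inf_{v<u\leq w}\phi(u)$, and the latter is non-increasing in $w$ (it is an infimum over an increasing family of sets), $\psi_{v,x}$ itself is non-increasing. This gives monotonicity essentially for free, so the content of the lemma lies in absolute continuity, which I plan to obtain by showing $\psi_{v,x}$ is $1$-Lipschitz on $[v,1]$.

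The key input is \cref{lem:phi}(ii), which I would rewrite as the statement that $u \mapsto u + \phi(u)$ is non-decreasing on $(0,1]$ (using the extension $\phi(1)=0$). Equivalently, for $u \leq u'$, $\phi(u') \geq \phi(u) - (u'-u)$, so $\phi$ can drop at rate at most $1$ to the right, even though it may jump upwards arbitrarily. I want to propagate this one-sided Lipschitz bound through the truncated running infimum defining $\psi_{v,x}$.

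Concretely, fix $v \leq w_1 < w_2 \leq 1$ and set $I_j = \inf_{v<u\leq w_j}\phi(u)$ for $j=1,2$, so $I_2 \leq I_1$ and $\psi_{v,x}(w_j) = x \wedge I_j$. For any $u \in (w_1,w_2]$, the monotonicity of $u+\phi(u)$ gives $\phi(u) \geq \phi(w_1) - (u-w_1) \geq \phi(w_1) - (w_2-w_1) \geq I_1 - (w_2-w_1)$, where the last step uses $\phi(w_1) \geq I_1$. Taking the infimum over $u \in (w_1,w_2]$ and combining with $I_2 = I_1 \wedge \inf_{w_1<u\leq w_2}\phi(u)$ yields
\[ I_2 \;\geq\; I_1 - (w_2-w_1). \]
A short case analysis on the three scenarios $I_1 \leq x$, $I_2 \geq x$, and $I_2 < x < I_1$ then shows in every case that $\psi_{v,x}(w_1) - \psi_{v,x}(w_2) \leq I_1 - I_2 \leq w_2 - w_1$. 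Together with the already established monotonicity, this gives $0 \leq \psi_{v,x}(w_1) - \psi_{v,x}(w_2) \leq w_2 - w_1$, so $\psi_{v,x}$ is $1$-Lipschitz and hence absolutely continuous.

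I do not expect any serious obstacle here: the argument is structural and uses only the monotonicity of $u+\phi(u)$ inherited from \cref{lem:phi}(ii), together with elementary properties of running infima and truncations. The only place requiring a little care is the case distinction comparing $I_1,I_2$ to $x$ when passing from the bound on $I_1-I_2$ to the corresponding bound on $\psi_{v,x}(w_1)-\psi_{v,x}(w_2)$, but this is routine.
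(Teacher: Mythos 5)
Your strategy matches the paper's: monotonicity is immediate from the structure of the truncated running infimum, and absolute continuity is reduced to a $1$-Lipschitz estimate inherited from Lemma~\ref{lem:phi}(ii). Working directly with $I_w = \inf_{v<u\le w}\phi(u)$ and the truncation-Lipschitz inequality $(x\wedge a)-(x\wedge b)\le a-b$ (for $a\ge b$) is a perfectly good, and slightly more direct, route than the paper's device of introducing $\tilde{\phi}$ with $\tilde{\phi}(v)=x$. However, your estimate does not cover the endpoint $w_1=v$, and that is precisely where the assumption $x\le\phi(v+)$ enters. When $w_1=v$ the set $\{u: v<u\le w_1\}$ is empty, so $I_1=+\infty$; the step ``$\phi(w_1)\ge I_1$'' fails, and the asserted conclusion $I_2\ge I_1-(w_2-w_1)$ is in fact false (the right-hand side is infinite while $I_2$ is finite). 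In your case analysis the operative branch is $I_2<x<I_1=\infty$, where the claimed bound $x-I_2\le I_1-I_2$ is vacuous and does not deliver $\le w_2-v$. Tellingly, for $w_1>v$ your argument never uses $x\le\phi(v+)$; but that hypothesis cannot be dispensed with, since for $x>\phi(v+)$ the function $\psi_{v,x}$ has a genuine downward jump at $v$, from $\psi_{v,x}(v)=x$ to $\lim_{w\downarrow v}\psi_{v,x}(w)=x\wedge\phi(v+)=\phi(v+)<x$.

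The patch is local. For $w_1=v$ and $I_2<x$: take $u\in(v,w_2]$ and $u'\in(v,u)$; Lemma~\ref{lem:phi}(ii) gives $\phi(u)\ge\phi(u')-(u-u')$, and letting $u'\downarrow v$ (the right limit $\phi(v+)$ exists by Proposition~\ref{prop:Rlowersemi}(ii)) yields $\phi(u)\ge\phi(v+)-(u-v)\ge x-(w_2-v)$, using $x\le\phi(v+)$. Hence $I_2\ge x-(w_2-v)$ and $\psi_{v,x}(v)-\psi_{v,x}(w_2)=x-I_2\le w_2-v$. This is exactly the step the paper encodes by setting $\tilde{\phi}(v)=x$ and verifying $\tilde{\phi}(v)-\tilde{\phi}(u)\le\lim_n\phi(v+1/n)-\phi(u)\le u-v$. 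With this boundary case added, your proof is complete.
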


It follows from a combination of \cref{lem:claims3phi} and \cref{lem:psi} that $\psi_{S^{-1}(y),P_{\nu}'(y) - S^{-1}(y)}$ is decreasing and absolutely continuous on $(S^{-1}(y),1)$, and hence $\psi'_{S^{-1}(y),P_{\nu}'(y) - S^{-1}(y)}$ is defined almost everywhere on $(S^{-1}(y),1)$.

	\begin{figure}[H]
		\centering
		\begin{tikzpicture}
		\draw (0,-0.2) -- (0,6.2);
		\draw (10,-0.2)--(10,6.2);
		\draw[loosely dashed, gray] (-0.2,0)--(10,0);
		\draw[loosely dashed, gray] (-0.2,6)--(10,6);
		\node at (-0.5,0) {0};
		\node at (-0.5,6) {1};
		\node at (0,-0.5) {0};
		\node at (10,-0.5) {1};
		\node at (2,-0.5) {$v$};
		\draw [blue,densely dotted] (0,0) to[out=0, in=240] (2,2);
		\draw[fill=blue] (2,2) circle (2pt);
		\draw[blue] (2,3) circle (2pt);
		\draw[loosely dashed, gray] (-0.2,2)--(2,2);
		\node at (-0.7,2) {$\phi(v)$};
		\draw[loosely dashed, gray] (-0.2,3)--(2,3);
		\node at (-0.8,3) {$\phi(v+)$};
		\draw[loosely dashed, gray] (-0.2,2.5)--(2,2.5);
		\node at (-0.5,2.5) {$x$};
		\draw[densely dotted, blue] (2,3) to[out=0, in=180] (3,4);
		\draw[densely dotted, blue] (3,4) to[out=0, in=160] (4,2.5);
		\draw[fill=red] (2,2.5) circle (2pt);
		\draw[red] (2,2.5)--(4,2.5);
		\draw[red] (4,2.5) to[out=340, in=180] (5,1.5);
		\draw[blue, densely dotted] (5,1.5) to[out=0, in=180] (6,3) to[out=0, in=150] (7,1.5);
		\draw[red] (5,1.5)--(7,1.5);
		\draw[red] (7,1.5) to[out=330, in=180] (7.5,1);
		\draw[densely dotted,blue] (7.5,1) to[out=0, in=220] (8,1.3);
		\draw[fill=blue] (8,1.3) circle (2pt);
		\draw[blue,densely dotted] (8,2) to[out=40, in=150] (9,1);
		\draw[blue] (8,2) circle (2pt);
		\draw[red] (7.5,1)-- (9,1) to[out=330, in=180] (10,0);
		\node[blue] at (5,4) {$w\mapsto\phi(w)$};
		\node[red] at (5,0.7) {$w\mapsto\psi_{v,x}(w)$};
		\end{tikzpicture}
		\caption{Construction of $\psi$ (solid curve) from $\phi$ (dotted curve). $\phi$ can have jumps, but only upwards, and for $x \in [\phi(v),\phi(v+)]$, $w\mapsto\psi_{v,x}(w) = x \wedge \inf_{v < u \leq w} \{ \phi(u) \}$ is monotonically decreasing and Lipschitz with Lipschitz constant 1.}
		\label{fig:PhiPsi}
	\end{figure}
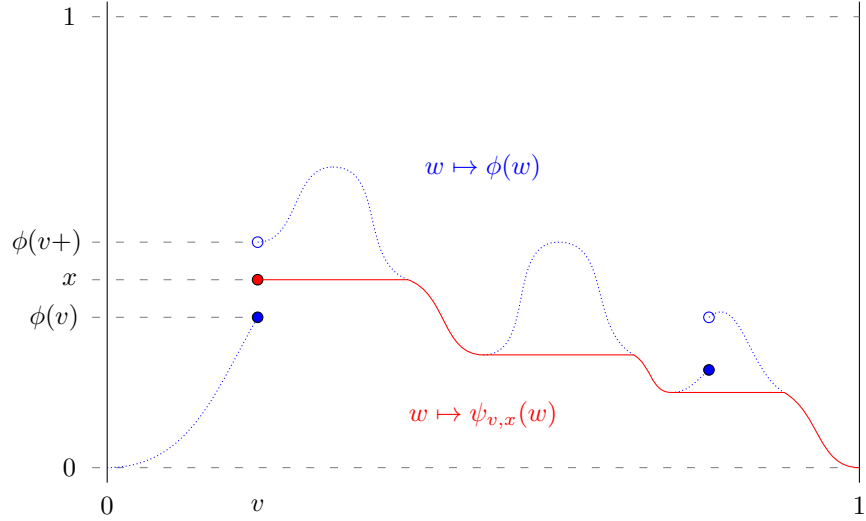

The next lemma says that the places $\{u : u \in (S^{-1}(y),1) \}$ where $\psi_{S^{-1}(y),P_{\nu}'(y) - S^{-1}(y)}$ decreases are essentially the places where $\psi_{S^{-1}(y),P_{\nu}'(y) - S^{-1}(y)}(u)=\phi(u)$ and $S(u) > G(u+)$.
\begin{lem}
\label{lem:claims12}
For $u \in (0,1)$ and $x \in [\phi(u),\phi(u+)]$ the following set inclusion holds:
\begin{equation}
\{ w : w>u, \psi_{u,x}'(w) < 0  \} \subseteq
\left( \{w: w>u, S(w) > G(w+) \} \cap \{w:w>u, \psi_{u,x}(w) = \phi(w) \} \right) \cup  \sN_u  \label{eq:claim2b}
\end{equation}
where $\sN_u$ is a set of measure zero.
\end{lem}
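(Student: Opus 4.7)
The plan is to prove the stated set inclusion by independently verifying two assertions at every $w$ where $\psi'_{u,x}(w)$ exists and is strictly negative: (a) $\psi_{u,x}(w) = \phi(w)$, and (b) $w$ lies in $A_< = \{z : S(z) > G(z+)\}$ up to a Lebesgue-null exceptional set $\sN_u$.

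For part (a), I will argue by contradiction. Suppose $\psi_{u,x}(w) < \phi(w)$ at a point where $\psi'_{u,x}(w) < 0$. Appealing to the left-continuity of $\phi$ from \cref{prop:Rlowersemi}(ii), I obtain $\varepsilon > 0$ with $\phi(w') > \psi_{u,x}(w)$ for every $w' \in (w-\varepsilon, w]$. For such $w'$ the infimum $\inf_{u < v \le w'}\phi(v)$ cannot be attained on $(w-\varepsilon, w']$ and therefore equals $\inf_{u < v \le w-\varepsilon}\phi(v) \le \psi_{u,x}(w)$, whence $\psi_{u,x}$ is constant on $(w-\varepsilon, w]$. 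The left derivative of $\psi_{u,x}$ at $w$ is then zero, contradicting $\psi'_{u,x}(w) < 0$.

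For part (b) I will set $B := \{w : S(w) \le G(w+)\}$ and observe that, since $G \le S$ pointwise, $B$ is contained in $\{G(w) < G(w+)\} \cup \{G(w) = S(w)\}$. The first piece is countable (the jump set of the monotone $G$) and hence null, while on the remainder $B_1 := \{w : G(w) = S(w)\}$ Lemma~\ref{lem:R=G=S} yields $R(w) = G(w) = S(w)$. It therefore suffices to show $\psi'_{u,x} = 0$ almost everywhere on $B_1$. To that end, I plan to establish the key claim that $\phi$ is non-decreasing on each connected component of $B_1$. Once this is known, on any interval $I \subseteq B_1$ the running infimum $w \mapsto \inf_{u < v \le w}\phi(v)$ cannot decrease as $w$ traverses $I$ (every new value $\phi(w)$ is bounded below by $\phi(\inf I)$, which is already included in the infimum), so $\psi_{u,x}$ is locally constant on $B_1$ and $\psi'_{u,x} = 0$ there almost everywhere.

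To prove the key non-decrease of $\phi$ on components of $B_1$, I will separate two sub-regimes. On sub-intervals where $G$ is locally constant equal to $a$, \cref{lem:chagree} applied with $z=a$ yields that $\sE_w^c$ agrees for all such $w$ on $(-\infty, a]$, so $\phi(w) = (\sE_w^c)'(a-)$ is constant. On sub-intervals where $G$ is strictly increasing inside $B_1$, the triviality of the left-curtain coupling there (each quantile mapped to itself) forces $\mu \le \nu$ on the corresponding range $G(I)$, making $D' = F_\nu - F_\mu$ non-decreasing on $G(I)$ and hence $\phi(w) = D'(G(w)-)$ non-decreasing in $w$. The main obstacle, where the bulk of the effort will go, is rigorously joining these two regimes across their boundaries: here I will combine the left-continuity of $\phi$ with Lemma~\ref{lem:phi}(ii) (equivalent to $w \mapsto \phi(w) + w$ being non-decreasing, which rules out downward jumps of $\phi$) to glue the two monotonicity statements into a global non-decreasing assertion on each component of $B_1$.
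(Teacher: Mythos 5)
Your part (a) is sound and is essentially the paper's own observation (the paper argues symmetrically, using $\phi(w+\delta)\geq\phi(w)-\delta$ from \cref{lem:phi}(ii) to get the right derivative $\psi'_{u,x}(w+)=0$, while you use left-continuity of $\phi$ to get $\psi'_{u,x}(w-)=0$; either suffices when $\psi'_{u,x}(w)$ exists). Part (b), however, contains two genuine gaps.

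First, the appeal to \cref{lem:R=G=S} to deduce $R(w)=G(w)=S(w)$ on $B_1$ is a misreading: that lemma concerns $Q$, not $R$, and one only gets $Q(w)=G(w)=S(w)$ while $R(w)\leq Q(w)$ may be strict. The regime $R(w)<G(w)=S(w)$ is a genuine possibility which the paper treats as a separate case (its Case~1) with a dedicated argument showing $\liminf_{h\downarrow 0}(\phi(w+h)-\phi(w))/h\geq 0$ via the representations \eqref{eq:phiR}--\eqref{eq:phiS} and \eqref{eq:Ediff}. Your scheme silently assumes $R=G=S$ throughout $B_1$. This on its own might be repairable since your target claim ($\phi$ non-decreasing on components of $B_1$, which gives $\psi$ locally constant) does not by itself distinguish the two cases, but it signals that the structure of $B_1$ is richer than you assume.

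Second, and more seriously, your key claim --- that $\phi$ is non-decreasing on each connected component of $B_1$ --- is justified, in the ``strictly increasing $G$'' regime, by asserting that ``the triviality of the left-curtain coupling there (each quantile mapped to itself) forces $\mu\leq\nu$ on $G(I)$.'' This is circular: that the candidate coupling built from $(R,G,S)$ maps each quantile to itself on $B_1$ and transports $\mu$ to $\nu$ is precisely the content of \cref{thm:construction2}, of which the present lemma is an ingredient. At this point in the development we only know the left-monotonicity of $(R,S)$ (\cref{thm:monotone}) and the potential-theoretic identities for $\phi$; we do not yet know that the resulting $Y$ has law $\nu$, so we cannot read off $\mu\leq\nu$ on $G(I)$. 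One can try to argue the monotonicity of $\phi$ on $B_1$ directly (roughly: $Q(w)=G(w)=S(w)$ forces $D$ to lie above its left tangent at $G(w)$, so $D$ cannot be strictly concave on $G(B_1)$, and absence of downward kinks of $D'$ when $\mu$ is atomless then suggests $D'$ is non-decreasing there), but turning this sketch into a proof requires nontrivial work and is itself roughly equivalent to the content of the paper's Cases~1 and~2. The paper avoids the monotonicity claim altogether: in Case~2 ($R=G=S$) it instead shows $\psi_{u,x}(w)<\phi(w)$ strictly by exhibiting $u_\theta\in(u,w)$ with $\phi(u_\theta)\leq\theta<\phi(w)$, via the auxiliary point $z_\theta$ and \cref{lem:chagree}; this makes the local constancy of $\psi$ near $w$ a one-line consequence of \cref{lem:phi}(ii), whereas your route requires establishing a global structural property of $\phi$ on $B_1$ whose proof you have not supplied.

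Finally, a small point on the gluing step: \cref{lem:phi}(ii) does give $\phi(v+)\geq\phi(v)$ and hence rules out downward jumps across the boundary between the two sub-regimes, so the gluing itself would be fine \emph{if} the non-decrease inside each sub-regime were established. The gap is entirely within the strictly-increasing-$G$ regime.
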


\begin{proof}
Let $\sN_u = \{ w \in [u,1) \mbox{ such that $S(w+) > S(w)$ or $G(w+)>G(w)$ or $\psi'_{u,x}(w)$ does not exist} \}$.

To prove the lemma we need to show that if $w \in \sN_u^c$ and $S(w)=G(w)$ then $\psi_{u,x}'(w)=0$ and also that if  $w \in \sN_u^c$ and $\phi(w)>\psi_{u,x}(w)$ then $\psi_{u,x}'(w)=0$. The second of these is immediate: if $\phi(w) - \psi_{u,x}(w)> \epsilon >0$, then for $0< \delta \leq \epsilon$ we have $\phi(w+\delta) \geq \phi(w) - \delta > \psi_{u,x}(w)$, and hence $\psi_{u,x}(w+\epsilon) = \psi_{u,x}(w)$. Hence $\psi_{u,x}'(w+)=0$ and if $\psi'_{u,x}(w)$ is defined --- as it must be since $w \in \sN^c_u$ --- it must take the value 0.

Hence, to complete the proof of the lemma it is sufficient to show that if $w$ is such that $S$ and $G$ are continuous at $w$ and $\psi_{u,x}'(w)$ exists, then $S(w)=G(w)$ implies $\psi_{u,x}'(w)=0$.

We split into two cases: $R(w)<G(w)=S(w)$ and $R(w)=G(w)=S(w)$.

{\em Case 1: $R(w)<G(w)=S(w)$.}
Using \eqref{eq:phiR} and \eqref{eq:phiS} for the inequalities we have that
\begin{eqnarray*}
\phi(w+h) & \geq & \frac{\sE_{w+h}( S(w+h)) - D(R(w))}{S(w+h) - R(w)} \\
& = &  \frac{\sE_{w}( S(w+h)) - D(R(w))}{S(w+h) - R(w)} + \frac{\sE_{w+h} (S(w+h)) - \sE_w(S(w+h))}{S(w+h) - R(w)} \\
& \geq & \phi(w) + \frac{\sE_{w+h} (S(w+h)) - \sE_w(S(w+h))}{S(w+h) - R(w)}.
\end{eqnarray*}
It is sufficient to show that $\liminf_{h \downarrow 0} \frac{\phi(w+h) - \phi(w)}{h} \geq 0$.
This will follow if
\[ \lim_{h \downarrow 0} \frac{1}{h}\left( \sE_{w}( S(w+h)) - \sE_{w+h}(S(w+h)) \right)   \leq 0. \]
But, by \eqref{eq:Ediff}, $\sE_{w} (S(w+h)) - \sE_{w+h}(S(w+h)) = h(S(w+h) - G(w)) + P_\mu(G(w+h)) - P_\mu(G(w)) - (w+h) (G(w+h)-G(w))$.
 Moreover, $P_\mu(G(w+h)) - P_\mu(G(w)) = \tilde{w}(G(w+h)-G(w))$ for some $\tilde{w} \in [w,w+h]$ and then
\( |P_\mu(G(w+h)) - P_\mu(G(w)) - (w+h) (G(w+h)-G(w))| \leq h (G(w+h) - G({w})) .\)
We conclude $|\frac{1}{h}\left( \sE_{w}( S(w+h)) - \sE_{w+h}(S(w+h)) \right)| \leq S(w+h) - G(w) + G(w+h) - G(w) \rightarrow 0$,
by the continuity of $S$ and $G$ at $w$ and the fact that $S(w)=G(w)$.

{\em Case 2: $R(w)=G(w)=S(w)$.}
Then $\phi(w) = D'(G(w)-)$ and $L^{\sE_w,\phi(w)}_{G(w)} \leq \sE^c_w \leq \sE_w$. Note that $D=\sE_w$ to the left of $G(w)$.

Pick $\theta \in (0,\phi(w))$. Define $z_\theta = \sup \{ z : \theta \in \partial \sE^c_w(z) \}$. Then $z_\theta<G(w)$. Note that $ \theta \in \partial \sE^c_w(z_\theta)$ and $D(z_\theta) = \sE_w^c(z_\theta)$. If $G(u+)=G(w)$ then $\phi(v)=\phi(w)$ for all $v\in(u,w]$, and thus $\psi_{u,x}=x$ on $[u,w]$. It follows that $\psi_{u,x}^\prime(w)=\psi_{u,x}^\prime(w-)=0$. Otherwise $G(u+)<G(w)$, and by choosing $\theta$ large enough we may assume $z_\theta>G(u+)$.

Let $u_\theta=F_\mu(z_\theta)$. Then $u<u_\theta<w$, and either $G(u_\theta)=z_\theta$ or $G(u_\theta)< z_\theta< G(u_\theta+)$. We claim that $\phi(u_\theta)\leq\theta$. If $G(u_\theta)=z_\theta$ then by \cref{lem:chagree} we have that $\sE^c_{u_\theta}=\sE^c_{w}$ on $(-\infty,G(u_\theta)=z_\theta]$, and therefore $\phi(u_\theta)=\inf\{z:z\in\partial\sE^c_{w}(z_\theta)\}\leq\theta$. Suppose $G(u_\theta)< z_\theta< G(u_\theta+)$. If $\sE_{u_\theta}(G(u_\theta))=D(G(u_\theta))=\sE^c_w(G(u_\theta))$ then using the same argument (i.e., by \cref{lem:chagree}) we have that $\phi(u_\theta)=\inf\{z:z\in\partial\sE^c_{w}(G(u_\theta))\}$. But, by the monotonicity of the subdifferential of $\sE_w$, $\inf\{z:z\in\partial\sE^c_{w}(G(u_\theta))\} \leq\inf\{z:z\in\partial\sE^c_{w}(z_\theta)\}\leq\theta$. Hence $\phi(u_\theta) \leq \theta$. Finally, if $\sE_{u_\theta}(G(u_\theta))=D(G(u_\theta))>\sE^c_w(G(u_\theta))$, then since $\sE_{u_\theta}$ is convex and coincides with $D$ on $[G(u_\theta),G(u_\theta+)]$, it follows that $S(u_\theta)\leq z_\theta$ and $\sE_{u_\theta}(S(u_\theta))=D(S(u_\theta))=\sE^c_w(S(u_\theta))$. Then $\sE^c_{u_\theta}=\sE^c_{w}$ on $(-\infty,S(u_\theta)]$ and, by \cref{lem:phialt}, we again have that $\phi(u_\theta) = (\sE_u^c)'(S(u_\theta)-) = (\sE_w^c)'( S(u_\theta)-) \leq  (\sE_w^c)'(z_\theta-)\leq\theta$.
	
Therefore in all cases $\psi_{u,x}(w) \leq\phi(u_\theta)\leq \theta < \phi(w)$. By \cref{lem:phi}(ii) we than have that $\phi(v)>\psi_{u,x}(w)$ for $v \in (w, w+ \phi(w)-\psi_{u,x}(w))$ and thus $\psi$ is constant on this interval.

It follows that if $\psi_{u,x}'(w)$ exists, then $\psi_{u,x}'(w)=0$.
\end{proof}

The final result we need identifies the set where $R(v) \leq y$ with the set where $\psi = \phi$. The proof of this result has to cover several cases and consequently is quite long and technical. For this reason the proof is postponed to Appendix~\ref{ssec:setequality}.

\begin{prop}
\label{prop:setequality}
Suppose $y$ is a continuity point of $\mu$ and $\nu$.
Then
\begin{equation} \{ v : v>S^{-1}(y), R(v) \leq y  \} = \{ v : v> S^{-1}(y), \psi_{S^{-1}(y), P_\nu(y)-S^{-1}(y)}(v) = \phi(v) \}. \label{eq:claim3D}
\end{equation}
\end{prop}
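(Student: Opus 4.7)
Set $s = S^{-1}(y)$ and $x = P_\nu'(y) - s$; by \cref{lem:claims3phi} $x \in [\phi(s), \phi(s+)]$, so \cref{lem:psi} makes $\psi_{s,x}$ absolutely continuous and non-increasing on $[s,1]$. (I interpret the printed subscript ``$P_\nu(y) - S^{-1}(y)$'' as ``$P_\nu'(y) - S^{-1}(y)$'', since otherwise the second argument would not lie in $[0,1)$ as required for $\psi$ in \cref{def:psi}.) From $\psi_{s,x}(v) = x \wedge \inf_{s < u \le v}\phi(u)$, the equality $\psi_{s,x}(v) = \phi(v)$ is equivalent to the conjunction of (I) $\phi(v) \le x$ and (II) $\phi(v) \le \phi(u)$ for all $u \in (s,v)$. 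The plan is to prove $R(v) \le y$ is equivalent to (I) and (II) together, by establishing each set inclusion separately.

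For the direction $R(v) \le y \Rightarrow$ ((I) and (II)): since $L_v := L^{\sE_v,\phi(v)}_{G(v)}$ lies below $D$ on $(-\infty, G(v)]$ with equality at $R(v)$, I read off $\phi(v) \in [D'(R(v)-), D'(R(v)+)]$ and hence $\phi(v) \le P_\nu'(R(v)+) - P_\mu'(R(v)+) \le P_\nu'(y) - P_\mu'(R(v)+)$. Establishing (I) reduces to $P_\mu'(R(v)+) \ge s$. When $R(v) \ge G(s)$ this is immediate from $s \le P_\mu'(G(s)+)$ and monotonicity of $P_\mu'$. The residual case $R(v) < G(s)$ only occurs when $s \in A_<^n$ and, by left-monotonicity, $R(v) \le R(s)$; if $v$ lies in the same component as $s$ then \cref{lem:phi}(iii) gives $\phi(v) \le \phi(s) \le x$ directly, while if $v$ lies in a later component the inequality is propagated across components via the representation \eqref{eq:phiS2} applied at intermediate points. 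For (II), I argue by contradiction: if $\phi(u) < \phi(v)$ for some $u \in (s,v)$, then by \cref{lem:phi}(iii) $u$ and $v$ lie in different components of $A_<$, and using \cref{lem:phi}(ii) together with the upward-jump-only structure of $\phi$ from \cref{prop:Rlowersemi}(ii), the left-monotonicity asserted in \cref{thm:monotone} is violated by $(R(v),S(v))$ relative to $(R(u),S(u))$.

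For the direction ((I) and (II)) $\Rightarrow R(v) \le y$: assume (I) and (II) and suppose for contradiction $R(v) > y$, so $G(v) \ge R(v) > y$. Since $R(s) \le y$ and $R$ is lower semi-continuous from the left by \cref{prop:Rlowersemi}(iii), $R$ must first cross above $y$ via a right-discontinuity at some $t \in [s, v)$ with $R(t) \le y$ but $R(u) > y$ for $u$ in a right-neighbourhood of $t$. Such transitions occur only at boundaries of components of $A_<$, and at $t$ either $\phi$ jumps upward (in which case $\phi$ must decrease again on the new component before reaching $v$, producing a $u' \in (t,v)$ with $\phi(u') < \phi(v)$ and contradicting (II)), or $\phi$ is continuous through $t$ (in which case the subdifferential relation $\phi(v) + v \in \partial P_\nu(S(v))$, combined with $R(v) > y$ and convexity of $P_\nu$ at $y$, forces $\phi(v) > x$, contradicting (I)).

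The main obstacle is the book-keeping across the potentially infinite family of components of $A_<$ between $s$ and $v$, whose right-endpoints may accumulate. The hardest technical steps are the case $R(v) < G(s)$ in the forward direction and the careful identification of the crossing point $t$ in the reverse direction, especially when $t$ is an accumulation point of component boundaries; both rely on propagating subdifferential and tangent estimates across multiple components using the fine interplay between the convex-hull structure of $\sE_v^c$, the subdifferential relations for $P_\nu$ and $P_\mu$, and the left-monotonicity of $R$ and $S$ established in \cref{thm:monotone}.
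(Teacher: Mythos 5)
Your observation that $\psi_{s,x}(v)=\phi(v)$ is equivalent to the conjunction ``(I) $\phi(v)\le x$'' and ``(II) $\phi(v)\le\phi(u)$ for all $u\in(s,v)$'' is correct and is a sensible way to organize the problem, and your subdifferential estimate $\phi(v)\in[D'(R(v)-),D'(R(v)+)]$ together with $P_\mu'(R(v)+)\ge s$ when $R(v)\ge G(s)$ does give a clean short-cut to (I) in that subcase. However, the remaining parts of your argument have genuine gaps.

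For the forward direction, the case $R(v)<G(s)$ is not handled: ``propagating the inequality across components via \eqref{eq:phiS2}'' cannot work as stated because \eqref{eq:phiS2} is an identity valid only for $u,v$ in the \emph{same} component $A_<^n$, and says nothing about how $\phi$ behaves when passing from one component to the next (where $\phi$ can jump upward, cf.\ \cref{prop:Rlowersemi}(ii)). Similarly, your derivation of (II) asserts that $\phi(u)<\phi(v)$ directly contradicts the left-monotonicity of $(R,S)$, but this is not a consequence of \cref{thm:monotone} alone: the correct intermediate fact is that $\phi(u)<\phi(v)$ forces $R(v)\ge S(u)$, which the paper proves in \cref{lem:Rolz<Solz} by a two-case geometric comparison of the tangent lines $L^{\sE_u,\phi(u)}_{S(u)}$ and $L^{\sE_v,\phi(v)}_{S(v)}$, not by invoking monotonicity alone.

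For the reverse direction, the ``first right-discontinuous crossing of $R$ above $y$'' is exactly the kind of argument the paper deliberately avoids, because the component boundaries of $A_<$ can accumulate and the infimum of $\{u>s:R(u)>y\}$ need not be a point at which $R$ has any tractable one-sided behaviour. The paper instead introduces $\uulz=\sup\{w\in(s,v]:\phi(w)=\psi_{s,\phi^*}(w)\}$ and $\oolz=\sup\{z>\uulz:\sE_z\ge L^{\sE_{\uulz},\phi(\uulz)}_{R(\uulz)}\}$, and uses limits of $\sE_u$ (\cref{lem:Elim}) together with left-continuity of $\phi$ and lower semi-continuity of $R$ to show these suprema are attained and retain the needed properties. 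Your sketch has no mechanism that plays this role. Finally, you have not addressed the degenerate situation where no $w$ in a right-neighbourhood of $S^{-1}(y)$ has $R(w)<y$ (Condition $A(y)$ in the paper fails); in that case $R(v)=y$ can occur, your subdifferential argument at $R(v)=G(v)$ breaks down, and the paper has to split $(S^{-1}(y),1)$ into the three intervals $(S^{-1}(y),\tilde z]$, $(\tilde z,\olz]$, $(\olz,1)$ and treat each separately (the key lemmas being \cref{lem:D<sE}, \cref{lem:D=sEand}, \cref{lem:claims3C}, \cref{lem:Rolz<Solz}). As it stands, the proposal is a reasonable high-level outline but is missing the supremum constructions and the degenerate-case analysis that carry the real weight of the proof.
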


We are now ready to prove the main results.

\begin{prop}
\label{prop:intrewrite}
Suppose $y$ is a continuity point of both $\mu$ and $\nu$. 
Then
\begin{equation} \label{eq:int}
\int^1_{S^{-1}(y)}\frac{S(v)-G(v)}{S(v)-{T}(v)}I_{\{T(v)\leq y  \}}dv =  P'_\nu(y) - S^{-1}(y) .
\end{equation}
\end{prop}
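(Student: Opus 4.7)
The plan is to rewrite the integrand as $-\psi'(v)$ for the auxiliary function $\psi := \psi_{S^{-1}(y), P'_\nu(y) - S^{-1}(y)}$ and then apply the fundamental theorem of calculus. Setting $a = S^{-1}(y)$ and $x = P'_\nu(y) - a$, \cref{lem:claims3phi} gives $\phi(a) \leq x \leq \phi(a+)$, so \cref{lem:psi} guarantees that $\psi$ is decreasing and absolutely continuous on $[a,1]$, with boundary values $\psi(a) = x$ (the infimum over the empty set being $+\infty$) and $\psi(1) = x \wedge \phi(1) = 0$.

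Three pieces then combine. First, the identity $\phi'(v) = -\frac{S(v)-G(v)}{S(v)-R(v)}$ a.e.\ on $A_<$ (derived at the end of \cref{sec:phiprop}), together with the fact that the ratio vanishes off $A_<$ (where $S(v)=G(v)$), lets us rewrite the integrand as $-\phi'(v) I_{A_<}(v) I_{\{R(v) \leq y\}}(v)$. Second, \cref{prop:setequality} replaces $I_{\{R(v) \leq y\}}$ by $I_{\{\psi(v) = \phi(v)\}}$ for $v > a$. Third, for $v \in A_< \cap \{\psi = \phi\}$ at which both $\phi'(v)$ and $\psi'(v)$ exist (and this holds a.e.\ on each component of $A_<$, using \cref{cor:phiabs} for $\phi$ and the absolute continuity of $\psi$), the global inequality $\psi \leq \phi$ combined with equality at $v$ forces $\psi'(v) = \phi'(v)$ by comparing one-sided difference quotients (from the right, $\psi'(v+) \leq \phi'(v+)$; from the left, $\psi'(v-) \geq \phi'(v-)$). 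Putting these together, the integrand equals $-\psi'(v)$ a.e.\ on $A_< \cap \{\psi = \phi\}$, while \cref{lem:claims12} gives $\psi'(v) = 0$ a.e.\ outside this set, so
$$\int_a^1 \frac{S(v)-G(v)}{S(v)-R(v)} I_{\{R(v) \leq y\}}\, dv = -\int_a^1 \psi'(v)\, dv = \psi(a) - \psi(1) = x = P'_\nu(y) - S^{-1}(y),$$
where the second equality uses absolute continuity of $\psi$.

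The main obstacle is the bookkeeping: one has to line up three distinct a.e.\ statements — the $\phi'$ identity on $A_<$, the $\{R \leq y\} = \{\psi = \phi\}$ identification from \cref{prop:setequality}, and the vanishing of $\psi'$ off $A_< \cap \{\psi = \phi\}$ from \cref{lem:claims12} — and then extract the pointwise equality $\psi'(v) = \phi'(v)$ from the one-sided inequality $\psi \leq \phi$ on the intersection where both derivatives exist. Once these pieces are aligned, converting back to $\psi'$ is what enables the use of the FTC (which is unavailable for $\phi'$ directly since $\phi$ is not monotone and $A_<$ may decompose into countably many components with accumulating endpoints), and the closed form follows immediately.
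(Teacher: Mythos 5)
Your proof is correct and follows essentially the same route as the paper: rewrite the integrand as $-\phi' I_{A_<}$, invoke Proposition~\ref{prop:setequality} to swap $I_{\{R\leq y\}}$ for $I_{\{\psi=\phi\}}$, pass from $\phi'$ to $\psi'$ on that set, use Lemma~\ref{lem:claims12} to extend the integral over all of $(S^{-1}(y),1)$, and finish with the fundamental theorem of calculus. Your explicit one-sided difference-quotient argument for $\psi'=\phi'$ at points of equality cleanly fills in a step the paper leaves implicit.
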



\begin{proof}

From \cref{prop:setequality} we know that under the hypotheses of the proposition
\begin{equation}
\{u: u > S^{-1}(y), R(u) \leq y \}  =  \{u:  u > S^{-1}(y), \psi_{<y>}(u) = \phi(u) \} \label{eq:claim3}.
\end{equation}
where $\psi_{<y>}$ is shorthand for $\psi_{S^{-1}(y), P_\nu'(y) - S^{-1}(y)}$.

Then, using the fact that $A_< \subseteq \{v: G(v)< S(v) \}$ but the difference is a set of measure zero,
\begin{eqnarray*}
\int^1_{S^{-1}(y)}\frac{S(v)-G(v)}{S(v)-{T}(v)}I_{\{{T}(v)\leq y\}}dv
& = & \int^1_{S^{-1}(y)}\frac{S(v)-G(v)}{S(v)-R(v)}I_{\{R(v)\leq y \}} I_{ \{ G(v)< S(v) \} }dv \\
& = & \int^1_{S^{-1}(y)}\frac{S(v)-G(v)}{S(v)-R(v)}I_{\{R(v)\leq y \}} I_{ A_< } dv \\
& = & - \int^1_{S^{-1}(y)}\phi'(v) I_{\{R(v)\leq y \}} I_{ A_< } dv \\
& = & - \int^1_{S^{-1}(y)}\phi'(v) I_{\{ \psi_{<y>}(v) = \phi(v) \}} I_{ A_< } dv \\
& = & - \int^1_{S^{-1}(y)}\psi_{<y>}'(v) I_{\{ \psi_{<y>}(v) = \phi(v) \}} I_{A_<} dv \\
& = & - \int^1_{S^{-1}(y)}\psi_{<y>}'(v) \left[ I_{\{ \psi_{<y>}(v) = \phi(v) \}} I_{ A_< } + I_{ \sN } \right] dv,
\end{eqnarray*}
where $\sN$ is some set of measure zero.

For any absolutely continuous, decreasing function $g$ we have $\int_a^b g'(z) dz = \int_a^b g'(z) I_{\{ g'(z) < 0 \} } dz + \int_a^b g'(z) I_{\{ g'(z) = 0 \} } dz = \int_a^b g'(z) I_{A} dz $ for any set $A \supseteq \{ g'(z) < 0 \}$.
We saw in Lemma~\ref{lem:claims12} that 
\begin{equation}
\label{eq:claim2}
\{ u : u > S^{-1}(y), \psi_{<y>}'(u) < 0  \} \subseteq
\left( \{u: u > S^{-1}(y), S(u) > G(u+) \} \cap \{ \psi_{<y>}(u) = \phi(u) \} \right)  \cup \sN_{S^{-1}(y)}
\end{equation}
for a well chosen set $\sN_{S^{-1}(y)}$ of measure zero.
Then, 
\begin{eqnarray*}
\lefteqn{\int^1_{S^{-1}(y)}\psi_{<y>}'(v) \left( I_{\{ \psi_{<y>}(v) = \phi(v) \}} I_{ \{ G(v+)< S(v) \} } +  I_{\sN_{S^{-1}(y)}} \right) dv} \\
& = & \int_{S^{-1}(y)}^1 \psi_{<y>}'(v) dv
 =  \psi_{<y>}(1) - \psi_{<y>}(S^{-1}(y))
= - (P'_{\nu}(y) - S^{-1}(y))
\end{eqnarray*}
with the last equality following from Lemma~\ref{lem:phi} and the fact that $\psi_{<y>}(1) = \phi(1) = 0$.
\end{proof}

\begin{prop}
\label{prop:resultempty}
Suppose that $y$ is a continuity point of both $\mu$ and $\nu$. 
Then $\chi((-\infty,y]) = \nu((-\infty,y])$.
\end{prop}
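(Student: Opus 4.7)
The plan is a direct assembly of the machinery already in place in this section, so this proposition is essentially a bookkeeping step rather than a substantive argument.

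Starting from the representation \eqref{eq:intrepresentation}, which holds for any $y\in\R$,
\[
\chi((-\infty,y]) = S^{-1}(y) + \int^1_{S^{-1}(y)}\frac{S(u)-G(u)}{S(u)-R(u)}I_{\{R(u)\leq y\}}\,du,
\]
I would simply invoke \cref{prop:intrewrite}, which applies because $y$ is a continuity point of both $\mu$ and $\nu$, to replace the integral by $P'_\nu(y) - S^{-1}(y)$. This yields
\[
\chi((-\infty,y]) = P'_\nu(y).
\]

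To finish I would appeal to the standard identities for the modified potential: since $P_\nu(k) = \int_\R (k-x)^+\,\nu(dx)$, we have $P'_\nu(k-) = \nu((-\infty,k))$ and $P'_\nu(k+) = \nu((-\infty,k])$. As $y$ is a continuity point of $\nu$, the left and right derivatives of $P_\nu$ at $y$ coincide, whence $P'_\nu(y) = \nu((-\infty,y])$. Combining gives $\chi((-\infty,y]) = \nu((-\infty,y])$ as required.

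There is essentially no obstacle at this stage; the difficulty has already been absorbed into \cref{prop:intrewrite} (and, upstream of that, into \cref{prop:setequality}, \cref{lem:claims12} and \cref{lem:claims3phi}). One small subtlety worth flagging is only that the desired conclusion need only be verified on a dense subset of $y$'s, since both $\chi$ and $\nu$ are probability measures determined by their distribution functions on any dense set — the set of common continuity points of $\mu$ and $\nu$ is co-countable and hence dense, so this restricted verification suffices to conclude $\chi=\nu$.
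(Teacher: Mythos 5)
Your proof is correct and follows the same route as the paper: combine \eqref{eq:intrepresentation} with \cref{prop:intrewrite} to get $\chi((-\infty,y]) = P'_\nu(y)$, and then use the elementary identity $P'_\nu(y)=\nu((-\infty,y])$ at continuity points of $\nu$ (which the paper leaves implicit). Your closing remark about denseness is not part of this proposition but correctly anticipates the argument used in \cref{thm:construction2}.
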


\begin{proof}
Combining \eqref{eq:intrepresentation} and \eqref{eq:int} we find $\Prob(Y(U,V) \leq y) = P'_{\nu}(y)$.
\end{proof}

\begin{thm}\label{thm:construction2}
Define $T$ and $S$ as in \eqref{eq:T} and \eqref{eq:S}.
Then they are left-monotone with respect to $G$ and define a construction as in Theorem~\ref{thm:construction} such that $\sL(Y)=\nu$, where $Y$ is as in \ref{eq:YUVdef1}. In particular, $T$ and $S$ define the (lifted) left-curtain coupling.
\end{thm}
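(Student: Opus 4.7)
The plan is to assemble the pieces already established in Sections~\ref{sec:geometric}--\ref{sec:embed} rather than to carry out any new heavy lifting. Fix $R,S$ as in \eqref{eq:R}--\eqref{eq:S}, take $G$ to be the left-continuous quantile function of $\mu$, and let $(U,V)$ be independent $U(0,1)$ random variables on the canonical space of Theorem~\ref{thm:construction}. Set $X = G(U)$ and $Y = Y(U,V)$ via \eqref{eq:YUVdef}.

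First I would verify the three hypotheses of Theorem~\ref{thm:construction} one at a time. The identity $\sL(X) = \mu$ is immediate from $G = G_\mu$ being a quantile function of $\mu$. Left-monotonicity of $(R,S)$ with respect to $G$ on $(0,1)$ is exactly the content of Theorem~\ref{thm:monotone}, which gives both $R \leq G \leq S$ and the required monotone/non-crossing structure from Definition~\ref{def:lmonfns}. The martingale property $\E[Y\mid X] = X$ is forced by the definition \eqref{eq:YUVdef}: conditionally on $U=u$, $Y$ takes values in $\{R(u),S(u)\}$ with weights chosen so that the conditional law is $\chi_{R(u),G(u),S(u)}$, which has mean $G(u) = X$ when $R(u) \leq G(u) \leq S(u)$ (and $Y=G(u)=X$ deterministically when $R(u)=S(u)$).

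Next I would verify $\sL(Y) = \nu$. By Proposition~\ref{prop:resultempty} we have $\Prob(Y \leq y) = \nu((-\infty, y])$ at every $y$ which is a continuity point of both $\mu$ and $\nu$. Since the set of such common continuity points is co-countable and hence dense in $\R$, and since both $y \mapsto \Prob(Y \leq y)$ and $y \mapsto \nu((-\infty,y])$ are right-continuous, the two distribution functions agree everywhere, so $\sL(Y) = \nu$. Combined with the previous step this proves the existence clause of Theorem~\ref{thm:construction} with the explicitly constructed $R,S$.

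Finally I would identify the resulting coupling with the lifted left-curtain coupling. With $\hat{\mu}^Q$ the quantile lift of $\mu$, the joint law of $(U,X,Y)$ under $\Prob$ is a lifted martingale transport plan of the form $\hat{\pi}(du,dx,dy) = du\,\delta_{G(u)}(dx)\,\chi_{R(u),G(u),S(u)}(dy)$, with second marginal $\mu$ and third marginal $\nu$ by the previous step. By Corollary~\ref{cor:construction} (which is the lifted restatement of the characterisation of Beiglb\"ock--Juillet) there is a unique such lifted martingale transport plan for which $(R,S)$ is left-monotone with respect to $G$, namely the lifted left-curtain coupling; hence $\hat\pi = \hat\pi^Q_{lc}$. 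The only conceptually nontrivial step in this assembly is Proposition~\ref{prop:resultempty}, which itself rests on Proposition~\ref{prop:setequality} and Lemma~\ref{lem:claims12} and has already been established; everything else is bookkeeping.
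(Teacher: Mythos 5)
Your proposal is correct and follows essentially the same route as the paper's proof: the paper also assembles Theorem~\ref{thm:monotone} (left-monotonicity) and Proposition~\ref{prop:resultempty} ($\chi((-\infty,y])=\nu((-\infty,y])$ at common continuity points of $\mu,\nu$, extended by density and right-continuity), merely stating these ingredients more tersely. Your extra bookkeeping --- spelling out $\sL(X)=\mu$, the conditional-mean/martingale check, and the identification with $\hat\pi^Q$ via Corollary~\ref{cor:construction} --- is all implicit in the paper's one-line argument and does not change the approach.
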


\begin{proof}
This follows immediately from Lemma~\ref{lem:monotone} and Proposition~\ref{prop:resultempty}, where we note that it is sufficient to show that $\chi((-\infty,y]) = \nu((-\infty,y])$ on a dense subset of the reals and we may exclude non-continuity points of $\mu$ and $\nu$.
\end{proof}

\bibliographystyle{plainnat}

\appendix
\section{Proofs}\label{sec:proofs}
\subsection{Convex hull}
\label{ssec:proofsCH}
\begin{proof}[Proof of \cref{lem:linear}]
Without loss of generality we assume that $f^c$ is not equal to $-\infty$ everywhere. Since $f>f^c$, we also cannot have that $f^c=\infty$ everywhere.
		
		 We first deal with the case when $a,b\in\R$ with $a<b$. Suppose $f^c$ is not a straight line on $(a,b)$. Then, by the convexity of $f^c$, for all $x\in(a,b)$ we have
	\begin{equation}\label{eq:f<L}
	f^c(x) <\frac{b-x}{b-a} f^c(a) + \frac{x-a}{b-a} f^c(b) = L^{f^c}_{a,b}(x) .
	\end{equation}
	
	Let $\eta = \inf_{y \in (a,b)} \{ f(y) -  L^{f^c}_{a,b}(y) \}$. If $\eta \geq 0$ then $f \geq L^{f^c}_{a,b}$ on $(a,b)$ and $f \geq f^c \vee L^{f^c}_{a,b}$, contradicting the maximality of $f^c$ as a convex minorant of $f$.
	
	Now suppose that $\eta<0$. Since $f$ is lower semi-continuous, $f-L^{f^c}_{a,b}$ is also lower semi-continuous, and therefore attains its infimum on $[a,b]$. Fix $z\in\arginf_{y\in[a,b]}\{f(y) -  L^{f^c}_{a,b}(y)\}$. Since $f(k)-L^{f^c}_{a,b}(k)=f(k)-f^c(k)\geq0$ for $k\in\{a,b\}$, $a,b\notin\arginf_{y\in[a,b]}\{f(y) -  L^{f^c}_{a,b}(y)\}$, and thus $z\in(a,b)$. Then since $f > f^{c}$ on $(a,b)$ we have $0 > \eta = f(z)-L^{f^c}_{a,b}(z)> f^c(z) -  L^{f^c}_{a,b}(z)$.
	Then $f^c \vee (L^{f^c}_{a,b} + \eta)$ is convex, is a minorant of $f$ and is strictly larger than $f^c$ (in particular at $z$) again contradicting the maximality of $f^c$ as a convex minorant of $f$.
	The case when one (or both) of the endpoints of $(a,b)$ are infinite can be reduced to the previous finite case. Indeed, suppose $f>f^c$ on $(a,\infty)$ (resp. $(-\infty,b)$) with $a\in\R$ (resp. $b\in\R$). If $f^c$ is not a straight line on $(a,\infty)$ (resp. $(-\infty,b)$), then there exists $b\in \R$ (resp. $a\in\R$) with $a<b$ such that $f^c$ is not a straight line on $(a,b)$ and \eqref{eq:f<L} holds for all $x\in(a,b)$. The rest of the argument remains the same. Finally, if $f>f^c$ on $\R$ but $f^c$ is not linear, then we can find $a,b\in\R$ with $a<b$ such that $f^c$ is not linear on $(a,b)$ and \eqref{eq:f<L} holds. We conclude as before.	
\end{proof}

\begin{proof}[Proof of \cref{lem:convh1}]
If $f=\infty$ on $\R$ then $f^c = \infty$, $X(y)=y=Z(y)$, and $L^f_{X(y),Z(y)}(y)=\infty=f^c(y)$. Henceforth we exclude this degenerate case.

By continuity of $f$ and necessarily of $f^c$, if $X(y)>-\infty$ then $f^c(X(y)) = f(X(y))$; similarly, if $Z(y)<\infty$ then $f^c(Z(y)) = f(Z(y))$.

Fix $y \in \R$. If $X(y)=y=Z(y)$ then $f^c(y) = f(y) = L^f_{y,y}(y) = L^f_{X(y),Z(y)}(y)$, as required.

So we may suppose $X(y)<Z(y)$. Suppose for now that $-\infty < X(y) < Z(y) < \infty$. 
By definition $f> f^c$ on $(X(y),Z(y))$. Then, by Lemma~\ref{lem:linear}, $f^c$ is linear on $(X(y),Z(y))$ and, by continuity of $f$, $f^c(X(y)) = f(X(y))$ and $f^c(Z(y)) = f(Z(y))$. Then $f^c(k) =  L^{f^c}_{X(y),Z(y)}(k)=L^{f}_{X(y),Z(y)}(k)$ on $[X(y),Z(y)]$. Applying this result at $k=y$ we have $f^c(y) =  L^{f}_{X(y),Z(y)}(y)$ as required.

We want to extend this result to the case where one, or both, of $\{X(y),Z(y)\}$ is infinite. Suppose $-\infty<X(y)<Z(y)=\infty$ (the case of $-\infty = X(y)< Z(y)< \infty$ can be treated symmetrically). By Lemma~\ref{lem:linear}, $f^c$ is a straight line on $(X(y),\infty)$ and by continuity of $f$, $f(X(y)) = f^c(X(y))$. Then $f^c(k) = f(X(y)) + \theta (k-X(y))$ on $[X(y),\infty)$, for some $\theta$ to be determined. Let $\phi = \liminf_{k \uparrow \infty} f(k)/k$; we show that $\phi = \theta$ for then $f^c(y) = L^f_{X(y),\infty}(y)$. We have
\[ \theta = \lim_{ k\uparrow\infty} \left[ \frac{f^c(k) - f(X(y))}{k-{ X(y)}} \right] \leq \liminf_{ k\uparrow\infty} \left[ \frac{f(k) - f(X(y))}{k-{ X(y)}} \right] = \liminf_{ k\uparrow\infty} \frac{f(k)}{k}  = \phi. \]
If $\theta < \phi$ then let $J = \inf_{ k } (f(k) - \phi k)$. Then $f^c \vee (J + \phi k)$ is convex, is a minorant of $f$ and is greater than $f^c$ for large enough $k$, contradicting the maximality of $f^c$. Hence $\theta = \phi$, and then $f^c(y)=L^f_{X(y),Z(y)}(y)$.

Finally, suppose $-\infty = X(y)<Z(y)=\infty$. Then, by Lemma~\ref{lem:linear}, $f^c$ is a straight line, and either $f^c(y) = \alpha + \theta y$ with $\alpha,\theta \in \R$, or $f^c = - \infty$ or $f^c=\infty$. The latter is ruled out since $f^c \leq f$ and at least for some $x$, $f < \infty$ by non-degeneracy.

Suppose $\psi + \phi >0$. Then both $\psi,\phi>-\infty$. We first show that $f^c\neq-\infty$. Suppose that $\psi,\phi\in\R$. Let $\epsilon$ be given by $2 \epsilon = \psi + \phi > 0$. Then $f(y) - (\phi - \epsilon) y$ is bounded below on $\R_+$ by $\beta_+ \in \R$ say and $f(y) - (\psi - \epsilon) |y|$ is bounded below on $\R_-$ by $\beta_- \in \R$. Then, if $\beta = \beta_+ \wedge \beta_-$, $f(y) \geq f^c\geq \beta + (\phi - \epsilon) y> -\infty$, for all $y\in\R$. If $\psi=\phi=\infty$, then by the continuity of $f$ and since $f<\infty$ for some $x\in\R$, $f$ is bounded below on $\R$ by some $\alpha\in\R$, and then $f^c\geq\alpha$ everywhere.  Finally, suppose that $\psi=\infty$ and $\phi\in\R$ (the case of $\psi\in\R$ and $\phi=\infty$ follows by symmetry). Then, if $\epsilon>0$, $f(y)-(\phi-\epsilon)y$ is bounded below on $\R$, and it follows that we cannot have $f^c=-\infty$.

So suppose $\psi + \phi>0$ and $f^c(y) = \alpha + \theta y \leq f(y)$, $y\in\R$. We show that this leads to a contradiction (to the fact that $-\infty = X(y)<Z(y)=\infty$). If $\psi + \phi>0$ then either $\psi > - \theta$ or $\phi>\theta$ (or both). Suppose $\phi > \theta$ (the case of $\psi > -\theta$ can be treated similarly). Then, there exists $y_0\in\R$ such that for $y \geq y_0$, $f(y) \geq f^c(y_0) + \phi (y-y_0) > f^c(y_0) + \theta (y-y_0)=f^c(y)$. But then $(\alpha + \theta y) \vee (f^c(y_0)+ \phi(y-y_0))$ is a convex minorant of $f$ which is greater than $f^c$.

Now suppose $\psi + \phi<0$. Choose $z_n \uparrow \infty$ such that $f(z_n)/z_n \rightarrow \phi$ and $x_m \downarrow - \infty$ such that $f(x_m)/|x_m| \rightarrow \psi$. By (5) (i.e., the result of Rockafellar that $f^c(y) = \inf_{x \leq y \leq z} L^f_{x,z}(y)$), for big enough $m$ and $n$, we have $f^c(y) \leq L^f_{x_m,z_n}(y)$. Hence, if $-\infty < \phi + \psi < 0$, then $-\infty\leq f^c(y) \leq \lim_m( \lim_n L^f_{x_m,z_n}(y) ) = \lim_m L^f_{x_m,\infty}(y) = L^f_{-\infty,\infty}(y) = -\infty$.

Similarly, if $\phi = - \infty$, then $L^f_{x,\infty}(y) = -\infty$ for $x<y$ and $\lim_m L^f_{x_m,\infty}(y) = -\infty$, irrespective of the value of $\psi$. If on the other hand, $\psi = -\infty$, then we reverse the order of taking limits and use $f^c(y) \leq \lim_n( \lim_m L^f_{x_m,z_n}(y) ) = \lim_n L^f_{-\infty,z_n}(y) = -\infty$. Thus, if $\phi+\psi = -\infty$ we have both $L^f_{-\infty,\infty} = -\infty$ and $f^c=-\infty$ and then $f^c= L^f_{X(y),Z(y)}$.

Finally, suppose $\phi + \psi=0$ and $L^f_{-\infty,\infty}(y) =\gamma+ \phi y$. For any $k \in \R$ we have that $\gamma \leq f(k) - \phi k$ and hence $L^f_{-\infty,\infty}(k) = \gamma + \phi k \leq f(k)$. Hence $L^f_{-\infty,\infty}$ is a convex minorant of $f$.

Suppose $\gamma = -\infty$, so that $L^f_{-\infty,\infty}=-\infty$. Let $g$ be any convex function with $g \leq f$. We show that $g$ is identically equal to $-\infty$. Suppose not. Then there exists $k$ such that $g(k)>-\infty$ and by the convexity of $g$, $g(y) \geq g(k) + (y-k)g'(k-)$. Dividing by $y$ and letting $y \rightarrow \infty$ we conclude $\phi \geq g'(k-)$. Dividing by $|y|$ and letting $y \rightarrow -\infty$ we conclude $\psi \geq - g'(k-)$. Then $-\psi = \phi \geq g'(k-) \geq - \psi$ and $f(y) \geq g(y) \geq g(k) + \phi(y-k)$. Then $\inf_w (f(w) - \phi w) \geq g(k)-\phi(k)>-\infty$, contradicting the fact that $\gamma=-\infty$. Finally, since $g =-\infty$ is the only convex minorant of $f$ we have that $f^c = -\infty = L^f_{-\infty,\infty}$.

Now suppose $\gamma\in\R$. We first show that $f^c$ must be linear. Suppose not. Then, since $f^c\geq L^f_{-\infty,\infty}$, either there exists $k_+\in\R$ such that $f^c>L^f_{-\infty,\infty}$ on $(k_+,\infty)$ or there exists $k_-\in\R$ such that $f^c>L^f_{-\infty,\infty}$ on $(-\infty,k_-)$ (or both). In either case $f^c(k)>f(k)$ for some (large or small enough) $k\in\R$, a contradiction.

Hence suppose $f^c(y) = \alpha + \theta y$, $y\in\R$, for some $\alpha,\theta\in\R$. We aim to show that $\theta = \phi$ and $\alpha = \gamma$ so that $f^c = L^f_{-\infty,\infty}$. Suppose $\theta < \phi$. Then for large enough $y$, $f^c(y)<L^f_{-\infty,\infty}(y)$, so that $f^c \vee L^f_{-\infty,\infty}$ is a convex minorant of $f$ which is bigger than $f^c$, thus contradicting the maximality of $f^c$.  $\theta>\phi$ can be ruled out similarly by considering large negative $y$; hence $\theta = \phi$.
Finally, choose $k_n$ such that $f(k_n)-\phi k_n \downarrow \gamma$. Then $0 \leq f(k_n) - f^c(k_n) = f(k_n) - \phi k_n - \alpha \rightarrow \gamma - \alpha$. Hence $\gamma \geq \alpha$ and $f^c \leq L^f_{-\infty,\infty} \leq f$. Since $f^c$ is the largest convex minorant we conclude $\gamma = \alpha$ and $f^c = L^f_{-\infty,\infty}$.
\end{proof}

\begin{proof}[Proof of \cref{lem:convh2}]
Fix $y\in\R$ and suppose $(x,z)\in\sB(y)$ are such that $f(k)>L^f_{x,z}(k)$ for all $k\in(x,z)$. If $X(y)=-\infty$, then we trivially have that $X(y)=-\infty\leq x$. Now suppose that $X(y)$ is finite. Using again that $f\geq f^c$ and $f^c$ is convex we have that
$$
f(k)>L^f_{x,z}(k)\geq L^{f^c}_{x,z}(k)\geq f^c(k),\quad k\in(x,z).
$$
Therefore, if $X(y)\in(x,z)$, we have a contradiction since $f(X(y))=f^c(X(y))$. A symmetric argument shows that $z\leq Z(y)$.

\end{proof}
\subsection{The Geometric construction}
\label{ssec:proofsGC}

\begin{proof}[Proof of Lemma \ref{lem:RSfinite}]
First note that, since $\{k:D(k)>0\}=(\ell_\nu,r_\nu)\supseteq(\ell_\mu,r_\mu)$ and by hypothesis $\mu(\{\ell_\nu\})=0$ (resp. $\mu(\{r_\mu\})=0$) in the case $\ell_\mu=\ell_\nu$ (resp. $r_\mu=r_\nu$), we have that $G(u)\in(\ell_\nu,r_\mu)$ for all $u\in(0,1)$. Hence $S(u)<r_\nu$ (resp. $\ell_\nu<R(u)$) in the case $G(u)=S(u)$ (resp. $R(u)=G(u)$). Recall that if $R(u)=G(u)$, then $R(u)=Q(u)=G(u)=S(u)$ (see Lemma \ref{lem:R=G=S}).

If $-\infty\leq R(u)<\ell_\nu$ (or $-\infty=R(u)=\ell_\nu$), then $\sE^c_u(R(u))=\sE_u(R(u))=D(R(u))= 0$ (or $\lim_{k\downarrow R(u)}\sE_u(R(u))=\lim_{k\downarrow R(u)}\sE^c_u(R(u))=\lim_{k\downarrow R(u)}D(k)=0$). In both cases we have that $L^{\sE^c_u}_{R(u),S(u)}$ has zero slope. It follows that either $S(u)<-\infty$ and $\sE^c_u(S(u))=\sE_u(S(u))=D(S(u))=0$ or $S(u)=\infty$ and then $\sE^c_u\equiv 0$. In the former case we must have that $r_\nu\leq S(u)<\infty$. But then $\sE^c_u$ is linear on $(R(u),S(u))$, and thus (see Corollary \ref{cor:shadow_potential}) $\nu-S^\nu(\mu_u)$ does not charge $[\ell_\nu,r_\nu)$. It follows that we have $(\mu-\mu_u)\leq_{cx}(\nu-S^\nu(\mu_u))\leq \nu(\{r_\nu\})$, and then $(\mu - \mu_u)$ must be a point mass at $r_\nu$ which is excluded by hypothesis, since $\mu$ places no mass there. In the latter case, since $\sE^c_u\equiv 0$, Corollary \ref{cor:shadow_potential} implies that $\nu=S^\nu(\mu_u)$. But $\nu(\R)=1>u=\mu_u(\R)=S^\nu(\mu_u)(\R)$, a contradiction. We conclude that either $-\infty<\ell_\nu\leq R(u)$ or $-\infty=\ell_\nu<R(u)$.

Now we deal with $S(u)$. Suppose that either $r_\nu<S(u)\leq\infty$ or $r_\nu=S(u)=\infty$. Then, by Theorem \ref{thm:shadow_potential}, linearity of $\sE^c_u$ on $(R(u),S(u))$ implies that $\nu-S^\nu(\mu_u)$ does not charge $(R(u),r_\nu]$, so that $\textrm{supp}(\nu-S^\nu(\mu_u))\subseteq (-\infty,R(u)]$. On the other hand, $\textrm{supp}(\mu-\mu_u)\subseteq[G(u),\infty)$. But, $S(u)>G(u)$, and therefore, by Lemma \ref{lem:R=G=S}, $R(u)\leq Q(u)<G(u)$. This contradicts the fact that $(\mu-\mu_u)\leq_{cx}(\nu-S^\nu(\mu_u))$. It follows that either $S(u)\leq r_\nu<\infty$ or $S(u)<r_\nu=\infty$.
\end{proof}

\begin{proof}[Proof of \cref{lem:chagree}]
	We have $\sE_v(z) \leq \sE_u(z) = \sE^c_v(z) \leq \sE_v(z)$ so that $\sE_v(z) = \sE_u(z)$.
	
	Consider ${\tE} : \R \mapsto \R$ given by $\tE = \sE^c_u$ on $(-\infty,z]$ and $\tE = \sE^c_v$ on $(z,\infty)$. If we can show that $\tE \leq \sE_v$ and $\tE$ is convex then $\tE \leq \sE^c_v$ everywhere and therefore $\sE^c_u \leq \sE^c_v$ on $(-\infty,z]$. But $\sE^c_v \leq \sE^c_u$ everywhere, and in particular $\sE^c_u = \sE^c_v$ on $(-\infty,z]$.
	
	To show that $\tE \leq \sE_v$ note that $\sE_v < \sE_u$ on $(G(u+),\infty)$ and so we must have $z \leq G(u+)$. Then, for $x \in (-\infty,z]$, $\tE(x) = \sE^c_u(x) \leq \sE_u(x)=D(x) = \sE_v(x)$ whereas for $x>z$, $\tE(x) = \sE^c_v(x) \leq \sE_v(x)$.
	
	To show that $\tE$ is convex note that if $x<y \leq z$ then for $\lambda \in (0,1)$, by convexity of $\sE^c_u$,
	\[ \tE(\lambda x + (1-\lambda)y) =  \sE^c_u(\lambda x + (1-\lambda)y) \leq  \lambda \sE^c_u(x) + (1-\lambda)\sE^c_u(y) =  \lambda \tE(x) + (1-\lambda)\tE(y) . \]
	We obtain a similar inequality for $z \leq x < y$ using the convexity of $\sE^c_v$.
	
	So suppose $x<z<y$. Suppose $\lambda$ is such that $\lambda x + (1-\lambda)y \geq z$. Then
	\begin{eqnarray*}
		\lefteqn{ \tE(\lambda x + (1-\lambda)y) =  \sE^c_v(\lambda x + (1-\lambda)y) \leq  \lambda \sE^c_v(x) + (1-\lambda)\sE^c_v(y) } \\
		&& \leq  \lambda \sE^c_u(x) + (1-\lambda)\sE^c_v(y) =  \lambda \tE(x) + (1-\lambda)\tE(y).
	\end{eqnarray*}
	
	Finally suppose that $x<z<y$ and $\lambda x + (1-\lambda)y < z$. For $x<r<k$ consider $f=f(k,r)$ given by
	\[ f(k,r) = \sE^c_u(x) + \frac{r-x}{k-x} ( \sE^c_v(k) - \sE^c_u(x)). \]
	Since $\sE^c_u(x) \geq \sE^c_v(x)$ and $\sE^c_v$ is convex it is easily seen that $f(k,r)$ is increasing in $k$.
	Then
	\begin{eqnarray*}
		\lambda \tE(x) + (1-\lambda) \tE(y) & = & \sE^c_u(x) + (1-\lambda) (\sE^c_v(y) - \sE^c_u(x)) \\
		& = & f(y,\lambda x + (1-\lambda)y) \\
		& \geq & f(z,\lambda x + (1-\lambda)y) \\
		& = & \sE^c_u(x) +  \frac{(1-\lambda)(y-x)}{z-x} (\sE^c_v(z) - \sE^c_u(x)) \\
		&=& \left( 1 - \frac{(1-\lambda)(y-x)}{z-x} \right) \sE^c_u(x) + \frac{(1-\lambda)(y-x)}{z-x} \sE^c_u(z) \\
		& \geq & \sE^c_u \left( \left( 1 - \frac{(1-\lambda)(y-x)}{z-x} \right)x +  \frac{(1-\lambda)(y-x)}{z-x} z     \right) \\
		& = & \sE^c_u(\lambda x + (1 - \lambda y)) = \tE(\lambda x + (1 - \lambda y)).
	\end{eqnarray*}
	\end{proof}
\begin{proof}[Proof of \cref{cor:R=R}]
	Set $s = S(u)=S(v)$. Then $G(u+) \leq G(v) \leq S(v)=s \leq G(u+)$ so that $G(u+)=G(v)=s$ also.
	
	By \cref{lem:chagree}, in order to conclude that  $\sE^c_u = \sE^c_v$ on $(-\infty, S(v)]$ it is sufficient to show that $\sE^c_u(s)= \sE^c_v(s)=\sE_v(s)$. But $\sE^c_u(s) = \sE_u(s)$ since $s=S(u)$, $\sE_u(s)=\sE_v(s)$ since $s \leq G(u+)$ and $\sE_v(s)= \sE^c_v(s)$ since $s=S(v)$, and we are done.
	
	Inspection of the proof of \cref{lem:phiRepresentation} shows that provided we replace $D(r)$ with $\sE_u(r)$ in the expression for $\phi(u)$ in \eqref{eq:phiR}, the supremum over $k<G(u)$ can be replaced by a supremum over $k<S(u)$. (The two cases can be considered separately, and if $G(w)=S(w)$ there is nothing to prove.) Then, since $S(u)=s=S(v)$ and $\sE_u = \sE_v$ on $(-\infty,s]$,
	\[ \phi(u) = \sup_{r<S(u)} \frac{\sE_u(S(u)) - \sE_u(r)}{S(u)- r} = \sup_{r<S(v)} \frac{\sE_v(S(v)) - \sE_v(r)}{S(v)- r} = \phi(v). \]
	Further, since $\sE^c_u = \sE^c_v$ on $(-\infty,S(u)=s=S(v)]$ and $R(\cdot)$ depends on $\sE_\cdot$ only through $\sE^c_{\cdot}$ on $(-\infty, S(\cdot)]$, we have $R(u)=R(v)$.
\end{proof}

\begin{proof}[Proof of Lemma \ref{lem:RdecreasingA<}]
	Fix $n\geq1$ and $u,v\in A^n_<$ with $u<v$. We have that $R(u)<G(u)\leq G(u+)<S(u-)\leq S(u)$. Furthermore, by Theorem \ref{thm:monotone}, $R(v)\notin(R(u),S(u))$. We will show that $R(v)\leq R(u)$.

Suppose not, so that $S(u)\leq R(v)$. We aim to find a contradiction. Define $\bar u:=\inf \{w\in(u,v] : S(u)\leq R(w)\}$ and note that, since $v \in \{w\in(u,v] : S(u)\leq R(w)\}$, $\bar u$ is well-defined in $[u,v]$.
	
	First note that $\bar u>u$. Indeed, since $G(u+)<S(u)$, there exists $\bar\epsilon>0$ such that $G(u+)\leq G(u+\epsilon)<S(u)$ for all $0\leq\epsilon\leq\bar\epsilon$. But then $R(u+\epsilon)\leq G(u+\epsilon)<S(u)$ for all $0\leq\epsilon\leq\bar\epsilon$. It follows that $\bar u > u$.
	
	We claim that $R(\cdot)$ is decreasing on $[u,\bar u)$. Let $u_1,u_2\in[u,\bar u)$ with $u_1<u_2$. For $w\in\{u_1,u_2\}$ $R(w)<S(u)$ and then by the results of the previous paragraph and left-monotonicity, $R(w)\leq R(u)<S(u)\leq S(u_1)\leq S(u_2)$. Since $R(u_2)\notin(R(u_1),S(u_1))$ it follows that $R(u_2)\leq R(u_1)$, which proves the claim.
	
	Now observe that $R(\bar u)\notin(R(w),S(w))$ for all $w\in [u,\bar u)$, and therefore, by the monotonicity of $R(\cdot)$ and $S(\cdot)$ on $[u,\bar u)$, we have that $R(\bar u)\notin (R(\bar u-),S(\bar u -))$. But $\bar u \in A^n_<$, so that $G(\bar u)\leq G(\bar u+)<S(\bar u-)$. Hence, since $R(\bar u)\leq G(\bar u)$, we must have that $R(\bar u)\leq R(\bar u-)$ and therefore $R(\bar u)<S(u)$. In particular, $\bar u<v$.
	
	Finally, we have that $R(\bar u)<G(\bar u)\leq G(\bar u+)<S(\bar u-)\leq S(\bar u)$ and we can pick $\tilde\epsilon>0$ such that $G(\bar u+\epsilon)<S(\bar u)$ for all $0\leq \epsilon\leq\tilde\epsilon$. But then $R(\bar u +\epsilon)\leq G(\bar u+\epsilon)<S(\bar u)$ and therefore $R(\bar u+\epsilon)\leq R(\bar u)<S(u)$ for all $0\leq \epsilon\leq \tilde\epsilon$, contradicting the definition of $\bar u$ as the infimum of points in $\{w\in(u,v]:S(u)\leq R(w)\}$. This is the desired contradiction and we conclude that $R(v)\leq R(u)$.
	\end{proof}
\subsection{Left Continuity}
\label{ssec:proofsLC}
\begin{proof}[Proof of Lemma~\ref{lem:Elim}]
	By the left-continuity of $G$ we have
	\begin{align*}
	\lim_{u\uparrow v}\sE_u(k)=\lim_{u\uparrow v}\{P_\nu(k)-P_\mu(k\wedge G(u))+u(k-G(u))^+\}
	=P_\nu(k)-P_\mu(k\wedge G(v))+v(k-G(v))^+=\sE_v(k).
	\end{align*}
	Also, for all $u\in(0,1)$ with $v<u$, by \eqref{eq:Ediff} we have that
	$$
	0\leq \sE_v(k)-\sE_u(k)\leq P_\mu(G(u)) - P_\mu(G(v))-u(G(u)-G(v))+(u-v)(k-G(v))^+,\quad k\in\R.
	$$
	But, since $\mu$ does not charge $(G(v),G(v+))$ (provided $G(v)<G(v+)$), $P_\mu$ is linear on $(G(v),G(v+))$ with slope $v$, and we have that
	$$
	\lim_{u\downarrow v}\{P_\mu(G(u)) - P_\mu(G(v))-u(G(u)-G(v))\}=P_\mu(G(v+)) - P_\mu(G(v))-v(G(v+)-G(v))=0.
	$$
	It follows that, for each $k\in\R$, $\sE_u(k)\uparrow \sE_v(k)$ as $u\downarrow v$.
	
	For convergence of convex hulls note that, for $u\in(0,1)$ with $u<v$, $\sE_u(k)\geq\sE_v(k)\geq\sE^c_v(k)$ and therefore $\sE_u(k)\geq\sE^c_u(k)\geq\sE^c_v(k)$, $k\in \R$. Then
	$$
	\sE_v(k)=\lim_{u\uparrow v}\sE_u(k)\geq\lim_{u\uparrow v}\sE^c_u(k)\geq\sE^c_v(k),\quad k\in\R.
	$$
	Since a point-wise limit of convex functions is convex, $\lim_{u\uparrow v}\sE^c_u$ is a convex minorant of $\sE_v$. Hence $\sE^c_v\geq \lim_{u\uparrow v}\sE^c_u$ and equality follows. Similarly, for $u\in(0,1)$ with $v<u$, we have that $\sE_v\geq\sE_u\geq\sE_u^c$, and therefore $\sE^c_v\geq\lim_{u\downarrow v}\sE^c_u$.
	
	It remains to show that $\sE^c_v\leq\lim_{u\downarrow v}\sE^c_u$. First note that if $s_n \rightarrow s$ and $v_n \downarrow v$ then $\sE_{v_n}(s_n) \rightarrow \sE_v(s)$. To see this note that $\sE_{v_n}(s_n) \leq \sE_v(s_n)$ and so $\lim_n \sE_{v_n}(s_n) \leq \lim_n \sE_v(s_n) = \sE_v(s)$ by the continuity of $\sE_v$. Conversely, since $|\frac{\sE_{u}(k) - \sE_u(j)}{j-k}| \leq 1$ for all $j \neq k$ and $u \in (0,1)$, $\sE_{v_n}(s_n) \geq \sE_{v_n}(s) - |s_n - s|$ and $\lim_n \sE_{v_n}(s_n) \geq \lim_n \sE_{v_n}(s) = \sE_v(s)$.
	
	Fix $k$ such that $D(k)>0$. Either there exists a sequence $v_n \downarrow v$ such that $\sE_{v_n}^c(k) = \sE_{v_n}(k)$ or there exists a sequence $v_n \downarrow v$ such that for each $n$ there exists $r_n = r_n(k)$ and $s_n=s_n(k)$ with $r_n<k < s_n$ with
	\begin{equation}
	\label{eq:sEvn}
	\sE^c_{v_n}(k) = \sE_{v_n}(r_n) \frac{(s_n-k)}{s_n-r_n} + \sE_{v_n}(s_n) \frac{(k-r_n)}{s_n-r_n} = \sE_{v_n}(s_n) - \frac{\sE_{v_n}(s_n) - \sE_{v_n}(r_n)}{s_n-r_n}(s_n-k).
	\end{equation}
	In the former case we have
	\[ \sE^c_v(k) \leq \sE_v(k) = \lim_n \sE_{v_n}(k) = \lim_{v_n \downarrow v} \sE^c_{v_n}(k). \]
	In the latter case we can choose a subsequence such that $s_n \rightarrow s \geq k$ and $r_n \rightarrow r \in [-\infty, k]$. If $s=k$ then from \eqref{eq:sEvn} and using
	$|\frac{\sE_{v_n}(s_n) - \sE_{v_n}(r_n)}{s_n-r_n}| \leq 1$,
	\[ \lim_n \sE^c_{v_n}(k) = \lim_n \sE_{v_n}(s_n) = \sE_{v}(k) \geq \sE^c_v(k). \]
	Otherwise, if $s>k$, then taking limits in \eqref{eq:sEvn},
	\[ \lim_n \sE^c_{v_n}(k) = \sE_v(s) - \frac{\sE_v(s) - \sE_v(r)}{s-r} (s-k) =
	\sE_v(r) \frac{s-k}{s-r} + \sE_v(s)\frac{k-r}{s-r} \geq \sE^c_v(k) . \]
	
\end{proof}

\begin{proof}[Proof of Lemma~\ref{lem:hullConv}]
	Fix $u,v\in(0,1)$ with $u<v$. Since $\sE_u\geq\sE_v$, $\sE_u(f(u))\leq\sE_u(k)$ for all $k\geq f(u)$, and $f$ is non-decreasing, using \eqref{E} we have that
	\begin{equation}\label{eq:convE}
	\sE_v(f(v-))=\lim_{u\uparrow v}\sE_v(f(u))\leq \lim_{u\uparrow v}\sE_u(f(u))\leq \lim_{u\uparrow v}\sE_u(f(v-))=\sE_v(f(v-)).
	\end{equation}
	Equation \eqref{eq:convE} still holds when $\sE$ is replaced by $\sE^c$ (note that $\sE^c_u$ is non-decreasing everywhere for each $u\in(0,1)$), which concludes the proof.
\end{proof}

\subsection{The candidate coupling is an embedding}
\label{ssec:proofsE}
\begin{proof}[Proof of Lemma \ref{lem:monotone}]
First we argue that $(T,S)$ is left-monotone with respect to $G$. We need to show that if $v>u$ then $T(v) \notin (T(u),S(u))$.

If $S(u)=G(u)$ then $(T(u),S(u)) = \emptyset$ so there is nothing to prove. So take $T(u)=R(u)$.
Suppose first that $G(v) = S(v)$. Then $T(v)=G(v) =S(v) \geq S(u)$.
Alternatively suppose $G(v)<S(v)$. Then $T(v)=R(v)$. Finally, properties of $R$ (see Theorem \ref{thm:monotone}) imply that either $T(v) = R(v) \geq S(u)$ and $T(v) \geq S(u)$ or $T(v) = R(v) \leq R(u) = T(u)$.

Measurability of $T$ now follows easily. We have $(0,1) = (\cup_{n \geq 1} A^n_<) \cup (u : G(u) < G(u+), G(u) \leq S(u) \leq G(u+) ) \cup (u : G(u)=S(u)=G(u+))$. Then $(T(u) \leq t) = (\cup_{n \geq 1} A^n_< \cap \{ u: T(u) \leq t \} ) \cup \{ u : G(u) < G(u+), G(u) \leq S(u) \leq G(u+), T(t) \leq t \}  \cup \{ u : G(u)=S(u)=G(u+) , T(u) \leq t \}$.

But $T$ is decreasing on $A_<^n$ and so $A^n_< \cap \{ u: T(u) \leq t \}$ is a Borel subset of $(0,1)$; $\{ u : G(u)=S(u)=G(u+) , T(u) \leq t \} = \{ u : G(u)=S(u)=G(u+) , G(u) \leq t \}$ is Borel from the measurability of $G$ and $S$, and $\{ u : G(u) < G(u+), G(u) \leq S(u) \leq G(u+), T(t) \leq t \}$ is countable. Hence $T$ is measurable.

\end{proof}

\begin{proof}[Proof of Lemma~\ref{lem:psi}]
From \cref{lem:phi}(ii) we have $\phi(v+) \geq \phi(v)$. Further, for $v \leq w \leq u \leq 1$ we have $\phi(w)-\phi(u) \leq u-w$. Introduce $\tp:[v,1] \rightarrow [0,1]$ by $\tp(v)=x$ and $\tp(w)=\phi(w)$ for $w \in (v,1]$. Then, for $v < u \leq 1$, $\tp(v)-\tp(u) \leq \lim_{n \uparrow \infty} \phi(v + 1/n) - \phi(u) \leq \lim_{n \uparrow \infty} \{ u - (v+1/n) \} = u-v$. By checking the easy cases $w=v=u$ and $v < w \leq u \leq 1$ separately we conclude $\tp(w) - \tp(u) \leq u-w$ for all $v \leq w \leq u \leq 1$.

By construction, $\psi_{v,x}(w) = \inf_{v \leq u \leq w} \{ \tp(u) \}$. Then
$\psi_{v,x}$ is decreasing, $\psi_{v,x}(v) = \tp(v)=x$ and $\psi_{v,x} \leq \tp$ on $[v,1]$. Fix $w$ and $u$ with $v \leq w<u$. Let $(u_m)_{m \geq 1}$ with $v \leq u_m \leq u$ be such that $\lim_{m \uparrow \infty} \tp(u_m) \downarrow \psi_{v,x}(u)$. Taking a convergent subsequence if necessary we may assume $u_m \rightarrow \tilde{u} \in [v,u]$. Then, if $w<\tilde{u}$,
\[ 0 \leq  \psi_{v,x}(w) - \psi_{v,x}(u) \leq \tp(w) - \lim_m \tp(u_m) = \lim_m \left\{ \tp(w) - \tp(u_m) \right\}\leq \lim_m \left\{ u_m - w \right\} = \tilde{u} - w \leq u-w. \]
On the other hand, if $\tilde{u}<w$, $\psi_{v,x}(w)\leq \lim_m \tp(u_m) = \psi_{v,x}(u)$ so that $\psi_{v,x}(w) = \psi_{v,x}(u)$. Finally, if $\tilde{u}=w$ then either there exists a sequence $u_m \rightarrow \tilde{u}$ with $\lim_m \tp(u_m) = \psi_{v,x}(u)$ and $u_m \geq \tilde{u}$, or there exists a sequence $u_m \rightarrow \tilde{u}$ with $\lim_m \tp(u_m) = \psi_{v,x}(u)$ and $u_m \leq \tilde{u}$ (or both). In either case the corresponding proof shows that $\psi_{v,x}(w) = \psi_{v,x}(u)$. It follows that $0 \leq \psi_{v,x}(w) - \psi_{v,x}(u) \leq u-w$ so that $\psi_{v,x}$ is absolutely continuous on $[v,1]$ (and not just on $[v,1] \cap A_<$) and has a derivative $\psi'$ such that $\int_a^b \psi_{v,x}'(u) du = \psi_{v,x}(b) - \psi_{v,x}(a)$, for all $v\leq a\leq b\leq 1$.
\end{proof}

\subsection{Proof of Proposition~\ref{prop:setequality}}
\label{ssec:setequality}
Our goal is to show that $R(\cdot ) \leq y$ is equivalent to $\psi(\cdot) = \phi(\cdot)$ for a well-chosen element $\psi=\psi_{v,x}$. In particular, we want to show
	\begin{equation} \{ v : v>S^{-1}(y), R(v) \leq y  \} = \{ v : v> S^{-1}(y), \psi_{S^{-1}(y), (\sE^c_{S^{-1}(y)})'(y-)}(v) = \phi(v) \} \label{eq:claim3C0}
	\end{equation}
in a sufficiently rich set of circumstances, with the ultimate aim of proving Proposition~\ref{prop:setequality}.
	We begin with a partial result, valid in the case where $w=S^{-1}(y)$ is such that $R(w)<S(w)$. Later the issue will be to show that \eqref{eq:claim3C0} holds also for both $y$ such that $R(S^{-1}(y)) = S(S^{-1}(y))$ and for $y$ which are not of the form $y=S(w)$ for some $w$.

	\begin{lem}
		\label{lem:Rolz<Solz}
		Suppose $w$ is such that $R(w)<S(w)$. Then
		\[ \{ v : v>w, R(v) \leq R(w)  \} = \{ v : v> w, \psi_{w, \phi(w)}(v) = \phi(v) \}. \]
	\end{lem}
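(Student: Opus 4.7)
My plan is to establish the two set inclusions separately. Observe first that the equality $\psi_{w,\phi(w)}(v) = \phi(v)$ unpacks as $\phi(v) \leq \phi(w)$ together with $\phi(v) \leq \phi(u)$ for all $u \in (w,v]$, since $\psi_{w,\phi(w)}(v) = \phi(w) \wedge \inf_{w<u\leq v}\phi(u)$ and $\psi_{w,\phi(w)}(v) \leq \phi(v)$ holds automatically.

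For the forward inclusion I would proceed as follows. Suppose $R(v) \leq R(w)$ with $v > w$; it suffices to show $\phi(u) \geq \phi(v)$ for every $u \in [w,v]$. The central geometric object is the affine function $\ell(y) := D(R(v)) + \phi(v)(y-R(v))$, which coincides with $\sE^c_v$ on the linear segment $[R(v),S(v)]$ and hence is a tangent minorant of $\sE^c_v$ on all of $\R$; in particular $\ell \leq \sE_v$ everywhere. For $u \in [w,v]$, $\ell \leq \sE_v \leq \sE_u$, so $\ell$ is a convex minorant of $\sE_u$, giving $\ell \leq \sE^c_u$ pointwise. The hypothesis $R(v) \leq R(w) < G(w) \leq G(u)$ forces $R(v) < G(u)$, so $\sE_u(R(v)) = D(R(v)) = \ell(R(v))$, and sandwiching $\ell(R(v)) \leq \sE^c_u(R(v)) \leq \sE_u(R(v))$ produces the tangency $\sE^c_u(R(v)) = \ell(R(v))$. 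Consequently $\phi(v) \in \partial \sE^c_u(R(v))$, and by the convexity of $\sE^c_u$ together with $G(u) > R(v)$ and Lemma~\ref{lem:phialt},
\[
\phi(u) = (\sE^c_u)'(G(u)-) \geq (\sE^c_u)'(R(v)+) \geq \phi(v).
\]

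For the reverse inclusion I would argue by contrapositive. If $R(v) > R(w)$, then Theorem~\ref{thm:monotone} forces $R(v) \geq S(w)$, and the objective is to produce some $u^* \in [w,v]$ with $\phi(u^*) < \phi(v)$, contradicting $\psi_{w,\phi(w)}(v) = \phi(v)$. A natural candidate is $u^* := S^{-1}(R(v))$, which, using monotonicity of $S$ together with $R(v) \in [S(w),S(v))$ and the left-semicontinuity of $S$ (Proposition~\ref{prop:Rlowersemi}(i)), lies in $[w,v)$. Since $R(v) \geq S(u^*)$, the convex hull satisfies $\sE^c_{u^*} = \sE_{u^*}$ on $[S(u^*),\infty) \ni R(v)$, so $(\sE^c_{u^*})'(R(v)-) = \sE'_{u^*}(R(v)-) = P'_\nu(R(v)-) - u^*$ (using that $P_{\mu_{u^*}}$ is linear with slope $u^*$ on $(G(u^*),\infty)$), while Corollary~\ref{cor:phi*} gives $\phi(u^*) \leq (\sE^c_{u^*})'(R(v)-)$. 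On the other hand $\phi(v) = (\sE^c_v)'(R(v)+)$, and since $\sE^c_v$ is linear on $[R(v),S(v)]$ with $\sE_v(R(v)) = D(R(v))$, one has $\phi(v) \leq P'_\nu(R(v)+) - P'_\mu(R(v)+)$. At continuity points of $\mu$ and $\nu$, the desired strict inequality $\phi(u^*) < \phi(v)$ therefore reduces to $u^* > P'_\mu(R(v))$, which I would establish by combining the strict inequality $\sE_{u^*}(R(v)) > D(R(v))$ (valid because $R(v) > G(u^*)$ and $\mu - \mu_{u^*}$ carries positive mass on $(G(u^*),R(v)]$) with Lemma~\ref{lem:chagree}, noting that equality $u^* = P'_\mu(R(v))$ would force $\sE^c_{u^*} = \sE^c_v$ to the left of $R(v)$, contradicting $R(v) > R(w)$.

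The main obstacle will be the reverse direction. Whereas the forward inclusion is a clean convex-analytic tangency argument, the reverse requires comparing slopes of two distinct convex hulls $\sE^c_{u^*}$ and $\sE^c_v$ at the common abscissa $R(v)$, and demands a delicate case analysis when $\mu$ or $\nu$ has atoms at $R(v)$, or when $S$ is discontinuous at $u^*$ (in which case $R(v)$ may fall strictly between $S(u^*)$ and $S(u^*+)$ and Corollary~\ref{cor:phi*} has to be applied together with Corollary~\ref{cor:phisemi}). The principal technical ingredients for this analysis are Lemma~\ref{lem:sEc'}, Corollary~\ref{cor:phi*}, Lemma~\ref{lem:chagree}, and the left-semicontinuity results of Section~\ref{sec:LeftCont}.
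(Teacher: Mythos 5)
Your forward inclusion is correct. It is a direct tangency argument: you use that $R(w)<S(w)$ forces $R(w)<G(w)$, hence $R(v)\leq R(w)<G(u)$ for all $u\in[w,v]$, so the line $\ell=L^{D,\phi(v)}_{R(v)}$ is a convex minorant of $\sE_u$ that touches it at $R(v)$, whence $\phi(v)\in\partial\sE^c_u(R(v))$ and $\phi(u)=(\sE^c_u)'(G(u)-)\geq(\sE^c_u)'(R(v)+)\geq\phi(v)$. The paper instead first proves the implication $\phi(u)<\phi(v)\Rightarrow R(v)\geq S(u)$ and then passes to contrapositives, but the two routes are equivalent.

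The reverse inclusion has a genuine gap. You take $u^*=S^{-1}(R(v))$ and try to get $\phi(u^*)<\phi(v)$ by comparing one-sided slopes at the abscissa $R(v)$, reducing the matter (at continuity points) to $u^*>P'_\mu(R(v))$. That inequality cannot hold: since $G(u^*)\leq S(u^*)\leq R(v)$ and $u^*\in\partial P_\mu(G(u^*))$, we have $u^*\leq P'_\mu(G(u^*)+)\leq P'_\mu(R(v)+)$, which is $P'_\mu(R(v))$ at a continuity point of $\mu$. So one always has $u^*\leq P'_\mu(R(v))$, and the lower bound $\phi(v)-\phi(u^*)\geq u^*-P'_\mu(R(v))$ that your two estimates give is non-positive and therefore vacuous. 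This is not merely a sharpness issue: at a continuity point of $\mu$ and $\nu$ one computes $(\sE^c_{u^*})'(R(v)-)=P'_\nu(R(v))-u^*\geq P'_\nu(R(v))-P'_\mu(R(v))=D'(R(v))=\phi(v)$, so the upper bound you place on $\phi(u^*)$ is never below $\phi(v)$. Structurally this reflects Lemma~\ref{lem:sEc'}: slopes of $\sE^c_{u^*}$ dominate those of $\sE^c_v$ pointwise, so comparing a slope of $\sE^c_{u^*}$ against a slope of $\sE^c_v$ at the same abscissa goes the wrong way. There is also a secondary gap: if $R(v)=G(v)=S(v)$ (which you have not ruled out under $R(v)>R(w)$) the claim $u^*=S^{-1}(R(v))<v$ fails, so the candidate is not even eligible.

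The paper circumvents the obstruction by choosing a different $u$ so that the slope comparison happens within a \emph{single} convex function. After disposing of the cases $\sE_v(S(v))\geq L^{\sE_w,\phi(w)}_{S(w)}(S(v))$ directly (which already give $\phi(v)>\phi(w)$), it locates the maximal linearity interval $[x_1,x_2]$ of $\sE^c_v$ with $x_1\leq R(w)<G(w+)<x_2$ and takes $u\in(w,v)$ with $G(u)\leq x_2\leq G(u+)$. Lemma~\ref{lem:chagree} then yields $\sE^c_u=\sE^c_v$ on $(-\infty,x_2]$, so $\phi(u)=(\sE^c_v)'(x_2-)$ and $\phi(v)=(\sE^c_v)'(S(v)-)$ are slopes of the same convex hull at two abscissae, with strict inequality because linearity of $\sE^c_v$ on $(x_1,S(v))$ would give $R(v)\leq x_1\leq R(w)$, a contradiction. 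If you want to salvage your outline you would need to replace the candidate $u^*$ and the two-hull comparison at $R(v)$ by a construction of this type.
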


\begin{proof}
		Let $I^R_w = \{ v : v>w, R(v) \leq R(w)  \}$ and $I^\phi_w = \{ v : v> w, \psi_{w, \phi(w)}(v) = \phi(v) \}$.
		
		First we argue that, for all $u,v\in(0,1)$ with $u < v$, $\phi(u)< \phi(v)$ implies $R(v)\geq S(u)$. Suppose $u<v$ and $\phi(u)<\phi(v)$. Suppose $(S(v),\sE_v(S(v)))$ lies on or below the line $L^{\sE_u, \phi(u)}_{S(u)}$.
		Then $L^{\sE_v, \phi(v)}_{S(v)} < L^{\sE_u, \phi(u)}_{S(u)} \leq \sE_u$ on $(-\infty,S(v))\supseteq (-\infty,S(u))$ and $L^{\sE_v, \phi(v)}_{S(v)}<\sE_u=D$ on $(-\infty,G(u+))$. Hence $R(v) \geq G(u+)$ and by left-monotonicity $R(v) \geq S(u)$. Conversely, if $(S(v),\sE_v(S(v))$ lies above $L^{\sE_u, \phi(u)}_{S(u)}(S(v))$ then $L^{\sE_v, \phi(v)}_{S(v)}(S(u)) \leq \sE_v(S(u)) \leq \sE_u(S(u)) = L^{\sE_u, \phi(u)}_{S(u)}(S(u))$.
		Then $L^{\sE_v, \phi(v)}_{S(v)}(k) < L^{\sE_u, \phi(u)}_{S(u)}(k) \leq \sE_u(k) = \sE_v(k)$ for $k <  G(u) \leq G(v)$ and since $L^{\sE_v, \phi(v)}_{S(v)}(R(v)) = \sE_v(R(v))$ we must have $R(v) \geq G(u)$. If $R(u) < S(u)$ then $R(v) \geq G(u) > R(u)$ and then by left-monotonicity $R(v) \geq S(u)$. Similarly, if $R(u)=S(u)$ then $R(v) \geq G(u) = R(u)=S(u)$ and again $R(v) \geq S(u)$.
		
Now we show that $I^R_w \subseteq I^\phi_w$. 
Suppose $v>w$ but $v \notin I^\phi_w$. Then there exists $u \in [w,v)$ such that $\phi(u)<\phi(v)$. Then by the above argument, $R(v) \geq S(u) \geq S(w) > R(w)$ so that $v \notin I^R_w$. Hence $I^R_w \subseteq I^\phi_w$.
		
For the converse we show that if $v>w$ but $v \notin I^R_w$ (so that $R(v)>R(w)$) then there exists $u \in [w,v)$ such that $\phi(u)<\phi(v)$ and hence $v \notin I^\phi_w$.

So, suppose $R(v)>R(w)$. By left-monotonicity we deduce that $R(v) \geq S(w)$. Note also that since $R(w) < G(w+)$ and $w<v$, $\sE_v(R(w)) = \sE_w(R(w)) = D(R(w))$.

If $\sE_v(S(v)) > L^{\sE_w, \phi(w)}_{S(w)}(S(v))$ then since $R(w)<R(v) \leq G(v)$, by Lemma~\ref{lem:phiRepresentation} we have
\[ \phi(v) \geq \frac{\sE_v(S(v)) - \sE_v(R(w))}{S(v)-R(w)} > 	\frac{L^{\sE_w, \phi(w)}_{S(w)}(S(v))-L^{\sE_w, \phi(w)}_{S(w)}(R(w))}{S(v)-R(w)} = \phi(w). \]
It follows that $\phi(v) > \phi(w)$.

If $\sE_v(S(v)) = L^{\sE_w, \phi(w)}_{S(w)}(S(v))$ then, since $L^{\sE_v,\phi(v)}_{S(v)}\leq\sE^c_v\leq\sE^c_w$ everywhere, we must have that $\phi(v)\geq \phi(w)$. But if $\phi(w)=\phi(v)$ then $L^{\sE_v,\phi(v)}_{S(v)}$ and $L^{\sE_w,\phi(w)}_{S(w)}$ coincide and hence $R(v)=R(w)$, a contradiction to our hypothesis that $R(v)>R(w)$. It follows again that $\phi(v) > \phi(w)$.

Finally consider the case where $\sE_v(S(v))= L^{\sE_v, \phi(v)}_{S(v)}(S(v))< L^{\sE_w, \phi(w)}_{S(w)}(S(v))$. If $\sE_v(R(v)) = L^{\sE_v, \phi(v)}_{S(v)}(R(v))\geq L^{\sE_w, \phi(w)}_{S(w)}(R(v))$ then $R(w)<R(v)<S(v)$ and we must have $L^{\sE_v, \phi(v)}_{S(v)}(R(w)) > L^{\sE_w, \phi(w)}_{S(w)}(R(w))$. Then $\sE_w(R(w)) = \sE_v(R(w)) \geq L^{\sE_v, \phi(v)}_{S(v)}(R(w)) > L^{\sE_w, \phi(w)}_{S(w)}(R(w)) = \sE_w(R(w))$, a contradiction. Hence we must have $\sE_v(R(v)) < L^{\sE_w, \phi(w)}_{S(w)}(R(v))$.

Consider $\sE_v^c$. There exists an interval $[x_1,x_2]$ with $x_1 \leq R(w)< G(w+) < x_2$ such that $\sE^c_v$ is linear on $[x_1,x_2]$, $\sE_v^c(x_1)= \sE_v(x_1)=D(x_1)$ and $\sE_v^c(x_2)= \sE_v(x_2)$. Necessarily $\sE_v(x_1) \geq L^{\sE_w, \phi(w)}_{S(w)}(x_1)$ and $\sE_v(x_2) < L^{\sE_w, \phi(w)}_{S(w)}(x_2)$. There exists $u$ such that $G(u) \leq x_2 \leq G(u+)$. Note that since $x_2 > G(w+)$ we have $u>w$. Also $x_2 \leq R(v) \leq G(v)$, else $R(v) \leq x_1 \leq R(w)$ which contradicts our assumption that $R(v)>R(w)$ and hence we may take $u \leq v$. Finally, we must have $u<v$ since if $u = v$ we find $R(v)=R(u)=x_1\leq R(w)$, again a contradiction. In summary, $u \in (w,v)$.

Since $x_2 \leq G(u+) \leq G(v) \leq S(v)$ by the convexity of $\sE^c_v$ we have $(\sE^c_v)'(x_2-) \leq (\sE^c_v)'(S(v)-)$. If there is equality here then $\sE^c_v$ is linear on $(x_1,S(v))$ and $R(v) \leq x_1 \leq R(w)$, a contradiction. Hence there is strict inequality and $\phi(u) = (\sE^c_v)'(x_2-) < (\sE^c_v)'(S(v)-) = \phi(v)$. Hence $\psi_{w,\phi(w)}(v) \leq \phi(u) <\phi(v)$ and $v \notin I^\phi_w$.
	\end{proof}

Now we introduce a condition which will allow us to prove an analogue of \eqref{eq:claim3C0} in a wider set of circumstances.
\begin{defn}
Fix $x\in\R$ and $v\in(S^{-1}(x),1)$. Define $\sA(v,x) = \{ w : S^{-1}(x) < w \leq v, R(w) < x \}$. Then Condition $A(x)$ is that $\sA(v,x)$ is non-empty for each $v\in(S^{-1}(x),1)$.
\end{defn}

\begin{lem}
\label{lem:D<sE}
Suppose $D(y) < \sE_{S^{-1}(y)}(y)$. Then Condition $A(y)$ holds.
\end{lem}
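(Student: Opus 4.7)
\textbf{Proof plan for Lemma~\ref{lem:D<sE}.}
Write $w_0 := S^{-1}(y)$ throughout, so the hypothesis reads $D(y) < \sE_{w_0}(y)$. The plan is to argue by contradiction: suppose Condition $A(y)$ fails, so that there exists some $v \in (w_0,1)$ for which $\sA(v,y)$ is empty, i.e.\ $R(w) \geq y$ for every $w \in (w_0, v]$.

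The key elementary observation is that $R \leq G$, so $R(w) \geq y$ forces $G(w) \geq y$ on the interval $(w_0, v]$. Since $\sE_w = D$ on $(-\infty,G(w)]$ (this is immediate from the formula $P_{\mu_w}(k) = P_\mu(k \wedge G(w)) + w(k-G(w))^+$ given in \cref{sec:geometric}), it follows that $\sE_w(y) = D(y)$ for every $w \in (w_0,v]$.

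Now I would invoke the right-continuity of $w \mapsto \sE_w(y)$ at $w_0$ established in \cref{lem:Elim}: letting $w \downarrow w_0$ in the identity $\sE_w(y) = D(y)$ gives $\sE_{w_0}(y) = D(y)$, which directly contradicts the standing assumption $D(y) < \sE_{w_0}(y)$. Hence $\sA(v,y) \neq \emptyset$ for every $v > w_0$, and Condition $A(y)$ holds.

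There is no real obstacle here; the argument is short once one recognises that the hypothesis $D(y) < \sE_{w_0}(y)$ is precisely incompatible with $y$ remaining to the left of $G(w)$ for all $w$ just above $w_0$. The only thing to be careful about is the use of right-continuity rather than left-continuity in \cref{lem:Elim}, which is what lets us send $w \downarrow w_0$ and conclude $\sE_{w_0}(y) = D(y)$.
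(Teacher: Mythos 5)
Your proof is correct and rests on the same core observation as the paper's: under the hypothesis, $G(w)$ cannot stay $\geq y$ for $w$ just above $S^{-1}(y)$, and since $R \leq G$ this forces $R(w) < y$ nearby. The paper's proof is slightly more direct—it observes that $D(y) < \sE_{S^{-1}(y)}(y)$ is equivalent to $G(S^{-1}(y)+) < y$ and then simply picks $h_0$ with $G(S^{-1}(y)+h_0) < y$, avoiding the detour through Lemma~\ref{lem:Elim}—but the underlying idea is the same.
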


\begin{proof}
It is clear that $D(y) < \sE_{S^{-1}(y)}(y)$ if and only if $G(S^{-1}(y)+) < y$.

Choose $h_0>0$ such that $G(S^{-1}(y)+h_0)<y$. Then for all $0<h \leq h_0$ we have
\( R(S^{-1}(y)+h) \leq G(S^{-1}(y)+h) \leq G(S^{-1}(y)+h_0)<y. \)
\end{proof}

\begin{lem}
\label{lem:D=sEand}
Suppose $y$ is a continuity point for both $\mu$ and $\nu$. Suppose further that $D(y) = \sE_{S^{-1}(y)}(y)$ and for all $v \in (S^{-1}(y),1)$, there exists $k$ such that $\sE_v(k) < L_y^{\sE_{S^{-1}(y)}, (\sE_{S^{-1}(y)}^c)'(y-)}(k)$. 

Then, Condition $A(y)$ holds.
\end{lem}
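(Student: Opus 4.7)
The plan is to argue by contradiction. Set $w_0 = S^{-1}(y)$, $\phi^* = (\sE^c_{w_0})'(y-)$, and $L = L_y^{\sE_{w_0},\phi^*}$. First I would observe that the hypothesis $D(y) = \sE_{w_0}(y)$ forces $y \leq G(w_0+)$ (this is the contrapositive of the equivalence exploited in the proof of \cref{lem:D<sE}); combined with $y \geq S(w_0)$, this places $y$ in $[S(w_0), G(w_0+)]$. The convexity of $\sE_{w_0}$ on $[G(w_0),\infty)$ together with $\sE^c_{w_0}(S(w_0)) = \sE_{w_0}(S(w_0))$ implies $\sE^c_{w_0} = \sE_{w_0}$ on $[S(w_0),\infty)$, so $\sE^c_{w_0}(y) = D(y) = L(y)$ and $\phi^* \in \partial\sE^c_{w_0}(y)$, whence $L \leq \sE^c_{w_0} \leq \sE_{w_0}$ on $\R$. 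For any $w > w_0$ one has $\sE_w = D = \sE_{w_0}$ on $(-\infty, G(w_0+)]$, so $\sE_w(y) = L(y)$ and $\sE_w \geq L$ on $(-\infty, G(w_0+)]$; in particular, any $k$ realizing $\sE_w(k) < L(k)$ must satisfy $k > G(w_0+)$.

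Suppose for contradiction that Condition~$A(y)$ fails, so that there exists $v^* > w_0$ with $R(w) \geq y$ for every $w \in (w_0, v^*]$. Fix such $w$. If $\sE^c_w(y) = D(y)$, then \cref{lem:chagree} (applied with $u = w_0$, $v = w$, $z = y$) gives $\sE^c_w = \sE^c_{w_0}$ on $(-\infty, y]$, so $\phi^* = (\sE^c_w)'(y-) \in \partial\sE^c_w(y)$ and thus $L \leq \sE^c_w \leq \sE_w$ on $\R$, contradicting the existence of $k$ with $\sE_w(k) < L(k)$. Otherwise $\sE^c_w(y) < D(y)$, and $y$ lies in the interior of a maximal linear section $[a(w), b(w)]$ of $\sE^c_w$ with $\sE^c_w(a(w)) = D(a(w))$, $\sE^c_w(b(w)) = D(b(w))$, and slope $\gamma(w) \leq \phi^*$ (the latter bound from \cref{lem:sEc'}). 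The assumption $R(w) \geq y$ forces $b(w) \leq R(w)$: otherwise the sections $[a(w), b(w)]$ and $[R(w), S(w)]$ would overlap, merge into a single linear section with left endpoint $a(w)$, and make $R(w) \leq a(w) < y$. Assuming additionally $b(w) \leq G(w_0+)$, one has $L \leq D$ on $[a(w), b(w)] \subseteq (-\infty, G(w_0+)]$, so both $\sE^c_w(a(w)) - L(a(w)) \geq 0$ and $\sE^c_w(b(w)) - L(b(w)) \geq 0$; since $\sE^c_w - L$ is affine on $[a(w), b(w)]$, it is non-negative throughout this section, contradicting $\sE^c_w(y) < L(y) = D(y)$.

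It remains to ensure $b(w) \leq G(w_0+)$ for at least one $w \in (w_0, v^*]$ belonging to the second case. I would use the pointwise convergence $\sE^c_w \to \sE^c_{w_0}$ as $w \downarrow w_0$ from \cref{lem:Elim} together with the identity $\sE^c_{w_0} = D$ on $[S(w_0), G(w_0+)]$: the endpoints $a(w), b(w)$ accumulate at contact points of $\sE^c_{w_0}$ with $D$ bracketing $y$, and in the generic case (when $D$ is not linear in a neighborhood of $y$) they must collapse to $\{y\}$, forcing $b(w) \leq G(w_0+)$ for $w$ sufficiently close to $w_0$. The main obstacle will be the degenerate situation where $D$ is linear on an interval around $y$ within $[S(w_0), G(w_0+)]$: since the slope of such a linear piece necessarily equals $\phi^*$, one has $L = D$ locally, the linear sections of $\sE^c_w$ at $y$ need not shrink, and the contradiction must be derived separately by exploiting the hypothesis $\sE_w(k) < L(k)$ at some $k > G(w_0+)$ to force an incompatibility with $R(w) \geq y$ through the interplay of the main linear section $[R(w), S(w)]$ with the line $L$ beyond the degenerate region.
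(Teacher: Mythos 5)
Your argument contains a genuine flaw in the second subcase, and the ``generic case'' you invoke to try to close the gap in fact never occurs.

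Specifically, you claim that when $\sE^c_w(y) < D(y)$ and $b(w) \leq G(w_0+)$ you obtain a contradiction, and you then hope to verify $b(w) \leq G(w_0+)$ for some $w$ close to $w_0$. But the conditional contradiction you derive is exactly the proof that $b(w) > G(w_0+)$ always holds in this subcase, so the hypothesis $b(w) \leq G(w_0+)$ is vacuous. To see this directly: $a(w) < y \leq G(w_0+)$ so $\sE^c_w(a(w)) = \sE_w(a(w)) = D(a(w)) \geq L(a(w))$, and if $b(w) \leq G(w_0+)$ then likewise $\sE^c_w(b(w)) = \sE_w(b(w)) = D(b(w)) \geq L(b(w))$; since $\sE^c_w - L$ is affine on $[a(w),b(w)]$ and non-negative at both endpoints, it is non-negative at $y$, which contradicts $\sE^c_w(y) < D(y) = L(y)$. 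Hence $b(w) > G(w_0+)$ whenever $\sE^c_w(y) < D(y)$, for every $w > w_0$, and your ``degenerate situation'' is in fact the only situation. Consequently, the convergence argument via \cref{lem:Elim} you sketch at the end cannot rescue the proof: there is no $w$ for which $b(w) \leq G(w_0+)$, regardless of whether $D$ is locally linear near $y$. Your first subcase ($\sE^c_w(y) = D(y)$) is fine and shows unconditionally that $\sE^c_w(y) < D(y)$ for all $w > w_0$, but you never actually used the assumed failure of Condition $A(y)$ beyond the claim $b(w) \leq R(w)$, which is itself not fully justified when $R(w) = G(w) = S(w)$ (the linear section containing $y$ may then extend past $G(w)$).

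The paper's proof avoids this dead end by being constructive rather than by contradiction: fixing $h_0 > 0$ and $k > y$ with $\sE_{w_0+h_0}(k) < L(k)$, it minimizes the chord-slope $f(j) = \bigl(\sE_{w_0+h_0}(j) - D(y)\bigr)/(j-y)$ over $j \in (y,k]$, obtaining $\underline{f} < \phi^*$ at some $\underline{j} > y$. Anchoring the chord at $(y, D(y))$ (rather than at $(y, \sE^c_w(y))$ as in your linear-section analysis) is the key: the chord with slope $\underline{f}$ lies below $\sE_{w_0+h_0}$ on $(y,\underline{j})$ and strictly above $L$ to the left of $y$, and a small upward perturbation of the slope then produces a crossing $\check{y} < y$ with $D$ that certifies $R(w_0+h) < y$ for an appropriate $h \leq h_0$ (with the two cases $\underline{j} \geq G(w_0+h_0)$ and $\underline{j} < G(w_0+h_0)$ handled by choosing $h = h_0$ or a smaller $h$). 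That tangent-line construction supplies exactly the ingredient your approach is missing: an explicit use of the hypothesis $\sE_v < L$ somewhere, translated into a support line of $D$ touching to the left of $y$.
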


\begin{proof}
Write $L$ as shorthand for $L_y^{\sE_{S^{-1}(y)}, (\sE_{S^{-1}(y)}^c)'(y-)}$. Fix $h_0>0$ and choose $k>y$ such that $\sE_{S^{-1}(y)+h_0}(k) < L(k)$.
Such a $k$ exists under the assumptions of the lemma. See Figure \ref{fig:L}.


Since $y$ is a continuity point of both $\mu$ and $\nu$ we have $(\sE_{S^{-1}(y)}^c)'(y-) = (\sE_{S^{-1}(y)}^c)'(y+)$ and $(\sE_{S^{-1}(y)}^c)'(y) =\sE_{S^{-1}(y)}'(y) = D'(y)$.
For $j \in (y,k]$ define $f(j) = \frac{\sE_{S^{-1}(y)+h_0}(j) - D(y)}{j-y}$. Then $f$ is continuous on $(y,k]$ with $f(y+)= \sE_{S^{-1}(y)}'(y)$ and $f(k) < \frac{L(k) - D(y)}{k-y} = (\sE^c_{S^{-1}(y)})'(y)$. In particular $f$ attains its minimum value. Let $\underline{f}$ be this minimal value and let $\underline{j}$ be the smallest value at which it is attained. Then $\underline{j}>y$.
\begin{figure}[H]
	\centering
	\begin{tikzpicture}
	
	\begin{axis}[%
	width=6.028in,
	height=2.754in,
	at={(1.011in,0.642in)},
	scale only axis,
	xmin=-11,
	xmax=11,
	ymin=-0.5,
	ymax=2,
	axis line style={draw=none},
	ticks=none
	]
	\draw[black, thin] (-11,0) -- (11,0);
	\addplot [color=black, dashed, line width=0.5pt, forget plot]
	table[row sep=crcr]{%
		3.77732374207791	0.626769007966386\\
		3.9341164164935	0.637894275973635\\
		4.09090909090909	0.644599435510496\\
		4.25843861122547	0.647323499960791\\
		4.42596813154186	0.644090192466747\\
		4.59349765185824	0.636248956881996\\
		4.76102717217463	0.623378659481076\\
		4.93440674276733	0.604714517707976\\
		5.10778631336004	0.580653467036178\\
		5.28116588395275	0.551171501824431\\
		5.45454545454546	0.516301090797671\\
		6.14084012777783	0.372328187197632\\
		6.81818181818182	0.25309748923382\\
		7.50052689249177	0.156180892143828\\
		8.18181818181818	0.0826477350739729\\
		8.913369432737	0.0295193007739663\\
		9.54545454545454	0.00516201691444174\\
		10.1951138678739	-3.74967337890553e-06\\
		10.9090909090909	5.19110393604194e-06\\
		11.6245592768795	4.57978089229982e-06\\
		12.2727272727273	5.79947207590692e-06\\
		12.7873709632194	-0.000829551138547657\\
		13.6363636363636	2.4651729031433e-06\\
		14.3055157129462	-6.7231104949883e-06\\
		15	5.41978617896177e-06\\
	};
	\addplot [color=red, line width=1.0pt, forget plot]
	table[row sep=crcr]{%
		-15	0\\
		-14.3743426864006	0\\
		-13.6363636363636	0\\
		-12.922520765984	0\\
		-12.2727272727273	0\\
		-11.6037511150776	0\\
		-10.9090909090909	0\\
		-10.1567767603786	0\\
		-9.54545454545454	0.00516529194875333\\
		-8.78555408111793	0.0318688447181529\\
		-8.18181818181818	0.0541770432307099\\
		-7.52404914247162	0.0784817803221705\\
		-6.81818181818182	0.104563761552785\\
		-6.16274148089556	0.128782452497416\\
		-5.45454545454546	0.154950479874861\\
		-4.69396922041941	0.183053969553868\\
		-4.09090909090909	0.205337198196936\\
		-3.35965597297158	0.232357191105751\\
		-2.72727272727273	0.255723916519011\\
		-2.00961780284732	0.282241452640625\\
		-1.36363636363636	0.306110634841086\\
		-0.688142581770709	0.331070305778884\\
		0	0.356497353163162\\
		0.681996956288809	0.368970306835827\\
		1.36363636363636	0.395862010259944\\
		2.07255602717534	0.442839557985265\\
		2.72727272727273	0.503443615761336\\
		3.46373839324673	0.59138098261362\\
		4.09090909090909	0.684754393790167\\
		4.76102717217463	0.80625047021627\\
		5.45454545454546	0.955629694964023\\
		6.14084012777783	1.12711466162036\\
		6.81818181818182	1.31946216482611\\
		7.50052689249177	1.53642621219928\\
		8.18181818181818	1.77628405234954\\
		8.913369432737	2.05966908010521\\
		9.54545454545454	2.3260713980839\\
		10.1951138678739	2.61974906918045\\
		10.9090909090909	2.9481867282283\\
		11.6245592768795	3.27730226787211\\
		12.2727272727273	3.57546044699675\\
		12.7873709632194	3.81218623129582\\
		13.6363636363636	4.20273010502642\\
		14.3055157129462	4.51053028046035\\
		15	4.82999961174161\\
	};
	\addplot [color=blue, dotted, line width=1.5pt, forget plot]
	table[row sep=crcr]{%
		-15	0\\
		-14.3743426864006	0\\
		-13.6363636363636	0\\
		-12.922520765984	0\\
		-12.2727272727273	0\\
		-11.6037511150776	0\\
		-10.9090909090909	0\\
		-10.1567767603786	0\\
		-9.54545454545454	0.00516529194875333\\
		-8.78555408111793	0.0368719582992022\\
		-8.18181818181818	0.0826446406924073\\
		-7.52404914247162	0.153258338675332\\
		-6.81818181818182	0.253099065950084\\
		-6.16274148089556	0.368114076983243\\
		-5.80864346772051	0.43918676050732\\
		-5.45454545454546	0.516291124251744\\
		-5.26440139601394	0.554263855225197\\
		-5.07425733748243	0.585729263460034\\
		-4.88411327895092	0.610686122022407\\
		-4.69396922041941	0.62913544187045\\
		-4.54320418804183	0.635771770168381\\
		-4.39243915566425	0.645048476099583\\
		-4.24167412328667	0.64686852145073\\
		-4.09090909090909	0.64459693608525\\
		-3.90809581142471	0.636353531944546\\
		-3.72528253194034	0.622094567125119\\
		-3.54246925245596	0.601821845091591\\
		-3.35965597297158	0.577684437559403\\
		-2.72727272727273	0.503447332690059\\
		-2.00961780284732	0.437884749712861\\
		-1.36363636363636	0.395735125572964\\
		-0.688142581770709	0.369132963180525\\
		0	0.359996840005688\\
		0.681996956288809	0.368970306835827\\
		1.36363636363636	0.395862010259944\\
		2.07255602717534	0.442839557985265\\
		2.72727272727273	0.503443615761336\\
		3.46373839324673	0.59138098261362\\
		4.09090909090909	0.684754393790167\\
		4.76102717217463	0.80625047021627\\
		5.45454545454546	0.955629694964023\\
		6.14084012777783	1.12711466162036\\
		6.81818181818182	1.31946216482611\\
		7.50052689249177	1.53642621219928\\
		8.18181818181818	1.77628405234954\\
		8.913369432737	2.05966908010521\\
		9.54545454545454	2.3260713980839\\
		10.1951138678739	2.61974906918045\\
		10.9090909090909	2.9481867282283\\
		11.6245592768795	3.27730226787211\\
		12.2727272727273	3.57546044699675\\
		12.7873709632194	3.81218623129582\\
		13.6363636363636	4.20273010502642\\
		14.3055157129462	4.51053028046035\\
		15	4.82999961174161\\
	};
	\addplot [color=black, line width=1.0pt, forget plot]
	table[row sep=crcr]{%
		-15	-1.90124880029243\\
		-14.3743426864006	-1.81678506295652\\
		-13.6363636363636	-1.71715789120152\\
		-12.922520765984	-1.62078910370027\\
		-12.2727272727273	-1.53306698211061\\
		-11.6037511150776	-1.44275520082791\\
		-10.9090909090909	-1.3489760730197\\
		-10.1567767603786	-1.24741366294354\\
		-9.54545454545454	-1.1648851639288\\
		-8.78555408111793	-1.06229860124335\\
		-8.18181818181818	-0.980794254837886\\
		-7.52404914247162	-0.891995434526101\\
		-6.81818181818182	-0.796703345746977\\
		-6.16274148089556	-0.708218900213333\\
		-5.45454545454546	-0.612612436656068\\
		-4.69396922041941	-0.509934645049052\\
		-4.09090909090909	-0.428521527565159\\
		-3.35965597297158	-0.329802356643595\\
		-2.72727272727273	-0.24443061847425\\
		-2.00961780284732	-0.147547203676819\\
		-1.36363636363636	-0.0603397093833408\\
		-0.688142581770709	0.0308519511685226\\
		0	0.123751199707568\\
		0.681996956288809	0.215820788806558\\
		1.36363636363636	0.307842108798477\\
		2.07255602717534	0.403546263376239\\
		2.72727272727273	0.491933017889387\\
		3.46373839324673	0.591355882795876\\
		4.09090909090909	0.676023926980296\\
		4.76102717217463	0.766489867951143\\
		5.45454545454546	0.860114836071205\\
		6.14084012777783	0.952764616957575\\
		6.81818181818182	1.04420574516211\\
		7.50052689249177	1.13632233019396\\
		8.18181818181818	1.22829665425302\\
		8.913369432737	1.32705607312706\\
		9.54545454545454	1.41238756334393\\
		10.1951138678739	1.50009157187055\\
		10.9090909090909	1.59647847243484\\
		11.6245592768795	1.6930667020863\\
		12.2727272727273	1.78056938152575\\
		12.7873709632194	1.85004627974219\\
		13.6363636363636	1.96466029061666\\
		14.3055157129462	2.0549958209553\\
		15	2.14875119970757\\
	};
	\addplot [color=black!50!green, dash dot, line width=1.5pt, forget plot]
	table[row sep=crcr]{%
		4.09090909090909	0.644599435510496\\
		4.42596813154186	0.649962276220434\\
		4.76102717217463	0.65957507795753\\
		5.45454545454546	0.697297765025315\\
		6.14084012777783	0.758289376142287\\
		6.81818181818182	0.841584830290362\\
		7.50052689249177	0.948691400236181\\
		8.18181818181818	1.07886121205453\\
		8.913369432737	1.24446708611985\\
		9.54545454545454	1.40910320688409\\
		10.1951138678739	1.59818533987045\\
		10.9090909090909	1.81167310071809\\
		11.6245592768795	2.02559767990352\\
		12.2727272727273	2.21940141689416\\
		12.7873709632194	2.37326894366716\\
		13.6363636363636	2.62712568795694\\
		14.3055157129462	2.82719254823487\\
		15	3.0348499003245\\
	};
	\draw[gray, thin, dashed] (4.2,-0.1)--(4.2,0.6);
	\node[black] at (4.2,-0.2) {$G(v)$};
	\draw[gray, thin, dashed] (3.3,-0.1)--(3.3,0.55);
	\node[black] at (3.3,-0.2) {$y$};
	\node[black] at (-1,-0.4) {$k\mapsto L(k)$};
	\node[blue] at (-5,0.9) {$k\mapsto \sE_{S^{-1}(y)}(k)$};
	\node[red] at (5,1.7) {$k\mapsto \sE^c_{S^{-1}(y)}(k)$};
	\node[black!60!green] at (9,0.8) {$k\mapsto \sE_v(k)$};
	\node[black] at (9,0.3) {$k\mapsto D(k)$};
	\end{axis}
	\end{tikzpicture}
	\caption{Plot of $\sE_{S^{-1}(y)}$ (dotted curve), $\sE^c_{S^{-1}(y)}$ (solid curve below $\sE_{S^{-1}(y)}$), $D$ (dashed curve), $\sE_{v}$ (dash-dotted curve) and $L\equiv L_y^{\sE_{S^{-1}(y)}, (\sE_{S^{-1}(y)}^c)'(y-)}$ (line tangent to $\sE^c_{S^{-1}(y)}$ at $y$) under the assumptions of \cref{lem:D=sEand}. In the figure, $R(S^{-1}(y))=G(S^{-1}(y))=S(S^{-1}(y))=y$ and, for all $v\in(S^{-1}(y),1)$, $\sE_{v}(k)<L(k)$ for some $k>y$. Furthermore, $\sE_{S^{-1}(y)}'(y)$ and $(\sE^c_{S^{-1}(y)})'(y)$ both exist, and $\phi(S^{-1}(y))=\sE_{S^{-1}(y)}'(y)=(\sE^c_{S^{-1}(y)})'(y)$ is the slope of $L$.}
	\label{fig:L}
\end{figure}

Suppose $\underline{j} \geq G(S^{-1}(y)+h_0)$. Then the line $\hat{L} = L^{\sE_{S^{-1}(y)+h_0}}_{y,\underline{j}}$ joining $(y,\sE_{S^{-1}(y)+h_0}(y))$ to $(\underline{j},\sE_{S^{-1}(y)+h_0}(\underline{j}))$ lies below $L$ on $(y, \underline{j})$, but strictly above $L$ on an interval to the left of $y$. If follows that we can find small enough $\epsilon>0$ and $\check{y}<y$ such that $L^{\sE_{S^{-1}(y)+h_0},\underline{f}+\epsilon}_{\underline{j}}(\check{y}) = D(\check{y})$ and $\sE_{S^{-1}(y)+h_0} > {\hat L}$ on
$(\check{y},\underline{j})$. Then $R(S^{-1}(y)+h_0) \leq Q(S^{-1}(y)+h_0) \leq \check{y}<y < \underline{j} \leq S(S^{-1}(y)+h_0)$. In particular, $R(S^{-1} (y) +h_0)<y$.

Now suppose $\underline{j} < G(S^{-1}(y)+h_0)$. Then there exists $h \in (0, h_0]$ such that $G(S^{-1}(y)+h) \leq \underline{j} \leq G((S^{-1}(y)+h)+)$. It follows that $\sE_{S^{-1}(y)+h}(\underline{j}) = D(\underline{j})$. Then, exactly as before we can find  $\epsilon>0$ and $\check{y}<y$ such that $L^{\sE_{S^{-1}(y)+h},\underline{f}+\epsilon}_{\underline{j}}(\check{y}) = D(\check{y})$ and $D \equiv \sE_{S^{-1}(y)+h} >  L^{\sE_{S^{-1}(y)+h},\underline{f}+\epsilon}_{\underline{j}}$ on
$(\check{y},\underline{j})$. As before it follows that $R(S^{-1}(y)+h)<y$.
\end{proof}

\begin{lem}
\label{lem:claims3C}
Suppose $x\in\R$ is such that Condition $A(x)$ holds. Then 
\begin{equation} \{ v : v>S^{-1}(x), R(v) \leq x  \} = \{ v : v> S^{-1}(x), \psi_{S^{-1}(x), (\sE^c_{S^{-1}(x)})'(x-)}(v) = \phi(v) \}. \label{eq:claim3Ca}
\end{equation}
\end{lem}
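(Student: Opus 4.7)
The plan is to reduce the equivalence to applications of Lemma~\ref{lem:Rolz<Solz} at auxiliary points $w'\in(w,v]$ with $R(w')<x$, for which that lemma applies because then $R(w')<x\leq S(w')$ implies $R(w')<S(w')$. Write $w=S^{-1}(x)$, $x_0=(\sE^c_w)'(x-)$, and $\psi:=\psi_{w,x_0}$. By Corollary~\ref{cor:phi*} we have $\phi(w)\leq x_0\leq\phi(w+)$, so Lemma~\ref{lem:psi} ensures $\psi$ is non-increasing and absolutely continuous on $[w,1]$. Throughout I use that $v>w=S^{-1}(x)$ forces $S(v)>x$.

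Forward direction: suppose $R(v)\leq x$. By Condition $A(x)$ applied at any $v'\in(w,v)$, pick $w''\in(w,v)$ with $R(w'')<x<S(w'')$; left-monotonicity (Theorem~\ref{thm:monotone}) forces $R(v)\notin(R(w''),S(w''))$, and since $R(v)\leq x<S(w'')$ this yields $R(v)\leq R(w'')<x$. Thus $R(v)<x<S(v)$, so $\sE^c_v$ is linear in a neighbourhood of $x$ with slope $\phi(v)$, whence $(\sE^c_v)'(x-)=\phi(v)$. Lemma~\ref{lem:sEc'} then delivers $\phi(v)\leq(\sE^c_w)'(x-)=x_0$. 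For the running infimum, fix any $u\in(w,v]$ and use Condition $A(x)$ at some $u'\in(w,u)$ to obtain $w'''\in(w,u)$ with $R(w''')<x$; the same left-monotonicity argument gives $R(v)\leq R(w''')$, so Lemma~\ref{lem:Rolz<Solz} at $w'''$ yields $\psi_{w''',\phi(w''')}(v)=\phi(v)$, which forces $\phi(\tilde u)\geq\phi(v)$ for every $\tilde u\in[w''',v]$, and in particular $\phi(u)\geq\phi(v)$. Combining, $\psi(v)=x_0\wedge\inf_{w<u\leq v}\phi(u)=\phi(v)$.

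Reverse direction: suppose $\psi(v)=\phi(v)$ and assume for contradiction $R(v)>x$. By Condition $A(x)$ pick $w'\in(w,v)$ with $R(w')<x$, so $R(v)>x>R(w')$. Lemma~\ref{lem:Rolz<Solz} at $w'$ (applicable since $R(w')<S(w')$) then forces $\psi_{w',\phi(w')}(v)\neq\phi(v)$; since $\psi_{w',\phi(w')}(v)=\phi(w')\wedge\inf_{w'<u\leq v}\phi(u)\leq\phi(v)$ trivially (take $u=v$), the inequality must be strict, so either $\phi(w')<\phi(v)$ or there is $u^*\in(w',v]$ with $\phi(u^*)<\phi(v)$. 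Either way $\psi(v)\leq\phi(w')\wedge\phi(u^*)<\phi(v)$ (interpreting a missing term as $+\infty$), contradicting $\psi(v)=\phi(v)$.

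Main obstacle: The delicate point is the forward-direction inequality $x_0\geq\phi(v)$. A priori one could have $R(v)=x$, a boundary case where $(\sE^c_v)'(x-)$ may fall strictly below $(\sE^c_v)'(x+)=\phi(v)$ so that the Lemma~\ref{lem:sEc'} bound on $(\sE^c_v)'(x-)$ is insufficient. The resolution is the observation that Condition $A(x)$ automatically upgrades $R(v)\leq x$ to the strict inequality $R(v)<x$, because any $w''\in\sA(v',x)$ with $w''<v$ satisfies $R(w'')<x<S(w'')$ and left-monotonicity rules out $R(v)\in(R(w''),S(w''))$; this eliminates the boundary case entirely and lets the convex-hull derivative bound be applied cleanly.
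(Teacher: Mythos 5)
Your proof is correct, and it takes a genuinely different route from the paper's. The paper proves Lemma~\ref{lem:claims3C} by a direct geometric argument: it introduces $\ulz=\sup\sA(v,x)$, establishes that $\ulz\in\sA(v,x)$, $R(\ulz)<S(\ulz)$, $\phi(\ulz)\le\phi^*$ and $\ulz\in H_{S^{-1}(x),v}$, and then in each implication carries out a careful convex-hull argument involving $S(\ulz)\le G(\ulz+)$, the stability of the tangent line $L$ under small increments of the index, and the auxiliary thresholds $\olz$ (resp.\ $\uulz$, $\oolz$). Your proof instead black-boxes Lemma~\ref{lem:Rolz<Solz}, applying it at anchor points $w'$ with $R(w')<x<S(w')$ that Condition $A(x)$ supplies, and translates between the two statements of the equivalence entirely through arithmetic with the running infimum defining $\psi$. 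This is shorter and more modular: it cleanly isolates the role of Condition $A(x)$ as guaranteeing a supply of applicable anchor points for the already-proved Lemma~\ref{lem:Rolz<Solz}, rather than re-running a parallel geometric argument. The boundary-case observation you highlight --- that Condition $A(x)$ plus left-monotonicity upgrade $R(v)\le x$ to $R(v)<x$, which is what licenses $(\sE^c_v)'(x-)=\phi(v)$ --- is exactly the content of the paper's Remark~\ref{rem:claims3C}, stated there after the lemma but proved by the same two-line argument you give. One small point you might make explicit: the identity $(\sE^c_v)'(x-)=\phi(v)$ for $R(v)<x<S(v)$ uses that $\sE^c_v$ is linear on $(R(v),S(v))$ with slope $\phi(v)$; this holds also in the degenerate case $Q(v)=G(v)=S(v)$ with $R(v)<Q(v)$ (by sandwiching $\sE^c_v$ between $L^{\sE^c_v,\phi(v)}_{G(v)}$ and itself at the two endpoints $R(v)$ and $G(v)$), but that deserves a word since the paper's explicit statement of linearity is phrased only for $Q(v)<S(v)$.
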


\begin{rem}
\label{rem:claims3C}
It will follow from the proof of \cref{lem:claims3C} that if $x$ is such that Condition $A(x)$ holds then also
\begin{equation} \{ v : v>S^{-1}(x), R(v) < x  \} = \{ v : v> S^{-1}(x), \psi_{S^{-1}(x), (\sE^c_{S^{-1}(x)})'(x-)}(v) = \phi(v) \}. \label{eq:claim3Ca2}
\end{equation}
The equivalence of the left-hand-sides of \eqref{eq:claim3Ca} and  \eqref{eq:claim3Ca2} can also be seen directly. It is sufficient to argue that if Condition $A(x)$ holds then $\{ v : v>S^{-1}(x), R(v) = x  \} = \emptyset$. To see this, given $v > S^{-1}(x)$ choose $w \in (S^{-1}(x),v)$. Then, since $\sA(w,x)$ is non-empty there exists $u \in (S^{-1}(x),w]$ such that $R(u) < x < S(u)$. Then $R(v) \notin (R(u),S(u))$ and in particular $R(v) \neq x$.
\end{rem}

\begin{proof}[Proof of Lemma~\ref{lem:claims3C}]
We begin with a definition which will be useful in both the forward and reverse implication of \eqref{eq:claim3Ca}.

Fix $v>S^{-1}(x)$. Define $\ulz = \sup \{ w : w \in \sA(v,x) \}$. Note that by assumption $\sA(v,x)$ is non-empty and $S^{-1}(x)<\ulz\leq v$. We show that $\ulz \in \sA(v,x)$. If this is not immediately the case then since $\ulz > S^{-1}(x)$ there exists $(z_m)_{m \geq 1}$ with $z_m \uparrow \ulz$ and $R(z_m) < x$. Then, by Proposition~\ref{prop:Rlowersemi} we have that $R(\ulz) \leq \liminf_{u \uparrow \ulz} R(u) \leq x$.
Moreover, $R(\ulz) \notin (R(z_m), S(z_m)) \supseteq \{ x \}$. Hence $R(\ulz) < x$ and $\ulz \in \sA(v,x)$.
We also have $R(\ulz)<S(\ulz)$.

Let $H_{S^{-1}(x),z} = \{ w \in (S^{-1}(x),z] : \phi(w) = \psi_{S^{-1}(x),\phi^*}(w) \}$ where $\phi^*$ is shorthand for $(\sE^c_{S^{-1}(x)})'(x-)$. We claim that $\phi(\ulz) \leq (\sE^c_{S^{-1}(x)})'(x-) = \phi^*$ and $\ulz \in H_{S^{-1}(x),v}$.

For the first of these claims, suppose to the contrary that $\phi(\ulz) > \phi^*$. Then, for $k<x$, $\sE_{\ulz}(x) + \phi(\ulz)(k-x) < \sE_{S^{-1}(x)}(x) + \phi^*(k-x) \leq D(k)$, since $\phi^* \in \partial \sE^c_{S^{-1}(x)}(x)$. Since also $\sE_{\ulz}(S(\ulz)) + \phi(\ulz)(x-S(\ulz)) \leq \sE_{\ulz}(x)$, $\sE_{\ulz}(S(\ulz)) + \phi(\ulz)(k-S(\ulz)) \leq \sE_{\ulz}(x) + \phi(\ulz)(k-x) < D(k)$ for all $k\in(-\infty,x)$. In particular, $R(\ulz) \geq x$. But we saw above that $R(\ulz)<x$, and
this is our contradiction.


Now we show $\ulz \in H_{S^{-1}(x),v}$. Suppose not, i.e. suppose $\psi_{S^{-1}(x),\phi^*}(\ulz) < \phi(\ulz)$.
Let $(z_m)_{m\geq1}$ be such that 
$\phi(z_m)\downarrow 
\psi_{S^{-1}(x),\phi^*}(\ulz)<\phi(\ulz)$. Pick $m$ such that $ S^{-1}(y)< z_m<\ulz$ with $\phi(z_m)<\phi(\ulz)$. Then by the left-monotonicity of $R$ (and $S$) we have that $R(\ulz) \notin (R(z_m),S(z_m))$. Since $S(z_m)>x$ and $R(\ulz) < x$ we have $R(\ulz)\leq R(z_m)\leq S(z_m)\leq S(\ulz)$. If $R(z_m)=R(\ulz)<x < S(z_m)$, then $\sE^c_{z_m}(S(z_m)) =\sE_{z_m}(S(z_m)) = \sE_{z_m}(R(z_m)) + \phi(z_m) (S(z_m)-R(z_m)) < \sE_{\ulz}(R(\ulz)) + \phi(\ulz) (S(z_m)-R(\ulz)) =L^{\sE_{\ulz}}_{R(\ulz),S(\ulz)}(S(z_m))=\sE^c_{\ulz}(S(z_m))$, a contradiction to $\sE^c_{z_m} \geq \sE^c_{\ulz}$. On the other hand, if $R(\ulz)<R(z_m)$, by convexity of $\sE^c_{z_m}$ we must have that $D(R(\ulz))=\sE_{z_m}(R(\ulz)) \geq \sE_{z_m}(S(z_m)) - \phi(z_m) (S(z_m) - R(\ulz)) > \sE^c_{\ulz}(S(z_m)) - \phi(\ulz) (S(z_m) - R(\ulz)) = L^{\sE_{\ulz}}_{R(\ulz),S(\ulz)}(R(\ulz))=D(R(\ulz))$, again a contradiction. We conclude that we cannot have $\phi(\ulz)>\psi_{S^{-1}(x),\phi^*}(\ulz)$ and hence $\ulz\in{H}_{S^{-1}(x),v}$ as claimed.

\noindent{\sc Forward implication:} \\
First we show that $\{ v : v> S^{-1}(x), \psi_{S^{-1}(x),\phi^*}(v) = \phi(v) \} \subseteq \{ v : v>S^{-1}(x), R(v) \leq x \}$.
In fact we show the apparently stronger result that $\{ v : v> S^{-1}(x), \psi_{S^{-1}(x),\phi^*}(v) = \phi(v) \} \subseteq \{ v : v>S^{-1}(x), R(v) < x \}$, although by \cref{rem:claims3C} these implications are equivalent.

We wish to show that if $v>S^{-1}(x)$ and $R(v)  \geq x$ then $\phi(v) > \psi_{S^{-1}(x),\phi^*}(v)$.

Suppose $R(v) \geq x$. We argue that $\phi(\ulz) < \phi(v)$ and therefore that $\psi_{S^{-1}(x),\phi^*}(v) < \phi(v)$.

Temporarily let $L = L^{\sE_{\ulz}}_{R(\ulz),S(\ulz)}$.

Since $R(v) \geq x$ we have that $\ulz < v$. First, we claim that $S(\ulz) \leq G(\ulz+)$ (and then also $\sE_{\ulz} = D$ on $[G(\ulz),G(\ulz+)]\supseteq[G(\ulz),S(\ulz)]$, since $P_\mu$ is linear on $(G(\ulz),G(\ulz+))$ as $\mu$ does not charge $(G(\ulz),G(\ulz+))$).  Suppose not: then $S(\ulz)>G(\ulz+)$ and there exists $h \in (0,v - \underline{z})$ such that $G(\ulz+h) < S(\ulz)$. But $R(\ulz+h)\leq G(\ulz+h)$ and, by the left-monotonicity result (\cref{thm:monotone}), $R(\ulz+h)\notin(R(\ulz),S(\ulz))$. It follows that $R(\ulz+h) \leq R(\ulz) < x$ contradicting the supposed maximality of $\ulz$. We conclude that $S(\ulz) \leq G(\ulz+)$.

Second we claim that, for a sufficiently small $h>0$, $\sE_{\ulz+h} \geq L$. Suppose to the contrary that there exists $0<h<v-\ulz$ and $\epsilon>0$ such that $L^{\sE_{\ulz}, \phi(\ulz) - \epsilon}_{R(\ulz)}$ intersects $\sE_{\ulz+h}$ to the right of $S(\ulz)$. Then, letting $\tilde{k} = \tilde{k}(\ulz,h)$ be the $x$-coordinate of the smallest intersection point to the right of $S(\ulz)$ we have $\sE_{\ulz+h} > L^{\sE_{{\ulz}+h}}_{R(\ulz), \tilde{k}}$ on $(R(\ulz),\tilde{k})$. If $G(\ulz+ h) \leq \tilde{k}$ then $Q(\ulz+h) \leq R(\ulz)$ and so $R(\ulz+h) \leq Q(\ulz+h) \leq R(\ulz) < x$.  This contradicts the maximality of $\ulz$. Otherwise, if $G(\ulz+ h) > \tilde{k}$ then by an argument very similar to the last part of the proof of Lemma~\ref{lem:D=sEand} we can again conclude that there exists $0<h_1 \leq h$ for which $R(\ulz + h_1) < x$. Again this contradicts the maximality of $\ulz$.

Define $\olz = \sup \{ z : \sE_z \geq L\textrm{ on }\R \}$. Then
since $\lim_{z \uparrow \olz} \sE_z(k) = \sE_{\olz}(k)$ it follows that $\sE_{\olz} \geq L$ and there exists $k>S(\ulz)$ such that $\sE_{\olz}(k) = L(k)$, and then $R(\olz) = R(\ulz)$ and $\phi(\ulz)=\phi(\olz)$.
Then $R(\olz) < x$ and, by the maximality of $\ulz$, we have that $v<\olz$ (note that $v=\olz$ is excluded since $R(v) \geq  x > R(\ulz)$).
Since $v<\olz$,
$\sE_v \geq L$ on $(S(\ulz), \infty)$ and $\sE_v(S(v)) > L(S(v)) $ (note that if $\sE_v(S(v)) = L(S(v))$ then $R(v) \leq R(\ulz) < x$, a contradiction).
Hence
\[ \phi(v) \geq \frac{\sE_v(S(v)) - D(R(\ulz))}{S(v)-R(\ulz)} > \frac{L(S(v)) - D(R(\ulz))}{S(v)-R(\ulz)} = \phi(\ulz) \geq \psi_{S^{-1}(x),\phi^*}(v) . \]

{\noindent{\sc Reverse implication:}} \\
Now we show that $\{ v : v>S^{-1}(x), R(v) \leq x  \} \subseteq \{ v : v> S^{-1}(x), \psi_{S^{-1}(x),\phi^*}(v) = \phi(v) \}$. We suppose that $v> S^{-1}(x)$ and $ \psi_{S^{-1}(x),\phi^*}(v) < \phi(v)$ and show that $R(v) > x$.

From the opening comments of the proof of the lemma we have that $S^{-1}(x)<\ulz\leq v$, $R(\ulz) <x$ and $\ulz\in H_{S^{-1}(x),v}$. Then the assumption that $ \psi_{S^{-1}(x),\phi^*}(v) < \phi(v)$ implies that $\ulz<v$ so that $\ulz \in (S^{-1}(x), v)$.

Let
$\uulz = \sup \{ w: w \in H_{S^{-1}(x),v} \}$. We will see that $\uulz$ (respectively $\oolz$ introduced below) plays a very similar role to $\ulz$ (respectively $\olz$) from the forward implication.

We have $\uulz \geq \ulz > S^{-1}(x)$.
If $\uulz = v$ then there exists a sequence $(z_n)_{n\geq 1}$ with $z_n\uparrow v$ and $\phi(z_n) = \psi_{S^{-1}(y),\phi^*}(z_n)$. Then, by Proposition~\ref{prop:Rlowersemi} and the left-continuity of $\phi$ and $\psi_{S^{-1}(x),\phi^*}$, $\phi(v) = \liminf \phi(z_n) = \liminf \psi_{S^{-1}(x),\phi^*}(z_n) = \psi_{S^{-1}(x),\phi^*}(v)$ which is a contradiction. Hence we may conclude $S^{-1}(x) < \ulz \leq \uulz < v$ and $\psi_{S^{-1}(x),\phi^*}(v) = \phi(\uulz) < \phi(v)$.

First we show that $S(\uulz) \leq G(\uulz+)$. Suppose to the contrary and take $0<h< v - \uulz$ such that $G(\uulz+h) < S(\uulz)$. We claim that $\phi(\uulz+h)<\phi(\uulz)$, contradicting the maximality of $\uulz$.
	
We have $R(\uulz+h)\leq R(\uulz)<G(\uulz)\leq G(\uulz+h)<S(\uulz)\leq S(\uulz+h)$, so that $\sE^c_w=L^{\sE_w}_{R(w),S(w)}$ on $[R(w),S(w)]$ for $w\in\{\uulz,\uulz+h\}$. Since $G(\uulz+h)<S(\uulz)$, $\sE_{\uulz}(S(\uulz))>\sE_{\uulz+h}(S(\uulz))\geq \sE^c_{\uulz+h}(S(\uulz))$, and therefore $L^{\sE_{\uulz}}_{R(\uulz),S(\uulz)}=L_{S(\uulz)}^{\sE_{\uulz},\phi(\uulz)}> L_{S(\uulz)}^{\sE^c_{\uulz+h},\phi(\uulz)}$ everywhere. Then for $\psi\geq\phi(\uulz)$, $D\geq L^{\sE_{\uulz}}_{R(\uulz),S(\uulz)}>L_{S(\uulz)}^{\sE^c_{\uulz+h},\psi}$ on $(-\infty,G(\uulz)]\supset(-\infty,R(\uulz+h)]$. But $D(R(\uulz+h))=L^{\sE_{\uulz+h}}_{R(\uulz+h),S(\uulz+h)}(R(\uulz+h))=L_{S(\uulz)}^{\sE^c_{\uulz+h},\phi(\uulz+h)}(R(\uulz+h))$ and therefore we conclude that $\phi(\uulz+h)<\phi(\uulz)$. It follows that $S(\uulz)\leq G(\uulz+)$ as claimed.

Second we show that $\sE_{\uulz +h} \geq L^{\sE_{\uulz},\phi(\uulz)}_{R(\uulz)}$ for a sufficiently small $h>0$. If this is not the case then for any $\epsilon>0$ there exists some $h \in (0,\epsilon\wedge (v - \uulz))$ we have that $\delta = \inf_{ { k} > S(\uulz)} \{ \sE_{\uulz + h} ({ k}) - L^{\sE_{\uulz},\phi(\uulz)}_{R(\uulz)}({k}) \} < 0$. Let $L^\delta$ be given by $L^\delta(k) = L^{\sE_{\uulz},\phi(\uulz)}_{R(\uulz)}(k) + \delta$ and let $\uuls>S(\uulz)$ be such that $L^\delta(\uuls) = \sE_{\uulz+h}(\uuls)$. We have $\sE_{\uulz+h} \geq \sE^c_{\uulz + h} \geq L^\delta$ and $\sE_{\uulz+h}(\uuls) = \sE^c_{\uulz + h}(\uuls) = L^\delta(\uuls)$. It follows that $(\sE^c_{\uulz + h})'(\uuls-) \leq \phi(\uulz)$.

Suppose $G(\uulz+h)\leq \uuls$. Then $S(\uulz+h) \leq \uuls$ and $\phi(\uulz+h) = (\sE^c_{\uulz + h})'(S(\uulz+h)-){\leq}(\sE^c_{\uulz + h})'(\uuls-) \leq \phi(\uulz)$. Then $\uulz$ is not maximal in $H_{S^{-1}(x),v}$, a contradiction. Now suppose $G(\uulz+h)> \uuls$. Then there exists $\uulu\in(\uulz,\uulz+h)$ with $G(\uulu) \leq \uuls \leq G(\uulu+)$ for which $\sE_{\uulu}= \sE_{\uulz+h}$ on $(-\infty, \uuls]$. Then by the same argument as in the case $G(\uulz+h)\leq \uuls$ but with $\uulz+h$ replaced by $\uulu$ we find that $\phi(\uulu) \leq \phi(\uulz)$ again contradicting the maximality of $\uulz$ in $H_{S^{-1}(x),v}$. Indeed $\sE_{\uulu}\geq\sE^c_{\uulu}\geq L^\delta$ and $\sE_{\uulu}(\uuls)=\sE^c_{\uulu}(\uuls)=L^\delta(\uuls)$ so that $S(\uulu)\leq\uuls$ and $(\sE^c_{\uulu})'(\uuls-)\leq\phi(\uulz)$. Then $\phi(\uulu)=(\sE^c_{\uulu})'(S(\uulu)-)\leq(\sE^c_{\uulu})'(\uuls-)\leq\phi(\uulz)$.

Now let $L\equiv L^{\sE_{\uulz},\phi(\uulz)}_{R(\uulz)}$ and introduce
\[  \oolz = \sup \{z:  z> \uulz \mbox{ such that $\sE_z \geq L$ on $\R$} \} . \]
It is clear that $\{z:  z> \uulz \mbox{ such that $\sE_z \geq L$ on $\R$} \}$ is non-empty
and thus $\oolz$ is well-defined.

It follows similarly to the forward implication that $\phi(\uulz) = \phi(\oolz)$ and $R(\uulz) = R(\oolz)$. Then we must have $v < \oolz$ since otherwise $\uulz$ is not maximal. Then $\sE_v>\sE_{\oolz}$ on $(G(\oolz),\infty)$ and $\sE_v(S(v))>L(S(v))$ since otherwise $\phi(v) \leq \phi(\uulz)$ which was ruled out above. It follows that the line joining $(S(\uulz), \sE_{v}(S(\uulz)))$ to $(S(v),\sE_v(S(v)))$ has slope steeper than $\phi(\uulz)$. Further, this line lies strictly below $\sE_v \equiv D$ to the left of $S(\uulz)$. Hence $R(v) \geq S(\uulz)$. Finally, by the right-continuity of $S^{-1}$ we have $S(\uulz)>x$ and hence $R(v)>x$ as required.
\end{proof}

We are now ready to prove Proposition~\ref{prop:setequality}.

\begin{proof}[Proof of Proposition~\ref{prop:setequality}]
	Suppose $D(y) < \sE_{S^{-1}(y)}(y)$ then the result follows by combining Lemmas \ref{lem:D<sE} and \ref{lem:claims3C}.
	
	Suppose $D(y) = \sE_{S^{-1}(y)}(y)$ and for all $v>S^{-1}(y)$ there exists $k$ such that $\sE_v(k) < L_y^{\sE_{S^{-1}(y)}, (\sE^c_{S^{-1}(y)})'(y-)}(k)$. Then the result follows from \cref{lem:D=sEand} and \cref{lem:claims3C}.
	
	The remaining case is if $D(y) = \sE_{S^{-1}(y)}(y)$ and there exists $v_0>S^{-1}(y)$ such that $\sE_{v_0} \geq L$ where $L= L_y^{\sE_{S^{-1}(y)}, (\sE^c_{S^{-1}(y)})'(y-)}$. Since $y$ is a continuity point of $\mu$ and $\nu$,  $(\sE^c_{S^{-1}(y)})'$ and $\sE_{S^{-1}(y)}'$ exist at $y$ and $L = L_y^{\sE_{S^{-1}(y)}, (\sE^c_{S^{-1}(y)})'(y)}=L_y^{\sE_{S^{-1}(y)}, \sE_{S^{-1}(y)}'(y)}$.
	
	Define $\tilde{z} = \sup\{ u: u>S^{-1}(y), \sE_u \geq L \}$ and note that it is well-defined due to the existence of $v_0$. Then, $R(\tilde{z}) \leq y$ and $\phi(\tilde{z}) = \sE_{S^{-1}(y)}'(y)=D'(y)$. Further, by the right-continuity of $S^{-1}$ we must have $\tilde{z} \leq \olz$ where $\olz := S^{-1}(S(\tilde{z}))$.

Recall that $\psi_{<y>}$ is shorthand for $\psi_{S^{-1}(y), P_\nu'(y) - S^{-1}(y)}$.	
	We will prove \eqref{eq:claim3D} by proving that
	\begin{equation} \{ v : v \in I, R(v) \leq y  \} = \{ v : v \in I, \psi_{<y>}(v) = \phi(v) \} \label{eq:claim3DI} \end{equation}
	for each of $I=(S^{-1}(y), \tilde{z}]$, $I=(\tilde{z},\olz]$ and $I=(\olz,1)$ separately.
	
	\vspace{2mm}

	\noindent{\bf Case $I= (S^{-1}(y), \tilde{z}]$.} \\
	First note that if $v\in(S^{-1}(y),\tilde{z}]$ then since $\sE_v\geq L$, we have that $\sE_{S^{-1}(y)}\geq \sE_v\geq \sE^c_v\geq L$ with all inequalities being equalities at $y$. Since the subdiferential of $\sE^c_v$ is non-decreasing and $y<S(v)$, we have that $\phi(v)\geq\sE^\prime_{S^{-1}(y)}(y)$ and therefore $\psi_{<y>}(v)=\sE^\prime_{S^{-1}(y)}(y)$. It follows that
	$\psi_{<y>}(v)=\phi(\tilde{z})=\sE^\prime_{S^{-1}(y)}(y)=P^\prime_\nu(y)-S^{-1}(y)$. Further, if $\psi_{<y>}(v)=\phi(v)$ we must have that $\sE_v(S(v))=L(S(v))$ and consequently $R(v)\leq y$.
	
	On the other hand, suppose $v\in(S^{-1}(y),\tilde{z}]$ and $R(v)\leq y$. As $v>S^{-1}(y)$ we have $y<S(v)$. Further, since $\sE^c_{S^{-1}(y)}(y) = \sE^c_v(y) = \sE_{S^{-1}(y)}(y) = \sE_v(y) = D(y)$, by \cref{lem:chagree} we have that $\sE^c_v = \sE^c_{S^{-1}(y)}$ on $(-\infty,y]$. If $R(v)=y$ then $\phi(v) = (\sE^c_v)'(y) = D'(y) = \psi_{<y>}(v)$. Otherwise, if $R(v)<y$ we have $\phi(v) = (\sE^c_v)'(R(v)+) = (\sE^c_{S^{-1}(y)})'(R(v)+) \leq (\sE^c_{S^{-1}(y)})'(y) = \psi_{<y>}(v) \leq \phi(v)$. Again we conclude $\phi(v) = \psi_{<y>}(v) = D'(y)$ as required.
	
	For future reference note that $\psi_{<y>}(\tilde z)=\phi(\tilde z)$.
	
	\vspace{2mm}
	\noindent{\bf Case $I= (\tilde{z},\olz]$.}
	
	Recall $\olz = S^{-1}(S(\tilde{z}))$. Throughout this section we assume $\tilde{z}<\olz$ else there is nothing to prove.
	
	Let $\ols = S(\olz)$. By construction $S$ is constant on $(\tilde{z}, \olz]$ and $\olz = S^{-1}(S(\olz))$. Further, $S(z)>\ols$ if and only if $z>\olz$. We show that both $\{v: v \in (\tilde{z},\olz], R(v)\leq y\}=(\tilde{z},\olz]$ and $\{v: v \in (\tilde{z},\olz], \psi_{<y>}(v)=\phi(v)\}=(\tilde{z},\olz]$. 
	
	We have $S(\tz) \leq \ols$ and $L(S(\tz)) = \sE_{\tz}(S(\tz))$.
	We show that we must have $\sE_{\tz}(\ols) = L(\ols)$ so that if $S(\tilde{z}) < \ols$ then $\sE_{\tz} = L$ on $[S(\tz),\ols]$. Suppose for a contradiction that $\sE_{\tz}(\ols) > L(\ols)$. Then either  $\sE_{\tz}(\ols) = D(\ols) > L(\ols)$ or both $\sE_{\tz}(\ols) > D(\ols)$ and $\sE_{\tz}(\ols) > L(\ols)$. In the former case we can find small enough ${\breve z}\in(\tz,1)$ such that $\sE_{\breve{z}} \geq L$, contradicting the maximality of $\tilde{z}$. In the second case $G(\tz+)<\ols$ (since $\sE_{\tz}(\ols) > D(\ols)$), $S(\tz)<\ols$ (since $\sE_{\tz}(\ols) > L(\ols)$), and there must exist $z_1 \in (\tz,\olz]$ such that, for all $z\in(\tz,z_1)$, $G(z)<\ols$ and $\sE_{\tz}(\ols) > \sE_{z}(\ols) >  L(\ols)\vee D(\ols)$. Then by the maximality of $\tz$ there must be $k_1 \in (G(\tz),\ols)$ such that $\sE_{z_1}(k_1) < L(k_1)$. Fix $z\in(\tz,z_1)$ and let $\phi_1 = \inf_{x>y} \frac{\sE_{z}(x)-\sE_{z}(y)}{x-y}$ and let the infimum be attained at $x_1$. Then $\phi_1 < \phi(\tz)$ and $x_1 \in ( G(\tz),\ols)$. If $G(z) \leq x_1$ then $x_1 \leq S(z) \leq s_1 = \sup \{ s:s > x_1, \sE_{z}(s) \leq L(s) \} < \ols$, a contradiction since $S(z) = S(\olz) = \ols$. Conversely, if $x_1 < G(z)$ then there exists $z_2 \in (\tz,z)$ such that $G(z_2) \leq x_1 \leq G(z_2+)$. Then if $s_2  = \sup \{ s:s > x_1, \sE_{z_2}(s) \leq L(s) \}$ we have $s_2 < \ols$ and $S(z_2) \leq s_2 < \ols$, again a contradiction. Hence we must have $\sE_{\tz}(\ols) = L(\ols)$.
	
	Note that for $v\in(\tilde {z},\olz]$, $S(v)=S(S^{-1}(S(\tilde z)))= S(\olz) = \ols$.
Furthermore, we must have that $G(\tilde z)\leq G(\tilde z+)<\ols$. (If $G(\tilde z+) \geq \ols$, then, for all $v\in(\tilde z,\olz]$, $G(\tilde z+) \leq G(v) \leq S(v)=\ols \leq G(\tilde{z}+)$ and hence $G(v)= S(v)$. Then using \eqref{eq:Ediff} we deduce that $\sE_v=\sE_{\tilde z}$ on $(-\infty,\ols=S(v)]$. Then $\sE^c_v(\ols) = \sE_v(\ols) = \sE_{\tz}(\ols) = L(\ols) =\sE^c_{\tz}(\ols)$ which implies by \cref{lem:chagree} that $\sE^c_v = \sE^c_{\tz}$ on $(-\infty, \ols]$. Then, since $L= \sE^c_v$ on an interval to the left of $\ols$ we have $L \leq \sE^c_v$ everywhere, contradicting the maximality of $\tilde z$.) It follows by \eqref{eq:Ediff} again that $\sE_{v}(\ols) <\sE_{\tilde z}(\ols)=L(\ols)$ for all $v\in(\tilde z, \olz]$.

First suppose that $G(\tz)\leq S(\tz)\leq G(\tz+)<\ols$ and continue to take $v \in (\tz,\olz]$. Since $\sE_{\tz}\equiv L$ on $[G(\tz+),\ols]$ and $P_{\mu_{\tz}}$ is linear on $[G(\tz+),\infty)$, we have that $P_\nu=\sE_{\tz}+P_{\mu_{\tz}}$ is also linear on $[G(\tz+),\ols]$. Then $D=P_\nu-P_\mu$ must be concave and below $L$ on ${(} G(\tz+),\ols]$ (recall that $D(\ols)\leq\sE_{v}(\ols)<L(\ols))$. If $R(v)\in[S(\tz),G(\tz+)]$, then $\sE_v(R(v))=D(R(v))=L(R(v))$ and $L^{\sE_v}_{R(v),\ols}$ must cross $L$ at $R(v)$. If $R(v)\in(G(\tz+),\ols)$ then, since $D$ is concave and equal to or below $L$ at $R(v)$, $L^{\sE_v}_{R(v),\ols}$ crosses $L$ at some $k\in[G(\tz+),\ols)$. In both cases we have that $\sE^c_{\tz}=L<L^{\sE_v}_{R(v),\ols}\leq\sE^c_v$ on an interval to the left of the corresponding crossing point, a contradiction to $\sE^c_v\leq\sE^c_{\tz}$. Hence, since $R(v)\notin(R(\tz),S(\tz))$, either $R(v)\leq R(\tz)\leq y$ or $R(v)=G(v)=S(v)=\ols$. However the latter cannot happen due to concavity of $D$ on $[G(\tz+),\ols]$. Hence $\{v : v \in (\tz,\olz], R(v) \leq y \} =  (\tz,\olz]$.

We now show that $\phi$ is non-increasing on $(\tz, \olz]$, and hence, since $\psi_{<y>}(\tz)=\phi(\tz)$, that $\psi_{<y>}(v) = \phi(v)$ on $(\tz,\olz]$. Take $\tz<v < w \leq \olz$. Since $R(w) < S(w)=S(v)=\ols$ we have $R(w) \leq R(v)<S(v)$. Then, since
$\partial\sE^c_{\tz}$ is non-decreasing, and using Lemma~\ref{lem:sEc'} for the second inequality,
\[ \phi(w) = (\sE_w^c)'(R(w)+) \leq (\sE_w^c)'(R(v)+) \leq (\sE_v^c)'(R(v)+) = \phi(v) \]
and the result follows.
		
		Now suppose $G(\tz+)<S(\tz)$. Then $\{v\in(\tilde z, \olz]:G(v)<S({\tz})\}$ is non-empty. Define $\hat z:=\sup\{v\in(\tilde z, \olz]:G(v)<S(\tz)\}$ and note that $\tilde z<\hat z\leq \olz$. We now show that
	\begin{equation}
	\label{eq:euqalTildeZ}
	\{v\in (\tilde z,\hat z]: R(v)\leq y\}=\{v\in (\tilde z,\hat z]: \psi_{<y>}(v)=\phi(v)\} = (\tz,\hat{z}].
	\end{equation}
	For $u\in(\tilde z,\hat z)$, $R(u)\notin(R(\tilde z),S(\tilde z))$ and $R(u)\leq G(u)<S(\tilde z)$, and therefore $R(u)\leq R(\tilde z)\leq y$. Then by \cref{prop:Rlowersemi}(iii), $R(\hat z)\leq\liminf_{u\uparrow \hat z}R(u)\leq y$. On the other hand, the same argument shows that $R(v)\leq R(u)\leq y$ for $u,v\in[\tilde z,\hat z)$ with $u<v$. Then we must have that $\sE^c_z(R(z))=\sE^c_{\tilde z}(R(z))$ for $z\in\{u,v\}$. Since the subdifferential of $\sE^c_{\tilde z}$ is non-decreasing and $\sE^c_u(S(u))=\sE_u(S(u))>\sE_v(S(v))=\sE^c_v(S(v))$, for $\tz \leq u < v <\hat{z}$ we have that $\phi(u) \geq \phi(v)$. Then since $\phi(\tz) = \psi_{<y>}(\tz)$ we have $\psi_{<y>}(\tilde z)\geq \phi(u)\geq\phi(v)\geq \lim_{u\uparrow \hat z}\phi(u) =\phi(\hat z)$, which proves the claim.
	
	Now, if $\hat z= \olz$, then \eqref{eq:claim3DI} follows for $I=(\tz,\olz]$. Therefore, suppose that $\hat z<\olz$. We claim that
	$$
	\{v\in (\hat z,\olz]: R(v)\leq y\}=\{v\in (\hat z,\olz]: \psi_{<y>}(v)=\phi(v)\} = (\hat{z}, \olz].
	$$
	Indeed, using the definition of $\hat z$, we have that $G(v)=\ols$ for all $v\in(\hat z,\olz]$, and therefore $G(\hat z +)= \ols$. Then by \eqref{eq:Ediff}, for all  $v\in(\hat z,\olz]$,  $\sE_v=\sE_{\hat z}$ on $(-\infty,\ols=S(v)]$. It follows from \cref{cor:R=R} with $u=\hat z$ that for all  $v\in(\hat z,\olz]$ we have $R(v)=R(\hat z)\leq y$ and $\psi_{<y>}(\hat z)=\psi_{<y>}(v)=\phi(v)$.
	We conclude that \eqref{eq:claim3DI} holds for $I=(\hat z,\olz]$, and hence, given \eqref{eq:euqalTildeZ}, for $I=(\tz,\olz]$.

	\vspace{2mm}
	\noindent{\bf Case $v \in (S^{-1}(S(\tilde{z})),1)$.}
	
	In the case $\tilde{z}=\olz$ then $\tilde{z}=\olz = S^{-1}(S(\olz)) = S^{-1}(S(\tilde{z})) $ and then from the first case $R(\olz)= R(\tilde{z}) \leq y<S(\olz)$ and $\psi_{<y>}(\olz) = \psi_{<y>}(\tilde{z}) =\phi(\tilde{z}) = \phi(\olz)$. In the case where $\tilde{z} < \olz$ then again we have $\olz=S^{-1}(S(\olz))$ and then from the second case $R(\olz) \leq y<S(\olz)$ and $\psi_{<y>}(\olz) =\phi(\olz)$.

	By the left-monotonicity of $R$, for $v> \olz$ we have $R(v) \notin (R(\olz),S(\olz))$. Since $R(\olz ) \leq y<S(\olz)$ we conclude $R(v) \leq y$ is equivalent to $R(v)  \leq R(\olz)$. Then $\{ v : v > \olz, R(v) \leq y  \} = \{ v : v > \olz, R(v)  \leq  R(\olz)  \}$.
	
	Note that for $v>\olz$ we have that $\psi_{<y>}(v) = \psi_{\olz, \phi(\olz)}(v)$. Hence to show \eqref{eq:claim3DI} for $I=(\olz,1)$ we need to show that
	\begin{equation}
	\{ v : v> \olz, R(v) \leq R(\olz))  \} = \{ v : v> \olz, \psi_{\olz,\phi(\olz)}(v) = \phi(v) \}.
	\label{eq:claim3DzzA}
	\end{equation}
	But, since $R(\olz)<S(\olz)$ this is immediate from \cref{lem:Rolz<Solz}.
\end{proof}
\end{document}